\def\div{ \hbox{\rm div}\,  }
\newtheorem{theorem}{Theorem}[section]
\newtheorem{lemma}{Lemma}[section]
\newtheorem{prop}{Proposition}[section]
\newtheorem{remark}{Remark}[section]
\newtheorem{defn}{Definition}[section]
\def\var{\varepsilon}
\def\bma#1\ema{{\allowdisplaybreaks\begin{aligned}#1\end{aligned}}}
\numberwithin{equation}{section}
\begin{document}
\title{{\LARGE \textbf{Global existence and optimal time-decay rates of the compressible Navier-Stokes-Euler system}}}

\author[b,c]{Hai-Liang  Li  \thanks{
E-mail:		hailiang.li.math@gmail.com (H.-L. Li).}}

\author[b,c]{Ling-Yun Shou \thanks{E-mail: shoulingyun$\underline{~}$math@163.com(L.-Y Shou).}}
    \affil[b]{School of Mathematical Sciences,
	Capital Normal University, Beijing 100048, P.R. China}
\affil[c]{Academy for Multidisciplinary Studies, Capital Normal University, Beijing 100048, P.R. China}
\date{}

\renewcommand*{\Affilfont}{\small\it}
\maketitle
\begin{abstract}
In this paper, we consider the Cauchy problem of the multi-dimensional compressible Navier-Stokes-Euler system for two-phase flow motion, which consists of the isentropic compressible Navier-Stokes equations and the isothermal compressible Euler equations coupled with each other through a relaxation drag force. We first establish the local existence and uniqueness of the strong solution for general initial data in a critical homogeneous Besov space, and then prove the global existence of the solution if the initial data is a small perturbation of the equilibrium state. Moreover, under the additional condition that the low-frequency part of the initial perturbation also belongs to another Besov space with lower regularity, we obtain the optimal time-decay rates of the global solution toward the equilibrium state. These results imply that the relaxation drag force and the viscosity dissipation affect regularity properties and long time behaviors of solutions for the compressible Navier-Stokes-Euler system.

\end{abstract}
\noindent{\textbf{Key words:}  Two-phase flow, Navier-Stokes equations, Euler equations, critical regularity, global existence, optimal time-decay rates }
\section{Introduction}


We consider the coupled compressible Navier-Stokes-Euler (NS-Euler) system for two-phase flow motion in $\mathbb{R}^{d}$ ($d\geq2$) as follows:
\begin{equation}\label{m1}
\left\{
\begin{aligned}
& \partial_{t}\rho+\div (\rho u)=0,\\
& \partial_{t}(\rho u)+\div (\rho u\otimes u)+\nabla P(\rho)=\mu \Delta u+(\mu+\lambda)\nabla \div u-\kappa n(u-w),\\
&\partial_{t}n+\div (nw)=0,\\
&\partial_{t}(n w)+\div ( n w\otimes w)+\nabla n=\kappa n(u-w),\quad\quad x\in\mathbb{R}^{d},\quad t>0,\\
\end{aligned}
\right.
\end{equation}
 with the initial data
 \begin{align}
 &(\rho,u,n,w)(x,0)=(\rho_{0},u_{0},n_{0},w_{0})(x)\rightarrow (\bar{\rho}, 0, \bar{n}, 0), \quad\quad |x|\rightarrow\infty,\label{d}
 \end{align}
where $(\bar{\rho}, 0, \bar{n}, 0)$ $(\bar{\rho},\bar{n}>0$) is the constant state. The unknowns are the densities $\rho=\rho(x,t)\geq0, n=n(x,t)\geq0$ and the velocities $u=u(x,t)\in\mathbb{R}^{d}, w=w(x,t)\in\mathbb{R}^{d}$. Furthermore, the pressure function $P(\rho)\in C^{\infty}(\mathbb{R}_{+})$ satisfies $P'(\rho)>0$ for $\rho>0$, the drag force coefficient $\kappa>0$ is a constant, and the viscosity coefficients $\mu$ and $\lambda$ satisfy 
\begin{equation}\nonumber
\begin{aligned}
\mu>0,\quad \quad 2\mu+\lambda>0.
\end{aligned}
\end{equation}
The NS-Euler system $(\ref{m1})$ was derived in \cite{choi3,choi1} as a hydrodynamic limit of the compressible Navier-Stokes-Vlasov-Fokker-Planck model describing the dynamics of small particles dispersed in a fluid, such as the sedimentation of suspensions, sprays, combustion \cite{jabin1,o1,will1}, and so on.


There has been much important progress made recently on the analysis of two-phase flow models, refer to \cite{wu1,choi1,mellet1,mellet2,choi2,choi3,choi4,hcc1,lf1,wen0,evje6,bresch4,bresch5,lhl1,lhl2,lhl3,breschhand,bresch3,novotny2} and references therein. However, for the NS-Euler system \eqref{m1}, there are few important results to our knowledge on the well-posedness and asymptotic behaviors of solutions, cf. \cite{choi1,tang1,wu1,choi3}. If the initial data is a small perturbation of the equilibrium state in the three-dimensional Sobolev space $H^{s}$ ($s\geq 3$), Choi \cite{choi1} established the global existence and uniqueness of strong solutions  for \eqref{m1} in both the whole space and the periodic domain, and further got the exponential time stability in the periodic case. Choi and Jung \cite{choi3} showed the global existence and uniqueness of solutions to \eqref{m1} near the equilibrium state in a three-dimensional bounded domain. In addition, Wu, Zhang and Zou \cite{wu1} and Tang and Zhang \cite{tang1} obtained the optimal algebraic time-decay rates of global solutions for \eqref{m1} with Sobolev regularity in the three-dimensional whole space if the initial data further belongs to $L^1$. For the pressureless NS-Euler system (i.e., without the pressure term $\nabla n$ in $(\ref{m1})_{4}$), the global dynamics of strong solutions near the equilibrium state was studied in \cite{choi4}, and the finite-time blow-up phenomena of classical solutions was investigated in \cite{choi2}. When the density-dependent viscosity term $\div(n\mathbb{D} w)$ in $(\ref{m1})_{4}$ is taken into account, which can be derived from the Champman-Enskog expansion of compressible Navier-Stokes-Vlasov-Fokker-Planck model in \cite{lhl1}, the existence and nonlinear stability of steady-states to the inflow/outflow problem in the half line are proved in \cite{lhl2,lhl3}.

However, there are no any results about the NS-Euler system (\ref{m1}) in critical spaces. The main purpose in this paper is to study the well-posedness and optimal time-decay rates of solutions to the multi-dimensional Cauchy problem (\ref{m1})-\eqref{d} in the critical regularity framework. To be more precise, it is proved that for general initial data in the $L^2$-type critical Besov space, a unique strong solution to the Cauchy problem (\ref{m1})-\eqref{d} exists locally in time, which is shown to be a global one when the initial data is near the constant equilibrium state. In addition, the optimal algebra time-decay rates of the global solution to its equilibrium state are obtained under the additional mild assumption that the low-frequency part of the initial data is either bounded or small in another Besov space with lower regularity.

\vspace{2ex}


 Without loss of generality, set 
 $$
  P'(\bar{\rho})=\kappa=\bar{\rho}=\bar{n}=\mu=1,\quad\quad \lambda=-1.
  $$
   Define the perturbation 
$$
a:=\rho-1,\quad \quad a_{0}:=\rho_{0}-1,\quad\quad b:=\log{n},\quad \quad b_{0}=\log{n_{0}}.
$$
Then, the Cauchy problem \eqref{m1}-\eqref{d} can be reformulated into
\begin{equation}\label{m1n}
\left\{
\begin{aligned}
&\partial_{t}a+\div u=-\div (au),\\
&\partial_{t}u+ \nabla a- \Delta u+u-w=-u\cdot\nabla  u+G,\\
&\partial_{t}b+\div w=-w\cdot\nabla b,\\
&\partial_{t}w+\nabla b+w-u=-w\cdot \nabla w,\quad\quad  x\in\mathbb{R}^{d},\quad t>0,\\
&(a,u,b,w)(x,0)=(a_{0},u_{0},b_{0},w_{0})(x)\rightarrow(0,0,0,0),\quad |x|\rightarrow\infty,
\end{aligned}
\right.
\end{equation}
where $G$ is the nonlinear term
\begin{equation}\label{G}
\begin{aligned}
G:=g(a)\nabla a+f(a) \Delta u+h(a,b) (u-w),
\end{aligned}
\end{equation}
with 
$$
g(a):=-\frac{P'(1+a)}{1+a}+ 1,\quad f(a):=-\frac{a}{a+1},\quad h(a,b):= (e^{b}-1)\frac{a}{a+1}+\frac{a}{a+1}-e^{b}+1.
$$

\vspace{2ex}

First, we have the local existence and uniqueness of the strong solution to the Cauchy problem (\ref{m1n}) for general initial data in the critical Besov space as follows:

\begin{theorem}\label{theorem11} 
Assume that the initial data $(a_{0},u_{0},b_{0},w_{0})$ satisfies
\begin{equation}\label{a1}
\begin{aligned}
 a_{0}\in\dot{B}^{\frac{d}{2}}_{2,1},\quad~ \inf_{x\in\mathbb{R}^{d}}(1+a_{0})(x)>0,\quad~ u_{0}\in\dot{B}^{\frac{d}{2}-1}_{2,1},\quad~ (b_{0},w_{0})\in \dot{B}^{\frac{d}{2}-1}_{2,1}\cap \dot{B}^{\frac{d}{2}+1}_{2,1}.
\end{aligned}
\end{equation}
Then, there exists a time $T>0$ such that the Cauchy problem $(\ref{m1n})$ admits a unique strong solution  $(a,u,b,w)$ satisfying for $t\in [0,T]$ that
\begin{equation}
\left\{
\begin{aligned}
& a\in \mathcal{C}([0,T];\dot{B}^{\frac{d}{2}}_{2,1}),\quad \inf_{(x,t)\in\mathbb{R}^{d}\times[0,T]}(1+a)(x,t)>0\\
&  u\in \mathcal{C}([0,T];\dot{B}_{2,1}^{\frac{d}{2}-1})\cap L^1(0,T;\dot{B}^{\frac{d}{2}+1}_{2,1}),\\
& (b,w)\in \mathcal{C}([0,T];\dot{B}^{\frac{d}{2}-1}_{2,1}\cap\dot{B}^{\frac{d}{2}+1}_{2,1}) .\label{r1}
\end{aligned}
\right.
\end{equation}
In addition to $(\ref{a1})$, if assume $a_{0}\in \dot{B}^{\frac{d}{2}-1}_{2,1}$, then $a\in \mathcal{C}([0,T];\dot{B}^{\frac{d}{2}-1}_{2,1})$ holds.
\end{theorem}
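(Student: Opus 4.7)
The plan is to run a Friedrichs-type iteration that treats the Navier-Stokes block $(a,u)$ and the damped Euler block $(b,w)$ as two decoupled linear systems at each step, with the coupling terms and the nonlinearities frozen at the previous iterate. At step $n+1$, we define $(a^{n+1},u^{n+1})$ as the solution of the linearised compressible Navier-Stokes system with source $-u^{n}\cdot\nabla u^{n}+G(a^{n},u^{n},b^{n},w^{n})$ and lower-order forcing $w^{n}$, and $(b^{n+1},w^{n+1})$ as the solution of the linearised damped Euler system with transport source $-w^{n}\cdot\nabla(b^{n},w^{n})$ and lower-order forcing $u^{n}$. After obtaining uniform-in-$n$ bounds on a common time interval $[0,T]$, we pass to the limit in the iteration, and verify the positivity $\inf(1+a)>0$ by a short-time argument using the embedding $\dot{B}^{d/2}_{2,1}\hookrightarrow L^{\infty}$.

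For the Navier-Stokes block we would invoke the Danchin-type maximal regularity estimate for the Lam\'e operator, producing
\[
\|a^{n+1}\|_{L^{\infty}_{T}\dot{B}^{d/2}_{2,1}}+\|u^{n+1}\|_{L^{\infty}_{T}\dot{B}^{d/2-1}_{2,1}\cap L^{1}_{T}\dot{B}^{d/2+1}_{2,1}}\lesssim \|(a_{0},u_{0})\|_{\dot{B}^{d/2}_{2,1}\times\dot{B}^{d/2-1}_{2,1}}+\Phi^{n}_{T},
\]
where $\Phi^{n}_{T}$ gathers the nonlinear and coupling contributions. For the hyperbolic $(b,w)$ block, we apply dyadic blocks $\dot{\Delta}_{j}$, perform an $L^{2}$ energy estimate exploiting the symmetry of the principal part, handle the transport term by the standard commutator estimate of Bahouri-Chemin-Danchin, then multiply by $2^{js}$ and sum over $j$ at both indices $s\in\{\tfrac{d}{2}-1,\tfrac{d}{2}+1\}$. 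The crucial coupling observation is that, since the Euler block is non-smoothing, the source $u^{n}$ in the $w$-equation must be integrable in time at the top regularity; this is exactly furnished by $u^{n}\in L^{1}_{T}\dot{B}^{d/2+1}_{2,1}$ coming from the Navier-Stokes block.

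The nonlinearities $g(a)\nabla a$, $f(a)\Delta u$, $h(a,b)(u-w)$ and the transport products are bounded by classical product, commutator and composition estimates in the $\dot{B}^{s}_{2,1}$ scale, using the algebra property of $\dot{B}^{d/2}_{2,1}\hookrightarrow L^{\infty}$, the identities $g(0)=f(0)=h(0,0)=0$, and the embedding $\dot{B}^{d/2-1}_{2,1}\cap\dot{B}^{d/2+1}_{2,1}\hookrightarrow\dot{B}^{d/2}_{2,1}$. Each contribution can be arranged to carry a factor of $T$ or $\sqrt{T}$, so choosing $T$ small yields a uniform bound. Convergence of the iterates and uniqueness of the limit both follow from a stability estimate on the differences in one regularity index below, where the $\ell^{1}$-summability of the $\dot{B}^{s}_{2,1}$ scale prevents any logarithmic loss in the hyperbolic block.

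The main obstacle I expect is the regularity mismatch between the two blocks: the Navier-Stokes block benefits from parabolic smoothing and naturally lives at index $d/2-1$ for $u$, while the Euler block does not smooth at all and needs $(b,w)$ carried at index $d/2+1$ to handle the transport nonlinearities, so the coupling term $u-w$ must be estimated at both indices simultaneously. This is overcome precisely by the parabolic smoothing of $u$ giving $u\in L^{1}_{T}\dot{B}^{d/2+1}_{2,1}$, which is enough to drive the Euler block at the top index. Finally, the additional assertion $a\in\mathcal{C}([0,T];\dot{B}^{d/2-1}_{2,1})$ under $a_{0}\in\dot{B}^{d/2-1}_{2,1}$ is obtained by a further transport estimate for $a$ at the lower index, exploiting that $\div u\in L^{1}_{T}\dot{B}^{d/2}_{2,1}$ and $\div(au)\in L^{\infty}_{T}\dot{B}^{d/2-1}_{2,1}$ are already controlled by the preceding bounds.
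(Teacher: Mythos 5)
Your plan diverges from the paper's proof in two places, and one of those divergences opens a genuine gap that your outline does not close.

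The paper does \emph{not} run a Picard iteration that freezes the coupling and the nonlinearities at the previous step. It performs a Friedrichs spectral truncation: it projects the whole nonlinear system onto $L^2_n$ (Fourier support in an annulus), solves the resulting ODE by Cauchy--Lipschitz, proves $n$-uniform bounds, and extracts a limit by Aubin--Lions compactness. Before truncating, it makes two structural reformulations you do not mention: it replaces $a$ by $a_*=\frac{1}{1+a}-1$ so that the momentum equation becomes a \emph{genuine} parabolic equation with variable viscosity coefficient $(1+a_*)\Delta u_*$, rather than a constant-coefficient Lam\'e operator plus a source $f(a)\Delta u$; and it splits $u=u_L+u_*$ with $u_L$ the free heat flow of $u_0$. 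These two steps are exactly what your argument lacks. Since Theorem~\ref{theorem11} allows $a_0$ of arbitrary size in $\dot{B}^{d/2}_{2,1}$ (only $\inf(1+a_0)>0$), the quantity $\|f(a^n)\|_{L^\infty_T(\dot{B}^{d/2}_{2,1})}$ is of order $\|a_0\|_{\dot{B}^{d/2}_{2,1}}$ and is not small; if you treat $f(a^n)\Delta u^n$ as a source for the constant-coefficient Lam\'e operator, the maximal-regularity estimate gives
\[
\|u^{n+1}\|_{L^1_T(\dot{B}^{d/2+1}_{2,1})}\lesssim \|u_0\|_{\dot{B}^{d/2-1}_{2,1}}+\|a_0\|_{\dot{B}^{d/2}_{2,1}}\,\|u^{n}\|_{L^1_T(\dot{B}^{d/2+1}_{2,1})}+\cdots,
\]
and neither the uniform bound nor the contraction on differences closes, because the coefficient multiplying $\|u^n\|_{L^1_T(\dot{B}^{d/2+1}_{2,1})}$ is not small and the $\|u_0\|$-contribution is not small. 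The paper avoids this by keeping $(1+a_*)\Delta u_*$ on the left as a variable-coefficient elliptic operator and invoking the variable-coefficient parabolic estimates of Danchin (2007), which require only positivity and boundedness of the coefficient, not smallness; you instead invoke the constant-coefficient Lam\'e estimate (Lemma~\ref{heat}), which is not strong enough here.

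Your claim that ``each contribution can be arranged to carry a factor of $T$ or $\sqrt{T}$'' is also not accurate for the two leading terms. The natural bounds are $\|f(a^n)\Delta u^n\|_{L^1_T(\dot{B}^{d/2-1}_{2,1})}\lesssim \|a^n\|_{L^\infty_T(\dot{B}^{d/2}_{2,1})}\|u^n\|_{L^1_T(\dot{B}^{d/2+1}_{2,1})}$ and $\|u^n\cdot\nabla u^n\|_{L^1_T(\dot{B}^{d/2-1}_{2,1})}\lesssim\|u^n\|_{\widetilde L^\infty_T(\dot{B}^{d/2-1}_{2,1})}\|u^n\|_{L^1_T(\dot{B}^{d/2+1}_{2,1})}$, with no explicit power of $T$. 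The smallness of $\|u^n\|_{L^1_T(\dot{B}^{d/2+1}_{2,1})}$ as $T\to0$ holds only by dominated convergence on the heat-flow part, which is why the paper sets up $u=u_L+u_*$, chooses $T$ so that $\|u_L\|_{L^1_T(\dot{B}^{d/2+1}_{2,1})}\le\eta^2$, and then propagates $\|u_*^n\|\le\eta$; that mechanism is nowhere in your proposal and must be added for the iteration to close. Finally, your remark that $\ell^1$-summability ``prevents any logarithmic loss'' in the uniqueness step does not survive inspection in dimension $d=2$: there the relevant regularity index drops to $\dot{B}^{-1}_{2,\cdot}$ and the paper is forced to prove uniqueness in $\dot{B}^{-1}_{2,\infty}$-type (i.e.\ $r=\infty$) spaces, a point your argument would need to address.
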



 \vspace{2ex}

Then, we establish the global existence of the strong solution to the Cauchy problem $(\ref{m1n})$ for the initial data close to the equilibrium state below:

\begin{theorem}\label{theorem12}
For any $d\geq2$, there exists a constant $\varepsilon_{0}>0$ such that if the initial data $(a_{0},u_{0},b_{0},w_{0})$ satisfies $a_{0}\in \dot{B}^{\frac{d}{2}-1}_{2,1}\cap\dot{B}^{\frac{d}{2}}_{2,1}$, $u_{0}\in\dot{B}^{\frac{d}{2}-1}_{2,1}$, $(b_{0},w_{0})\in\dot{B}^{\frac{d}{2}-1}_{2,1}\cap\dot{B}^{\frac{d}{2}+1}_{2,1}$ and
\begin{equation}\label{a2}
\begin{aligned}
\mathcal{X}_{0}:= \|(a_{0},u_{0},b_{0},w_{0})^{\ell}\|_{\dot{B}^{\frac{d}{2}-1}_{2,1}}+\|a_{0}^{h}\|_{\dot{B}^{\frac{d}{2}}_{2,1}}+\|u_{0}^{h}\|_{\dot{B}^{\frac{d}{2}-1}_{2,1}}+\|(b_{0},w_{0})^{h}\|_{\dot{B}_{2,1}^{\frac{d}{2}+1}}\leq \varepsilon_{0},
\end{aligned}
\end{equation}
then the Cauchy problem $(\ref{m1n})$ admits a unique global strong solution $(a,u,b,w)$, which satisfies
\begin{equation}\label{rglobal}
\left\{
\begin{aligned}
&a^{\ell}\in \mathcal{C}_{b}(\mathbb{R}^{+};\dot{B}^{\frac{d}{2}-1}_{2,1})\cap L^1(\mathbb{R}^{+};\dot{B}^{\frac{d}{2}+1}_{2,1}),\quad\quad a^{h}\in \mathcal{C}_{b}(\mathbb{R}^{+};\dot{B}^{\frac{d}{2}}_{2,1})\cap L^1(\mathbb{R}^{+};\dot{B}^{\frac{d}{2}}_{2,1}),\\
&u^{\ell}\in \mathcal{C}_{b}(\mathbb{R}^{+};\dot{B}^{\frac{d}{2}-1}_{2,1})\cap L^1(\mathbb{R}^{+};\dot{B}^{\frac{d}{2}+1}_{2,1}),\quad\quad u^{h}\in \mathcal{C}_{b}(\mathbb{R}^{+};\dot{B}^{\frac{d}{2}-1}_{2,1})\cap L^1(\mathbb{R}^{+};\dot{B}^{\frac{d}{2}+1}_{2,1}),\\
&b^{\ell}\in \mathcal{C}_{b}(\mathbb{R}^{+};\dot{B}^{\frac{d}{2}-1}_{2,1})\cap L^1(\mathbb{R}^{+};\dot{B}^{\frac{d}{2}+1}_{2,1}),\quad\quad ~b^{h}\in \mathcal{C}_{b}(\mathbb{R}^{+};\dot{B}^{\frac{d}{2}+1}_{2,1})\cap L^1(\mathbb{R}^{+};\dot{B}^{\frac{d}{2}+1}_{2,1}),\\
&w^{\ell}\in \mathcal{C}_{b}(\mathbb{R}^{+};\dot{B}^{\frac{d}{2}-1}_{2,1})\cap L^1(\mathbb{R}^{+};\dot{B}^{\frac{d}{2}+1}_{2,1}),\quad\quad w^{h}\in \mathcal{C}_{b}(\mathbb{R}^{+};\dot{B}^{\frac{d}{2}+1}_{2,1})\cap L^1(\mathbb{R}^{+};\dot{B}^{\frac{d}{2}+1}_{2,1}),\\
& (u-w)^{\ell}\in  L^1(\mathbb{R}_{+};\dot{B}^{\frac{d}{2}}_{2,1})\cap L^2(\mathbb{R}^{+};\dot{B}^{\frac{d}{2}-1}_{2,1}),
\end{aligned}
\right.
\end{equation}
and
\begin{equation}\label{XX0}
\begin{aligned}
&\|(a,u,b,w)\|_{L^{\infty}_{t}(\dot{B}^{\frac{d}{2}-1}_{2,1})}^{\ell}+\|(a,u,b,w)\|_{L^1_{t}(\dot{B}^{\frac{d}{2}+1}_{2,1})}^{\ell}+\|u-w\|_{L^1_{t}(\dot{B}^{\frac{d}{2}}_{2,1})}^{\ell}+\|u-w\|_{L^2_{t}(\dot{B}^{\frac{d}{2}-1}_{2,1})}^{\ell}\\
&\quad\quad+\| a\|_{L^{\infty}_{t}(\dot{B}^{\frac{d}{2}}_{2,1})}^{h}+\|u\|_{L^{\infty}_{t}(\dot{B}^{\frac{d}{2}-1}_{2,1})}^{h}+\|(b,w)\|_{L^{\infty}_{t}(\dot{B}^{\frac{d}{2}+1}_{2,1})}^{h}+\|a\|_{L^1_{t}(\dot{B}^{\frac{d}{2}}_{2,1})}^{h}+\|(u,b,w)\|_{L^1_{t}(\dot{B}^{\frac{d}{2}+1}_{2,1})}^{h}\\
&\quad\leq C\mathcal{X}_{0},\quad\quad t>0,
\end{aligned}
\end{equation}
for $C>0$ a constant independent of time.
\end{theorem}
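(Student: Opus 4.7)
The plan is to extend the local solution provided by Theorem \ref{theorem11} to a global one by a continuation argument based on uniform-in-time a priori estimates. Denote by $\mathcal{X}(t)$ the left-hand side of \eqref{XX0}; on the lifespan of the local solution I would establish the bootstrap inequality
\[
\mathcal{X}(t)\le C\mathcal{X}_0+C\mathcal{X}(t)^2
\]
with $C$ independent of $t$. Smallness of $\mathcal{X}_0$ then forces $\mathcal{X}(t)\le 2C\mathcal{X}_0$, which controls every norm appearing in the local blow-up criterion of Theorem \ref{theorem11} and hence delivers a global solution. The hybrid form of $\mathcal{X}$ reflects the fact that the linearised principal part of \eqref{m1n} has different structures in different frequency regimes: parabolic on all four unknowns at low frequency, parabolic on the Navier--Stokes block $(a,u)$ and of order-zero damped-Euler type on the Euler block $(b,w)$ at high frequency.

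At low frequency, applying the dyadic block $\dot\Delta_j$ with $j\le J_0$ to \eqref{m1n}, I would run an energy method on $(a_j,u_j,b_j,w_j)$ augmented by cross terms of the type $\eta_1\langle u_j,\nabla a_j\rangle+\eta_2\langle w_j,\nabla b_j\rangle+\eta_3\langle u_j,w_j\rangle$ with small parameters $\eta_i$. A direct computation yields a coercive dissipation $\gtrsim 2^{2j}\|(a_j,u_j,b_j,w_j)\|_{L^2}^2+\|(u-w)_j\|_{L^2}^2$, the viscous gain on $u$ being transferred to the pressureless Euler phase through the zeroth-order drag coupling. Summing with weights $2^{j(d/2-1)}$ produces the low-frequency part of \eqref{XX0}, including the $L^1_t(\dot{B}^{d/2}_{2,1})$ and $L^2_t(\dot{B}^{d/2-1}_{2,1})$ control of $u-w$. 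At high frequency the two subsystems must be decoupled. For $(a,u)$ I would use the effective-velocity trick of Haspot, setting $z:=u+(-\Delta)^{-1}\nabla a$, which in each $j$-block decouples the principal part into a heat equation for $z$ and a pure damping equation for $a$, treating the drag source $-w$ as a lower-order perturbation controlled via the Euler high-frequency bound. For $(b,w)$ the Fourier symbol of the linearised damped Euler operator has eigenvalues with real part bounded below by a positive constant for $|\xi|\gg 1$, producing uniform exponential damping but no gain of derivative --- precisely the reason why the $L^\infty_t$ and $L^1_t$ indices of $(b,w)^h$ coincide at $d/2+1$ in \eqref{XX0}. The damping is captured at the dyadic level by the cross term $\langle w_j,\nabla b_j\rangle$, while the coupling source $-u$ in the $w$-equation is absorbed using $u^h\in L^1_t\dot{B}^{d/2+1}_{2,1}$ from the NS high-frequency estimate.

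The nonlinear contributions $\div(au)$, $u\cdot\nabla u$, $w\cdot\nabla w$, $w\cdot\nabla b$ and the composition terms $g(a)\nabla a$, $f(a)\Delta u$, $h(a,b)(u-w)$ of $G$ are handled in the critical Besov framework by Bony paraproduct decomposition, commutator estimates and classical composition results, using $\dot{B}^{d/2}_{2,1}\hookrightarrow L^\infty$ and the fact that $\|a\|_{L^\infty_{x,t}}$ stays small so that $a\mapsto 1/(1+a)$ and $b\mapsto e^b-1$ remain well behaved. After distributing each bilinear factor between the low- and high-frequency parts of $\mathcal{X}(t)$, one obtains the required closure $\lesssim\mathcal{X}(t)^2$. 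I expect the principal difficulty to be the treatment of the hyperbolic Euler-type terms $w\cdot\nabla w$ and $w\cdot\nabla b$ at the top index $d/2+1$: since the relaxation provides no regularising effect on $w$, one cannot borrow $\nabla w$ from any viscous smoothing, and the full $L^1_t\dot{B}^{d/2+1}_{2,1}$ control of the high-frequency part of $(b,w)$ must be produced exclusively by the zeroth-order damping identified above. This forces a delicate calibration of the cross-term weights and of the splitting threshold $J_0$ so that the induced damping on $(b,w)^h$ dominates simultaneously the Euler-type nonlinearities and the coupling source $u$ injected through the drag, which is the main technical novelty compared with the standard critical-Besov theory for the barotropic compressible Navier--Stokes system.
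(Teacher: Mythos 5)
Your bootstrap strategy, the hybrid frequency‑localised energy functional, and the diagnosis of the principal structures (parabolic smoothing at low frequency, heat‑type gain only on $u$ at high frequency with zeroth‑order damping on $(b,w)^h$) all agree with the paper's proof. Two points deserve comment.

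First, for the high‑frequency $(a,u)$ block you propose Haspot's effective velocity $z=u+(-\Delta)^{-1}\nabla a$ to decouple the Navier--Stokes linear operator into a heat equation for $z$ and a damping equation for $a$. The paper does not use this device: it keeps the $(a,u)$ variables and runs a cross‑term energy on $\dot\Delta_j(a,u,b,w)$ with the weight $\eta_2\big(\tfrac12\|\nabla\dot\Delta_j a\|^2+(\dot\Delta_j u\,|\,\nabla\dot\Delta_j a)\big)+\eta_2 2^{-2j}(\dot\Delta_j w\,|\,\nabla\dot\Delta_j b)$ (Lemma~\ref{lemma45}), which yields $L^\infty_t\cap L^1_t$ control of $(\nabla a,u,b,w)^h$ only at the level $\dot B^{\frac d2-1}_{2,1}$, and then recovers the missing two derivatives on $u^h$ by treating $(\ref{ENS2})$ as a heat equation with prescribed force and invoking the maximal‑regularity Lemma~\ref{heat}. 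Both routes are viable here because $\mu=1,\ \lambda=-1$ makes the viscous operator a pure Laplacian; the effective‑velocity route is arguably more self‑contained, while the paper's route makes the triangle‑type inequality $\|\dot\Delta_j u\|^2+\|\dot\Delta_j(u-w)\|^2\gtrsim\|\dot\Delta_j w\|^2$ the single coercivity mechanism in both frequency regimes. Relatedly, the extra cross term $\eta_3\langle u_j,w_j\rangle$ you add at low frequency is unnecessary in the paper's scheme; two cross terms suffice once that inequality is used.

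Second, and this is the genuine gap: you assert that summing the dyadic energy with weights $2^{j(\frac d2-1)}$ already produces the $L^1_t(\dot B^{\frac d2}_{2,1})$ and $L^2_t(\dot B^{\frac d2-1}_{2,1})$ bounds on $(u-w)^\ell$. That does not follow from the Lyapunov inequality alone. The low‑frequency dissipation delivers $\mathcal D_{1,j}\gtrsim 2^{2j}\mathcal E_{1,j}$, hence $L^1_t(\dot B^{\frac d2+1}_{2,1})$ and $\widetilde L^2_t(\dot B^{\frac d2}_{2,1})$ for the full vector $(a,u,b,w)^\ell$; to upgrade $u-w$ to one derivative less in $L^1_t$ and in $L^2_t$, which is exactly what is needed to close the estimate of $h(a,b)(u-w)$ in $G$ (see \eqref{Lownonlinear4}), the paper does not use the energy but rewrites the relative‑velocity equation as the scalar damped equation $\partial_t(u-w)+2(u-w)=-\nabla a+\Delta u+\nabla b+\cdots$ (equation \eqref{relativedamp1}) and exploits the uniform‑in‑$j$ zeroth‑order damping together with the already established $(a,u,b,w)$ bounds. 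The $L^1_t(\dot B^{\frac d2}_{2,1})$ bound in particular cannot be extracted from the quadratic dissipation $\int_0^t\|(u-w)_j\|_{L^2}^2\,d\tau$ without a time factor. This observation is singled out in the introduction as the main obstruction to closing the a priori estimate, and it needs to appear explicitly in your proof; as written, your closure of $\mathcal X(t)\le C\mathcal X_0+C\mathcal X(t)^2$ has a hole at the $h(a,b)(u-w)$ term and at the $(u-w)$ entries of $\mathcal X$.
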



For Besov spaces, the readers can refer to Definitions \ref{defnbesov}-\ref{defntimespace} in Appendix. 



\begin{remark}
The regularity $L^1(\mathbb{R}_{+};\dot{B}^{\frac{d}{2}+1}_{2,1})$ of the velocity $w$ in \eqref{rglobal} comes essentially from the coupling of the relaxation drag force term on the relative velocity $u-w$ and the viscosity dissipation on the velocity $u$. Furthermore, due to the influences of the relaxation drag force term, the regularity $L^1(\mathbb{R}_{+},\dot{B}^{\frac{d}{2}}_{2,1})\cap L^2(\mathbb{R}_{+}, \dot{B}^{\frac{d}{2}-1}_{2,1})$ of the relative velocity $u-w$ in \eqref{rglobal} is stronger than the regularity $L^1(\mathbb{R}_{+},\dot{B}^{\frac{d}{2}+1}_{2,1})\cap L^2(\mathbb{R}_{+}, \dot{B}^{\frac{d}{2}}_{2,1})$ of the solution $(a,u,b,w)$ for low frequencies.
\end{remark}


Moreover, we have the optimal time-decay rates of the global solution to the Cauchy problem \eqref{m1n} if the low-frequency part of the initial data is further bounded in $\dot{B}^{\sigma_{0}}_{2,\infty}$ for $\sigma_{0}\in [-\frac{d}{2},\frac{d}{2}-1)$ as follows:
\begin{theorem}\label{theorem13}
For any $d\geq2$, let the assumptions of Theorem \ref{theorem12} hold, and $(a,u,b,w)$ be the corresponding global strong solution to the Cauchy problem $(\ref{m1n})$ given by Theorem \ref{theorem12}. If the initial data $(a_{0}, u_{0}, b_{0}, w_{0})$ further satisfies its low-frequency part 
\begin{equation}\label{a3}
\begin{aligned}
(a_{0}, u_{0}, b_{0}, w_{0})^{\ell}\in \dot{B}^{\sigma_{0}}_{2,\infty}\quad\quad\text{for}\quad\sigma_{0}\in [-\frac{d}{2},\frac{d}{2}-1),
\end{aligned}
\end{equation}
then it holds for any $t\geq1$ that
\begin{equation}\label{decay1}
\left\{
\begin{aligned}
&\|(a,u,b,w)(t)\|_{\dot{B}^{\sigma}_{2,1}}^{\ell}\leq C\delta_{0}(1+ t)^{-\frac{1}{2}(\sigma-\sigma_{0})},\quad\quad\quad\sigma\in(\sigma_{0},\frac{d}{2}-1],\\
&\|a(t)\|_{\dot{B}^{\frac{d}{2}}_{2,1}}^{h}+\|(u,b,w)(t)\|_{\dot{B}^{\frac{d}{2}+1}_{2,1}}^{h}\leq C\delta_{0}(1+ t)^{-\frac{1}{2}(\frac{d}{2}-1-\sigma_{0})},\\
&\|(u-w)(t)\|_{\dot{B}^{\sigma_{0}}_{2,\infty}}^{\ell}\leq C\delta_{0}  (1+t)^{-\sigma_{*}},
\end{aligned}
\right.
\end{equation}
with a constant $C>0$ independent of time, $\sigma_{*}:= \min\{\frac{1}{2},\frac{1}{2}(\frac{d}{2}-1-\sigma_{0})\}>0$ and
\begin{equation}\label{D0}
\begin{aligned}
&\delta_{0}:=\|(a_{0},u_{0},b_{0},w_{0})^{\ell}\|_{\dot{B}^{\sigma_{0}}_{2,\infty}}+\|a_{0}^{h}\|_{\dot{B}^{\frac{d}{2}}_{2,1}}+\|u_{0}^{h}\|_{\dot{B}^{\frac{d}{2}-1}_{2,1}}+\|(b_{0},w_{0})^{h}\|_{\dot{B}_{2,1}^{\frac{d}{2}+1}}.
\end{aligned}
\end{equation}
Furthermore, for $d\geq3$ and $\sigma_{0}\in [-\frac{d}{2},\frac{d}{2}-2)$, the relative velocity $u-w$ satisfies
\begin{equation}\label{decay11}
\begin{aligned}
&\|(u-w)(t)\|_{\dot{B}^{\sigma}_{2,1}}^{\ell}\leq C\delta_{0} (1+t)^{-\frac{1}{2}(1+\sigma-\sigma_{0})},\quad\quad \sigma\in(\sigma_{0},\frac{d}{2}-2].
\end{aligned}
\end{equation}
\end{theorem}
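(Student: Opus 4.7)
My approach follows the critical Besov framework for dissipative hyperbolic-parabolic systems developed by Danchin and Xu: propagate a weaker $\dot{B}^{\sigma_{0}}_{2,\infty}$ norm of the low-frequency part of the perturbation, combine it with the uniform-in-time $\dot{B}^{d/2-1}_{2,1}$ bound of Theorem \ref{theorem12} via a Gagliardo--Nirenberg-type Besov interpolation so as to convert the low-frequency parabolic smoothing into algebraic decay, and finally exploit the damping effect of the relaxation drag on $u-w$ to upgrade the rate of the relative velocity.

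For the first step, I apply $\dot{\Delta}_{j}$ to \eqref{m1n} at each low-frequency block $j\leq j_{0}$; the linearized principal part (two acoustic-viscous modes on $(a,u)$ coupled to a damped-Euler mode on $(b,w)$ through the drag) satisfies a Shizuta--Kawashima-type condition and generates a full parabolic dissipation of order $2^{2j}$. A block-wise $L^{2}$ energy estimate, together with classical product laws in Besov spaces and the smallness $\mathcal{X}_{0}\leq\varepsilon_{0}$ from Theorem \ref{theorem12}, yields a closed inequality whose absorption gives
\begin{equation*}
\|(a,u,b,w)\|^{\ell}_{L^{\infty}_{t}(\dot{B}^{\sigma_{0}}_{2,\infty})}\leq C\delta_{0},\qquad t\geq 0.
\end{equation*}

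For the algebraic decay, the same block-wise estimate at regularity $\sigma$ provides a Lyapunov inequality of the form
\begin{equation*}
\frac{d}{dt}\|(a,u,b,w)\|^{\ell}_{\dot{B}^{\sigma}_{2,1}}+c\|(a,u,b,w)\|^{\ell}_{\dot{B}^{\sigma+2}_{2,1}}\lesssim \text{(small nonlinear source)}.
\end{equation*}
The interpolation $\|f\|^{\ell}_{\dot{B}^{\sigma}_{2,1}}\lesssim (\|f\|^{\ell}_{\dot{B}^{\sigma_{0}}_{2,\infty}})^{\theta}(\|f\|^{\ell}_{\dot{B}^{\sigma+2}_{2,1}})^{1-\theta}$ with $\theta=\tfrac{2}{\sigma+2-\sigma_{0}}$ then turns this into a Bernoulli-type differential inequality $y'+c\,\delta_{0}^{-2/(\sigma-\sigma_{0})}y^{1+2/(\sigma-\sigma_{0})}\leq 0$, which integrates to $y(t)\leq C\delta_{0}(1+t)^{-\frac{1}{2}(\sigma-\sigma_{0})}$ and is exactly the first line of \eqref{decay1}. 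At high frequencies, the viscosity on $u$, the acoustic coupling on $a$, and the damped-Euler structure of $(b,w)$ together produce a direct damping in the energy inequality for $\|a\|^{h}_{\dot{B}^{d/2}_{2,1}}+\|u\|^{h}_{\dot{B}^{d/2-1}_{2,1}}+\|(b,w)\|^{h}_{\dot{B}^{d/2+1}_{2,1}}$, whose right-hand side is controlled at the transitional regularity by the low-frequency decay evaluated at $\sigma=\tfrac{d}{2}-1$; Gronwall then delivers the saturated rate $(1+t)^{-\frac{1}{2}(\frac{d}{2}-1-\sigma_{0})}$.

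Finally, subtracting the two momentum equations of \eqref{m1n} yields
\begin{equation*}
\partial_{t}(u-w)+2(u-w)=\Delta u-\nabla(a-b)-u\cdot\nabla u+w\cdot\nabla w+G,
\end{equation*}
so $u-w$ inherits a full damping kernel $e^{-2(t-\tau)}$ via Duhamel. Convolving against the already-obtained decay of the source $\Delta u-\nabla(a-b)$ produces the $\dot{B}^{\sigma_{0}}_{2,\infty}$ rate $(1+t)^{-\sigma_{*}}$ with $\sigma_{*}=\min\{\tfrac{1}{2},\tfrac{1}{2}(\tfrac{d}{2}-1-\sigma_{0})\}$; when $d\geq 3$ and $\sigma_{0}<\tfrac{d}{2}-2$, the source admits an extra derivative in the admissible range, yielding the enhanced rate $(1+t)^{-\frac{1}{2}(1+\sigma-\sigma_{0})}$ of \eqref{decay11}. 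The main obstacle throughout is closing the nonlinear contributions from $G$ in \eqref{G} and the transport terms: each factor must be split into low- and high-frequency pieces, controlled via paraproduct and remainder laws, and then inserted into a time integral $\int_{0}^{t}(1+t-\tau)^{-\alpha}\|N(\tau)\|\,d\tau$ that must close at precisely the expected rate. This compatibility is what dictates the threshold $\sigma_{0}\in[-\tfrac{d}{2},\tfrac{d}{2}-1)$ in \eqref{a3} and the sharper restriction $\sigma_{0}<\tfrac{d}{2}-2$ needed for \eqref{decay11}.
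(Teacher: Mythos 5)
Your overall strategy has the right ingredients: propagating the $\dot{B}^{\sigma_0}_{2,\infty}$ norm of the low-frequency part (this matches Lemma \ref{prop51}), interpolating against the uniform $\dot{B}^{\frac{d}{2}-1}_{2,1}$ bound, and using the damped equation for $u-w$ via Duhamel (this matches the paper's treatment of \eqref{relativedamp1} at the end of Section \ref{sectionlarge}). The threshold analysis for \eqref{decay11} is also correct in spirit. But the central step — converting the Lyapunov inequality into algebraic decay — contains a genuine gap, and one of your structural claims contradicts the paper.

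First, you state that the linearized system satisfies a Shizuta--Kawashima-type condition. The paper asserts the opposite: because of the drag-force term the SK condition fails, and the parabolic dissipation $2^{2j}$ for low frequencies is instead extracted through the ad hoc inequality \eqref{dissipationw}/\eqref{tri} relating $\dot{\Delta}_j u$, $\dot{\Delta}_j(u-w)$ and $\dot{\Delta}_j w$. The end result (full low-frequency dissipation) is what you use, but the SK machinery is not the mechanism; invoking it would not be accepted as a proof that \eqref{DLsim1} holds.

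Second, and more importantly, the Bernoulli-type ODE argument does not close as written. After summing the block-wise Lyapunov inequality with weight $2^{j\sigma}$ over $j\leq 0$, the right-hand side is a fixed nonlinear quantity of size $O(\delta_0^2)$ — for instance $\|au\|^\ell_{\dot{B}^{\sigma+1}_{2,\infty}}\lesssim\|a\|_{\dot{B}^{d/2}_{2,1}}\|u\|^\ell_{\dot{B}^{\sigma_0}_{2,\infty}}$, neither factor of which decays. At low frequencies the dissipation $\|(a,u,b,w)\|^\ell_{\dot{B}^{\sigma+2}_{2,1}}$ is strictly weaker than the energy norm ($2^{2j}\leq 1$ for $j\leq 0$), so there is no way to absorb this source into the dissipation by smallness, and you cannot simply drop it before integrating $y'+cy^{1+q}\leq 0$. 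What you actually get is $y'+cy^{1+q}\lesssim\delta_0^2$, which yields a uniform bound but no decay. To obtain decay one must show that the nonlinear source itself decays, which forces a bootstrap. The paper handles this by introducing the time-weighted functional $\mathcal{X}_\theta(t)$ in \eqref{mathcalXtheta} with weight $\tau^\theta$ for $\theta>\frac{1}{2}(\frac{d}{2}+1-\sigma_0)$, multiplying the Lyapunov inequalities by $t^\theta$, and closing via the integral interpolation estimates \eqref{interL1}--\eqref{interL} plus the crucial fact that the nonlinear contributions are then bounded by $\mathcal{X}(t)\mathcal{X}_\theta(t)$ (Lemmas \ref{lemma51}--\ref{lemma52}). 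Your closing remark about splitting nonlinearities and plugging into a Duhamel integral $\int_0^t(1+t-\tau)^{-\alpha}\|N(\tau)\|\,d\tau$ describes a third, different mechanism — this is closer to Lemmas \ref{lemma61}--\ref{lemma63} in Theorem \ref{theorem14}, which require the extra smallness hypothesis \eqref{a4} that Theorem \ref{theorem13} does not assume — and it is incompatible with the Bernoulli ODE you set up. You would have to commit to one strategy and carry the nonlinear estimates through; as written, the decay of the low-frequency part is not actually derived.
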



\begin{remark}
Theorem \ref{theorem13} implies that the solution $(a,u,b,w)$ to the Cauchy problem \eqref{m1n} decays at the same rate $(1+ t)^{-\frac{1}{2}(\sigma-\sigma_{0})}$ in $\dot{B}^{\sigma}_{2,1}$ as the solution of the heat equation with initial data in $\dot{B}^{\sigma_{0}}_{2,\infty}${\rm;} however, due to the dissipation effect of the relaxation drag force, the relative velocity $u-w$ decays at the faster rate $(1+ t)^{-\frac{1}{2}(1+\sigma-\sigma_{0})}$ in $\dot{B}^{\sigma}_{2,1}$.
\end{remark}

\vspace{1ex}

When the low-frequency part of the initial data is suitably small in $\dot{B}^{\sigma_{0}}_{2,\infty}$ for $\sigma_{0}\in [-\frac{d}{2},\frac{d}{2}-1)$, we can also prove the optimal time-decay rates of the global solution to the Cauchy problem \eqref{m1n}.

\begin{theorem}\label{theorem14}
For any $d\geq2$, let the assumptions of Theorem \ref{theorem12} hold, and $(a,u,b,w)$ be the global strong solution to the Cauchy problem $(\ref{m1n})$ given by Theorem \ref{theorem12}. There exists a constant $\varepsilon_{1}>0$ such that if the initial data $(a_{0}, u_{0}, b_{0}, w_{0})$ further satisfies
\begin{equation}\label{a4}
\begin{aligned}
\|(a_{0},u_{0},b_{0},w_{0})^{\ell}\|_{\dot{B}^{\sigma_{0}}_{2,\infty}} \leq \varepsilon_{1}\quad\quad\text{for}\quad\sigma_{0}\in [-\frac{d}{2},\frac{d}{2}-1),
\end{aligned}
\end{equation}
then it holds for any $t\geq1$ that
\begin{equation}\label{r4}
\left\{
\begin{aligned}
&\|(a,u,b,w)(t)\|_{\dot{B}^{\sigma}_{2,1}}^{\ell}\leq C \delta_{0} (1+t)^{-\frac{1}{2}(\sigma-\sigma_{0})},\quad\quad \sigma\in (\sigma_{0},\frac{d}{2}+1],\\
&\| a(t)\|_{\dot{B}^{\frac{d}{2}}_{2,1}}^{h}+\|(u,b,w)(t)\|_{\dot{B}^{\frac{d}{2}+1}_{2,1}}^{h}\leq C\delta_{0}(1+ t)^{-\frac{1}{2}(d+1-2\sigma_{0}-2\varepsilon)},\\
&\|(u-w)(t)\|_{\dot{B}^{\sigma_{0}}_{2,\infty}}^{\ell}\leq  C \delta_{0} (1+t)^{-\frac{1}{2}},\\
&\|(u-w)(t)\|_{\dot{B}^{\sigma}_{2,1}}^{\ell}\leq C \delta_{0} (1+t)^{-\frac{1}{2}(1+\sigma-\sigma_{0})},\quad\quad  \sigma\in (\sigma_{0},\frac{d}{2}],
\end{aligned}
\right.
\end{equation}
where $\delta_{0}$ is denoted by \eqref{D0}, $C>0$ is a constant independent of time, and $\varepsilon\in(0,1]$ is any small constant.
\end{theorem}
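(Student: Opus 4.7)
The strategy is to upgrade the bootstrap argument of Theorem \ref{theorem13} by exploiting the smallness (not merely the boundedness) of the low-frequency $\dot{B}^{\sigma_0}_{2,\infty}$ norm of the initial perturbation. This smallness allows the weak norm to be propagated globally in time, which in turn extends the range of Besov indices on which the parabolic decay $(1+t)^{-\frac{1}{2}(\sigma-\sigma_{0})}$ is valid from $(\sigma_{0},\frac{d}{2}-1]$ up to $(\sigma_{0},\frac{d}{2}+1]$, upgrades the high-frequency rate, and enlarges the admissible range of regularities for the relative velocity $u-w$.

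First I would establish the uniform-in-time bound
\[
\|(a,u,b,w)^{\ell}\|_{L^{\infty}_{t}(\dot{B}^{\sigma_{0}}_{2,\infty})} \le C\bigl(\varepsilon_{1} + \mathcal{X}_{0}^{2}\bigr)
\]
by writing the solution via Duhamel's formula for the linearized NS-Euler operator (whose spectral decomposition and semigroup decay are inherited from the proof of Theorem \ref{theorem13}), projecting onto low frequencies, and estimating the nonlinearities $\div(au)$, $u\cdot\nabla u$, $G$, $w\cdot\nabla b$, $w\cdot\nabla w$ in $\dot{B}^{\sigma_{0}}_{2,\infty}$ by means of paradifferential product laws together with composition estimates applied to $f(a)$, $g(a)$ and $h(a,b)$. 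Since these terms are at least quadratic in the perturbation, the global bound \eqref{XX0} supplied by Theorem \ref{theorem12} yields the required smallness on the right-hand side.

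Next I would introduce a bootstrap functional $\mathcal{D}(t)$ assembled from the time-weighted norms appearing in \eqref{r4} and aim for the closed inequality $\mathcal{D}(t) \le C\delta_{0} + C\mathcal{X}_{0}\mathcal{D}(t)$. The low-frequency decay of $(a,u,b,w)$ in $\dot{B}^{\sigma}_{2,1}$ for $\sigma\in(\sigma_{0},\frac{d}{2}+1]$ then follows by combining the linearized semigroup decay, Duhamel's formula, the uniform bound from the previous step, and the interpolation
\[
\|f\|^{\ell}_{\dot{B}^{\sigma}_{2,1}} \lesssim \bigl(\|f\|^{\ell}_{\dot{B}^{\sigma_{0}}_{2,\infty}}\bigr)^{\theta}\bigl(\|f\|^{\ell}_{\dot{B}^{\sigma_{1}}_{2,1}}\bigr)^{1-\theta},
\]
for suitable $\theta\in(0,1)$ and $\sigma_{1}$ inside the regularity range controlled by Theorem \ref{theorem12}. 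The improved high-frequency rate $(1+t)^{-\frac{1}{2}(d+1-2\sigma_{0}-2\varepsilon)}$ is obtained by combining the parabolic smoothing of $u$ from the viscosity, the drag-damping of $u-w$ that transfers decay onto $b$ and $w$, and a Bernstein-type trade-off that costs the arbitrarily small loss $\varepsilon$. Finally, the relative velocity $u-w$ gains the extra factor $(1+t)^{-1/2}$ because subtracting $(\ref{m1n})_{4}$ from $(\ref{m1n})_{2}$ produces the damping term $2(u-w)$ on the left-hand side, an observation I would exploit inside the same bootstrap to handle both $\dot{B}^{\sigma_{0}}_{2,\infty}$ and $\dot{B}^{\sigma}_{2,1}$ for $\sigma\in(\sigma_{0},\frac{d}{2}]$.

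The hardest step will be the global propagation of the weak $\dot{B}^{\sigma_{0}}_{2,\infty}$ norm when $\sigma_{0}$ is close to $-\frac{d}{2}$, because the nonlinear products then have to be estimated in a Besov space of highly negative order; this is precisely where the smallness $\varepsilon_{1}$ is essential, as it permits absorbing the nonlinear contributions into the left-hand side after summation over the low-frequency dyadic blocks. A secondary delicate point is the appearance of the arbitrarily small $\varepsilon$ in the high-frequency decay rate, which reflects an unavoidable interpolation loss when trading regularity for decay in the high-frequency part.
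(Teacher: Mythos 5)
Your overall architecture (uniform weak-norm bound, then a time-weighted bootstrap functional, with the damping of $u-w$ inside the same bootstrap) matches the paper's, but the closure of the bootstrap is wrong in a way that matters. You aim for the linear inequality $\mathcal{D}(t)\le C\delta_{0}+C\mathcal{X}_{0}\mathcal{D}(t)$. If that were achievable, $\mathcal{X}_{0}$ small (already guaranteed by Theorem \ref{theorem12}) would suffice to absorb the second term, and the extra smallness $\varepsilon_{1}$ on $\|(a_{0},u_{0},b_{0},w_{0})^{\ell}\|_{\dot{B}^{\sigma_{0}}_{2,\infty}}$ would be superfluous. The hypothesis is not superfluous, and the reason lies in the worst nonlinear contributions: in the low-frequency Duhamel estimate one must bound terms like $\int_{0}^{t}\langle t-\tau\rangle^{-\frac{1}{2}(\sigma-\sigma_{0})}\|au\|_{\dot{B}^{\sigma_{0}+1}_{2,\infty}}^{\ell}\,d\tau$ and $\int_{0}^{t}\langle t-\tau\rangle^{-\frac{1}{2}(\sigma-\sigma_{0})}\|h(a,b)(u-w)\|_{\dot{B}^{\sigma}_{2,\infty}}^{\ell}\,d\tau$ for $\sigma$ as large as $\frac{d}{2}+1$, so $\frac{1}{2}(\sigma-\sigma_{0})$ is strictly bigger than $1$. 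If only one factor of each product carries a decay rate (the other being merely bounded via $\mathcal{X}_{0}$), then $\|au\|_{\dot{B}^{\sigma_{0}+1}_{2,\infty}}$ and $\|h(a,b)(u-w)\|_{\dot{B}^{\sigma}_{2,\infty}}$ decay only like $(1+\tau)^{-1/2}$, which is not integrable, and the convolution does not give $(1+t)^{-\frac{1}{2}(\sigma-\sigma_{0})}$. Both factors must decay, which forces the quadratic structure $\mathcal{Z}(t)\lesssim\delta_{0}+\mathcal{X}^{2}(t)+\mathcal{Z}^{2}(t)$ (this is exactly what the paper's Lemmas \ref{lemma61}--\ref{lemma63} produce, with the key step \eqref{keypp}--\eqref{factddd1} handling $h(a,b)(u-w)$). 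Closing a quadratic inequality is precisely where the smallness of $\delta_{0}$, and hence $\varepsilon_{1}$, enters.

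This also means your final paragraph mis-locates the role of $\varepsilon_{1}$. Global propagation of the weak $\dot{B}^{\sigma_{0}}_{2,\infty}$ norm does \emph{not} require $\varepsilon_{1}$ small: the paper's Lemma \ref{prop51} proves $\mathcal{X}_{L,\sigma_{0}}(t)\lesssim\delta_{0}$ under only the boundedness hypothesis of Theorem \ref{theorem13}, absorbing the nonlinearity through $\mathcal{X}(t)\lesssim\mathcal{X}_{0}\ll1$. A secondary point is that the interpolation inequality you invoke to extend the decay range to $\sigma\in(\sigma_{0},\frac{d}{2}+1]$ cannot do the job on its own: interpolating between the bounded weak norm and the $\dot{B}^{\frac{d}{2}-1}_{2,1}$ decay (the only decay at hand before the bootstrap) is what gives the restricted range of Theorem \ref{theorem13}; reaching $\frac{d}{2}+1$ requires estimating the Duhamel integral directly with time-weighted norms on both factors, not an interpolation trick. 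Your other ingredients (using the damped equation $\partial_{t}(u-w)+2(u-w)=\dots$ inside the bootstrap, the Bernstein/endpoint loss $\varepsilon$ in the high-frequency rate) are in line with the paper.
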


\begin{remark}
Compared with the time-decay rates of the solution $(a,u,b,w)$ to the Cauchy problem $(\ref{m1n})$ in Theorem \ref{theorem13}, it is shown in Theorem \ref{theorem14} that the low-frequency part $(a,u,b,w)^{\ell}$ has the optimal time-decay rate $(1+t)^{-\frac{1}{2}(\sigma-\sigma_{0})}$ in $\dot{B}^{\sigma}_{2,1}$ for higher regularity indexes $\sigma\in(\frac{d}{2}-1,\frac{d}{2}+1]$, the high-frequency part $(a,u,b,w)^{h}$ decays at the faster rate $(1+t)^{-\frac{1}{2}(d+1-2\sigma_{0}-2\varepsilon)}$ in the same Besov space, and furthermore the relative velocity $u-w$ decays at the rate $(1+t)^{-\frac{1}{2}(1+\sigma-\sigma_{0})}$ in $\dot{B}^{\sigma}_{2,1}$ for higher regularity indexes $\sigma\in(\frac{d}{2}-2,\frac{d}{2}]$ without the restrictions $d\geq3$ and $\sigma_{0}<\frac{d}{2}-2$.
\end{remark}

\begin{remark}
By $(\ref{r4})$ and interpolation arguments, for $\Lambda:=(-\Delta)^{\frac{1}{2}}$,  $p\geq2$ and $t\geq 1$, the following optimal $L^{p}$ time-decay rates hold$:$
\begin{equation}\label{decay21}
\left\{
\begin{aligned}
&\|\Lambda^{\sigma}a(t)\|_{L^{p}}\lesssim (1+t)^{-\frac{1}{2}(\sigma+\frac{d}{2}-\frac{d}{p}-\sigma_{0})},\quad\quad\quad\quad ~\quad\sigma+\frac{d}{2}-\frac{d}{p}\in(\sigma_{0},\frac{d}{2}],\\
&\|\Lambda^{\sigma}(u,b,w)(t)\|_{L^{p}}\lesssim (1+t)^{-\frac{1}{2}(\sigma+\frac{d}{2}-\frac{d}{p}-\sigma_{0})},\quad\quad~~\sigma+\frac{d}{2}-\frac{d}{p}\in(\sigma_{0},\frac{d}{2}+1],\\
&\|\Lambda^{\sigma}(u-w)(t)\|_{L^{p}}\lesssim (1+t)^{-\frac{1}{2}(1+\sigma+\frac{d}{2}-\frac{d}{p}-\sigma_{0})},\quad\quad\sigma+\frac{d}{2}-\frac{d}{p}\in(\sigma_{0},\frac{d}{2}].
\end{aligned}
\right.
\end{equation}
\end{remark}


\vspace{2ex}

We would like to mention that the important progress has been obtained about the well-posedness and  optimal time-decay rates of solutions to the Cauchy problem for the isentropic compressible Navier-Stokes equations in the $L^2$-type or $L^p$-type critical Besov spaces, refer to  \cite{danchin1,danchin2,okita1,charve1,chen1,haspot1,danchin5,xin1,xu3} and references therein. Complete overviews on Fourier analysis methods for the compressible Navier-Stokes equations are presented in \cite{bahouri1,danchin4}.


Meanwhile, for the Cauchy problem of  the compressible Euler equations with damping,  the global existence and optimal time-decay rates of small classical solutions with critical regularity were investigated in either the inhomogeneous Besov spaces \cite{xu0,xu1,xu2} or the homogeneous setting \cite{CBD1,c2}.

We explain the main ideas to prove above Theorems \ref{theorem12}-\ref{theorem14} about the global existence and optimal time-decay rates of the strong solution to the Cauchy problem \eqref{m1n} in the framework of critical Besov spaces. To promise the critical control of the convective terms in \eqref{m1n},  the key challenge arises from the gain of $L^1$ time integrability of the Lipschitz norms for $u$ and $w$, i.e.,
\begin{equation}
\begin{aligned}
&\int_{0}^{t}\|(\nabla u,\nabla w)\|_{L^{\infty}}dt\lesssim \int_{0}^{t}\|(\nabla u,\nabla w)\|_{\dot{B}^{\frac{d}{2}+1}_{2,1}}dt<\infty.
\end{aligned}
\end{equation}
The NS-Euler model $\eqref{m1}$ (i.e., $\eqref{m1n}_{1}$-$\eqref{m1n}_{4}$) can be viewed as a coupled system of the compressible Navier-Stokes equations $\eqref{m1}_{1}$-$\eqref{m1}_{2}$ and the compressible Euler equations $\eqref{m1}_{3}$-$\eqref{m1}_{4}$ through the drag force source terms $n(w-u)$ and $n(u-w)$, respectively. However, in order to derive the regularity $L^1(\mathbb{R}_{+},\dot{B}_{2,1}^{\frac{d}{2}+1})$ for the velocity $u$ in $\eqref{m1}_{1}$-$\eqref{m1}_{2}$ as that made in \cite{danchin1} for the compressible Navier-Stokes equations, we require the regularity $L^1(\mathbb{R}_{+},\dot{B}_{2,1}^{\frac{d}{2}-1})$ for $w$ in the source term $n(w-u)$. Meanwhile, to get the regularity $L^1(\mathbb{R}_{+},\dot{B}^{\frac{d}{2}}_{2,1}\cap \dot{B}_{2,1}^{\frac{d}{2}+1})$ for the velocity $w$ in $\eqref{m1}_{3}$-$\eqref{m1}_{4}$ by similar arguments used in \cite{c2} for the compressible Euler equations with damping, we need the regularity  $L^1(\mathbb{R}_{+},\dot{B}_{2,1}^{\frac{d}{2}-1}\cap\dot{B}^{\frac{d}{2}+1}_{2,1})$ for $u$ in the source term $n(u-w)$. Unfortunately, the regularity $L^1(\mathbb{R}_{+},\dot{B}_{2,1}^{\frac{d}{2}-1})$ required for the velocity $w$ in $\eqref{m1}_{1}$-$\eqref{m1}_{2}$ is not consistent with the critical regularity $L^1(\mathbb{R}_{+},\dot{B}_{2,1}^{\frac{d}{2}}\cap\dot{B}^{\frac{d}{2}+1}_{2,1})$ for $w$ in $\eqref{m1}_{3}$-$\eqref{m1}_{4}$, and neither is for the velocity $u$. In addition, since the NS-Euler  model \eqref{m1} does not satisfy the well-known ``Shizuta-Kawashima'' condition in the study of hyperbolic-parabolic composite systems (cf. \cite{kaw1,shi1,c2,xu1}) due to the drag force term, it is not obvious to analyze the dissipative structures of $w$ for \eqref{m1}. These cause essential difficulties to enclose the uniform-in-time a-priori estimates of the local solution in the critical Besov space and extend it globally in time.

To overcome these difficulties, we construct some suitable Lyapunov functionals in the spirit of hypocoercivity. More precisely, it is observed that for any $j\in\mathbb{Z}$, there exists a Lyapunov functional $\mathcal{E}_{j}(t)\sim \|\dot{\Delta}_{j}(a,\nabla a,u,b,w)\|_{L^2}^2$ such that
\begin{equation}\label{L1}
\begin{aligned}
\frac{d}{dt}\mathcal{E}_{j}(t)+\min\{1,2^{2j}\}\|\dot{\Delta}_{j}(a,b)\|_{L^2}^2+2^{2j}\|\dot{\Delta}_{j}u\|_{L^2}^2
+\|\dot{\Delta}_{j}(u-w)\|_{L^2}^2\lesssim {\text{nonlinear terms}} \sqrt{\mathcal{E}_{j}(t)}.
\end{aligned}
\end{equation}
In order to establish the estimates of the velocity $w$, we make full use of the dissipative properties of the relative velocity $u-w$ and the velocity $u$ as follows:
\begin{eqnarray}\label{dissipationw}
2^{2j}\|\dot{\Delta}_{j}u\|_{L^2}^2+\|\dot{\Delta}_{j}(u-w)\|_{L^2}^2\gtrsim 
\begin{cases}
2^{2j}\|\dot{\Delta}_{j}w\|_{L^2}^2,\quad
& \mbox{if $j\leq 0,$ } \\
\|\dot{\Delta}_{j}w\|_{L^2}^2,\quad
& \mbox{if $j\geq-1.$}
\end{cases}
\end{eqnarray}
With the help of \eqref{L1}-\eqref{dissipationw}, we derive both the $\widetilde{L}^{\infty}_{t}(\dot{B}^{\frac{d}{2}-1}_{2,1})\cap L^1_{t}(\dot{B}^{\frac{d}{2}+1}_{2,1})$-estimate of $(a, u,b,w)$ for low frequencies and the $\widetilde{L}^{\infty}_{t}(\dot{B}^{\frac{d}{2}-1}_{2,1})\cap L^1_{t}(\dot{B}^{\frac{d}{2}-1}_{2,1})$-estimate of $(\nabla a, u,b,w)$ for high frequencies. With these bounds, we then are able to have  $L^1_{t}(\dot{B}^{\frac{d}{2}+1}_{2,1})$-bound of $u$ for high frequencies treating $\eqref{m1n}_2$ as a heat equation with some given sources. In addition, in order to the higher order $\widetilde{L}^{\infty}_{t}(\dot{B}^{\frac{d}{2}+1}_{2,1})\cap L_{t}^1(\dot{B}^{\frac{d}{2}+1}_{2,1})$-estimates of $(b,w)$, we construct another Lyapunov functional $\mathcal{E}_{3,j}\sim  \|\dot{\Delta}_{j}(b,w)\|_{L^2}^2$ such that for 
\begin{equation}\label{L2}
\begin{aligned}
\frac{d}{dt}\mathcal{E}_{3,j}(t)+\|\dot{\Delta}_{j}(b,w)\|_{L^2}^2\lesssim \Big(\|\dot{\Delta}_{j}u\|_{L^2}+{\text{nonlinear terms}}\Big) \sqrt{\mathcal{E}_{3,j}(t)}.
\end{aligned}
\end{equation}
This gives the expected estimates of $(b,w)$ employing the $L^1_{t}(\dot{B}^{\frac{d}{2}+1}_{2,1})$-estimate of $u$ for high frequencies obtained by the viscosity term.




However, due to the difficulty caused by the nonlinear term $h(a,b)(u-w)$ in $\eqref{m1n}_{2}$, the regularities estimates of the solution $(a,u,b,w)$ are not enough to enclose the a-priori estimates. To overcome this difficulty, we observe that the relative velocity $u-w$ satisfies
\begin{equation}\label{relativedamp1}
\begin{aligned}
\partial_{t}(u-w)+2(u-w)=-\nabla a+ \Delta u+\nabla b-u\cdot \nabla u+w\cdot \nabla w+G.
\end{aligned}
\end{equation}
Employing the estimates of $(a,u,b,w)$ and \eqref{relativedamp1}, we further obtain the $L^1_{t}(\dot{B}^{\frac{d}{2}}_{2,1})\cap \widetilde{L}^2_{t}(\dot{B}^{\frac{d}{2}-1}_{2,1})$-estimate of the relative velocity $u-w$, which is stronger than the $L^1_{t}(\dot{B}^{\frac{d}{2}+1}_{2,1})\cap \widetilde{L}^2_{t}(\dot{B}^{\frac{d}{2}}_{2,1})$-estimate of $(a,u,b,w)$ for low frequencies. Combing the above estimates of $(a,u,b,w)$ and $u-w$ together, we enclose the a-priori estimates of the solution $(a,u,b,w)$ to the Cauchy problem \eqref{m1n} (refer to Lemmas \ref{prop41} and \ref{prop42}).


If $(a_{0},u_{0},b_{0},w_{0})^{\ell}$ is further bounded in $\dot{B}^{\sigma_{0}}_{2,\infty}$ for $\sigma_{0}\in [-\frac{d}{2},\frac{d}{2}-1)$, motivated by the interesting works \cite{guo1,xin1}, we establish different time-weighted energy estimates to derive the optimal time-decay rates of the solution $(a,u,b,w)$ in $\eqref{decay1}_{1}$-$\eqref{decay1}_{2}$ (refer to Lemma \ref{prop51}-\ref{lemma52}), and furthermore take advantage of the damped equation \eqref{relativedamp1} to get the faster time-decay rates in $\eqref{decay1}_{3}$ and \eqref{decay11}. When $\|(a_{0},u_{0},b_{0},w_{0})^{\ell}\|_{\dot{B}^{\sigma_{0}}_{2,\infty}}$ is sufficiently small, we also show more time-decay rates of $(a,u,b,w)$ and $u-w$ in \eqref{r4} (refer to Lemmas \ref{lemma61}-\ref{lemma63}) in the spirit of \cite{danchin5,xu3}. It should be emphasized that the rate $(1+t)^{-\frac{1}{2}}$ of the relative velocity $u-w$ in $\dot{B}^{\sigma_{0}}_{2,\infty}$ is the key point to derive the rate $(1+t)^{-\frac{1}{2}(\frac{d}{2}+1-\sigma_{0})}$ of the nonlinear term $\|h(a,b)(u-w)\|_{\dot{B}^{\sigma_{0}}_{2,\infty}}$ in \eqref{factddd1} and enclose the energy estimates.

\vspace{2ex}

The rest of the paper is organized as follows. In Section \ref{sectionglobal}, we prove Theorems \ref{theorem11}-\ref{theorem12} and establish the a-priori estimates of the solution to the Cauchy problem \eqref{m1n}. In Section \ref{sectionlarge}, we carry out the proofs of Theorems \ref{theorem13}-\ref{theorem14} on the optimal time-decay rates of the global solution. In Section \ref{sectionnotation}, we present some notations of Besov spaces and recall related analysis tools used in this paper.


\section{Local existence and uniqueness}

To show the local existence, we first construct the local Friedrichs approximation (cf. \cite{danchin1,hcc1}), establish the uniform a-priori estimates of the approximate sequence, and then show their convergence to the expected strong solution of the original Cauchy problem (\ref{m1n}).

\textbf{Step 1: Construction of approximate sequence}

Let $u_{L}$ be the unique global solution for the linear problem 
\begin{equation}\label{uL}
\left\{
\begin{aligned}
&\partial_{t}u_{L}-\mathcal{A}u_{L}=0,\quad\quad x\in\mathbb{R}^{d},\quad t>0,\\
& u_{L}(x,0)=u_{0}(x),\quad\quad x\in\mathbb{R}^{d}.
\end{aligned}
\right.
\end{equation}
Define
\begin{equation}\label{tildea}
\begin{aligned}
\tilde{a}:=\frac{1}{1+a}-1,\quad \tilde{u}:=u-u_{L},\quad \Pi(\tilde{a}):=\int_{0}^{\tilde{a}} P'(\frac{1}{1+s})\frac{1}{1+s}ds,
\end{aligned}
\end{equation}
so that the system (\ref{m1n}) can be re-written as
\begin{equation}\label{m11}
\left\{
\begin{aligned}
&\partial_{t}\tilde{a}+(\tilde{u}+u_{L})\cdot \nabla \tilde{a}=(\tilde{a}+1)\div (\tilde{u}+u_{L}),\\
&\partial_{t}\tilde{u}+(\tilde{u}+u_{L})\cdot\nabla\tilde{u}+\tilde{u}\cdot\nabla u_{L}-(1+\tilde{a})\mathcal{A}\tilde{u}\\
&\quad~=\tilde{a}\mathcal{A}u_{L}-u_{L}\cdot\nabla u_{L}+\nabla \Pi(\tilde{a})+ e^{b}(\tilde{a}+1)(w-\tilde{u}-u_{L}),\\
&\partial_{t}b+w\cdot \nabla b+\div w=0,\\
&\partial_{t}w+w\cdot \nabla w+\nabla b+ w= (\tilde{u}+u_{L}),\\
&(\tilde{a},\tilde{u},b,w)(x,0)=(\frac{1}{1+a_{0}}-1,0,b_{0},w_{0})(x).
\end{aligned}
\right.
\end{equation}
Let $L^2_{n}$ be the set of $L^2$ functions spectrally supported in the annulus $\mathcal{C}_{n}:=\{\xi\in\mathbb{R}^{d}~|~\frac{1}{n}\leq|\xi|\leq n\}$ endowed with the standard $L^2$ topology, and $\dot{\mathbb{E}}_{n}$ be the Friedrichs projectors defined by
\begin{equation}\nonumber
\begin{aligned}
&\dot{\mathbb{E}}_{n}f:=\mathcal{F}^{-1}(\mathbf{1}_{\mathcal{C}_{n}}\mathcal{F}f),\quad\forall f\in L^2(\mathbb{R}^{d}),
\end{aligned}
\end{equation}
where $\mathbf{1}_{\mathcal{C}_{n}}$ is the characteristic function on the annulus $\mathcal{C}_{n}$. Then the approximate sequence $(\widetilde{a}^{n},\widetilde{u}^{n},b^{n},w^{n})$ is defined by solving the following equations:
\begin{equation}\label{appm1}
\left\{
\begin{aligned}
&\partial_{t}\tilde{a}^{n}+ \dot{\mathbb{E}}_{n}\big{(}(\tilde{u}^{n}+u_{L}^{n})\cdot\nabla \tilde{a}^{n}\big{)}=-\dot{\mathbb{E}}_{n}\big{(}(1+\tilde{a}^{n})\div (\tilde{u}^{n}+u_{L}^{n})\big{)},\\
&\partial_{t}\tilde{u}^{n}+\dot{\mathbb{E}}_{n}\big{(}(\tilde{u}^{n}+u_{L}^{n})\cdot\nabla \tilde{u}^{n}+\tilde{u}^{n}\cdot \nabla u_{L}^{n}-(1+\tilde{a}^{n})\mathcal{A}\tilde{u}^{n}\big{)}\\
&\quad=\dot{\mathbb{E}}_{n}\big{(}-\tilde{a}^{n}\mathcal{A}u_{L}^{n}-u_{L}^{n}\cdot\nabla u_{L}^{n}+\nabla \Pi(\tilde{a}^{n}) + e^{b^{n}}(1+\tilde{a}^{n})(w^{n}-\tilde{u}^{n}-u_{L}^{n})  \big{)},\\
&\partial_{t}b^{n}+\dot{\mathbb{E}}_{n}\big{(}w^{n}\cdot\nabla b^{n}\big{)}+\div w^{n}=0,\\
&\partial_{t}w^{n}+\dot{\mathbb{E}}_{n}\big{(}w^{n}\cdot\nabla w^{n}\big{)}+\nabla b^{n}+ w^{n}= (\tilde{u}^{n}+u_{L}^{n}),\\
&(\tilde{a}^{n},\tilde{u}^{n},b^{n},w^{n})(x,0)=(\tilde{a}_{0}^{n},\tilde{u}_{0}^{n},b_{0}^{n},w_{0}^{n})(x):=(\dot{\mathbb{E}}_{n}(\frac{1}{1+a_{0}}-1),0,\dot{\mathbb{E}}_{n}b_{0},\dot{\mathbb{E}}_{n}w_{0})(x),
\end{aligned}
\right.
\end{equation}
with $u_{L}^{n}:=\dot{\mathbb{E}}_{n}u_{L}$. One can show under the assumptions of Theorem \ref{theorem11} that $(\tilde{a}_{0}^{n},b^{n}_{0},w^{n}_{0})$ converges to $(\frac{1}{1+a_{0}}-1,b_{0},w_{0})$ strongly in $\dot{B}^{\frac{d}{2}}_{2,1}\times(\dot{B}^{\frac{d}{2}+1}_{2,1}\times\dot{B}^{\frac{d}{2}-1}_{2,1})\times(\dot{B}^{\frac{d}{2}+1}_{2,1}\times\dot{B}^{\frac{d}{2}-1}_{2,1})$ as $n\rightarrow\infty$, and thus one can find a sufficiently large constant $n_{0}>0$ such that 
\begin{equation}\label{rho0lower}
\left\{
\begin{aligned}
&\quad\frac{3}{4\rho_{0+}}\leq 1+\tilde{a}^{n}_{0}\leq \frac{4}{3\rho_{0-}},\\
&\sum_{j\geq m-1}2^{\frac{d}{2}j}\|\tilde{a}^{n}_{0}\|_{L^2}\leq \frac{3}{2}\sum_{j\geq m}2^{\frac{d}{2}j}\|\frac{1}{1+a^{0}}-1\|_{L^2},\quad  n\geq n_{0},\quad  m\in\mathbb{Z},
\end{aligned}
\right.
\end{equation}
with
\begin{equation}\label{rho0}
\begin{aligned}
\rho_{0+}:=\sup_{x\in\mathbb{R}^{d}}(1+a_{0})(x)>0,\quad\quad \rho_{0-}:=\inf_{x\in\mathbb{R}^{d}}(1+a_{0})(x)>0.
\end{aligned}
\end{equation}
Since all the Sobolev norms are equivalent in (\ref{appm1}) due to the Bernstein inequality, it is easy to verify that (\ref{appm1}) is an ordinary differential system in $L^2_{n}$ and locally Lipschitz with respect to the variable $(a^{n},u^{n},b^{n},w^{n})$ for any $n\geq n_{0}$. Hence by the Cauchy-Lipschitz theorem (cf. \cite{bahouri1}, Page 124), there is a maximal time $T^{n}_{*}>0$ such that unique solution $(\widetilde{a}^{n},\widetilde{u}^{n},b^{n},w^{n})\in \mathcal{C}([0,T^{n}_{*});L^2_{n}\times L^2_{n}\times L^2_{n}\times L^2_{n})$ to the problem $(\ref{appm1})$ exists, and $1+\tilde{a}^{n}$ is strictly bounded away from zero for any $n\geq n_{0}$.

\textbf{Step 2: Uniform estimates}

For some constants $T\in(0,T_{*})$, $\eta>0$ and $\beta>0$ to be determined, let $m=m(\beta,a_{0})>0$ be the constant
\begin{equation}\label{m}
\begin{aligned}
& m:=\inf\Big{\{}q\in \mathbb{Z}~\big{|}~\sum_{j\geq q-1}2^{\frac{d}{2}j}\|\dot{\Delta}_{j}(\frac{1}{1+a_{0}}-1)\|_{L^2}\leq \frac{\beta}{3}\Big{\}},
\end{aligned}
\end{equation}
and $I^{n}$ be the time set
\begin{equation}\label{In}
\begin{aligned}
&I^{n}:=\Big{\{}t\in [0,T]~\big{|}~ \|\tilde{u}^{n}\|_{\widetilde{L}^{\infty}_{t}(\dot{B}^{\frac{d}{2}-1}_{2,1})}+\|\tilde{u}^{n}\|_{L^1_{t}(\dot{B}^{\frac{d}{2}+1}_{2,1})}\leq \eta,\\
 &\quad\quad\quad\quad\quad\quad\quad\quad~\|\tilde{a}^{n}\|_{\widetilde{L}^{\infty}_{t}(\dot{B}^{\frac{d}{2}}_{2,1})}\leq 2\|\frac{1}{1+a_{0}}-1\|_{\dot{B}^{\frac{d}{2}}_{2,1}}+1,\\
&\quad\quad\quad\quad\quad\quad\quad\quad~    \underset{j\geq m-1}\sum 2^{\frac{d}{2}j}\|\dot{\Delta}_{j}\tilde{a}^{n}\|_{L^{\infty}_{t}(L^2)}\leq \beta,\\
&\quad\quad\quad\quad\quad\quad\quad\quad~\frac{1}{2\rho_{0+}}\leq1+\tilde{a}^{n}\leq\frac{2}{\rho_{0-}},\\
&\quad\quad\quad\quad\quad\quad\quad\quad~\|(b^{n},w^{n})\|_{\widetilde{L}^{\infty}_{t}(\dot{B}^{\frac{d}{2}-1}_{2,1}\cap\dot{B}^{\frac{d}{2}+1}_{2,1})}\leq2\|(b_{0},w_{0})\|_{\dot{B}^{\frac{d}{2}-1}_{2,1}\cap\dot{B}^{\frac{d}{2}+1}_{2,1}}\Big{\}},
\end{aligned}
\end{equation}
where $\rho_{0+}$ and $\rho_{0-}$ are given by (\ref{rho0}). It follows from the time continuity of $(\tilde{a}^{n},\tilde{u}^{n},b^{n},w^{n})$ that $I^{n}$ is a nonempty closed subset of $[0,T]$ for every $n\geq n_{0}$. For any $t\in I^{n}$ and $n\geq n_{0}$, we claim that
\begin{equation}\label{claim1}
\left\{
\begin{aligned}
&\|\tilde{u}^{n}\|_{\widetilde{L}^{\infty}_{t}(\dot{B}^{\frac{d}{2}-1}_{2,1})}+\|\tilde{u}^{n}\|_{L^1_{t}(\dot{B}^{\frac{d}{2}+1}_{2,1})}< \eta,\\
&\|\tilde{a}^{n}\|_{\widetilde{L}^{\infty}_{t}(\dot{B}^{\frac{d}{2}}_{2,1})}< 2\|\frac{1}{1+a_{0}}-1\|_{\dot{B}^{\frac{d}{2}}_{2,1}}+1,\\
&\underset{j\geq m-1}\sum 2^{\frac{d}{2}j}\|\dot{\Delta}_{j}\tilde{a}^{n}\|_{L^{\infty}_{t}(L^2)}< \beta,\\
&\frac{1}{2\rho_{0+}}<1+\tilde{a}^{n}<\frac{2}{\rho_{0-}},\\
&\|(b^{n},w^{n})\|_{\widetilde{L}^{\infty}_{t}(\dot{B}^{\frac{d}{2}-1}_{2,1}\cap\dot{B}^{\frac{d}{2}+1}_{2,1})}<2\|(b_{0},w_{0})\|_{\dot{B}^{\frac{d}{2}-1}_{2,1}\cap\dot{B}^{\frac{d}{2}+1}_{2,1}}.
\end{aligned}
\right.
\end{equation}
Since all the estimates in (\ref{claim1}) are strictly at the time $t\in I^{n}$, again using the time continuity of the solution $(\tilde{a}^{n},\tilde{u}^{n},b^{n},w^{n})$, one can show that there exists a ball $B(t,\varepsilon)$ for a suitably small constant $\varepsilon>0$ satisfying $[0,T]\cap B(t,\varepsilon)\subset I^{n}$, which implies that $I^{n}$ is also an open subset of $[0,T]$. Hence, we have $I^{n}=[0,T]$, and the approximate sequence $(\tilde{a}^{n},\tilde{u}^{n},b^{n},w^{n})$ satisfies the estimates in (\ref{In}) uniformly with respect to $n\geq n_{0}$.

We turn to choose suitable small $T>0$, $\eta>0$ and $\alpha>0$ such that (\ref{claim1}) is valid for any $t\in I^{n}$ and $n\geq n_{0}$. Since $\dot{\mathbb{E}}_{n}$ is a $L^2$ orthogonal projector, there is no effect
on usual energy estimates in Lemmas \ref{lemma31}-\ref{lemma35} for $(\tilde{a}^{n},\tilde{u}^{n},b^{n},w^{n})$. Applying Lemma \ref{lemma33} to (\ref{uL}) and the dominated convergence theorem, for any constant $\eta>0$, there exists a sufficiently small time $T_{1}=T_{1}(\eta,u_{0})>0$ such that
\begin{equation}\label{uLeta}
\begin{aligned}
\|u_{L}^{n}\|_{L^1_{t}(\dot{B}^{\frac{d}{2}+1}_{2,1})}<\eta^2,\quad \|u_{L}^{n}\|_{\widetilde{L}^{\infty}_{t}(\dot{B}^{\frac{d}{2}-1}_{2,1})}\leq C\|u_{0}\|_{\dot{B}^{\frac{d}{2}-1}_{2,1}},\quad t\in[0,T_{1}].
\end{aligned}
\end{equation}
Due to the embedding $\dot{B}^{\frac{d}{2}}_{2,1}\hookrightarrow L^{\infty}$ and
\begin{equation}\nonumber
\left\{
\begin{aligned}
&\|\sum_{j\geq m}\dot{\Delta}_{j}\tilde{a}^{n}\|_{\widetilde{L}^{\infty}_{t}(\dot{B}^{\frac{d}{2}}_{2,1})}\leq C\underset{j\geq m-1}\sum 2^{\frac{d}{2}j}\|\dot{\Delta}_{j}\tilde{a}^{n}\|_{L^{\infty}_{t}(L^2)}\leq C\beta,\\
&1+\sum_{j\leq m-1}\dot{\Delta}_{j}\tilde{a}^{n}\geq\frac{1}{2\rho_{0+}}-\|\sum_{j\geq m}\dot{\Delta}_{j}\tilde{a}^{n}\|_{\widetilde{L}^{\infty}_{t}(\dot{B}^{\frac{d}{2}}_{2,1})}\geq \frac{1}{2\rho_{0+}}-C\beta,
\end{aligned}
\right.
\end{equation}
derived from $(\ref{In})_{3}$-$(\ref{In})_{4}$, we get
\begin{equation}\nonumber
\left\{
\begin{aligned}
&1+\sum_{j\leq m-1}\dot{\Delta}_{j}\tilde{a}^{n}\geq \frac{1}{4\rho_{0+}},\\
&\|\sum_{j\geq m}\dot{\Delta}_{j}\tilde{a}^{n}\|_{\widetilde{L}^{\infty}_{t}(\dot{B}^{\frac{d}{2}}_{2,1})}\leq \frac{c_{2} }{2\rho_{0+}},
\end{aligned}
\right.
\end{equation}
provided
\begin{equation}
\begin{aligned}
0<\beta< \Big{\{}\frac{1}{4\rho_{0+}C}, \frac{c_{2} }{2\rho_{0+}C }\Big{\}}, \label{T1}
\end{aligned}
\end{equation}
with $c_{2}>0$ given by Lemma \ref{lemma34}. Thus, all the conditions in Lemma \ref{lemma34} hold, and we have for any $t\in I_{n}\cap[0,T_{1}]$ that
\begin{equation}\label{tildeuestimate}
\begin{aligned}
&\|\tilde{u}^{n}\|_{\widetilde{L}^{\infty}_{t}(\dot{B}^{\frac{d}{2}-1}_{2,1})}+\|\tilde{u}^{n}\|_{L^1_{t}(\dot{B}^{\frac{d}{2}+1}_{2,1})}\\
&\quad\leq C_{a_{0}}e^{C_{a_{0}}\big{(}\|\tilde{a}^{n}\|_{L^2_{t}(\dot{B}^{\frac{d}{2}}_{2,1})}^2+\|(\tilde{u}^{n},u^{n}_{L})\|_{L^1_{t}(\dot{B}^{\frac{d}{2}+1}_{2,1})}\big{)}}\Big{(}\|\tilde{a}^{n}\mathcal{A}u_{L}^{n}\|_{\widetilde{L}^1_{t}(\dot{B}^{\frac{d}{2}-1}_{2,1})}+\|u_{L}^{n}\cdot\nabla u_{L}^{n}\|_{\widetilde{L}^1_{t}(\dot{B}^{\frac{d}{2}-1}_{2,1})}\\
&\quad\quad +\|\nabla \Pi(\tilde{a}^{n})\|_{\widetilde{L}^1_{t}(\dot{B}^{\frac{d}{2}-1}_{2,1})}+\| e^{b^{n}}(1+\tilde{a}^{n})(w^{n}-\tilde{u}^{n}-u_{L}^{n})  \|_{\widetilde{L}^1_{t}(\dot{B}^{\frac{d}{2}-1}_{2,1})}     \Big{)},\\
\end{aligned}
\end{equation}
where $C_{a_{0}}>0$ denotes a suitably large constant depending on $d$, $m$ and $a_{0}$. The terms on the right-hand side of (\ref{tildeuestimate}) can be controlled as follows. By $(\ref{In})_{1}$-$(\ref{In})_{2}$ and (\ref{uLeta}), we have
\begin{equation}\label{tildeuestimate1}
\begin{aligned}
&C_{a_{0}}\big{(}\|\tilde{a}^{n}\|_{L^2_{t}(\dot{B}^{\frac{d}{2}}_{2,1})}^2+\|(\tilde{u}^{n},u^{n}_{L})\|_{L^1_{t}(\dot{B}^{\frac{d}{2}+1}_{2,1})}\big{)}\leq  C_{a_{0}}(t+\eta+\eta^2).
\end{aligned}
\end{equation}
Since the product of functions maps $\dot{B}^{\frac{d}{2}-1}_{2,1}\times\dot{B}^{\frac{d}{2}}_{2,1}$ to $\dot{B}^{\frac{d}{2}-1}_{2,1}$ in (\ref{uv2}), we derive by $(\ref{In})_{2}$ and (\ref{uLeta}) that
\begin{equation}\label{tildeuestimate2}
\begin{aligned}
&\|\tilde{a}^{n}\mathcal{A}u_{L}\|_{\widetilde{L}^1_{t}(\dot{B}^{\frac{d}{2}-1}_{2,1})}\leq C\|\tilde{a}^{n}\|_{\widetilde{L}^{\infty}_{t}(\dot{B}^{\frac{d}{2}}_{2,1})}\|\mathcal{A}u_{L}\|_{\widetilde{L}^1_{t}(\dot{B}^{\frac{d}{2}-1}_{2,1})}\leq C_{a_{0}}\eta^2,
\end{aligned}
\end{equation}
and
\begin{equation}\label{tildeuestimate3}
\begin{aligned}
&\|u^{n}_{L}\cdot \nabla u^{n}_{L}\|_{\widetilde{L}^1_{t}(\dot{B}^{\frac{d}{2}-1}_{2,1})}\leq C\|u^{n}_{L}\|_{\widetilde{L}^{\infty}_{t}(\dot{B}^{\frac{d}{2}-1}_{2,1})}\|\nabla u^{n}_{L}\|_{L^1_{t}(\dot{B}^{\frac{d}{2}}_{2,1})}\leq C\eta^2.
\end{aligned}
\end{equation}
Applying the estimate (\ref{F1}) for the composition function $\Pi(\tilde{a}^{n})$ and $(\ref{In})_{2}$, we also have
\begin{equation}\label{tildeuestimate4}
\begin{aligned}
&\|\nabla \Pi(\tilde{a}^{n})\|_{\widetilde{L}^1_{t}(\dot{B}^{\frac{d}{2}-1}_{2,1})}\leq Ct\|\Pi(\tilde{a}^{n})\|_{\widetilde{L}^{\infty}_{t}(\dot{B}^{\frac{d}{2}}_{2,1})}\leq C_{a_{0}}t.
\end{aligned}
\end{equation}
Similarly, it holds that
\begin{equation}\label{tildeuestimate5}
\begin{aligned}
&\| e^{b^{n}}(1+\tilde{a}^{n})(w^{n}-\tilde{u}^{n}-u_{L}^{n})\|_{\widetilde{L}^1_{t}(\dot{B}^{\frac{d}{2}-1}_{2,1})}\\
&\quad\leq C_{a_{0},b_{0}}\big{(}\|b^{n}\|_{\widetilde{L}^{\infty}_{t}(\dot{B}^{\frac{d}{2}}_{2,1})}+1\big{)}\big{(}\|a^{n}\|_{\widetilde{L}^{\infty}_{t}(\dot{B}^{\frac{d}{2}}_{2,1})}+1\big{)}\|(w^{n},\widetilde{u}^{n},u^{n}_{L})\|_{\widetilde{L}^{\infty}_{t}(\dot{B}^{\frac{d}{2}-1}_{2,1})}t\\
&\quad\leq C_{a_{0},b_{0}}\big{(}\eta+\|u_{0}\|_{\dot{B}^{\frac{d}{2}-1}_{2,1}}+\|(b_{0},w_{0})\|_{\dot{B}^{\frac{d}{2}-1}_{2,1}\cap\dot{B}^{\frac{d}{2}+1}_{2,1}}\big{)}t,
\end{aligned}
\end{equation}
where $C_{a_{0},b_{0}}>0$ denotes a sufficiently large constant only depending on $d,, a_{0}$ and $b_{0}$, and we have used (\ref{In}), (\ref{uLeta}),  (\ref{uv1})-(\ref{uv2}), (\ref{F1}) and the embedding $\dot{B}^{\frac{d}{2}-1}_{2,1}\cap\dot{B}^{\frac{d}{2}+1}_{2,1}\hookrightarrow \dot{B}^{\frac{d}{2}}_{2,1}$. Combining (\ref{tildeuestimate})-(\ref{tildeuestimate5}) together, we obtain for any $t\in I^{n}\cap[0,T_{1}]$ that
\begin{equation}\nonumber
\begin{aligned}
&\|\tilde{u}^{n}\|_{\widetilde{L}^{\infty}_{t}(\dot{B}^{\frac{d}{2}-1}_{2,1})}+\|\tilde{u}^{n}\|_{L^1_{t}(\dot{B}^{\frac{d}{2}+1}_{2,1})}\\
&\quad\leq C_{a_{0},b_{0}}e^{C_{a_{0}}(t+\eta+\eta^2)}\Big{(}\eta^2+\eta^4+\big{(}1+\eta+\|u_{0}\|_{\dot{B}^{\frac{d}{2}-1}_{2,1}}+\|(b_{0},w_{0})\|_{\dot{B}^{\frac{d}{2}-1}_{2,1}\cap\dot{B}^{\frac{d}{2}+1}_{2,1}}\big{)}t\Big{)}.
\end{aligned}
\end{equation}
Thus, the condition $(\ref{claim1})_{1}$ holds provided
\begin{equation}\label{T}
\begin{aligned}
T=\min\{T_{1},\eta^2\},
\end{aligned}
\end{equation}
 and
\begin{equation}\label{T2}
\begin{aligned}
&C_{a_{0},b_{0}}e^{C_{a_{0}}(t+\eta+\eta^2)}\Big{(}\eta^2+\eta^{4}+\big{(}1+\eta+\|u_{0}\|_{\dot{B}^{\frac{d}{2}-1}_{2,1}}+\|(b_{0},w_{0})\|_{\dot{B}^{\frac{d}{2}-1}_{2,1}\cap\dot{B}^{\frac{d}{2}+1}_{2,1}}\big{)}\eta^2\Big{)}<\eta.
\end{aligned}
\end{equation}

Next, we show the conditions $(\ref{claim1})_{2}$-$(\ref{claim1})_{4}$ for $\tilde{a}^{n}$. Using (\ref{lemma321}) to the equation $(\ref{m11})_{1}$, one deduces for any $t\in I^{n}\cap [0,T_{1}]$ that
\begin{equation}\nonumber
\begin{aligned}
&\|\tilde{a}^{n}\|_{\widetilde{L}^{\infty}_{t}(\dot{B}^{\frac{d}{2}}_{2,1})}\leq e^{C\|(\tilde{u}^{n},u^{n}_{L})\|_{L^1_{t}(\dot{B}^{\frac{d}{2}+1}_{2,1})}}\|\tilde{a}^{n}_{0}\|_{\dot{B}^{\frac{d}{2}}_{2,1}}+e^{C\|(\tilde{u}^{n},u^{n}_{L})\|_{L^1_{t}(\dot{B}^{\frac{d}{2}+1}_{2,1})}}-1\\
&\quad\quad\quad\quad\quad~~\leq \frac{3}{2}e^{C(\eta+\eta^2)}\|\frac{1}{1+a_{0}}-1\|_{\dot{B}^{\frac{d}{2}}_{2,1}}+e^{C(\eta+\eta^2)}-1,
\end{aligned}
\end{equation}
where in the last inequality we have used $(\ref{rho0lower})_{2}$, $(\ref{In})_{1}$-$(\ref{In})_{2}$ and (\ref{uLeta}). To derive the condition $(\ref{claim1})_{2}$, one may choose $\eta$ so small that
\begin{equation}\label{T3}
\begin{aligned}
&\frac{3}{2}e^{C(\eta+\eta^2)}<2.
\end{aligned}
\end{equation}
Similarly, it follows from $(\ref{rho0lower})_{2}$, (\ref{m}), $(\ref{In})_{1}$, $(\ref{uLeta})$ and (\ref{lemma322}) for any $t\in I^{n}\cap [0,T]$ and $n\geq n_{0}$ that
\begin{equation}\nonumber
\begin{aligned}
&\sum_{j\geq m-1}2^{\frac{d}{2}j}\|\tilde{a}^{n}\|_{L^{\infty}_{t}(L^2)}\leq \sum_{j\geq m-1}2^{\frac{d}{2}j}\|\tilde{a}^{n}_{0}\|_{L^2}+(1+\|\tilde{a}^{n}_{0}\|_{\dot{B}^{\frac{d}{2}}_{2,1}})\Big{(}e^{C\|(\tilde{u}^{n},u_{L}^{n})\|_{L^1_{t}(\dot{B}^{\frac{d}{2}+1}_{2,1})}}-1\Big{)}\\
&\quad\quad\quad\quad\quad\quad~~\quad\quad\quad\leq \frac{\alpha}{2}+C_{a_{0}}(e^{C(\eta+\eta^2)}-1).
\end{aligned}
\end{equation}
Thus one concludes $(\ref{claim1})_{2}$ if
\begin{equation}\label{T4}
\begin{aligned}
&C_{a_{0}}(e^{C(\eta+\eta^2)}-1)<\frac{\beta}{2}.
\end{aligned}
\end{equation}
To prove $(\ref{claim1})_{4}$, we re-write $1+\tilde{a}^{n}$ by
\begin{equation}\label{1plusa}
\begin{aligned}
1+\tilde{a}^{n}=1+\tilde{a}_{0}^{n}+\sum_{j\geq m}(\tilde{a}^{n}-\tilde{a}_{0}^{n})+\sum_{j\leq m-1}(\tilde{a}^{n}-\tilde{a}^{n}_{0}).
\end{aligned}
\end{equation}
By $(\ref{In})_{3}$ and the embedding $\dot{B}^{\frac{d}{2}}_{2,1}\hookrightarrow L^{\infty}$, we have
\begin{equation}\label{1plusa1}
\begin{aligned}
&\|\sum_{j\geq m}(\tilde{a}^{n}-\tilde{a}^{n}_{0})\|_{L^{\infty}_{t}(L^{\infty})}\leq C\sum_{j\geq m-1} 2^{\frac{d}{2}j}\|\tilde{a}^{n}-\tilde{a}^{n}_{0}\|_{L^{\infty}_{t}(L^2)}\leq C \beta.
\end{aligned}
\end{equation}
For the third term on the right-hand side of (\ref{1plusa}), we deduce by (\ref{lemma323}), $(\ref{In})_{1}$ and (\ref{uLeta}) for any $t\in I^{n}\cap [0,T_{1}]$ that
\begin{equation}\label{1plusa2}
\begin{aligned}
&\|\sum_{j\leq m-1}(\tilde{a}^{n}-\tilde{a}^{n}_{0})\|_{L^{\infty}_{t}(L^{\infty})}\leq C\sum_{j\leq m} 2^{\frac{d}{2}j}\|\tilde{a}^{n}-\tilde{a}^{n}_{0}\|_{L^{\infty}_{t}(L^2)}\leq C_{a_{0}}\Big{(}e^{C(\eta+\eta^2)}-1+(\eta+\eta^2)t^{\frac{1}{2}}\Big{)}.
\end{aligned}
\end{equation}
Thence, for any $t\in I^{n}\cap [0,T_{1}]$ and $n\geq n_{0}$, we substitute (\ref{1plusa1})-(\ref{1plusa2}) into (\ref{1plusa}) and use $(\ref{rho0lower})_{1}$ to have
\begin{equation}\nonumber
\left\{
\begin{aligned}
&1+\tilde{a}^{n}\leq \frac{4}{3\rho_{0-}}+C\alpha+C_{a_{0}}\Big{(}e^{C(\eta+\eta^2)}-1+(\eta+\eta^2)t^{\frac{1}{2}}\Big{)},\\
&1+\tilde{a}^{n}\geq \frac{3}{4\rho_{0+}}-C\alpha-C_{a_{0}}\Big{(}e^{C(\eta+\eta^2)}-1+(\eta+\eta^2)t^{\frac{1}{2}}\Big{)}.
\end{aligned}
\right.
\end{equation}
If the time $T$ is given by (\ref{T}), and the small constants $\beta$, $\eta$ satisfy
\begin{equation}\label{T5}
\left\{
\begin{aligned}
&C\beta<\min\{\frac{1}{3\rho_{0-}},\frac{1}{8\rho_{0+}}\},\\
&C_{a_{0}}\Big{(}e^{C(\eta+\eta^2)}-1+(\eta+\eta^2)\eta\Big{)}<\min\{\frac{1}{3\rho_{0-}},\frac{1}{8\rho_{0+}}\},
\end{aligned}
\right.
\end{equation}
then $(\ref{claim1})_{4}$ is proved.

Finally, we show $(\ref{claim1})_{5}$ for $(b^{n},w^{n})$. Applying the operator $\dot{\Delta}_{j}$ for $j\in\mathbb{Z}$ to $(\ref{appm1})_{3}$ and taking the $L^2$ inner product of the resulting equation with $\dot{\Delta}_{j}a^{n}$, we have
\begin{equation}\label{2111}
\begin{aligned}
&\frac{1}{2}\frac{d}{dt}\|\dot{\Delta}_{j}b^{n}\|_{L^2}^2+\big{(}\div\dot{\Delta}_{j}w^{n}~|~\dot{\Delta}_{j} b^{n}\big{)}_{L^2}=-\big{(}\dot{\Delta}_{j}(w^{n}\cdot\nabla b^{n})~|~\dot{\Delta}_{j}b^{n}\big{)}_{L^2}.
\end{aligned}
\end{equation}
Similarly, one can obtain
\begin{equation}\label{2112}
\begin{aligned}
&\frac{1}{2}\frac{d}{dt}\|\dot{\Delta}_{j}w^{n}\|_{L^2}^2+\kappa\|\dot{\Delta}_{j}w^{n}\|_{L^2}^2+\big{(}\nabla\dot{\Delta}_{j}b^{n}~|~\dot{\Delta}_{j}w^{n}\big{)}_{L^2}\\
&~~=-\big{(}\dot{\Delta}_{j}(w^{n}\cdot\nabla w^{n})~|~\dot{\Delta}_{j}w^{n}\big{)}_{L^2}+\kappa\big{(}\dot{\Delta}_{j}(\tilde{u}^{n}+u_{L}^{n})~|~\dot{\Delta}_{j}w^{n}\big{)}_{L^2}.
\end{aligned}
\end{equation}
Adding (\ref{2111})-(\ref{2112}) together, we get
\begin{equation}\nonumber
\begin{aligned}
&\frac{1}{2}\frac{d}{dt}\|\dot{\Delta}_{j}(b^{n},w^{n})\|_{L^2}^2+\kappa\|\dot{\Delta}_{j}w^{n}\|_{L^2}^2\\
&~~\leq \big{(}\kappa\|\dot{\Delta}_{j}(\tilde{u}^{n},u^{n}_{L})\|_{L^2}+\frac{1}{2}\|\div w^{n}\|_{L^{\infty}}\|\dot{\Delta}_{j}(b^{n},w^{n})\|_{L^2}+\|[w^{n}\cdot\nabla ,\dot{\Delta}_{j}](b^{n},w^{n})\|_{L^2}\big{)}\|\dot{\Delta}_{j}(b^{n},w^{n})\|_{L^2},
\end{aligned}
\end{equation}
from which it follows that
\begin{equation}\label{2113}
\begin{aligned}
&\|\dot{\Delta}_{j}(b^{n},w^{n})\|_{L^2}\leq \|\dot{\Delta}_{j}(b_{0},w_{0})\|_{L^2}+\kappa\|\dot{\Delta}_{j}(\tilde{u}^{n},u^{n}_{L})\|_{L^2}+\frac{1}{2}\|\div w^{n}\|_{L^{\infty}}\|\dot{\Delta}_{j}(b^{n},w^{n})\|_{L^2}\\
&\quad\quad\quad\quad\quad\quad\quad~~+\|[w^{n}\cdot\nabla ,\dot{\Delta}_{j}](b^{n},w^{n})\|_{L^2}.
\end{aligned}
\end{equation}
Multiplying (\ref{2113}) by $2^{j(\frac{d}{2}-1)}$ and summing up over $j\in\mathbb{Z}$, we have for any $t\in I^{n}$ that
\begin{equation}\label{2114}
\begin{aligned}
&\|(b^{n},w^{n})\|_{\widetilde{L}^{\infty}_{t}(\dot{B}^{\frac{d}{2}-1}_{2,1})}\\
&\quad\leq \|(b_{0},w_{0})\|_{\dot{B}^{\frac{d}{2}-1}_{2,1}}+\Big{(} \|(\tilde{u}^{n},u^{n}_{L})\|_{\widetilde{L}^{\infty}_{t}(\dot{B}^{\frac{d}{2}-1}_{2,1})}+\frac{1}{2}\|\div w^{n}\|_{L^{\infty}_{t}(L^{\infty})}\|(b^{n},w^{n})\|_{\widetilde{L}^{\infty}_{t}(\dot{B}^{\frac{d}{2}-1}_{2,1})}\\
&\quad\quad+\sum_{j\in\mathbb{Z}}2^{j(\frac{d}{2}-1)}\|[w^{n}\cdot\nabla ,\dot{\Delta}_{j}](b^{n},w^{n})\|_{L^{\infty}_{t}(L^2)}\Big{)}t.
\end{aligned}
\end{equation}
Due to $(\ref{In})_{1}$ and  (\ref{uLeta}), it holds that
\begin{equation}\label{21141}
\begin{aligned}
&\|(\tilde{u}^{n},u^{n}_{L})\|_{\widetilde{L}^{\infty}_{t}(\dot{B}^{\frac{d}{2}-1}_{2,1})}\leq C(\|u_{0}\|_{\dot{B}^{\frac{d}{2}-1}_{2,1}}+\eta).
\end{aligned}
\end{equation}
By $(\ref{In})_{5}$ and the embedding $\dot{B}^{\frac{d}{2}}_{2,1}\hookrightarrow L^{\infty}$, we also have
\begin{equation}\label{21142}
\begin{aligned}
&\|\div w^{n}\|_{L^{\infty}_{t}(L^{\infty})}\|(b^{n},w^{n})\|_{\widetilde{L}^{\infty}_{t}(\dot{B}^{\frac{d}{2}-1}_{2,1})}\leq C\|(b_{0},w_{0})\|_{\dot{B}^{\frac{d}{2}+1}_{2,1}\cap\dot{B}^{\frac{d}{2}-1}_{2,1}}^2.
\end{aligned}
\end{equation}
One concludes from the commutator estimate (\ref{commutator}) and the property $(\ref{In})_{5}$ that
\begin{equation}\label{21143}
\begin{aligned}
&\sum_{j\in\mathbb{Z}}2^{j(\frac{d}{2}-1)}\|[w^{n}\cdot\nabla ,\dot{\Delta}_{j}] (b^{n},w^{n})\|_{L^{\infty}_{t}(L^2)}\leq C\|(b_{0},w_{0})\|_{\dot{B}^{\frac{d}{2}+1}_{2,1}\cap\dot{B}^{\frac{d}{2}-1}_{2,1}}^2.
\end{aligned}
\end{equation}
We combine (\ref{2114})-(\ref{21143}) to obtain
\begin{equation}
\begin{aligned}
&\|(b^{n},w^{n})\|_{\widetilde{L}^{\infty}_{t}(\dot{B}^{\frac{d}{2}-1}_{2,1})}\leq \|(b_{0},w_{0})\|_{\dot{B}^{\frac{d}{2}-1}_{2,1}}+ C\big{(}\eta+\|u_{0}\|_{\dot{B}^{\frac{d}{2}-1}_{2,1}}+\|(b_{0},w_{0})\|_{\dot{B}^{\frac{d}{2}+1}_{2,1}\cap\dot{B}^{\frac{d}{2}-1}_{2,1}}^2\big{)}t.
\end{aligned}
\end{equation}
Similarly, one may get after a direct computation that
\begin{equation}\label{2115}
\begin{aligned}
&\|(b^{n},w^{n})\|_{\widetilde{L}^{\infty}_{t}(\dot{B}^{\frac{d}{2}+1}_{2,1})}\\
&\quad\leq \|(b_{0},w_{0})\|_{\dot{B}^{\frac{d}{2}+1}_{2,1}}+ C \|(\widetilde{u}^{n},u^{n}_{L})\|_{L^1_{t}(\dot{B}^{\frac{d}{2}+1}_{2,1})}+C\|w^{n}\|_{\widetilde{L}^{\infty}_{t}(\dot{B}^{\frac{d}{2}+1}_{2,1})}\|(b^{n},w^{n})\|_{\widetilde{L}^{\infty}_{t}(\dot{B}^{\frac{d}{2}+1}_{2,1})}t\\
&\quad\leq \|(b_{0},w_{0})\|_{\dot{B}^{\frac{d}{2}+1}_{2,1}}+C\big{(}\eta+\eta^2+\|(b_{0},w_{0})\|_{\dot{B}^{\frac{d}{2}+1}_{2,1}\cap\dot{B}^{\frac{d}{2}-1}_{2,1}}^2t\big{)},\quad t\in I^{n}.
\end{aligned}
\end{equation}
According to (\ref{2114})-(\ref{2115}), the condition $(\ref{claim1})_{5}$ holds provided that $T$ is given by (\ref{T}) and $\eta$ satisfies
\begin{equation}\label{T6}
\begin{aligned}
&C(\eta+\eta^2)+C\big{(}\eta+\|u_{0}\|_{\dot{B}^{\frac{d}{2}-1}_{2,1}}+\|(b_{0},w_{0})\|_{\dot{B}^{\frac{d}{2}-1}_{2,1}\cap\dot{B}^{\frac{d}{2}+1}_{2,1}}^2\big{)}\eta^2<\|(b_{0},w_{0})\|_{\dot{B}^{\frac{d}{2}-1}_{2,1}\cap\dot{B}^{\frac{d}{2}+1}_{2,1}}.
\end{aligned}
\end{equation}
If we first choose $\beta$ so that (\ref{T1}) and $(\ref{T5})_{1}$ hold, thence fix $\eta$ to have $(\ref{T2})$,-(\ref{T4}), $(\ref{T5})_{2}$ and $(\ref{T6})$, and finally choose the time $T$ in $(\ref{T})$, then we conclude all the condtions in (\ref{claim1}) and obtain the estimates of the approximate sequence $(\tilde{a}^{n},\tilde{u}^{n},b^{n},w^{n})$ uniformly with respect to $n\geq n_{0}$ on the lifespan $[0,T]$.

\textbf{Srep 3: Compactness and convergence}

To obtain the strong convergence of $(\tilde{a}^{n},\tilde{u}^{n},b^{n},w^{n})$ in suitable senses such that every nonlinearity converge to the corresponding term of (\ref{m11}) in the sense of distributions, we need to estimate the time derivatives of $(a^{n},u^{n},b^{n},w^{n})$ uniformly with respect to $n\geq n_{0}$. By the uniform estimates obtained in Step 2, (\ref{uv2}), $(\ref{appm1})_{1}$ and (\ref{In}), we have
\begin{equation}\nonumber
\begin{aligned}
&\sup_{n\geq n_{0}}\|\partial_{t}\tilde{a}^{n}\|_{\widetilde{L}^{2}_{T}(\dot{B}^{\frac{d}{2}-1}_{2,1})}\lesssim \sup_{n\geq n_{0}}\big{(}\|(\tilde{u}^{n},u^{n}_{L})\|_{\widetilde{L}^{2}_{T}(\dot{B}_{2,1}^{\frac{d}{2}})}\|\tilde{a}^{n}\|_{\widetilde{L}^{\infty}_{T}(\dot{B}^{\frac{d}{2}}_{2,1})}+\|(\tilde{u}^{n},u^{n}_{L})\|_{\widetilde{L}^2_{T}(\dot{B}^{\frac{d}{2}}_{2,1})}\big{)}<\infty.
\end{aligned}
\end{equation}
Similarly, one can show
\begin{equation}\nonumber
\begin{aligned}
&\sup_{n\geq n_{0}}\big{(}\|\partial_{t}\tilde{u}^{n}\|_{L^1_{T}(\dot{B}^{\frac{d}{2}-1}_{2,1})}+\|\partial_{t}b^{n}\|_{\widetilde{L}^{\infty}_{T}(\dot{B}^{\frac{d}{2}}_{2,1})}+\|\partial_{t}w^{n}\|_{\widetilde{L}^2_{T}(\dot{B}^{\frac{d}{2}}_{2,1})}\big{)}<\infty.
\end{aligned}
\end{equation}
Then by virtue of the Aubin-Lions lemma and the cantor diagonal process, there is a limit $(\tilde{a},\tilde{u},b,w)$ such that up to a subsequence $(\tilde{a}^{n_{i}},\tilde{u}^{n_{i}},b^{n_{i}},w^{n_{i}})$ for $n_{i}\geq n_{0}$, it holds as $n_{i}\rightarrow \infty$ that
\begin{equation}\nonumber
\left\{
\begin{aligned}
&(\chi\tilde{a}^{n_{i}},\chi\tilde{u}^{n_{i}})\rightarrow (\chi\tilde{a},\chi\tilde{u})\quad\quad\text{in}\quad L^2(0,T;\dot{B}^{\frac{d}{2}-1}_{2,1})\times L^2(0,T;\dot{B}^{\frac{d}{2}}_{2,1}),\\
&(\chi b^{n_{i}},\chi w^{n_{i}})\rightarrow (\chi b, \chi w)\quad\quad\text{in}\quad L^2(0,T;\dot{B}^{\frac{d}{2}}_{2,1})\times L^2(0,T;\dot{B}^{\frac{d}{2}}_{2,1}).
\end{aligned}
\right.
\end{equation}
 Defining
\begin{equation}\nonumber
\begin{aligned}
&a:=\frac{1}{1+\tilde{a}}-1,\quad\quad u:=\tilde{u}+u_{L},
\end{aligned}
\end{equation}
we can show by the uniform estimates obtained in Step 2 and the Fatou property (cf. \cite[Page 443]{bahouri1}) that the regularities (\ref{r1})-(\ref{XX0}) hold for $(a,u,b,w)$, and therefore the limit $(a,u,b,w)$ is indeed a strong solution to the Cauchy problem (\ref{m1n}) on $[0,T]$.

\textbf{Step 4: Additional regularity}

Let the assumptions (\ref{a1}) be satisfied, and additionally  $a_{0}\in\dot{B}^{\frac{d}{2}-1}_{2,1}$. It follows from $(\ref{m1n})_{1}$, $(\ref{r1})$, (\ref{uv2}) and Lemma \ref{lemma31} that
\begin{equation}\nonumber
\begin{aligned}
&\|a\|_{\widetilde{L}^{\infty}_{T}(\dot{B}^{\frac{d}{2}-1}_{2,1})}\leq e^{C\|u\|_{L^1_{T}(\dot{B}^{\frac{d}{2}+1}_{2,1})}}\Big{(}\|a_{0}\|_{\dot{B}^{\frac{d}{2}-1}_{2,1}}+CT^{\frac{1}{2}}\big{(}\|u\|_{\widetilde{L}^{2}_{T}(\dot{B}^{\frac{d}{2}}_{2,1})}+\|a\|_{\widetilde{L}^{\infty}_{T}(\dot{B}^{\frac{d}{2}}_{2,1})}\|u\|_{\widetilde{L}^2_{T}(\dot{B}^{\frac{d}{2}}_{2,1})}\big{)}\Big{)},
\end{aligned}
\end{equation}
which yields $a\in \mathcal{C}([0,T];\dot{B}^{\frac{d}{2}-1}_{2,1})$. The proof of the local existence is complete.

\subsection{Uniqueness}

Assume that $(a_{0},u_{0},b_{0},w_{0})$ satisfies (\ref{a1}) of Theorem \ref{theorem11}. Let $(a_i,u_i,b_i,w_{i})$ $(i=1,2)$ be two solutions to the Cauchy problem $(\ref{m1n})$ satisfying (\ref{r1}) on $[0,T_{0}]$ for some time $T_{0}>0$ (which is independent of the lifespan for local existence) with the same initial data $(a_{0},u_{0},b_{0},w_{0})$. To prove uniqueness, one needs to estimate the discrepancy
$$
(\delta a,\delta u,\delta b,\delta w):=(a_2-a_1,u_2-u_1,b_{2}-b_{1},w_{2}-w_{1})
$$
with respect to a suitable norm. The system for $(\delta a,\delta u,\delta b,\delta w)$  reads
\begin{equation}\label{deltam1}
\left\{
\begin{aligned}
&\partial_{t}\delta a+u_2\cdot\nabla \delta a=\delta G_1,\\
&\partial_{t}\delta u+u_2\cdot\nabla \delta u+\delta u\cdot \nabla u_1-(1+a_2)^{-1}\mathcal{A}\delta u=\delta G_2,\\
&\partial_{t}\delta b+w_2\cdot\nabla \delta b+\div \delta b=\delta G_{3},\\
&\partial_{t}\delta w+w_2\cdot\nabla \delta w+\nabla \delta b+\kappa\delta w=\delta G_{4},
\end{aligned}
\right.
\end{equation}
 where $\delta G_{i}$ $(i=1,2,3,4)$ are given by
\begin{equation}\nonumber
\left\{
\begin{aligned}
&\delta G_1:=-\delta u \cdot \nabla a_1-\delta a \div u_2-(1+a_1)\div \delta u,\\
&\delta G_2:=-\frac{\delta a}{(a_{1}+1)(a_{2}+1)}\mathcal{A}u_1+\nabla \big{(}\pi(a_{2})-\pi(a_{1})\big{)}+\kappa(\frac{e^{b_{2}}}{a_{2}+1}-\frac{e^{b_{1}}}{a_{1}+1})(w_{2}-u_{2})\\
&\quad\quad\quad+\frac{ e^{b_{1}}}{a_{1}+1}(\delta w-\delta u),\\
&\delta G_3:=- \delta w \cdot \nabla b_1,\quad\quad \delta G_4:=-\delta w\cdot \nabla w_1+\kappa\delta u.
\end{aligned}
\right.
\end{equation}

\textbf{Case 1: $d\geq 3$.}

First, we estimate $\delta a$. It holds by the product estimate (\ref{uv2}) that
\begin{equation}\nonumber
\begin{aligned}
&\|\delta G_1\|_{\dot{B}^{\frac{d}{2}-1}_{2,1}}\leq C\big{(} (1+\|a_1\|_{\dot{B}^{\frac{d}{2}}_{2,1}})\| \delta u\|_{\dot{B}^{\frac{d}{2}}_{2,1}} +\| u_2\|_{\dot{B}^{\frac{d}{2}+1}_{2,1}}\|\delta a\|_{\dot{B}^{\frac{d}{2}-1}_{2,1}}\big{)},
\end{aligned}
\end{equation}
and then we apply Lemma \ref{lemma31} to get
\begin{equation}\nonumber
\begin{aligned}
\|\delta a\|_{\widetilde{L}^{\infty}_{t}(\dot{B}^{\frac{d}{2}-1}_{2,1})}&\leq Ce^{C\|u_2\|_{L^1_{t}(\dot{B}^{\frac{d}{2}+1}_{2,1})}}\|\delta G_1\|_{L^1_{T}(\dot{B}^{\frac{d}{2}-1}_{2,1})}\\
&\leq Ce^{C\|u_2\|_{L^1_{t}(\dot{B}^{\frac{d}{2}+1}_{2,1})}}\big{(}  (1+\| a_1\|_{L^{\infty}_{t}(\dot{B}^{\frac{d}{2}}_{2,1})})\int_{0}^{t}\|\delta u\|_{\dot{B}^{\frac{d}{2}}_{2,1}}d\tau +\int_{0}^{t} \| u_2\|_{\dot{B}^{\frac{d}{2}+1}_{2,1}}\|\delta a\|_{\dot{B}^{\frac{d}{2}-1}_{2,1}}d\tau \big{)}.
\end{aligned}
\end{equation}
This together with the Gr${\rm{\ddot{o}}}$nwall inequality gives
\begin{equation}
\begin{aligned}
\|\delta a\|_{\widetilde{L}^{\infty}_{t}(\dot{B}^{\frac{d}{2}-1}_{2,1})}\leq \widetilde{C}_{T_{0}}\int_{0}^{t}\|\delta u\|_{\dot{B}^{\frac{d}{2}}_{2,1}}d\tau,\quad t\in[0,T_{0}],\label{deltaa}
\end{aligned}
\end{equation}
where $\widetilde{C}_{T_{0}}>0$ stands for a constant depending on $T_{0}$ and the regularities of $(a_{i},u_{i},b_{i}.w_{i})$ $(i=1,2)$.

Next, since the product
of functions maps $\dot{B}^{\frac{d}{2}-2}_{2,1}\times\dot{B}^{\frac{d}{2}}_{2,1}$ to $\dot{B}^{\frac{d}{2}-2}_{2,1}$ in (\ref{uv2}) for any $d\geq 3$, we have
\begin{equation}\label{nonlineardeltawn}
\begin{aligned}
&\|(\delta G_{3},\delta G_{4})\|_{\dot{B}^{\frac{d}{2}-2}_{2,1}}\leq C\|(b_{1},w_{1})\|_{\dot{B}^{\frac{d}{2}+1}_{2,1}}\|\delta w\|_{\dot{B}^{\frac{d}{2}-2}_{2,1}}+C \|\delta u\|_{\dot{B}^{\frac{d}{2}-2}_{2,1}}.
\end{aligned}
\end{equation}
Applying the operator $\dot{\Delta}_{j}$ to $(\ref{deltam1})_{3}$-$(\ref{deltam1})_{4}$ and taking the $L^2$ inner product of the resulting equations with $\dot{\Delta}_{j}\delta b$ and $\dot{\Delta}_{j}\delta w$, respectively, we obtain
\begin{equation}\nonumber
\begin{aligned}
&\frac{1}{2}\frac{d}{dt}\|\dot{\Delta}_{j}(\delta b,\delta w)\|_{L^2}^2+\kappa\|\dot{\Delta}_{j}\delta w\|_{L^2}^2\\
&\quad\leq \big{(}\frac{1}{2}\|\div w_{2}\|_{L^{\infty}}\|\dot{\Delta}_{j}(\delta b,\delta w)\|_{L^2}+\|[w_{2}\cdot \nabla, \dot{\Delta}_{j}](\delta b,\delta w)\|_{L^2}+\|\dot{\Delta}_{j} (\delta G_{3},\delta G_{4})\|_{L^2}\big{)}\|\dot{\Delta}_{j}(b,w)\|_{L^2},
\end{aligned}
\end{equation}
which together with (\ref{r1}), $(\ref{commutator})$, (\ref{nonlineardeltawn}), and the Gr${\rm{\ddot{o}}}$nwall inequality yields
\begin{equation}\label{deltabw}
\begin{aligned}
&\|(\delta b,\delta w)\|_{\widetilde{L}^{\infty}_{t}(\dot{B}^{\frac{d}{2}-2}_{2,1})}\leq \widetilde{C}_{T_{0}}\int_{0}^{t}\|\delta u\|_{\dot{B}^{\frac{d}{2}-2}_{2,1}}d\tau,\quad  t\in[0,T_{0}].
\end{aligned}
\end{equation}

Then, $\delta u$ can be estimated by taking advantage of Lemma \ref{lemma35} for $s=\frac{d}{2}-2$. By the maps $\dot{B}^{\frac{d}{2}-2}_{2,1}\times\dot{B}^{\frac{d}{2}}_{2,1}\rightarrow \dot{B}^{\frac{d}{2}-2}_{2,1}$ and $\dot{B}^{\frac{d}{2}-1}_{2,1}\times\dot{B}^{\frac{d}{2}-1}_{2,1}\rightarrow \dot{B}^{\frac{d}{2}-2}_{2,1}$ in $(\ref{uv2})$ and the composition estimates (\ref{F1})-(\ref{F2}),  we have
\begin{equation}\label{deltaunolinear1}
\begin{aligned}
&\|(\frac{e^{b_{2}}}{a_{2}+1}-\frac{e^{b_{1}}}{a_{1}+1})(w_{2}-u_{2})\|_{\dot{B}^{\frac{d}{2}-2}_{2,1}}\\
&\quad\leq C\|\frac{e^{b_{1}}(w_{2}-u_{2})}{a_{2}+1}\|_{\dot{B}^{\frac{d}{2}}_{2,1}}\|e^{\delta b}-1\|_{\dot{B}^{\frac{d}{2}-2}_{2,1}}+C\|\frac{e^{b_{1}}}{(a_{1}+1)(a_{2}+1)}(w_{2}-u_{2})\|_{\dot{B}^{\frac{d}{2}-1}_{2,1}}\|\delta a\|_{\dot{B}^{\frac{d}{2}-1}_{2,1}}\\
&\quad\leq \widetilde{C}_{T_{0}}\big{(}1+\|(a_{2},b_{1},b_{2})\|_{\dot{B}^{\frac{d}{2}}_{2,1}}\big{)}^3\|(w_{2},u_{2})\|_{\dot{B}^{\frac{d}{2}}_{2,1}}\|\delta b\|_{\dot{B}^{\frac{d}{2}-2}_{2,1}}\\
&\quad\quad+\widetilde{C}_{T_{0}}\big{(}1+\|(a_{1},a_{2},b_{1})\|_{\dot{B}^{\frac{d}{2}}_{2,1}}\big{)}^3\|(w_{2},u_{2})\|_{\dot{B}^{\frac{d}{2}-1}_{2,1}}\|\delta a\|_{\dot{B}^{\frac{d}{2}-1}_{2,1}}.
\end{aligned}
\end{equation}
Similarly, one can show after direct computations that
\begin{equation}\label{deltaunolinear2}
\left\{
\begin{aligned}
&\|\frac{\delta a}{(a_{1}+1)(a_{2}+1)}\mathcal{A}u_1\|_{\dot{B}^{\frac{d}{2}-2}_{2,1}}\leq \widetilde{C}_{T_{0}}\big{(}1+\|(a_{1},a_{2})\|_{\dot{B}^{\frac{d}{2}}_{2,1}}\big{)}^{2}\|u_{1}\|_{\dot{B}^{\frac{d}{2}+1}_{2,1}}\|\delta a\|_{\dot{B}^{\frac{d}{2}-1}_{2,1}},\\
&\|\nabla\big{(}\pi(a_{2})-\pi(a_{1})\big{)}\|_{\dot{B}^{\frac{d}{2}-2}_{2,1}}\leq \widetilde{C}_{T_{0}}\big{(}1+\|(a_{1},a_{2})\|_{\dot{B}^{\frac{d}{2}}_{2,1}}\big{)}\|\delta a\|_{\dot{B}^{\frac{d}{2}-1}_{2,1}},\\
&\|\frac{e^{b_{1}}}{a_{1}+1}(\delta w-\delta u)\|_{\dot{B}^{\frac{d}{2}-2}_{2,1}}\leq \widetilde{C}_{T_{0}}\big{(}1+\|(a_{1},b_{1})\|_{\dot{B}^{\frac{d}{2}}_{2,1}}\big{)}^2\|(\delta w,\delta u)\|_{\dot{B}^{\frac{d}{2}-2}_{2,1}}.
\end{aligned}
\right.
\end{equation}
Notice that the conditions in Lemma \ref{lemma32} are satisfied due to the uniform estimates obtained in Subsection 3.2, and thence one has by Lemma \ref{lemma32}, (\ref{deltaa}) and (\ref{deltabw})-(\ref{deltaunolinear2}) for any $t\in[0,T_{0}]$ that
\begin{equation}\nonumber
\begin{aligned}
&\|\delta u\|_{\widetilde{L}^{\infty}_{t}(\dot{B}^{\frac{d}{2}-2}_{2,1})}+\|\delta u\|_{L^1_{t}(\dot{B}^{\frac{d}{2}}_{2,1})}\\
&\quad\leq Ce^{\widetilde{C}_{T_{0}}\big{(}\|a_2\|_{\widetilde{L}^{\infty}_{t}(\dot{B}^{\frac{d}{2}}_{2,1})}^2+\|(u_1,u_2)\|_{L^1_{t}(\dot{B}^{\frac{d}{2}+1}_{2,1})}\big{)}}\int_{0}^{t}\|\delta G_2\|_{\dot{B}^{\frac{d}{2}-2}_{2,1}}d\tau\\
&\quad\leq \widetilde{C}_{T_{0}}\int_{0}^{t}\big{(}1+\|u_{1}\|_{\dot{B}^{\frac{d}{2}+1}_{2,1}}+\|u_{2}\|_{\dot{B}^{\frac{d}{2}}_{2,1}}\big{)}\big{(}\|\delta u\|_{\dot{B}^{\frac{d}{2}-1}_{2,1}}+\|\delta u\|_{L^1_{\tau}(\dot{B}^{\frac{d}{2}}_{2,1})}\big{)}d\tau,
\end{aligned}
\end{equation}
which with the Gr${\rm{\ddot{o}}}$nwall inequality, (\ref{deltaa}) and (\ref{deltabw}) gives rise to
\begin{equation}\label{deltau}
\begin{aligned}
&\|\delta u\|_{\widetilde{L}^{\infty}_{t}(\dot{B}^{\frac{d}{2}-2}_{2,1})}+\|\delta u\|_{L^1_{t}(\dot{B}^{\frac{d}{2}}_{2,1})}+\|a\|_{\widetilde{L}^{\infty}_{t}(\dot{B}^{\frac{d}{2}-1}_{2,1})}+\|(b,w)\|_{\widetilde{L}^{\infty}_{t}(\dot{B}^{\frac{d}{2}-2}_{2,1})}=0, \quad t\in[0,T_{0}].
\end{aligned}
\end{equation}
Thus, we prove the uniqueness in the case of $d\geq3$.

\textbf{Case 2: $d=2$.}

In this case, we need to use the product estimates (\ref{uv3}) instead of (\ref{uv2}). According to Lemma \ref{lemma31} and $\dot{B}^{0}_{2,\infty}\times \dot{B}^{1}_{2,1}\rightarrow\dot{B}^{0}_{2,\infty}$ in (\ref{uv3}), it holds
\begin{equation}\label{deltaacase2}
\begin{aligned}
&\|\delta a\|_{\widetilde{L}^{\infty}_{t}(\dot{B}^{0}_{2,\infty})}\leq \widetilde{C}_{T_{0}}\int_{0}^{t}\|\delta u\|_{\dot{B}^{1}_{2,1}}d\tau,\quad  t\in[0,T_{0}].
\end{aligned}
\end{equation}
And similarly to (\ref{nonlineardeltawn})-(\ref{deltabw}) in Case 1, one can show
\begin{equation}\label{deltabwcase2}
\begin{aligned}
&\|(\delta b,\delta w)\|_{\widetilde{L}^{\infty}_{t}(\dot{B}^{-1}_{2,1})}\leq \widetilde{C}_{T_{0}}\int_{0}^{t}\|\delta u\|_{\dot{B}^{-1}_{2,\infty}}d\tau,\quad t\in[0,T_{0}].
\end{aligned}
\end{equation}

Then, we fix a large integer $m$ and a small time $t_{*}\in (0,T_{0}]$ satisfying the conditions in Lemma \ref{lemma35} and use (\ref{F1}), (\ref{F3}) and (\ref{deltaacase2})-(\ref{deltabwcase2}) to have
\begin{equation}\label{deltaucase211}
\begin{aligned}
&\|\delta u\|_{\widetilde{L}^{\infty}_{t_{*}}(\dot{B}^{-1}_{2,\infty})}+\|\delta u\|_{L^1_{t_{*}}(\dot{B}^{1}_{2,\infty})}\\
&\quad\leq \widetilde{C}_{T_{0}}\int_{0}^{t_{*}}\big{(} (1+\|u_{2}\|_{\dot{B}^{1}_{2,1}})\|\delta u\|_{\dot{B}^{-1}_{2,\infty}}+(1+\|u_{1}\|_{\dot{B}^{2}_{2,1}})\|\delta u\|_{L^1_{\tau}(\dot{B}^{1}_{2,1})}\big{)}d\tau.
\end{aligned}
\end{equation}
Employing the Gr${\rm{\ddot{o}}}$nwall inequality to (\ref{deltaucase211}) and using the log type inequality (\ref{log}), we obtain
\begin{equation}\nonumber
\begin{aligned}
&\|\delta u\|_{L^1_{t_{*}}(\dot{B}^{1}_{2,\infty})}\leq \widetilde{C}_{T_{0}}\int_{0}^{t_{*}}(1+\|u_1\|_{\dot{B}^{2}_{2,1}})\|\delta u\|_{L^1_{\tau}(\dot{B}^1_{2,\infty})}\log{\Big{\{} e+\frac{\widetilde{C}_{T_{0}}}{\|\delta u\|_{L^1_{\tau}(\dot{B}^1_{2,\infty})}}\Big{\}}}d\tau.
\end{aligned}
\end{equation}
Thus, we apply the the Osgood lemma (cf. \cite{bahouri1}[Page 125]) to show $\|\delta u\|_{L^1_{t_{*}}(\dot{B}^{1}_{2,\infty})}=0$. This together with (\ref{deltaacase2})-(\ref{deltabwcase2}) leads to the uniqueness on the time interval $[0,t_{*}]\subset [0,T_{0}]$. Since we have $(a_{i},u_{i},b_{i},w_{i})(t_{*})=0$, one can show $(a_{i},u_{i},b_{i},w_{i})(t)=0$ for any $t\in[t_{*},2t_{*}]$. Taking a finite connectivity argument, we prove the uniqueness on the time interval $[0,T_{0}]$.

\section{Global existence}\label{sectionglobal}

\subsection{The a-priori estimates}

In order to prove the global existence of the solution to the Cauchy problem \eqref{m1n}, we need to establish the uniform-in-time a-priori estimates below.

\begin{prop}\label{apriorie}
For given time $T>0$, suppose that the strong solution $(a,u,b,w)$ to the Cauchy problem \eqref{m1n} satisfies for $t\in(0,T)$ that
\begin{equation}\label{XX00}
\begin{aligned}
\mathcal{X}(t)&:=\|(a,u,b,w)\|_{\widetilde{L}^{\infty}_{t}(\dot{B}^{\frac{d}{2}-1}_{2,1})}^{\ell}+\|(a,u,b,w)\|_{L^1_{t}(\dot{B}^{\frac{d}{2}+1}_{2,1})}^{\ell}+\|u-w\|_{L^1_{t}(\dot{B}^{\frac{d}{2}}_{2,1})}^{\ell}+\|u-w\|_{\widetilde{L}^2_{t}(\dot{B}^{\frac{d}{2}-1}_{2,1})}^{\ell}\\
&\quad+\| a\|_{\widetilde{L}^{\infty}_{t}(\dot{B}^{\frac{d}{2}}_{2,1})}^{h}+\|u\|_{\widetilde{L}^{\infty}_{t}(\dot{B}^{\frac{d}{2}-1}_{2,1})}^{h}+\|(b,w)\|_{\widetilde{L}^{\infty}_{t}(\dot{B}^{\frac{d}{2}+1}_{2,1})}^{h}+\|a\|_{L^1_{t}(\dot{B}^{\frac{d}{2}}_{2,1})}^{h}+\|(u,b,w)\|_{L^1_{t}(\dot{B}^{\frac{d}{2}+1}_{2,1})}^{h}\\
&\leq 2C_{0}\mathcal{X}_{0},\quad\quad t\in(0,T),
\end{aligned}
\end{equation}
where $C_{0}>1$ is a constant independent of the time $T>0$, and $\mathcal{X}_{0}$ is defined by \eqref{a2}. There exist a small constant $\var_{0}>0$ such that if $\mathcal{X}_{0}\leq \var_{0}$, then it holds
\begin{equation}
\begin{aligned}
\mathcal{X}(t)\leq C_{0}\mathcal{X}_{0},\quad \quad t\in(0,T).\label{X2}
\end{aligned}
\end{equation}
\end{prop}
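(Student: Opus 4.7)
The plan is to carry out a continuation/bootstrap argument: assuming $\mathcal{X}(t)\le 2C_0\mathcal{X}_0$ on $(0,T)$ with $\mathcal{X}_0$ small, derive a strict improvement via frequency-localized energy estimates, product/composition estimates in Besov spaces, and the damped equation \eqref{relativedamp1} for the relative velocity $u-w$. The analytic heart of the matter is the mixed-frequency dissipation \eqref{dissipationw}, which extracts dissipation for $w$ out of the combined drag and viscous contributions of $u$, thereby bypassing the mismatch between the Navier--Stokes scale (which demands $L^1_t(\dot B^{d/2-1}_{2,1})$ of $w$ as a source in the $u$-equation) and the damped-Euler scale (which demands $L^1_t(\dot B^{d/2-1}_{2,1}\cap\dot B^{d/2+1}_{2,1})$ of $u$ as a source in the $w$-equation).

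First I would apply $\dot\Delta_j$ to the full linear part of \eqref{m1n}$_1$--\eqref{m1n}$_4$ and perform an $L^2$ energy estimate that \emph{couples} the four blocks, augmented with an $a\cdot u$ cross term (for the NS pressure/viscous cancellation) and a $b\cdot w$ cross term (for the pressureless-like Euler dissipation), in the spirit of \cite{danchin1,c2}. Summing with the weight $2^{j(d/2-1)}$ over low frequencies and invoking \eqref{dissipationw} produces
\[
\|(a,u,b,w)\|_{\widetilde L^\infty_t(\dot B^{d/2-1}_{2,1})}^{\ell}+\|(a,u,b,w)\|_{L^1_t(\dot B^{d/2+1}_{2,1})}^{\ell}\lesssim\mathcal{X}_0+\mathcal{N}^\ell(t),
\]
with $\mathcal{N}^\ell(t)$ a nonlinear residual. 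For high frequencies I would split the system: for the $(a,u)$ block I would invoke the effective-velocity trick of Haspot/Danchin to obtain $\widetilde L^\infty_t(\dot B^{d/2}_{2,1})\cap L^1_t(\dot B^{d/2}_{2,1})$ control of $a$ and $\widetilde L^\infty_t(\dot B^{d/2-1}_{2,1})\cap L^1_t(\dot B^{d/2+1}_{2,1})$ control of $u$; for the $(b,w)$ block I would freeze $u$ as a known forcing and apply the damped compressible Euler estimates from \cite{xu0,xu1,c2} to obtain the $\widetilde L^\infty_t(\dot B^{d/2+1}_{2,1})\cap L^1_t(\dot B^{d/2+1}_{2,1})$ control.

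The delicate point is closing the nonlinear term $h(a,b)(u-w)$ in the $u$-equation: the $L^1_t(\dot B^{d/2+1}_{2,1})$ control of $(a,u,b,w)$ alone does not control $u-w$ with the extra derivative needed by the product law at the critical index. Here I would exploit \eqref{relativedamp1}, which is an exponentially damped equation for $u-w$ with source $\nabla a+\Delta u+\nabla b-u\cdot\nabla u+w\cdot\nabla w+G$. Frequency by frequency Duhamel, combined with the already established controls of $(a,u,b,w)$, yields the stronger bound
\[
\|u-w\|_{L^1_t(\dot B^{d/2}_{2,1})}^{\ell}+\|u-w\|_{\widetilde L^2_t(\dot B^{d/2-1}_{2,1})}^{\ell}\lesssim\mathcal{X}_0+\mathcal{X}(t)^2,
\]
which gains one derivative over the naive estimate and is precisely what Moser-type product rules need to absorb $h(a,b)(u-w)$.

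The remaining bookkeeping is standard: composition estimates (valid under the smallness of $\|a\|_{\widetilde L^\infty_t(\dot B^{d/2}_{2,1})}$ supplied by the bootstrap) control $g(a),f(a),h(a,b)$, and Besov product laws bound the convection-type terms $u\cdot\nabla u$, $w\cdot\nabla b$, $w\cdot\nabla w$, $\div(au)$, $w\cdot\nabla b$ by $\mathcal{X}(t)^2$. Assembling the low- and high-frequency pieces together with the relative-velocity estimate gives
\[
\mathcal{X}(t)\le \tfrac{C_0}{2}\mathcal{X}_0+C\,\mathcal{X}(t)^2,
\]
so that the bootstrap assumption $\mathcal{X}(t)\le 2C_0\mathcal{X}_0$ and the smallness $\mathcal{X}_0\le\var_0$ (with $\var_0$ chosen so that $4CC_0^2\var_0\le\tfrac{C_0}{2}$) absorb the quadratic term and close to $\mathcal{X}(t)\le C_0\mathcal{X}_0$. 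I expect the main obstacle to be the design of the coupled Lyapunov functional at low frequencies: unless the four blocks are treated jointly so that \eqref{dissipationw} can be invoked, any block-by-block argument stalls on the regularity mismatch described above, and the critical-regularity $L^1_t(\dot B^{d/2+1}_{2,1})$ control of $w$ listed in \eqref{XX0} cannot be recovered.
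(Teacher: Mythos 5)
Your low-frequency treatment, your use of the damped relative-velocity equation \eqref{relativedamp1}, and the overall bootstrap structure ($\mathcal{X}(t)\lesssim\mathcal{X}_0+\mathcal{X}(t)^2$) all match the paper's proof closely. The genuine discrepancy is in the high-frequency step, and it is more than cosmetic.

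You propose, for $j\geq -1$, to decouple the system into an $(a,u)$ block handled by the effective-velocity device with $w$ frozen as forcing, and a $(b,w)$ block handled as a damped Euler system with $u$ frozen as forcing. This stalls on exactly the circularity that \eqref{dissipationw} is built to resolve. In the $u$-equation the drag contributes $(w\,|\,\dot\Delta_j u)$, and Young's inequality leaves a residual $\tfrac12\|\dot\Delta_j w\|_{L^2}^2$ that the $(a,u)$ block alone cannot absorb (at high frequency there is no $2^{2j}$-weighted gain on the $w$-source, and the drag coefficient is $O(1)$). Closing it against the $(b,w)$ estimate gives $\|w\|^h_{L^1_t(\dot B^{d/2-1}_{2,1})}\lesssim\|w\|^h_{L^1_t(\dot B^{d/2+1}_{2,1})}\lesssim\mathcal{X}_0+\|u\|^h_{L^1_t(\dot B^{d/2+1}_{2,1})}+\dots$, which then feeds back into the $(a,u)$ bound with an $O(1)$ constant in front of $\|u\|^h_{L^1_t(\dot B^{d/2+1}_{2,1})}$ and does not close. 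The drag dissipation $\|\dot\Delta_j(u-w)\|_{L^2}^2$ only appears after summing the $u$- and $w$-energy identities, so the decomposition discards it. Note that this is inconsistent with your own closing remark, since \eqref{dissipationw} is stated for $j\geq -1$ as well as $j\leq 0$; the regularity mismatch you flag at low frequency is equally present at high frequency, for the same reason.

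The paper's high-frequency analysis is therefore a three-stage argument rather than a two-block split: first a \emph{joint} Lyapunov functional $\mathcal{E}_{2,j}$ in Lemma~\ref{lemma45} (with cross terms $(\dot\Delta_j u\,|\,\nabla\dot\Delta_j a)$ and $2^{-2j}(\dot\Delta_j w\,|\,\nabla\dot\Delta_j b)$) whose dissipation, via \eqref{tri}, controls $\|\dot\Delta_j(a,\nabla a,u,b,w)\|_{L^2}^2$ simultaneously and hence gives $(u,b,w)$ in $L^1_t(\dot B^{d/2-1}_{2,1})^h$; only then does one apply Lemma~\ref{heat} to \eqref{ENS2} to upgrade $u$ to $L^1_t(\dot B^{d/2+1}_{2,1})^h$, and only after that does one run the $(b,w)$-block Lyapunov argument of Lemma~\ref{lemma46} with the now-controlled $u$ as forcing to reach $L^1_t(\dot B^{d/2+1}_{2,1})^h$ for $(b,w)$. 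You should replace the effective-velocity/damped-Euler split with the joint high-frequency Lyapunov functional before the maximal regularity and $(b,w)$ steps; the remainder of your outline (low-frequency coupled estimate, relative-velocity gain from \eqref{relativedamp1}, product and composition estimates, bootstrap) can then be kept as is. The choice between the effective-velocity trick and the $a$--$u$ cross term for the $(a,u)$ subblock is incidental; the missing ingredient is the joint treatment that extracts $w$-dissipation from $\|\dot\Delta_j u\|_{L^2}^2+\|\dot\Delta_j(u-w)\|_{L^2}^2$.
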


The proof of Proposition \ref{apriorie} consists of Lemmas \ref{lemma41}-\ref{prop42} below.

\vspace{2ex}

\underline{\it\textbf{Proof of Theorem \ref{theorem12}:}}~Let the assumptions of Theorem \ref{theorem12} hold. According to Theorem \ref{theorem11}, there exists a time $T_{0}>0$ such that the Cauchy problem \eqref{m1n} has a unique strong solution $(a,u,b,w)(x,t)$ for $t\in(0,T_{0}]$ satisfying \eqref{r1}. By virtue of Proposition \ref{apriorie}, the solution $(a,u,b,w)$ indeed satisfies $\sup_{t\in(0,T_{0}]}\mathcal{X}(t)\leq C_{0}\mathcal{X}_{0}$, and therefore by the standard continuity arguments, we can extend the solution $(a,u,b,w)$ globally in time and verify that $(a,u,b,w)$ satisfies the properties \eqref{rglobal}-\eqref{XX0}.




\subsection{Low-frequency analysis}

In this subsection, we establish the a-priori estimates of solutions to the Cauchy problem $(\ref{m1n})$ in the low-frequency region $\{\xi\in\mathbb{R}^{d}~|~ |\xi|\leq \frac{8}{3}\}$. Note that \eqref{XX00} implies
\begin{equation}
\begin{aligned}
\sup_{(x,t)\in\mathbb{R}^{d}\times(0,T)}|a(x,t)|\leq \frac{1}{2}\Rightarrow \frac{1}{2}\leq \rho=1+a\leq \frac{3}{2},\quad \text{if}~\mathcal{X}_{0}<<1.\label{simsim2}
\end{aligned}
\end{equation}
The property \eqref{simsim2} will be used to handle the nonlinear terms $f(a)$, $g(a)$ and $h(a,b)$ in \eqref{G} by virtue of the composition estimates \eqref{F1}.  For any $j\in\mathbb{Z}$, applying the operator $\dot{\Delta}_{j}$ to $(\ref{m1n})_{1}$-$(\ref{m1n})_{2}$, we get
\begin{equation}\label{delta12}
\left\{
\begin{aligned}
&\partial_{t}\dot{\Delta}_{j}a+\div \dot{\Delta}_{j}u=-\div \dot{\Delta}_{j}(au),\\
&\partial_{t}\dot{\Delta}_{j}u+\nabla \dot{\Delta}_{j}a- \Delta\dot{\Delta}_{j}u+\dot{\Delta}_{j}(u-w)=-\dot{\Delta}_{j}(u\cdot\nabla u)+\dot{\Delta}_{j}G,\\
&\partial_{t}\dot{\Delta}_{j}b+\div \dot{\Delta}_{j}w=-\dot{\Delta}_{j}(w\cdot \nabla b),\\
&\partial_{t}\dot{\Delta}_{j}w+\nabla \dot{\Delta}_{j}b+\dot{\Delta}_j(w-u)=-\dot{\Delta}_{j}(w\cdot\nabla w).
\end{aligned}
\right.
\end{equation}

First, we derive a low-frequency Lyapunov type inequality of \eqref{delta12}.

\begin{lemma}\label{lemma41}
Let $(a,u,b,w)$ be any strong solution to the Cauchy problem $(\ref{m1n})$. Then, it holds for any $j\leq 0$ that
\begin{equation}\label{Lowinequality}
\begin{aligned}
&\frac{d}{dt}\mathcal{E}_{1,j}(t)+\mathcal{D}_{1,j}(t)\\
&\lesssim \big{(}\|\dot{\Delta}_{j}(2^{j}au, u\cdot \nabla u,w\cdot\nabla b,w\cdot\nabla w)\|_{L^2}+\|\dot{\Delta}_{j}G\|_{L^2} \big{)}\|\dot{\Delta}_{j}(a,u,b,w)\|_{L^2},
\end{aligned}
\end{equation}
where $\mathcal{E}_{1,j}(t)$ and $\mathcal{D}_{1,j}(t)$ are defined by
\begin{equation}\label{EL}
\left\{
\begin{aligned}
&\mathcal{E}_{1,j}(t):=\frac{1}{2}\|\dot{\Delta}_{j}(a,u,b,w) \|_{L^2}^2+\eta_{1}\big{(} \dot{\Delta}_{j}u~|~ \nabla\dot{\Delta}_{j}a\big{)}_{L^2}+\eta_{1}\big{(}\dot{\Delta}_{j}w~|~\nabla \dot{\Delta}_{j} b\big{)}_{L^2},\\
&\mathcal{D}_{1,j}(t):=\|\nabla\dot{\Delta}_{j}u \|_{L^2}^2+\|\dot{\Delta}_{j}(u-w) \|_{L^2}^2\\
&\quad\quad\quad\quad+\eta_{1}\Big{(}\|\nabla \dot{\Delta}_{j}a\|_{L^2}^2-\|\div \dot{\Delta}_{j}u\|_{L^2}^2+\big{(}\dot{\Delta}_{j}(u-w)-\Delta\dot{\Delta}_{j}u~|~\nabla\dot{\Delta}_{j}a\big{)}_{L^2}\Big{)}\\
&\quad\quad\quad\quad+\eta_{1}\Big{(}\|\nabla \dot{\Delta}_{j}b\|_{L^2}^2-\|\div \dot{\Delta}_{j}w\|_{L^2}^2+ \big{(}\dot{\Delta}_{j}(w-u)~|~ \nabla \dot{\Delta}_{j}b\big{)}_{L^2}\Big{)},
\end{aligned}
\right.
\end{equation}
with constant $\eta_{1}\in(0,1)$ to be determined later. 
\end{lemma}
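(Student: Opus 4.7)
The plan is to work entirely at the level of the spectrally localized system \eqref{delta12}, first extracting the obvious dissipation ($\|\nabla\dot{\Delta}_{j}u\|_{L^{2}}^{2}$ from the viscosity and $\|\dot{\Delta}_{j}(u-w)\|_{L^{2}}^{2}$ from the drag) via a basic $L^{2}$ pairing, and then recovering the missing density-type dissipations ($\|\nabla\dot{\Delta}_{j}a\|_{L^{2}}^{2}$ and $\|\nabla\dot{\Delta}_{j}b\|_{L^{2}}^{2}$) through two Hoff-type cross terms with a small parameter $\eta_{1}$.

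First I would take the $L^{2}$ inner product of $\eqref{delta12}_{1}$--$\eqref{delta12}_{4}$ with $\dot{\Delta}_{j}a,\dot{\Delta}_{j}u,\dot{\Delta}_{j}b,\dot{\Delta}_{j}w$ respectively and sum. The skew-adjoint couplings $(\nabla\dot{\Delta}_{j}a\,|\,\dot{\Delta}_{j}u)+(\div\dot{\Delta}_{j}u\,|\,\dot{\Delta}_{j}a)=0$ and $(\nabla\dot{\Delta}_{j}b\,|\,\dot{\Delta}_{j}w)+(\div\dot{\Delta}_{j}w\,|\,\dot{\Delta}_{j}b)=0$ cancel; the viscosity produces $\|\nabla\dot{\Delta}_{j}u\|_{L^{2}}^{2}$; and the drag terms combine by symmetry, since
$(\dot{\Delta}_{j}(u-w)\,|\,\dot{\Delta}_{j}u)+(\dot{\Delta}_{j}(w-u)\,|\,\dot{\Delta}_{j}w)=\|\dot{\Delta}_{j}(u-w)\|_{L^{2}}^{2}$.
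This yields the two unconditional dissipation terms of $\mathcal{D}_{1,j}$. On the right-hand side appear the convective nonlinearities $-\dot{\Delta}_{j}(u\cdot\nabla u)$, $-\dot{\Delta}_{j}(w\cdot\nabla b)$, $-\dot{\Delta}_{j}(w\cdot\nabla w)$ and $\dot{\Delta}_{j}G$, which after Cauchy-Schwarz contribute exactly the corresponding terms in \eqref{Lowinequality}; the transport nonlinearity $-\div\dot{\Delta}_{j}(au)$ paired with $\dot{\Delta}_{j}a$ is integrated by parts and then estimated with Bernstein (valid for $j\leq 0$ with the sharper scaling $\lesssim 2^{j}$ because $\dot{\Delta}_{j}$ is spectrally localized), which is precisely where the factor $2^{j}$ in $\|\dot{\Delta}_{j}(2^{j}au,\ldots)\|_{L^{2}}$ originates.

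Next I would compute $\frac{d}{dt}(\dot{\Delta}_{j}u\,|\,\nabla\dot{\Delta}_{j}a)$ by substituting $\partial_{t}\dot{\Delta}_{j}u$ from $\eqref{delta12}_{2}$ and $\partial_{t}\dot{\Delta}_{j}a$ from $\eqref{delta12}_{1}$ (integrating by parts in the second term to move the time derivative off the gradient). The $-\nabla\dot{\Delta}_{j}a$ piece yields $-\|\nabla\dot{\Delta}_{j}a\|_{L^{2}}^{2}$, the $\div\dot{\Delta}_{j}u$ piece yields $+\|\div\dot{\Delta}_{j}u\|_{L^{2}}^{2}$, and the $\Delta\dot{\Delta}_{j}u$ and $\dot{\Delta}_{j}(u-w)$ pieces give the stated mixed inner products; multiplying by $-\eta_{1}$ and moving everything to the left gives the third block of $\mathcal{D}_{1,j}$. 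The analogous computation with $(\dot{\Delta}_{j}w\,|\,\nabla\dot{\Delta}_{j}b)$, exploiting $\eqref{delta12}_{3}$--$\eqref{delta12}_{4}$, produces the fourth block of $\mathcal{D}_{1,j}$ together with the right-hand side contributions from $-\dot{\Delta}_{j}(w\cdot\nabla b)$ and $-\dot{\Delta}_{j}(w\cdot\nabla w)$, each bounded by $\|\dot{\Delta}_{j}(w\cdot\nabla b,w\cdot\nabla w)\|_{L^{2}}\|\dot{\Delta}_{j}(a,u,b,w)\|_{L^{2}}$ after Cauchy-Schwarz.

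Summing the basic energy estimate and $\eta_{1}$ times each of the two cross-term identities produces exactly $\frac{d}{dt}\mathcal{E}_{1,j}+\mathcal{D}_{1,j}$ on the left and the claimed bound on the right. No sign fights or smallness of $\eta_{1}$ is needed here, because the lemma only states that the raw combination equals the right-hand side; the actual coercivity of $\mathcal{E}_{1,j}$ and $\mathcal{D}_{1,j}$ (choosing $\eta_{1}$ small, absorbing $\|\div\dot{\Delta}_{j}u\|_{L^{2}}^{2}$ into $\|\nabla\dot{\Delta}_{j}u\|_{L^{2}}^{2}$, and using Young on the cross terms $(\dot{\Delta}_{j}(u-w)-\Delta\dot{\Delta}_{j}u\,|\,\nabla\dot{\Delta}_{j}a)$ and $(\dot{\Delta}_{j}(w-u)\,|\,\nabla\dot{\Delta}_{j}b)$ so that the leftover $\eta_{1}\|\nabla\dot{\Delta}_{j}a\|_{L^{2}}^{2}$ and $\eta_{1}\|\nabla\dot{\Delta}_{j}b\|_{L^{2}}^{2}$ survive) is deferred to the subsequent application. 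The main subtlety I expect is bookkeeping of the signs and the Bernstein reduction that turns $\div\dot{\Delta}_{j}(au)$ into a factor $2^{j}$ in front of $\|\dot{\Delta}_{j}(au)\|_{L^{2}}$, which is precisely why $j\leq 0$ is assumed in the statement.
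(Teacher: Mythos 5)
Your proof is correct and follows essentially the same route as the paper: the same basic $L^2$ energy identity combining (\ref{auj}) and (\ref{bwj}), the same two cross-term computations (\ref{E2})--(\ref{E3}), and the same summation with small weight $\eta_1$. One small slip in the exposition: you should add $+\eta_1$ times each cross-term identity (not multiply by $-\eta_1$) so that the $\frac{d}{dt}$ parts assemble to the $+\eta_1(\dot{\Delta}_j u\,|\,\nabla\dot{\Delta}_j a)$ and $+\eta_1(\dot{\Delta}_j w\,|\,\nabla\dot{\Delta}_j b)$ terms in \eqref{EL}; also note that the Bernstein estimate $\|\nabla\dot{\Delta}_j a\|_{L^2}\lesssim 2^j\|\dot{\Delta}_j a\|_{L^2}$ holds for every $j$, so the restriction $j\le 0$ is really there to absorb the extra powers of $2^j$ coming from the cross-term nonlinearities (e.g. $\|\nabla\div\dot{\Delta}_j(au)\|_{L^2}\lesssim 2^{2j}\|\dot{\Delta}_j(au)\|_{L^2}\le 2^j\|\dot{\Delta}_j(au)\|_{L^2}$) and, later, for the coercivity of $\mathcal{E}_{1,j},\mathcal{D}_{1,j}$ in Lemma \ref{prop41}.
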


\begin{proof}
Taking the $L^2$ inner product of $(\ref{delta12})_{1}$ and $(\ref{delta12})_{2}$ with $\dot{\Delta}_{j}a$ and $\dot{\Delta}_{j}u$, respectively, we have
\begin{equation}\label{auj}
\begin{aligned}
&\frac{1}{2}\frac{d}{dt}\|\dot{\Delta}_{j}(a,u)\|_{L^2}^2+ \|\nabla \dot{\Delta}_{j}u\|_{L^2}^2+\big{(}\dot{\Delta}_{j}(u-w)~|~\dot{\Delta}_{j}u\big{)}_{L^2}\\
&~~\leq\big{(}\dot{\Delta}_{j}(a u)~|~\nabla \dot{\Delta}_{j}a\big{)}_{L^2}-\big{(}\dot{\Delta}_{j}(u\cdot\nabla u+G)~|~\dot{\Delta}_{j}u\big{)}_{L^2}.
\end{aligned}
\end{equation}
By $(\ref{delta12})_{3}$-$\eqref{delta12}_{4}$, one deduces after a direct computation that
\begin{equation}\label{bwj}
\begin{aligned}
&\frac{1}{2}\frac{d}{dt}\|\dot{\Delta}_{j}(b,w) \|_{L^2}^2+\big{(}\dot{\Delta}_{j}(w-u)~|~\dot{\Delta}_{j}w\big{)}_{L^2}=-\big{(}\dot{\Delta}_{j}(w\cdot\nabla b)~|~\dot{\Delta}_{j}b\big{)}_{L^2}-\big{(} \dot{\Delta}_{j}(w\cdot \nabla w)~|~\dot{\Delta}_{j}w\big{)}_{L^2}.
\end{aligned}
\end{equation}
The combination of (\ref{auj})-(\ref{bwj}) leads to
\begin{equation}\label{E1}
\begin{aligned}
&\frac{1}{2}\frac{d}{dt}\|\dot{\Delta}_{j}(a,u,b,w) \|_{L^2}^2+\|\nabla\dot{\Delta}_{j}u \|_{L^2}^2+\|\dot{\Delta}_{j}(u-w) \|_{L^2}^2\\
&~~\leq \|\dot{\Delta}_{j}(au) \|_{L^2}\|\nabla\dot{\Delta}_{j}a\|_{L^2}+\big{(}\|\dot{\Delta}_{j}(u\cdot \nabla u,w\cdot\nabla b,w\cdot\nabla w)\|_{L^2}+\|\dot{\Delta}_{j}G\|_{L^2}\big{)}\|\dot{\Delta}_{j}(u,b,w)\|_{L^2},
\end{aligned}
\end{equation}

In order to obtain the dissipation of $a$ and $b$, we make use of $(\ref{delta12})_{1}$-$(\ref{delta12})_{2}$ to have
\begin{equation}\label{E2}
\begin{aligned}
&\frac{d}{dt}\big{(} \dot{\Delta}_{j}u~|~ \nabla\dot{\Delta}_{j}a\big{)}_{L^2}+\|\nabla \dot{\Delta}_{j}a\|_{L^2}^2-\|\div \dot{\Delta}_{j}u\|_{L^2}^2+\big{(}\dot{\Delta}_{j}(u-w)-\Delta\dot{\Delta}_{j}u~|~\nabla\dot{\Delta}_{j}a\big{)}_{L^2}\\
&\leq \big{(}\|\div\dot{\Delta}_{j}(u\cdot \nabla u)\|_{L^2}+\|\nabla\div\dot{\Delta}_{j}(au)\|_{L^2}+\|\div \dot{\Delta}_{j}G\|_{L^2}\big{)}\|\dot{\Delta}_{j}(a,u)\|_{L^2},
\end{aligned}
\end{equation}
and
\begin{equation}\label{E3}
\begin{aligned}
&\frac{d}{dt}\big{(}\dot{\Delta}_{j}w~|~\nabla \dot{\Delta}_{j} b\big{)}_{L^2}+\|\nabla\dot{\Delta}_{j}b\|_{L^2}^2-\|\div\dot{\Delta}_{j} w\|_{L^2}^2+\big{(}\dot{\Delta}_{j}(w-u)~|~\dot{\Delta}_{j}b\big{)}_{L^2}\\
&\leq \|\div \dot{\Delta}_{j}(w\cdot\nabla b,w\cdot \nabla w)\|_{L^2} \|\dot{\Delta}_{j}(b,w)\|_{L^2}.
\end{aligned}
\end{equation}
According to \eqref{E1}-\eqref{E3} and the Bernstein inequality, \eqref{Lowinequality} follows.
\end{proof}

Then, we have the following low-frequency estimates of solutions to the Cauchy problem $(\ref{m1n})$.
\begin{lemma}\label{prop41} 
Let $T>0$ be any given time, and $(a,u,b,w)$ be any strong solution to the Cauchy problem $(\ref{m1n})$ for $t\in(0,T)$. Then it holds
\begin{equation}\label{Low}
\begin{aligned}
&\|(a,u,b,w)\|_{\widetilde{L}^{\infty}_{t}(\dot{B}^{\frac{d}{2}-1}_{2,1})}^{\ell}+\|(a,u,b,w)\|_{L^1_{t}(\dot{B}^{\frac{d}{2}+1}_{2,1})}^{\ell}\\
&\quad\quad+\|u-w\|_{L^1_{t}(\dot{B}^{\frac{d}{2}}_{2,1})}^{\ell}+\|u-w\|_{\widetilde{L}^2_{t}(\dot{B}^{\frac{d}{2}-1}_{2,1})}^{\ell}\\
&\quad\lesssim \mathcal{X}_{0}+\mathcal{X}^2(t),\quad t\in(0,T),
\end{aligned}
\end{equation}
where $\mathcal{X}_{0}$ and $\mathcal{X}(t)$ are defined though \eqref{a2} and $(\ref{XX00})$, respectively.
\end{lemma}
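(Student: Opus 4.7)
The plan is to derive a frequency-localised Lyapunov estimate from \eqref{Lowinequality} and then sum it with the critical weight. First, I would verify that for $\eta_{1}>0$ sufficiently small and for $j\leq 0$, Bernstein's inequality forces $\mathcal{E}_{1,j}(t)\sim \|\dot{\Delta}_{j}(a,u,b,w)\|_{L^2}^2$, since the two cross terms in \eqref{EL} are dominated by $\eta_{1}2^{j}\|\dot{\Delta}_{j}(a,u,b,w)\|_{L^2}^2$. For the dissipation $\mathcal{D}_{1,j}$, I would absorb the indefinite pieces via Young's inequality together with Bernstein: the negative square $-\eta_{1}\|\div\dot{\Delta}_{j}w\|_{L^2}^2$ is controlled, through the key structural inequality \eqref{dissipationw}, by $\|\dot{\Delta}_{j}(u-w)\|_{L^2}^2+\|\nabla\dot{\Delta}_{j}u\|_{L^2}^2$; the term involving $\Delta\dot{\Delta}_{j}u$ is bounded on low frequencies by $2^{j}\|\nabla\dot{\Delta}_{j}u\|_{L^2}$; the remaining cross products split harmlessly. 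Applying \eqref{dissipationw} once more to recover the $w$-dissipation, the outcome for $j\leq 0$ is
\[
\mathcal{D}_{1,j}(t)\gtrsim 2^{2j}\|\dot{\Delta}_{j}(a,u,b,w)\|_{L^2}^2+\|\dot{\Delta}_{j}(u-w)\|_{L^2}^2.
\]

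Combining this lower bound with \eqref{Lowinequality} and writing $\mathcal{N}_{j}(t)$ for the nonlinear source on its right-hand side, I arrive at $\tfrac{d}{dt}\mathcal{E}_{1,j}+c\,2^{2j}\mathcal{E}_{1,j}+c\|\dot{\Delta}_{j}(u-w)\|_{L^2}^2\lesssim \mathcal{N}_{j}(t)\sqrt{\mathcal{E}_{1,j}}$. Dividing by $\sqrt{\mathcal{E}_{1,j}+\delta}$ and letting $\delta\to 0$ (the standard regularisation trick) yields $\tfrac{d}{dt}\sqrt{\mathcal{E}_{1,j}}+c\,2^{2j}\sqrt{\mathcal{E}_{1,j}}\lesssim \mathcal{N}_{j}(t)$, while integrating the un-divided inequality in time gives $c\|\dot{\Delta}_{j}(u-w)\|_{L^{2}_{t}L^2}^2\lesssim \mathcal{E}_{1,j}(0)+\int_{0}^{t}\mathcal{N}_{j}\sqrt{\mathcal{E}_{1,j}}\,ds$. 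After integrating, multiplying by $2^{j(\frac{d}{2}-1)}$, and summing over $j\leq 0$, I recover the $\widetilde{L}^{\infty}_{t}(\dot{B}^{\frac{d}{2}-1}_{2,1})^{\ell}\cap L^{1}_{t}(\dot{B}^{\frac{d}{2}+1}_{2,1})^{\ell}$ bound on $(a,u,b,w)$ and the $\widetilde{L}^{2}_{t}(\dot{B}^{\frac{d}{2}-1}_{2,1})^{\ell}$ bound on $u-w$, in terms of $\mathcal{X}_{0}$ plus a summed nonlinear contribution.

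To close the nonlinear contribution by $\mathcal{X}^{2}(t)$, I would use the standard product laws in homogeneous Besov spaces and the composition estimates \eqref{F1} applied to $f(a)$, $g(a)$, $h(a,b)$ under the bound \eqref{simsim2}. Estimates of the form $\|au\|_{L^{1}_{t}(\dot{B}^{\frac{d}{2}}_{2,1})}\lesssim \|a\|_{\widetilde{L}^{\infty}_{t}(\dot{B}^{\frac{d}{2}}_{2,1})}\|u\|_{L^{1}_{t}(\dot{B}^{\frac{d}{2}}_{2,1})}$ (and analogues for $u\cdot\nabla u$, $w\cdot\nabla b$, $w\cdot\nabla w$ and the three pieces of $G$) are each bounded by products of quantities inside $\mathcal{X}(t)$. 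To upgrade the $\widetilde{L}^{2}_{t}$-dissipation of $u-w$ to $L^{1}_{t}(\dot{B}^{\frac{d}{2}}_{2,1})^{\ell}$, I would revisit the damped equation \eqref{relativedamp1}: applying $\dot{\Delta}_{j}$ and using the elementary maximal-regularity estimate for $\partial_{t}+2$ yields
\[
\|(u-w)\|_{L^{1}_{t}(\dot{B}^{\frac{d}{2}}_{2,1})}^{\ell}\lesssim \|u_{0}-w_{0}\|_{\dot{B}^{\frac{d}{2}}_{2,1}}^{\ell}+\|(\nabla a,\Delta u,\nabla b)\|_{L^{1}_{t}(\dot{B}^{\frac{d}{2}}_{2,1})}^{\ell}+\text{nonlinear},
\]
whose linear right-hand side is absorbed by the already-obtained $L^{1}_{t}(\dot{B}^{\frac{d}{2}+1}_{2,1})^{\ell}$ bound on $(a,u,b)$ (after Bernstein on low frequencies for $\Delta u$), and whose nonlinear part is handled as above.

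The main obstacle I anticipate is the term $h(a,b)(u-w)$ inside $G$: its $L^{1}_{t}(\dot{B}^{\frac{d}{2}-1}_{2,1})^{\ell}$-estimate forces use of the sharpened $L^{1}_{t}(\dot{B}^{\frac{d}{2}}_{2,1})^{\ell}$-regularity of $u-w$ rather than only the $L^{2}_{t}(\dot{B}^{\frac{d}{2}-1}_{2,1})^{\ell}$ output of the Lyapunov step, so closure is necessarily coupled with the upgrade coming from \eqref{relativedamp1}. A secondary structural point is the absorption of $-\eta_{1}\|\div\dot{\Delta}_{j}w\|_{L^2}^{2}$ in $\mathcal{D}_{1,j}$ for $j\leq 0$, which is only available through the key inequality \eqref{dissipationw} --- the low-frequency substitute, in this non-Shizuta--Kawashima setting, for the usual hyperbolic--parabolic dissipative structure.
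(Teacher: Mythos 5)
Your proposal is correct and follows essentially the same route as the paper: the Lyapunov functional $\mathcal{E}_{1,j}$ with its lower bound on $\mathcal{D}_{1,j}$ (absorbing the bad $-\eta_1\|\div\dot\Delta_j w\|^2$ term via the structural inequality \eqref{tri}/\eqref{dissipationw}), followed by the damped relative-velocity equation \eqref{relativedamp1} to complete the $u-w$ bounds and the standard Besov product and composition estimates on the nonlinearities. The only small variation is that you extract the $\widetilde L^2_t(\dot B^{\frac d2-1}_{2,1})^\ell$-dissipation of $u-w$ directly from the integrated Lyapunov inequality (which works, at the cost of a Cauchy--Schwarz in $j$), whereas the paper reads off both the $\widetilde L^2_t$ and $L^1_t$ bounds on $u-w$ from \eqref{relativedamp1}; and your remark that closing $h(a,b)(u-w)$ \emph{forces} the $L^1_t(\dot B^{\frac d2}_{2,1})^\ell$-regularity of $u-w$ is slightly too strong --- the paper closes it with the product law $\dot B^{\frac d2}_{2,1}\times\dot B^{\frac d2-1}_{2,1}\to\dot B^{\frac d2-1}_{2,1}$ and H\"older $L^2_t\times L^2_t\to L^1_t$, so the $\widetilde L^2_t$-norm of $u-w$ alone already suffices.
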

\begin{proof}
Recall that $\mathcal{E}_{1,j}(t)$ and $\mathcal{D}_{1,j}(t)$ given by \eqref{EL} satisfy the Lyapunov type inequality \eqref{Lowinequality}. One can show for any $j\leq0$ that
\begin{equation}\label{ELsim}
\begin{aligned}
&(\frac{1}{2}-\frac{4}{3}\eta_{1})\|\dot{\Delta}_{j}(a,u,b,w)\|_{L^2}^2\leq \mathcal{E}_{1,j}(t)\leq       (\frac{1}{2}+\frac{4}{3}\eta_{1})\|\dot{\Delta}_{j}(a,u,b,w)\|_{L^2}^2,
\end{aligned}
\end{equation}
and
\begin{equation}\label{DLsim}
\begin{aligned}
\mathcal{D}_{1,j}(t)&\geq \frac{9}{16}2^{2j}\|\dot{\Delta}_{j}u\|_{L^2}^2+\|\dot{\Delta}_{j}(u-w)\|_{L^2}^2+\eta_{1}\big( \frac{9}{32}2^{2j}\|\dot{\Delta}_{j}(a,b)\|_{L^2}^2\\
&\quad\quad-C2^{2j}\|\dot{\Delta}_{j}u\|_{L^2}^2-C2^{2j}\|\dot{\Delta}_{j} w\|_{L^2}^2-C\|\dot{\Delta}_{j}(u-w)\|_{L^2}^2)\\
&\geq (\frac{9}{16}-C\eta_{1})2^{2j}\|\dot{\Delta}_{j}u\|_{L^2}^2+(1-C\eta_{1})\|\dot{\Delta}_{j}(u-w)\|_{L^2}^2\\
&\quad-C\eta_{1}2^{2j}\|\dot{\Delta}_{j}w\|_{L^2}^2+\frac{9\eta_{1}}{32}2^{2j}\|\dot{\Delta}_{j}(a,b)\|_{L^2}^2,
\end{aligned}
\end{equation}
where $C>1$ denotes a sufficiently large constant independent of time. Choosing a sufficiently small constant $\eta_{1}\in(0,1)$, we deduce by (\ref{ELsim})-(\ref{DLsim}) for any $j\leq 0$ that
\begin{equation}\label{ELsim1}
\begin{aligned}
&\mathcal{E}_{1,j}(t)\sim\|\dot{\Delta}_{j}(a,u,b,w)\|_{L^2}^2,
\end{aligned}
\end{equation}
and
\begin{equation}\label{DLsim1}
\begin{aligned}
\mathcal{D}_{1,j}(t)&\gtrsim 2^{2j} \|\dot{\Delta}_{j}u\|_{L^2}^2+\|\dot{\Delta}_{j}(u-w)\|_{L^2}^2-\eta_{1}2^{2j}\|\dot{\Delta}_{j}w\|_{L^2}^2+\eta_{1}2^{2j}\|\dot{\Delta}_{j}(a,b)\|_{L^2}^2\\
&\gtrsim 2^{2j} \|\dot{\Delta}_{j}(a,u,b,w)\|_{L^2}^2,
\end{aligned}
\end{equation}
where in the last inequality one has used the key fact
\begin{equation}\label{tri}
\begin{aligned}
&2^{2j}\|\dot{\Delta}_{j}u\|_{L^2}^2+\|\dot{\Delta}_{j}(u-w)\|_{L^2}^2\geq  \frac{1}{2}2^{2j}\|\dot{\Delta}_{j}w\|_{L^2}^2.
\end{aligned}
\end{equation}
By (\ref{Lowinequality}) and (\ref{ELsim1})-(\ref{DLsim1}), the following inequality holds:
\begin{equation}\label{Lowinequality1}
\begin{aligned}
&\frac{d}{dt}\mathcal{E}_{1,j}(t)+2^{2j}\mathcal{E}_{1,j}(t)\\
&\lesssim  \big{(}\|\dot{\Delta}_{j}(2^{j}au, u\cdot \nabla u,w\cdot\nabla b,w\cdot\nabla w)\|_{L^2}+\|\dot{\Delta}_{j}G\|_{L^2}\big{)}\sqrt{\mathcal{E}_{1,j}(t)},\quad j\leq 0.
\end{aligned}
\end{equation}
Thence we divide (\ref{Lowinequality1}) by $\big{(}\mathcal{E}_{1,j}(t)+\var_{*}^2\big{)}^{\frac{1}{2}}$ for $\var_{*}>0$, integrate the resulting inequality over $[0,t]$ and then take the limit as $\var_{*}\rightarrow 0$ to obtain
\begin{equation}\label{Lowinequality3}
\begin{aligned}
&\|\dot{\Delta}_{j}(a,u,b,w)\|_{L^2}+2^{2j}\int_{0}^{t}\|\dot{\Delta}_{j}(a,u,b,w)\|_{L^2}d\tau\\
&\quad\lesssim \|\dot{\Delta}_{j}(a_{0},u_{0},b_{0},w_{0})\|_{L^2}\\
&\quad\quad+\int_{0}^{t}\big{(}\|\dot{\Delta}_{j}(2^{j}au,u\cdot\nabla u,w\cdot\nabla b,w\cdot\nabla w)\|_{L^2}+\|\dot{\Delta}_{j}G\|_{L^2}\big{)}d\tau.
\end{aligned}
\end{equation}
Multiplying (\ref{Lowinequality3}) by $2^{j(\frac{d}{2}-1)}$, taking the supremum on $[0,t]$ and then summing over $j\leq 0$, we have
\begin{equation}\label{Lowinequality4}
\begin{aligned}
&\|(a,u,b,w)\|_{\widetilde{L}^{\infty}_{t}(\dot{B}^{\frac{d}{2}-1}_{2,1})}^{\ell}+\|(a,u,b,w)\|_{L^1_{t}(\dot{B}^{\frac{d}{2}+1}_{2,1})}^{\ell}\\
&\quad\lesssim \|(a_{0},u_{0},b_{0},w_{0})\|_{\dot{B}^{\frac{d}{2}-1}_{2,1}}^{\ell}+\|au\|_{L^1_{t}(\dot{B}^{\frac{d}{2}}_{2,1})}^{\ell}+\|(u\cdot \nabla u,w\cdot\nabla b,w\cdot\nabla w)\|_{L^1_{t}(\dot{B}^{\frac{d}{2}-1}_{2,1})}^{\ell}+\|G\|_{L^1_{t}(\dot{B}^{\frac{d}{2}-1}_{2,1})}^{\ell}.
\end{aligned}
\end{equation}
From \eqref{lh} and the definition of $\mathcal{X}(t)$, it is easy to check that
\begin{equation}\label{nonlinearsim}
\left\{
\begin{aligned}
&\|a\|_{\widetilde{L}_{t}^{\infty}(\dot{B}^{\frac{d}{2}-1}_{2,1}\cap \dot{B}^{\frac{d}{2}}_{2,1})}+\|u\|_{\widetilde{L}_{t}^{\infty}(\dot{B}^{\frac{d}{2}-1}_{2,1})}+\|(b,w)\|_{\widetilde{L}_{t}^{\infty}(\dot{B}^{\frac{d}{2}-1}_{2,1}\cap\dot{B}^{\frac{d}{2}+1}_{2,1})}\lesssim \mathcal{X}(t),\\
&\|(u,b,w)\|_{L_{t}^{1}(\dot{B}^{\frac{d}{2}+1}_{2,1})}+\|(a,u,b,w)\|_{\widetilde{L}_{t}^{2}(\dot{B}^{\frac{d}{2}}_{2,1})}+\|u-w\|_{\widetilde{L}_{t}^{2}(\dot{B}^{\frac{d}{2}-1}_{2,1})}\lesssim \mathcal{X}(t).
\end{aligned}
\right.
\end{equation}
To simplify calculations, we will employ the estimates (\ref{nonlinearsim}) frequently to control the nonlinear terms on the right-hand side of (\ref{Lowinequality4}). It follows by $(\ref{nonlinearsim})_{2}$ and (\ref{uv1}) that
\begin{equation}\label{Lownonlinear1}
\begin{aligned}
&\|au\|_{L^1_{t}(\dot{B}^{\frac{d}{2}}_{2,1})}\lesssim \|a\|_{\widetilde{L}^2_{t}(\dot{B}^{\frac{d}{2}}_{2,1})}\|u\|_{\widetilde{L}^2_{t}(\dot{B}^{\frac{d}{2}}_{2,1})}\lesssim \mathcal{X}^2(t).
\end{aligned}
\end{equation}
Due to $(\ref{nonlinearsim})_{2}$ and  (\ref{uv2}), it also holds
\begin{equation}
\begin{aligned}
&\|(u\cdot \nabla u,w\cdot\nabla b,w\cdot\nabla w)\|_{L^1_{t}(\dot{B}^{\frac{d}{2}-1}_{2,1})}\lesssim\|(u,b,w)\|_{\widetilde{L}^2_{t}(\dot{B}^{\frac{d}{2}}_{2,1})}^2\lesssim\mathcal{X}^2(t).\label{Lownonlinear2}
\end{aligned}
\end{equation}
To handle the nonlinear term $G$, we obtain from (\ref{XX00}), (\ref{simsim2}) and the continuity of composition functions in Lemma \ref{lemma24} that
\begin{equation}\nonumber
\begin{aligned}
&\|(g(a),f(a))\|_{\dot{B}^{\frac{d}{2}}_{2,1}}\lesssim \|a\|_{\dot{B}^{\frac{d}{2}}_{2,1}},\quad \|h(a,b)\|_{\dot{B}^{\frac{d}{2}}_{2,1}}\lesssim \|a\|_{\dot{B}^{\frac{d}{2}}_{2,1}}\|b\|_{\dot{B}^{\frac{d}{2}}_{2,1}}+\|a\|_{\dot{B}^{\frac{d}{2}}_{2,1}}+\|b\|_{\dot{B}^{\frac{d}{2}}_{2,1}}\lesssim \|(a,b)\|_{\dot{B}^{\frac{d}{2}}_{2,1}},
\end{aligned}
\end{equation}
which together with (\ref{nonlinearsim}) and  (\ref{uv1})-(\ref{uv2}) yields
\begin{equation}\label{Lownonlinear4}
\begin{aligned}
\|G\|_{L^1_{t}(\dot{B}^{\frac{d}{2}-1}_{2,1})}&\lesssim \|a\|_{\widetilde{L}^2_{t}(\dot{B}^{\frac{d}{2}}_{2,1})}^2+\|a\|_{\widetilde{L}^{\infty}_{t}(\dot{B}^{\frac{d}{2}}_{2,1})}\|u\|_{L^1_{t}(\dot{B}^{\frac{d}{2}+1}_{2,1})}\\
&\quad+\|(a,b)\|_{\widetilde{L}^2_{t}(\dot{B}^{\frac{d}{2}}_{2,1})}\|u-w\|_{\widetilde{L}^{2}_{t}(\dot{B}^{\frac{d}{2}-1}_{2,1})}\\
&\lesssim\mathcal{X}^2(t).
\end{aligned}
\end{equation}
We substitute (\ref{Lownonlinear1})-(\ref{Lownonlinear4}) into (\ref{Lowinequality4}) to get
\begin{equation}\label{Low1}
\begin{aligned}
&\|(a,u,b,w)\|_{\widetilde{L}^{\infty}_{t}(\dot{B}^{\frac{d}{2}-1}_{2,1})}^{\ell}+\|(a,u,b,w)\|_{L^1_{t}(\dot{B}^{\frac{d}{2}+1}_{2,1})}^{\ell}\\
&\quad\lesssim\|(a_{0},u_{0},b_{0},w_{0})\|_{\dot{B}^{\frac{d}{2}-1}_{2,1}}^{\ell}+\mathcal{X}^2(t).
\end{aligned}
\end{equation}

In addition, applying the operator $\dot{\Delta}_{j}$ to \eqref{relativedamp1}, taking the $L^2$-inner product of the resulting equation with $\dot{\Delta}_{j}(u-w)$ and then using the Bernstein inequality, we derive
\begin{equation}\label{Low2p}
\begin{aligned}
&\frac{d}{dt}\|\dot{\Delta}_{j}(u-w)\|_{L^2}^2+\|\dot{\Delta}_{j}(u-w)\|_{L^2}^2\\
&\lesssim \big( 2^{j}\|\dot{\Delta}_{j}(a,u,b)\|_{L^2}+\|\dot{\Delta}_{j}(u\cdot\nabla u,w\cdot \nabla w)\|_{L^2}+\|\dot{\Delta}_{j}G\|_{L^2}\big) \|\dot{\Delta}_{j}(u-w)\|_{L^2},\quad j\leq0,
\end{aligned}
\end{equation}
which gives rise to
\begin{equation}\label{low22pp}
\begin{aligned}
&\|\dot{\Delta}_{j}(u-w)\|_{L^2}^2+\int_{0}^{t}\|\dot{\Delta}_{j}(u-w)\|_{L^2}^2d\tau\\
&\quad\lesssim \|\dot{\Delta}_{j}(u_{0},w_{0})\|_{L^2}^2+ 2^{j}\|\dot{\Delta}_{j}(a,u,b)\|_{L^2_t(L^2)} \|\dot{\Delta}_{j}(u-w)\|_{L^2_t(L^2)}\\
&\quad\quad+ \big(\|\dot{\Delta}_{j}(u\cdot\nabla u,w\cdot \nabla w)\|_{L^1_t(L^2)}+\|\dot{\Delta}_{j}G\|_{L^1_t(L^2)}\big) \|\dot{\Delta}_{j}(u,w)\|_{L^{\infty}_t(L^2)}\\
&\quad\leq C\big( \|\dot{\Delta}_{j}(u_{0},w_{0})\|_{L^2}^2+\|\dot{\Delta}_{j}(u,w)\|_{L^{\infty}_t(L^2)}^2+ 2^{2j}\|\dot{\Delta}_{j}(a,u,b)\|_{L^2_t(L^2)}^2\big)\\
&\quad\quad+C\big( \|\dot{\Delta}_{j}(u\cdot\nabla u,w\cdot \nabla w)\|_{L^1_t(L^2)}^2+\|\dot{\Delta}_{j}G\|_{L^1_t(L^2)}^2\big)+\frac{1}{2} \int_{0}^{t}\|\dot{\Delta}_{j}(u-w)\|_{L^2}^2d\tau,\quad j\leq0.
\end{aligned}
\end{equation}
Multiplying \eqref{low22pp} by $2^{j(\frac{d}{2}-1)}$ and thence summing it over $j\leq0$, we have by (\ref{Lownonlinear2})-\eqref{Low1} and \eqref{inter} that
\begin{equation}\label{Low2}
\begin{aligned}
\|u-w\|_{\widetilde{L}^2_{t}(\dot{B}^{\frac{d}{2}-1}_{2,1})}^{\ell}&\lesssim \|(u_{0},w_{0})\|_{\dot{B}^{\frac{d}{2}-1}_{2,1}}^{\ell}+\|(u,w)\|_{\widetilde{L}^{\infty}_{t}(\dot{B}^{\frac{d}{2}-1}_{2,1})}^{\ell}+\|(a,u,b)\|_{\widetilde{L}^2_{t}(\dot{B}^{\frac{d}{2}}_{2,1})}^{\ell}\\
&\quad+\|(u\cdot\nabla u,w\cdot \nabla w)\|_{L^1_{t}(\dot{B}^{\frac{d}{2}-1}_{2,1})}^{\ell}+\|G\|_{L^1_{t}(\dot{B}^{\frac{d}{2}-1}_{2,1})}^{\ell}\\
&\lesssim\|(a_{0},u_{0},b_{0},w_{0})\|_{\dot{B}^{\frac{d}{2}-1}_{2,1}}^{\ell}+\mathcal{X}^2(t).
\end{aligned}
\end{equation}
Note that the inequality \eqref{Low2p} also implies for $j\leq0$ that
\begin{equation}\nonumber
\begin{aligned}
&\|\dot{\Delta}_{j}(u-w)\|_{L^2}+\int_{0}^{t}\|\dot{\Delta}_{j}(u-w)\|_{L^2}d\tau\\
&\quad\lesssim 2^{-j}\|\dot{\Delta}_{j}(u_{0},w_{0})\|_{L^2}+\int_{0}^{t} \big( 2^{j}\|\dot{\Delta}_{j}(a,u,b)\|_{L^2}+2^{-j}\|\dot{\Delta}_{j}(u\cdot\nabla u,w\cdot \nabla w)\|_{L^2}+2^{-j}\|\dot{\Delta}_{j}G\|_{L^2}\big) d\tau.
\end{aligned}
\end{equation}
Therefore, it holds that
\begin{equation}\label{Low3}
\begin{aligned}
&\|u-w\|_{L^1_{t}(\dot{B}^{\frac{d}{2}}_{2,1})}^{\ell}\\
&\quad\lesssim \|(u_{0},w_{0})\|_{\dot{B}^{\frac{d}{2}-1}_{2,1}}^{\ell}+\|(a,u,b)\|_{L^1_{t}(\dot{B}^{\frac{d}{2}+1}_{2,1})}^{\ell}+\|(u\cdot\nabla u,w\cdot\nabla w)\|_{L^1_{t}(\dot{B}^{\frac{d}{2}-1}_{2,1})}^{\ell}+\|G\|_{L^1_{t}(\dot{B}^{\frac{d}{2}-1}_{2,1})}^{\ell}\\
&\quad\lesssim\|(a_{0},u_{0},b_{0},w_{0})\|_{\dot{B}^{\frac{d}{2}-1}_{2,1}}^{\ell}+\mathcal{X}^2(t).
\end{aligned}
\end{equation}
By (\ref{Low1}) and (\ref{Low2})-\eqref{Low3}, we prove (\ref{Low}). 
\end{proof}

\subsection{High-frequency analysis}

In this subsection, we estimate solutions to the Cauchy problem \eqref{m1n} in the high-frequency region $\{\xi\in\mathbb{R}^{d}~|~ |\xi|\geq \frac{3}{8}\}$. To this end, we show a high-frequency Lyapunov type inequality of \eqref{delta12}.

\begin{lemma}\label{lemma45}
Let $(a,u,b,w)$ be any strong solution to the Cauchy problem $(\ref{m1n})$. Then, it holds for any $j\geq-1$ that
\begin{equation}\label{Highinequality}
\begin{aligned}
&\frac{d}{dt}\mathcal{E}_{2,j}(t)+\mathcal{D}_{2,j}(t)\\
&\lesssim \big{(}2^{j}\|\dot{\Delta}_{j}(au) \|_{L^2}+\|\dot{\Delta}_{j}(u\cdot \nabla u,w\cdot\nabla b,w\cdot\nabla w)\|_{L^2}+\|\dot{\Delta}_{j}G\|_{L^2}+\|\div u\|_{L^{\infty}}\|\nabla\dot{\Delta}_{j}a\|_{L^2}\\
&\quad+\|\nabla\dot{\Delta}_{j}(a\div u)\|_{L^2} +\|[u\cdot \nabla ,\dot{\Delta}_{j}]u\|_{L^2}+\sum_{k=1}^{d}\|[u\cdot \nabla ,\partial_{k} \dot{\Delta}_{j}]a\|_{L^2}\big{)}\|\dot{\Delta}_{j}(a,\nabla a,u,b,w)\|_{L^2},
\end{aligned}
\end{equation}
where $\mathcal{E}_{2,j}(t)$ and $\mathcal{D}_{2,j}(t)$ are defined by
\begin{equation}\label{EH}
\left\{
\begin{aligned}
&\mathcal{E}_{2,j}(t):=\frac{1}{2}\|\dot{\Delta}_{j}(a,u,b,w) \|_{L^2}^2+\eta_{2}\Big{(}\frac{1}{2} \|\nabla \dot{\Delta}_{j}a\|_{L^2}^2+\big{(} \dot{\Delta}_{j}u~|~ \nabla\dot{\Delta}_{j}a\big{)}_{L^2}\Big{)}\\
&\quad\quad\quad~~+\eta_{2}2^{-2j}\Big{(}\dot{\Delta}_{j}w~|~\nabla \dot{\Delta}_{j} b\big{)}_{L^2},\\
&\mathcal{D}_{2,j}(t):=\|\nabla\dot{\Delta}_{j}u \|_{L^2}^2+\|\dot{\Delta}_{j}(u-w) \|_{L^2}^2\\
&\quad\quad\quad~~+\eta_{2}\big{(}\|\nabla \dot{\Delta}_{j}a\|_{L^2}^2-\|\div \dot{\Delta}_{j}u\|_{L^2}^2+\big{(}\dot{\Delta}_{j}(u-w)~|~ \nabla \dot{\Delta}_{j}a\big{)}_{L^2}\Big{)}\\
&\quad\quad\quad~~+ \eta_{2}2^{-2j}\Big{(}\|\nabla \dot{\Delta}_{j}b\|_{L^2}^2-\|\div \dot{\Delta}_{j}w\|_{L^2}^2+\big{(}\dot{\Delta}_{j}(w-u)~|~ \nabla \dot{\Delta}_{j}b\big{)}_{L^2}\Big{)},
\end{aligned}
\right.
\end{equation}
with $\eta_{2}\in(0,1)$ a constant to be determined.
\end{lemma}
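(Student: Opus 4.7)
The plan is to parallel the proof of Lemma \ref{lemma41}, with two modifications tailored to the high-frequency regime. First, the energy contains an extra term $\tfrac{1}{2}\|\nabla\dot\Delta_j a\|_{L^2}^2$ (with coefficient $\eta_2$), so that the induced dissipation retains $\|\nabla\dot\Delta_j a\|_{L^2}^2$ directly rather than the weaker $2^{2j}\|\dot\Delta_j a\|_{L^2}^2$ available through Bernstein. Second, the cross-term $(\dot\Delta_j w\,|\,\nabla\dot\Delta_j b)_{L^2}$ is rescaled by $2^{-2j}$, so that the resulting $\|\nabla\dot\Delta_j b\|_{L^2}^2$ dissipation reduces, via Bernstein, to damping type $\sim\|\dot\Delta_j b\|_{L^2}^2$, which is the natural scale for the hyperbolic $(b,w)$ block.

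The proof assembles four building blocks. (i) \emph{Basic $L^2$ energy}: pair $(\ref{delta12})_1$--$(\ref{delta12})_4$ with $\dot\Delta_j(a,u,b,w)$ respectively to obtain $\tfrac{1}{2}\tfrac{d}{dt}\|\dot\Delta_j(a,u,b,w)\|_{L^2}^2+\|\nabla\dot\Delta_j u\|_{L^2}^2+\|\dot\Delta_j(u-w)\|_{L^2}^2=\mathcal{R}_1$, where the convective piece $(\dot\Delta_j(u\cdot\nabla u)\,|\,\dot\Delta_j u)_{L^2}$ is split via $[\dot\Delta_j,u\cdot\nabla]u$ and integration by parts on the transport part, producing $\|[u\cdot\nabla,\dot\Delta_j]u\|_{L^2}$ in $\mathcal{R}_1$; the mass-equation contribution $(\dot\Delta_j\div(au)\,|\,\dot\Delta_j a)$ supplies $2^j\|\dot\Delta_j(au)\|_{L^2}$ via Bernstein. (ii) \emph{$\nabla a$-energy}: apply $\partial_k\dot\Delta_j$ to $(\ref{delta12})_1$, pair with $\partial_k\dot\Delta_j a$, and sum over $k$. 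The transport piece $\dot\Delta_j\partial_k(u\cdot\nabla a)=u\cdot\nabla\partial_k\dot\Delta_j a-[u\cdot\nabla,\partial_k\dot\Delta_j]a$ combined with integration by parts produces $\tfrac{1}{2}\|\div u\|_{L^\infty}\|\nabla\dot\Delta_j a\|_{L^2}^2$ plus the commutators $\sum_k\|[u\cdot\nabla,\partial_k\dot\Delta_j]a\|_{L^2}$, while $\partial_k\dot\Delta_j(a\div u)$ gives $\|\nabla\dot\Delta_j(a\div u)\|_{L^2}$; the linear contribution is $(\nabla\div\dot\Delta_j u\,|\,\nabla\dot\Delta_j a)_{L^2}$. (iii) \emph{$(\dot\Delta_j u\,|\,\nabla\dot\Delta_j a)$-cross}: using both $(\ref{delta12})_1$ and $(\ref{delta12})_2$, the pressure term gives $-\|\nabla\dot\Delta_j a\|_{L^2}^2$, the viscosity gives $(\Delta\dot\Delta_j u\,|\,\nabla\dot\Delta_j a)_{L^2}$, the drag gives $-(\dot\Delta_j(u-w)\,|\,\nabla\dot\Delta_j a)_{L^2}$, and the second pairing $(\dot\Delta_j u\,|\,\nabla\partial_t\dot\Delta_j a)_{L^2}$ yields $+\|\div\dot\Delta_j u\|_{L^2}^2$ after integration by parts. (iv) \emph{$(\dot\Delta_j w\,|\,\nabla\dot\Delta_j b)$-cross}: the same computation on $(\ref{delta12})_3$--$(\ref{delta12})_4$ gives $-\|\nabla\dot\Delta_j b\|_{L^2}^2+\|\div\dot\Delta_j w\|_{L^2}^2-(\dot\Delta_j(w-u)\,|\,\nabla\dot\Delta_j b)_{L^2}+\mathcal{R}_4$.

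The combination (i) $+\eta_2[$(ii) $+$ (iii)$]+\eta_2 2^{-2j}($iv$)$ produces exactly $\tfrac{d}{dt}\mathcal{E}_{2,j}(t)+\mathcal{D}_{2,j}(t)$ on the left, and Cauchy--Schwarz applied to each nonlinear contribution then yields \eqref{Highinequality}. The key algebraic point that makes this Lyapunov construction work at high frequencies is the cancellation
\[(\Delta\dot\Delta_j u\,|\,\nabla\dot\Delta_j a)_{L^2}=(\nabla\div\dot\Delta_j u\,|\,\nabla\dot\Delta_j a)_{L^2},\]
both integrals equalling $-(\div\dot\Delta_j u\,|\,\Delta\dot\Delta_j a)_{L^2}$ after integration by parts. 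Without this identity, summing (ii) and (iii) would leave a residual term of size $2^j\|\nabla\dot\Delta_j u\|_{L^2}\|\nabla\dot\Delta_j a\|_{L^2}$, which is not absorbable into the available dissipation when $j\geq-1$ and would defeat the whole estimate. With the cancellation in hand, the only indefinite residue in $\mathcal{D}_{2,j}$ is the $-\eta_2\|\div\dot\Delta_j u\|_{L^2}^2$ contribution, which is dominated by $\|\nabla\dot\Delta_j u\|_{L^2}^2$ by choosing $\eta_2$ sufficiently small; this is the main obstacle and it is purely algebraic.
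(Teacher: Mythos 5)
Your proof replicates the paper's own four-block Lyapunov construction (combining \eqref{E1}, \eqref{E3}, and \eqref{E62}--\eqref{E64} exactly as you describe), and the cancellation $(\Delta\dot\Delta_j u\,|\,\nabla\dot\Delta_j a)_{L^2}=(\nabla\div\dot\Delta_j u\,|\,\nabla\dot\Delta_j a)_{L^2}$ that you single out is precisely the implicit algebraic step that lets \eqref{E62} close without a viscosity residual; your explanation of why it is indispensable at high frequencies (the $2^{j}\|\nabla\dot\Delta_j u\|_{L^2}\|\nabla\dot\Delta_j a\|_{L^2}$ residue cannot be absorbed uniformly in $j\geq -1$) is the right mechanism. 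The only minor discrepancy is that the paper retains the naked bound $\|\dot\Delta_j(u\cdot\nabla u)\|_{L^2}$ for the convective term in the basic $L^2$ energy as in \eqref{E1}, producing the commutator $\|[u\cdot\nabla,\dot\Delta_j]u\|_{L^2}$ only inside the cross-term computation \eqref{E64} (paired against the transport part of $\nabla\div\dot\Delta_j(au)$) rather than from your block (i), but both choices yield an inequality of the stated form.
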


\begin{proof}
One can show from $(\ref{delta12})_{1}$-$(\ref{delta12})_{2}$ that
 \begin{equation}\label{E62}
\begin{aligned}
&\frac{d}{dt}\Big{(}\frac{1}{2} \|\nabla \dot{\Delta}_{j}a\|_{L^2}^2+\big{(} \dot{\Delta}_{j}u~|~ \nabla\dot{\Delta}_{j}a\big{)}_{L^2}\Big{)}\\
&\quad+\|\nabla \dot{\Delta}_{j}a\|_{L^2}^2-\|\div \dot{\Delta}_{j}u\|_{L^2}^2+ \big{(}\dot{\Delta}_{j}(u-w)~|~ \nabla \dot{\Delta}_{j}a\big{)}_{L^2}\\
&=-\big{(}\nabla \div \dot{\Delta}_{j}(au)~|~\nabla \dot{\Delta}_{j}a\big{)}_{L^2}-\big{(}\dot{\Delta}_{j}(u\cdot \nabla u)~|~ \nabla \dot{\Delta}_{j}a\big{)}_{L^2}\\
&\quad+\big{(}\nabla \div \dot{\Delta}_{j}(au)~|~\dot{\Delta}_{j}u\big{)}_{L^2}+\big{(}\dot{\Delta}_{j}G~|~\nabla \dot{\Delta}_{j}a\big{)}_{L^2}.
\end{aligned}
\end{equation}
Thence we decompose the following two nonlinearities as 
 \begin{equation}\nonumber
 \left\{
 \begin{aligned}
&\partial_{k} \div \dot{\Delta}_{j}(au)=-[u\cdot \nabla ,\partial_{k} \dot{\Delta}_{j}]a+u\cdot \nabla \partial_{k}\dot{\Delta}_{j}a+\partial_{k}\dot{\Delta}_{j}(a\div u),\quad k=1,...,d,\\
&\dot{\Delta}_{j}(u\cdot \nabla u)=-[u\cdot\nabla , \dot{\Delta}_{j}]u+u\cdot \nabla \dot{\Delta}_{j}u,
 \end{aligned}
 \right.
 \end{equation}
so that it holds
\begin{equation}\label{E63}
\begin{aligned}
&\big{|}\big{(}\nabla \div \dot{\Delta}_{j}(au)~|~\nabla \dot{\Delta}_{j}a\big{)}_{L^2}\big{|}\\
&\quad=\big{|}-\sum_{k=1}^{d}\big{(}[u\cdot \nabla ,\partial_{k} \dot{\Delta}_{j}]a~|~\nabla \dot{\Delta}_{j}a\big{)}_{L^2}-\frac{1}{2}\big{(}\div u\nabla{\Delta}_{j}a~|~\nabla \dot{\Delta}_{j}a\big{)}_{L^2}+\big{(}\nabla \dot{\Delta}_{j}(a\div u)~|~\nabla \dot{\Delta}_{j}a\big{)}_{L^2}\big{|}\\
&\quad\leq \big{(}\sum_{k=1}^{d}\|[u\cdot \nabla ,\partial_{k}\dot{\Delta}_{j}]a\|_{L^2}+\frac{1}{2}\|\div u\|_{L^{\infty}}\|\nabla \dot{\Delta}_{j}a\|_{L^2}+\|\nabla \dot{\Delta}_{j}(a\div u)\|_{L^2}\big{)}\|\nabla \dot{\Delta}_{j}a\|_{L^2},
\end{aligned}
\end{equation}
and
\begin{equation}\label{E64}
\begin{aligned}
&\big{|}-\big{(}\dot{\Delta}_{j}(u\cdot \nabla u)~|~ \nabla \dot{\Delta}_{j}a\big{)}_{L^2}+\big{(}\nabla \div \dot{\Delta}_{j}(au)~|~ \dot{\Delta}_{j}u\big{)}_{L^2}\big{|}\\
&\quad=\big{|}\big{(}u\cdot\nabla \dot{\Delta}_{j}u~|~ \nabla \dot{\Delta}_{j}a\big{)}_{L^2}+\big{(}u\cdot \nabla \dot{\Delta}_{j}\nabla a~|~ \dot{\Delta}_{j} u\big{)}_{L^2}+\big{(}\nabla \dot{\Delta}_{j}(a\div u)~|~ \dot{\Delta}_{j}u\big{)}_{L^2}\\
&\quad\quad-\big{(}[u\cdot \nabla ,\dot{\Delta}_{j}]u~|~\nabla \dot{\Delta}_{j}a\big{)}_{L^2}-\sum_{k=1}^{d}\big{(}[u\cdot \nabla ,\partial_{k} \dot{\Delta}_{j}]a~|~ \dot{\Delta}_{j}u\big{)}_{L^2}\big{|}\\
&\quad\leq \big{(}\|\div u\|_{L^{\infty}}\|\dot{\Delta}_{j}u\|_{L^2}+\|\nabla \dot{\Delta}_{j}(a\div u)\|_{L^2}+\|[u\cdot \nabla\dot{\Delta}_{j}]u\|_{L^2}\|\nabla \dot{\Delta}_{j}a\|_{L^2}\\
&\quad\quad+\sum_{k=1}^{d}\|[u\cdot \nabla ,\partial_{k}\dot{\Delta}_{j}]a\|_{L^2}\big{)}\|\dot{\Delta}_{j}(\nabla a,u)\|_{L^2}.
\end{aligned}
\end{equation}
 The combination of (\ref{E62})-(\ref{E64}) gives rise to
\begin{equation}\label{E6}
\begin{aligned}
&\frac{d}{dt}\big{(}\frac{1}{2} \|\nabla \dot{\Delta}_{j}a\|_{L^2}^2+\big{(} \dot{\Delta}_{j}u~|~ \nabla\dot{\Delta}_{j}a\big{)}_{L^2}\big{)}+\|\nabla \dot{\Delta}_{j}a\|_{L^2}^2-\|\div \dot{\Delta}_{j}u\|_{L^2}^2+ \big{(}\dot{\Delta}_{j}(u-w)~|~ \nabla \dot{\Delta}_{j}a\big{)}_{L^2}\\
&\lesssim \big{(}\|\dot{\Delta}_{j}G\|_{L^2}+\|\div u\|_{L^{\infty}}\|\nabla\dot{\Delta}_{j}a\|_{L^2}+\|\nabla\dot{\Delta}_{j}(a\div u)\|_{L^2}+\|[u\cdot \nabla ,\dot{\Delta}_{j}]u\|_{L^2}\\
&\quad+\sum_{k=1}^{d}\|[u\cdot \nabla ,\partial_{k}\dot{\Delta}_{j}]a\|_{L^2}\big{)}\|\dot{\Delta}_{j}(u,\nabla a)\|_{L^2}.
\end{aligned}
\end{equation}
By virtue of (\ref{E1}), (\ref{E3}), (\ref{E6}), the Bernstein inequality and the fact $2^{-j}\leq 2$, \eqref{Highinequality} holds.
\end{proof}

Furthermore, the following Lyapunov-type inequality is used to derive the higher order estimates for $(b,w)$ in $\eqref{m1n}_{3}$-$\eqref{m1n}_{4}$.

\begin{lemma}\label{lemma46}
Let $(a,u,b,w)$ be any strong solution to the Cauchy problem $(\ref{m1n})$. Then, it holds for any $j\geq-1$ that
\begin{equation}\label{dEulerpp}
\begin{aligned}
&\frac{d}{dt}\mathcal{E}_{3,j}(t)+\mathcal{D}_{3,j}(t)\\
&\lesssim \|\dot{\Delta}_{j}u\|_{L^2} \sqrt{\mathcal{E}_{3,j}(t)}\\
&\quad+\big{(} \|\div w\|_{L^{\infty}}\|\dot{\Delta}_{j}(b,w)\|_{L^2}+2^{-j}\|\dot{\Delta}_{j}(w\cdot\nabla b,w\cdot\nabla w)\|_{L^2}\\
&\quad+\|[w\cdot \nabla,\dot{\Delta}_{j}](b,w)\|_{L^2}\big{)}\sqrt{\mathcal{E}_{3,j}(t)},
\end{aligned}
\end{equation}
where $\mathcal{E}_{3,j}(t)$ and $\mathcal{D}_{3,j}(t)$ are defined by
\begin{equation}\label{EHH}
\left\{
\begin{aligned}
&\mathcal{E}_{3,j}(t):=\frac{1}{2}\|\dot{\Delta}_{j}(b,w)\|_{L^2}^2+\eta_{3}2^{-2j}\big{(}\dot{\Delta}_{j}w~|~\nabla \dot{\Delta}_{j} b\big{)}_{L^2},\\
&\mathcal{D}_{3,j}(t):=\|\dot{\Delta}_{j}w\|_{L^2}^2+\eta_{3}2^{-2j}\Big{(}\|\nabla\dot{\Delta}_{j}b\|_{L^2}^2-\|\div\dot{\Delta}_{j} w\|_{L^2}^2+\big{(}\dot{\Delta}_{j}(w-u)~|~\dot{\Delta}_{j}b\big{)}_{L^2}\Big{)},
\end{aligned}
\right.
\end{equation}
for a constant $\eta_{3}\in(0,1)$ to be chosen.
\end{lemma}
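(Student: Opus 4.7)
The proof will follow the same two-step Hoff-type scheme used in Lemma \ref{lemma41}, but restricted to the $(b,w)$-subsystem $\eqref{delta12}_{3}$--$\eqref{delta12}_{4}$ and with the cross-term weighted by the frequency factor $2^{-2j}$ appropriate for high frequencies $j\geq -1$. The conceptual point is to view $\eqref{m1n}_{3}$--$\eqref{m1n}_{4}$ as a damped compressible Euler system with prescribed source $u$: the linear damping $+w$ furnishes the $\|\dot{\Delta}_{j}w\|_{L^2}^2$ dissipation at $L^2$-level, and a Hoff-type pairing $\eta_{3}2^{-2j}(\dot{\Delta}_{j}w\,|\,\nabla\dot{\Delta}_{j}b)_{L^2}$ generates the pressure-type dissipation $\eta_{3}2^{-2j}\|\nabla\dot{\Delta}_{j}b\|_{L^2}^2$, while the source $u$ contributes a single linear error $\|\dot{\Delta}_{j}u\|_{L^2}\sqrt{\mathcal{E}_{3,j}(t)}$ on the right.

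For the $L^2$ step, I take the $L^2$ inner product of $\eqref{delta12}_{3}$ with $\dot{\Delta}_{j}b$ and of $\eqref{delta12}_{4}$ with $\dot{\Delta}_{j}w$, and observe that the pressure cross terms $(\mathrm{div}\,\dot{\Delta}_{j}w\,|\,\dot{\Delta}_{j}b)_{L^2}+(\nabla\dot{\Delta}_{j}b\,|\,\dot{\Delta}_{j}w)_{L^2}$ cancel via integration by parts. This yields the identity
\begin{equation*}
\tfrac{1}{2}\tfrac{d}{dt}\|\dot{\Delta}_{j}(b,w)\|_{L^2}^2+\|\dot{\Delta}_{j}w\|_{L^2}^2-(\dot{\Delta}_{j}u\,|\,\dot{\Delta}_{j}w)_{L^2}=-(\dot{\Delta}_{j}(w\cdot\nabla b)\,|\,\dot{\Delta}_{j}b)_{L^2}-(\dot{\Delta}_{j}(w\cdot\nabla w)\,|\,\dot{\Delta}_{j}w)_{L^2}.
\end{equation*}
The linear source $(\dot{\Delta}_{j}u\,|\,\dot{\Delta}_{j}w)_{L^2}$ is controlled directly by $\|\dot{\Delta}_{j}u\|_{L^2}\sqrt{\mathcal{E}_{3,j}(t)}$ via Cauchy--Schwarz. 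For each convective term I decompose $\dot{\Delta}_{j}(w\cdot\nabla f)=w\cdot\nabla\dot{\Delta}_{j}f+[\dot{\Delta}_{j},w\cdot\nabla]f$ and integrate by parts on the first piece to extract $-\tfrac{1}{2}\bigl(\mathrm{div}\,w\bigr)|\dot{\Delta}_{j}f|^2$; this produces exactly the $\|\mathrm{div}\,w\|_{L^\infty}\|\dot{\Delta}_{j}(b,w)\|_{L^2}$ and $\|[w\cdot\nabla,\dot{\Delta}_{j}](b,w)\|_{L^2}$ contributions in \eqref{dEulerpp}.

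For the cross-term step, I take the $L^2$ inner product of $\eqref{delta12}_{4}$ with $\nabla\dot{\Delta}_{j}b$, rewrite $(\partial_{t}\dot{\Delta}_{j}w\,|\,\nabla\dot{\Delta}_{j}b)_{L^2}=\tfrac{d}{dt}(\dot{\Delta}_{j}w\,|\,\nabla\dot{\Delta}_{j}b)_{L^2}+(\mathrm{div}\,\dot{\Delta}_{j}w\,|\,\partial_{t}\dot{\Delta}_{j}b)_{L^2}$, and substitute $\partial_{t}\dot{\Delta}_{j}b=-\mathrm{div}\,\dot{\Delta}_{j}w-\dot{\Delta}_{j}(w\cdot\nabla b)$ from $\eqref{delta12}_{3}$. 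After multiplying by the weight $\eta_{3}2^{-2j}$, the remaining nonlinearities $\eta_{3}2^{-2j}(\mathrm{div}\,\dot{\Delta}_{j}w\,|\,\dot{\Delta}_{j}(w\cdot\nabla b))_{L^2}$ and $\eta_{3}2^{-2j}(\dot{\Delta}_{j}(w\cdot\nabla w)\,|\,\nabla\dot{\Delta}_{j}b)_{L^2}$ are bounded by Bernstein's inequality $\|\nabla\dot{\Delta}_{j}g\|_{L^2}\lesssim 2^{j}\|\dot{\Delta}_{j}g\|_{L^2}$ and the uniform bound $2^{-2j}\leq 4$ for $j\geq -1$, giving the contribution $2^{-j}\|\dot{\Delta}_{j}(w\cdot\nabla b,w\cdot\nabla w)\|_{L^2}\sqrt{\mathcal{E}_{3,j}(t)}$ in \eqref{dEulerpp}. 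Adding the two resulting identities produces $\tfrac{d}{dt}\mathcal{E}_{3,j}(t)+\mathcal{D}_{3,j}(t)$ on the left with the claimed upper bound on the right.

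The subtle point, and what distinguishes this lemma from Lemma \ref{lemma41}, is the choice of the frequency-dependent weight $2^{-2j}$ on the cross term. Without it, the pairing $(\dot{\Delta}_{j}w\,|\,\nabla\dot{\Delta}_{j}b)_{L^2}$ would carry one extra derivative relative to the $L^2$-energy and therefore could not be absorbed into $\mathcal{E}_{3,j}$ at the $\dot{B}^{\frac{d}{2}+1}_{2,1}$ level; the weight $2^{-2j}$ exactly compensates this loss so that $\mathcal{E}_{3,j}(t)\sim\|\dot{\Delta}_{j}(b,w)\|_{L^2}^2$ and, after choosing $\eta_{3}\in(0,1)$ small enough to absorb the indefinite $-\|\mathrm{div}\,\dot{\Delta}_{j}w\|_{L^2}^2$ and the damping cross coupling $(\dot{\Delta}_{j}(w-u)\,|\,\nabla\dot{\Delta}_{j}b)_{L^2}$ into the principal dissipation $\|\dot{\Delta}_{j}w\|_{L^2}^2+\eta_{3}2^{-2j}\|\nabla\dot{\Delta}_{j}b\|_{L^2}^2$, one recovers $\mathcal{D}_{3,j}(t)\gtrsim\|\dot{\Delta}_{j}(b,w)\|_{L^2}^2$ up to a residue bounded by $\|\dot{\Delta}_{j}u\|_{L^2}\sqrt{\mathcal{E}_{3,j}(t)}$. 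This last step---handling the source $u$ in the damping cross coupling by moving it to the right and bounding it by Cauchy--Schwarz---is where the $\|\dot{\Delta}_{j}u\|_{L^2}\sqrt{\mathcal{E}_{3,j}(t)}$ term in \eqref{dEulerpp} originates, and it is the only real technical point beyond the pattern already established in Lemmas \ref{lemma41} and \ref{lemma45}.
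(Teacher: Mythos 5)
Your proof is correct and follows the paper's two-step Lyapunov scheme: the $L^2$-energy identity for the $(b,w)$-block (the paper's \eqref{bwj}) combined with the $\eta_{3}2^{-2j}$-weighted cross pairing \eqref{E3}. One caveat on your final paragraph: the $\|\dot{\Delta}_{j}u\|_{L^2}\sqrt{\mathcal{E}_{3,j}(t)}$ term in \eqref{dEulerpp} comes, as you correctly state in your second paragraph, from $(\dot{\Delta}_{j}u\,|\,\dot{\Delta}_{j}w)_{L^2}$ in the $L^2$ step, not from moving $u$ out of the cross-coupling; the pairing $(\dot{\Delta}_{j}(w-u)\,|\,\nabla\dot{\Delta}_{j}b)_{L^2}$ is kept intact inside $\mathcal{D}_{3,j}$, and its $u$-part, together with the coercivity $\mathcal{D}_{3,j}\gtrsim\|\dot{\Delta}_{j}(b,w)\|_{L^2}^2$ you discuss, is handled downstream of this lemma in \eqref{eulerinequality1}--\eqref{eulerinequality2} of Lemma \ref{prop42}.
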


\begin{proof}
According to $\eqref{bwj}_{3}$-$\eqref{bwj}_{4}$, we have
\begin{equation}\nonumber
\begin{aligned}
&\frac{1}{2}\frac{d}{dt}\|\dot{\Delta}_{j}(b,w) \|_{L^2}^2+\|\dot{\Delta}_{j}w \|_{L^2}^2\\
&~~\leq \big{(} \|\dot{\Delta}_{j}u\|_{L^2}+\frac{1}{2}\|\div w\|_{L^{\infty}}\|\dot{\Delta}_{j}(b,w)\|_{L^2}+\|[w\cdot \nabla,\dot{\Delta}_{j}](b,w)\|_{L^2}\big{)}   \|\dot{\Delta}_{j}(b,w)\|_{L^2},
\end{aligned}
\end{equation}
which together with \eqref{E3} leads to \eqref{dEulerpp}.
\end{proof}

Finally, we are ready to establish the expected high-frequency estimates of solutions to the Cauchy problem $(\ref{m1n})$.
\begin{lemma}\label{prop42}
Let $T>0$ be any given time, and $(a,u,b,w)$ be any strong solution to the Cauchy problem $(\ref{m1n})$ for $t\in(0,T)$ satisfying \eqref{XX0} and \eqref{simsim2}. Then it holds
\begin{equation}\label{High}
\begin{aligned}
&\| a\|_{\widetilde{L}^{\infty}_{t}(\dot{B}^{\frac{d}{2}}_{2,1})}^{h}+\|u\|_{\widetilde{L}^{\infty}_{t}(\dot{B}^{\frac{d}{2}-1}_{2,1})}^{h}+\|(b,w)\|_{\widetilde{L}^{\infty}_{t}(\dot{B}^{\frac{d}{2}+1}_{2,1})}^{h}\\
&\quad\quad+\|a\|_{L^1_{t}(\dot{B}^{\frac{d}{2}}_{2,1})}^{h}+\|(u,b,w)\|_{L^1_{t}(\dot{B}^{\frac{d}{2}+1}_{2,1})}^{h}\\
&\quad\lesssim \mathcal{X}_{0}+\mathcal{X}^2(t),\quad t\in(0,T),
\end{aligned}
\end{equation}
where $\mathcal{X}_{0}$ and $\mathcal{X}(t)$ are defined though \eqref{a2} and $(\ref{XX00})$, respectively.
\end{lemma}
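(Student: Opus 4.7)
The plan is to combine the two Lyapunov-type inequalities of Lemmas \ref{lemma45}--\ref{lemma46} with heat-equation maximal regularity for the momentum equation, and then dispatch the nonlinear right-hand sides with product, composition, and commutator estimates in Besov spaces, using the bootstrap assumption \eqref{XX00} to close them quadratically.

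First I would extract a coercive estimate from Lemma \ref{lemma45}. Choose $\eta_{2}\in(0,1)$ small so that all cross terms in $\mathcal{E}_{2,j}$ are absorbed; since $2^{2j}\geq 1/4$ for $j\geq -1$, this yields the equivalences
$$\mathcal{E}_{2,j}(t)\sim 2^{2j}\|\dot{\Delta}_{j}a\|_{L^2}^2+\|\dot{\Delta}_{j}(u,b,w)\|_{L^2}^2, \qquad \mathcal{D}_{2,j}(t)\gtrsim \mathcal{E}_{2,j}(t),$$
uniformly in $j\geq-1$. Here the coercivity of $\mathcal{D}_{2,j}$ uses the same key inequality \eqref{tri} as in the low-frequency case in order to generate a damping term for $w$ from those of $u$ and $u-w$, and absorbs $\|\div\dot{\Delta}_{j}u\|_{L^2}^2$ into $\|\nabla\dot{\Delta}_{j}u\|_{L^2}^2$. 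Dividing \eqref{Highinequality} by $\sqrt{\mathcal{E}_{2,j}+\varepsilon_{*}^2}$, integrating on $[0,t]$, letting $\varepsilon_{*}\to 0$, multiplying by $2^{j(d/2-1)}$, and summing over $j\geq-1$ produces
$$\|a\|_{\widetilde L^{\infty}_{t}(\dot B^{d/2}_{2,1})}^{h}+\|(u,b,w)\|_{\widetilde L^{\infty}_{t}(\dot B^{d/2-1}_{2,1})}^{h}+\|a\|_{L^1_{t}(\dot B^{d/2}_{2,1})}^{h}+\|(u,b,w)\|_{L^1_{t}(\dot B^{d/2-1}_{2,1})}^{h}\lesssim \mathcal{X}_{0}+(\text{nonlinear}).$$

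Next, to upgrade $u$ to the stronger norm $L^1_{t}(\dot B^{d/2+1}_{2,1})^{h}$, I would read $\eqref{m1n}_{2}$ as the heat equation $\partial_{t}u-\Delta u=-\nabla a-(u-w)-u\cdot\nabla u+G$ and apply standard frequency-localized maximal regularity (Bernstein plus Duhamel on $\dot{\Delta}_{j}u$) to obtain
$$\|u\|_{\widetilde L^{\infty}_{t}(\dot B^{d/2-1}_{2,1})}^{h}+\|u\|_{L^1_{t}(\dot B^{d/2+1}_{2,1})}^{h}\lesssim \|u_{0}\|_{\dot B^{d/2-1}_{2,1}}^{h}+\|(\nabla a,u-w,u\cdot\nabla u,G)\|_{L^1_{t}(\dot B^{d/2-1}_{2,1})}^{h}.$$
The linear source terms are already controlled by the previous step, while the nonlinear ones will be treated below. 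Then I would apply Lemma \ref{lemma46} with $\eta_{3}$ small: on $j\geq-1$ one checks $\mathcal{E}_{3,j}\sim\|\dot{\Delta}_{j}(b,w)\|_{L^2}^2$ and $\mathcal{D}_{3,j}\gtrsim \mathcal{E}_{3,j}$. The same division-and-integration argument, now weighted by $2^{j(d/2+1)}$ and summed, gives
$$\|(b,w)\|_{\widetilde L^{\infty}_{t}(\dot B^{d/2+1}_{2,1})}^{h}+\|(b,w)\|_{L^1_{t}(\dot B^{d/2+1}_{2,1})}^{h}\lesssim \|(b_{0},w_{0})\|_{\dot B^{d/2+1}_{2,1}}^{h}+\|u\|_{L^1_{t}(\dot B^{d/2+1}_{2,1})}^{h}+(\text{nonlinear}),$$
where the $u$-contribution on the right comes from the forcing $\dot{\Delta}_{j}u$ in \eqref{dEulerpp} and is handled by the preceding maximal regularity bound.

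It remains to bound every nonlinear contribution by $\mathcal{X}^{2}(t)$. The products $au$, $u\cdot\nabla u$, $w\cdot\nabla b$, $w\cdot\nabla w$ are controlled by the Besov product laws \eqref{uv1}--\eqref{uv2} and the norms \eqref{nonlinearsim}. The composite terms $g(a)\nabla a$, $f(a)\Delta u$, $h(a,b)(u-w)$ in $G$ are handled by Lemma \ref{lemma24} under the smallness \eqref{simsim2}, split into low and high frequencies so that each factor lands in a space already appearing in $\mathcal{X}(t)$. The commutators $[u\cdot\nabla,\dot{\Delta}_{j}]u$, $[u\cdot\nabla,\partial_{k}\dot{\Delta}_{j}]a$ and $[w\cdot\nabla,\dot{\Delta}_{j}](b,w)$ are treated by the standard commutator estimates recalled in the Appendix. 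I expect the main obstacles to be: (i) recovering the $w$-dissipation from $u$ and $u-w$ via \eqref{tri} in the high-frequency regime, without which $\mathcal{D}_{2,j}$ is not coercive; and (ii) closing the critical-regularity estimate of the term $h(a,b)(u-w)$, since the relative velocity only enjoys $L^1_{t}(\dot B^{d/2}_{2,1})\cap L^2_{t}(\dot B^{d/2-1}_{2,1})$ regularity at low frequencies, so that the frequency split and the weights $2^{j(d/2-1)}$ vs.\ $2^{j(d/2+1)}$ must be balanced carefully in order not to lose derivatives.
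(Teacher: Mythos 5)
Your proposal is correct and follows essentially the same route as the paper: the same modified energy functionals $\mathcal{E}_{2,j}$ and $\mathcal{E}_{3,j}$, the same damping transfer from $u$ and $u-w$ to $w$ via \eqref{tri}, the same heat-equation maximal-regularity upgrade for $u$ to $L^1_t(\dot B^{\frac{d}{2}+1}_{2,1})$ followed by feeding that into the damped-Euler estimate for $(b,w)$, and the same nonlinear closure via product, composition, and commutator estimates. The two obstacles you flag are precisely the key technical points the paper handles.
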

\begin{proof}
We recall that the Lyapunov type inequality \eqref{Highinequality} holds for $\mathcal{E}_{2,j}(t)$ and $\mathcal{D}_{2,j}(t)$ given by \eqref{EH}. It is easy to verify for any $j\geq -1$ that
\begin{equation}\label{Highsim1}
\begin{aligned}
&(\frac{1}{2}-\eta_{2})\|\dot{\Delta}_{j}u\|_{L^2}^2+\frac{\eta_{2}}{4}\|\nabla\dot{\Delta}_{j}a\|_{L^2}^2\leq \mathcal{E}_{2,j}(t)\leq (\frac{1}{2}+\eta_{2})\|\dot{\Delta}_{j}u\|_{L^2}^2+\frac{3\eta_{2}}{4}\|\nabla\dot{\Delta}_{j}a\|_{L^2}^2,
\end{aligned}
\end{equation}
and
\begin{equation}\label{Highsim2}
\begin{aligned}
\mathcal{D}_{2,j}(t)&\geq \frac{9}{16}2^{2j}\|\dot{\Delta}_{j}u\|_{L^2}^2+\|\dot{\Delta}_{j}(u-w)\|_{L^2}^2\\
&\quad+\eta_{2}\big{(}\frac{1}{2}\|\nabla\dot{\Delta}_{j}a\|_{L^2}^2-\frac{64}{9} 2^{2j}\|\dot{\Delta}_{j}u\|_{L^2}^2-\frac{1}{2}\|\dot{\Delta}_{j}(u-w)\|_{L^2}^2\big{)}\\
&\quad+\eta_{2}2^{-2j}\big{(}\frac{9}{32}2^{2j}\|\dot{\Delta}_{j}b\|_{L^2}^2-\frac{64}{9}2^{2j}\|\dot{\Delta}_{j}w\|_{L^2}^2-\frac{1}{2}\|\dot{\Delta}_{j}(w-u)\|_{L^2}^2\big{)}\\
&\geq( \frac{9}{64} -\frac{64}{9}\eta_{2}) 2^{2j}\|\dot{\Delta}_{j}u\|_{L^2}^2+(1-\frac{5}{2}\eta_{2}) \|\dot{\Delta}_{j}(u-w)\|_{L^2}^2-\frac{64}{9}\eta_{2}\|\dot{\Delta}_{j}w\|_{L^2}\\
&\quad+\frac{\eta_{2}}{2}\|\nabla\dot{\Delta}_{j}a\|_{L^2}^2+\frac{9\eta_{2}}{32}\|\dot{\Delta}_{j}b\|_{L^2}^2.
\end{aligned}
\end{equation}
 By (\ref{tri}) and (\ref{Highsim1})-(\ref{Highsim2}), for any $j\geq -1$, one can choose a sufficiently small constant $\eta_{2}\in(0,1)$ so that we have
\begin{equation}\label{Highsim11}
\begin{aligned}
&\mathcal{E}_{2,j}(t)\sim\|\dot{\Delta}_{j}(a,\nabla a, u,b,w)\|_{L^2}^2,
\end{aligned}
\end{equation}
and
\begin{equation}\label{Highsim22}
\begin{aligned}
\mathcal{D}_{2,j}(t)&\gtrsim \|\dot{\Delta}_{j}u\|_{L^2}^2+\|\dot{\Delta}_{j}(u-w)\|_{L^2}^2-\eta_{2}\|\dot{\Delta}_{j}w\|_{L^2}^2\\
&\quad+\eta_{2}\|\nabla\dot{\Delta}_{j}a\|_{L^2}^2+\eta_{2}\|\dot{\Delta}_{j}b\|_{L^2}^2\\
&\gtrsim \|\dot{\Delta}_{j}(a,\nabla a, u,b,w)\|_{L^2}^2,
\end{aligned}
\end{equation}
where in the last inequality one has used
$$
\|\dot{\Delta}_{j}u\|_{L^2}^2+\|\dot{\Delta}_{j}(u-w)\|_{L^2}^2\geq \frac{1}{2}\|\dot{\Delta}_{j}w\|_{L^2}^2.
$$
 Combining (\ref{Highinequality}) and (\ref{Highsim11})-(\ref{Highsim22}) together, we show
\begin{equation}\label{Highinequality2}
\begin{aligned}
&\frac{d}{dt}\mathcal{E}_{2,j}(t)+\mathcal{E}_{2,j}(t)\\
&\lesssim \big{(}2^{j}\|\dot{\Delta}_{j}(au) \|_{L^2}+\|\dot{\Delta}_{j}(u\cdot \nabla u,w\cdot\nabla b,w\cdot\nabla w)\|_{L^2}+\|\dot{\Delta}_{j}G\|_{L^2}+\|\div u\|_{L^{\infty}}\|\nabla\dot{\Delta}_{j}a\|_{L^2}\\
&\quad+\|\nabla\dot{\Delta}_{j}(a\div u)\|_{L^2} +\|[u\cdot \nabla ,\dot{\Delta}_{j}]u\|_{L^2}+\sum_{k=1}^{d}\|[u\cdot \nabla ,\partial_{k} \dot{\Delta}_{j}]a\|_{L^2}\big{)}\sqrt{ \mathcal{E}_{2,j}(t)},\quad\quad j\geq-1.
\end{aligned}
\end{equation}
By similar arguments as in (\ref{Lowinequality3})-(\ref{Lowinequality4}), the inequality \eqref{Highinequality2} implies
\begin{equation}\label{Highinequality3}
\begin{aligned}
&\|(\nabla a,u,b,w)\|_{\widetilde{L}^{\infty}_{t}(\dot{B}^{\frac{d}{2}-1}_{2,1})}^{h}+\|(\nabla a,u,b,w)\|_{L^1_{t}(\dot{B}^{\frac{d}{2}-1}_{2,1})}^{h}\\
&\quad\lesssim \|(\nabla a_{0},b_{0},u_{0},w_{0})\|_{\dot{B}^{\frac{d}{2}-1}_{2,1}}^{h}\\
&\quad\quad+\|au\|_{L^1_{t}(\dot{B}^{\frac{d}{2}}_{2,1})}^{h}+\|(u\cdot \nabla u,w\cdot\nabla b,w\cdot\nabla w)\|_{L^1_{t}(\dot{B}^{\frac{d}{2}-1}_{2,1})}^{h}\\
&\quad\quad+\|G\|_{L^1_{t}(\dot{B}^{\frac{d}{2}-1}_{2,1})}^{h}+\|\div u\|_{L^1_{t}(L^{\infty})}\|a\|_{\widetilde{L}^{\infty}_{t}(\dot{B}^{\frac{d}{2}}_{2,1})}^{h}+\|a\div u\|_{L_{t}^1(\dot{B}^{\frac{d}{2}}_{2,1})}^{h}\\
&\quad\quad+\sum_{j\geq-1}2^{j(\frac{d}{2}-1)}\big{(}\|[u\cdot \nabla , \dot{\Delta}_{j}]u\|_{L^1_{t}(L^2)}+\sum_{k=1}^{d}\|[u\cdot \nabla ,\partial_{k}\dot{\Delta}_{j}]a\|_{L^1_{t}(L^2)}\big{)}.
\end{aligned}
\end{equation}
To estimate the right-hand side of \eqref{Highinequality3}, it holds by $(\ref{nonlinearsim})$ that
\begin{equation}\label{Highinequality4}
\begin{aligned}
&\|a\div u\|_{L_{t}^1(\dot{B}^{\frac{d}{2}}_{2,1})}+\|\div u\|_{L^1_{t}(L^{\infty})}\|a\|_{\widetilde{L}^{\infty}_{t}(\dot{B}^{\frac{d}{2}}_{2,1})}^{h}\\
&\quad\lesssim \|a\|_{\widetilde{L}^{\infty}_{t}(\dot{B}^{\frac{d}{2}}_{2,1})}\|u\|_{L^1_{t}(\dot{B}^{\frac{d}{2}+1}_{2,1})}\lesssim \mathcal{X}^2(t).
\end{aligned}
\end{equation}
Making use of the commutator estimates $(\ref{commutator})$-$(\ref{commutator1})$, we also have
\begin{equation}\label{Highinequality5}
\left\{
\begin{aligned}
&\sum_{j\in\mathbb{Z}}2^{j(\frac{d}{2}-1)}\|[u\cdot \nabla , \dot{\Delta}_{j}]u\|_{L^1_{t}(L^2)}+\sum_{j\in\mathbb{Z}}2^{j(\frac{d}{2}-1)}\sum_{k=1}^{d}\|[u\cdot \nabla , \partial_{k}\dot{\Delta}_{j}]a\|_{L^1_{t}(L^2)}\\
&\quad\quad\quad\quad\lesssim \|u\|_{L^1_{t}(\dot{B}^{\frac{d}{2}+1}_{2,1})}\|(a,u)\|_{\widetilde{L}^{\infty}_{t}(\dot{B}^{\frac{d}{2}-1}_{2,1})}\lesssim \mathcal{X}^2(t).
\end{aligned}
\right.
\end{equation}
By (\ref{Lownonlinear1})-(\ref{Lownonlinear4}) and (\ref{Highinequality3})-(\ref{Highinequality5}), there holds
\begin{equation}\label{High1}
\begin{aligned}
&\|a\|_{\widetilde{L}^{\infty}_{t}(\dot{B}^{\frac{d}{2}}_{2,1})}^{h}+\|(u,b,w)\|_{\widetilde{L}^{\infty}_{t}(\dot{B}^{\frac{d}{2}-1}_{2,1})}^{h}+\|a\|_{L^1_{t}(\dot{B}^{\frac{d}{2}}_{2,1})}^{h}+\|(u,b,w)\|_{L^1_{t}(\dot{B}^{\frac{d}{2}-1}_{2,1})}^{h}\\
&\quad\lesssim\|a_{0}\|_{\dot{B}^{\frac{d}{2}}_{2,1}}^{h}+\|(u_{0},b_{0},w_{0})\|_{\dot{B}^{\frac{d}{2}-1}_{2,1}}^{h}+\mathcal{X}^2(t).
\end{aligned}
\end{equation}
Then applying Lemma \ref{heat} to the heat equation
\begin{equation}\label{ENS2}
\begin{aligned}
\partial_{t}u-  \Delta u=-\nabla a+ w-u-u\cdot\nabla u+G,
\end{aligned}
\end{equation}
 we gain
\begin{equation}\label{High2}
\begin{aligned}
&\|u\|_{\widetilde{L}^{\infty}_{t}(\dot{B}^{\frac{d}{2}-1}_{2,1})}^{h}+\|u\|_{L^1_{t}(\dot{B}^{\frac{d}{2}+1}_{2,1})}^{h}\\
&\quad\lesssim \|u_{0}\|_{\dot{B}^{\frac{d}{2}-1}_{2,1}}^{h}+\|a\|_{L^1_{t}(\dot{B}^{\frac{d}{2}}_{2,1})}^{h}+\|(u,w)\|_{L^1_{t}(\dot{B}^{\frac{d}{2}-1}_{2,1})}^{h}\\
&\quad\quad+\|u\cdot\nabla u\|_{L^1_{t}(\dot{B}^{\frac{d}{2}-1}_{2,1})}^{h}+\|G\|_{L^1_{t}(\dot{B}^{\frac{d}{2}-1}_{2,1})}^{h}\\
&\quad\lesssim \|a_{0}\|_{\dot{B}^{\frac{d}{2}}_{2,1}}^{h}+\|(u_{0},b_{0},w_{0})\|_{\dot{B}^{\frac{d}{2}-1}_{2,1}}^{h}+\mathcal{X}^2(t),
\end{aligned}
\end{equation}
where in the last inequality one has used (\ref{Lownonlinear2})-(\ref{Lownonlinear4}) and (\ref{High1}).

Next, we are going to obtain the $\widetilde{L}^{\infty}_{t}(\dot{B}^{\frac{d}{2}+1}_{2,1})\cap L^1_{t}(\dot{B}^{\frac{d}{2}+1}_{2,1})$-estimates of $(b,w)$. Let $\mathcal{E}_{3,j}(t)$ and $\mathcal{D}_{3,j}(t)$ be defined by \eqref{EHH}. For any $j\geq-1$, it is easy to verify that
\begin{equation}\label{eulerinequality1}
\begin{aligned}
&(\frac{1}{2}-2\eta_{3})\|\dot{\Delta}_{j}(b,w)\|_{L^2}^2\leq \mathcal{E}_{3,j}(t)\leq (\frac{1}{2}+2\eta_{3})\|\dot{\Delta}_{j}(b,w)\|_{L^2}^2,
\end{aligned}
\end{equation}
and
\begin{equation}\label{eulerinequality2}
\begin{aligned}
&\mathcal{D}_{3,j}(t)\geq (1-C\eta_{3})\|\dot{\Delta}_{j}w\|_{L^2}^2+\frac{9}{32}\eta_{3}\|\dot{\Delta}_{j}b\|_{L^2}^2-4\eta_{3}\|\dot{\Delta}_{j}u\|_{L^2}\|\dot{\Delta}_{j}b\|_{L^2},
\end{aligned}
\end{equation}
where $C>0$ is a constant independent of time. Choosing a suitably small constant $\eta_{3}\in(0,1)$, for any $j\geq-1$, we get by \eqref{dEulerpp} and (\ref{eulerinequality1})-$(\ref{eulerinequality2})$ that
\begin{equation}\label{dEuler}
\begin{aligned}
&\frac{d}{dt}\mathcal{E}_{3,j}(t)+\mathcal{E}_{3,j}(t)\\
&\lesssim \big( \|\dot{\Delta}_{j}u\|_{L^2}+\|\div w\|_{L^{\infty}}\|\dot{\Delta}_{j}(b,w)\|_{L^2}\\
&\quad\quad+2^{-j}\|\dot{\Delta}_{j}(w\cdot\nabla b,w\cdot\nabla w)\|_{L^2}+\|[w\cdot \nabla,\dot{\Delta}_{j}](b,w)\|_{L^2}\big{)}\sqrt{\mathcal{E}_{3,j}(t)}.
\end{aligned}
\end{equation}
With the help of (\ref{nonlinearsim}), (\ref{High2}), \eqref{dEuler}, $(\ref{commutator})$ and
\begin{equation}\nonumber
\begin{aligned}
&\|(w\cdot\nabla b,w\cdot\nabla w)\|_{L^1_{t}(\dot{B}^{\frac{d}{2}}_{2,1})}\lesssim \|w\|_{\widetilde{L}^{\infty}_{t}(\dot{B}^{\frac{d}{2}}_{2,1})}\|(w,b)\|_{L^1_{t}(\dot{B}^{\frac{d}{2}+1}_{2,1})}\lesssim \mathcal{X}^2(t),
\end{aligned}
\end{equation}
it holds
\begin{equation}\label{High3}
\begin{aligned}
&\|(b,w)\|_{\widetilde{L}^{\infty}_{t}(\dot{B}^{\frac{d}{2}+1}_{2,1})}^{h}+\|(b,w)\|_{L^1_{t}(\dot{B}^{\frac{d}{2}+1}_{2,1})}^{h}\\
&\quad\lesssim \|(b_{0},w_{0})\|_{\dot{B}^{\frac{d}{2}+1}_{2,1}}^{h}+ \|u\|_{L^1_{t}(\dot{B}^{\frac{d}{2}+1}_{2,1})}^{h}+\|\div w\|_{L^{\infty}_{t}(L^{\infty})}\|(b,w)\|_{L^1_{t}(\dot{B}^{\frac{d}{2}+1}_{2,1})}^{h}\\
&\quad\quad+\|(w\cdot\nabla b,w\cdot\nabla w)\|_{L^1_{t}(\dot{B}^{\frac{d}{2}}_{2,1})}^{h}+\sum_{j\geq-1}2^{j(\frac{d}{2}+1)}\|[w\cdot \nabla,\dot{\Delta}_{j}](b,w)\|_{L^2}\\
&\quad\lesssim \|a_{0}\|_{\dot{B}^{\frac{d}{2}}_{2,1}}^{h}+\|u_{0}\|_{\dot{B}^{\frac{d}{2}-1}_{2,1}}^{h}+\|(b_{0},w_{0})\|_{\dot{B}^{\frac{d}{2}+1}_{2,1}}^{h}+\mathcal{X}^2(t).
\end{aligned}
\end{equation}
The combination of (\ref{High1}), (\ref{High2}) and (\ref{High3}) leads to (\ref{High}). The proof of Lemma \ref{prop42} is completed.
\end{proof}

\section{Optimal time-decay rates}\label{sectionlarge}

\subsection{The proof of Theorem \ref{theorem13}}
In this subsection, we show Theorem \ref{theorem13} on the optimal time-decay rates of the strong solution to the Cauchy problem $(\ref{m1n})$ in the case that $\|(a_{0},u_{0},b_{0},w_{0})^{\ell}\|_{\dot{B}^{\sigma_{0}}_{2,\infty}}$ is bounded.

First, in addition to \eqref{XX0}, we have the following low-frequency estimates.
\begin{lemma}\label{prop51}
Let $(a,u,b,w)$ be the global solution to the Cauchy problem $(\ref{m1n})$ given by Theorem \ref{theorem12}. Then, under the assumptions of Theorem \ref{theorem13}, the following inequality holds:
\begin{equation}
\begin{aligned}
\mathcal{X}_{L,\sigma_{0}}(t):&=\|(a,u,b,w)\|_{\widetilde{L}^{\infty}_{t}(\dot{B}^{\sigma_{0}}_{2,\infty})}^{\ell}+\|(a,u,b,w)\|_{\widetilde{L}^1_{t}(\dot{B}^{\sigma_{0}+2}_{2,\infty})}^{\ell}\\
&\quad+\|u-w\|_{\widetilde{L}^1_{t}(\dot{B}^{\sigma_{0}+1}_{2,\infty})}^{\ell}+\|u-w\|_{\widetilde{L}^2_{t}(\dot{B}^{\sigma_{0}}_{2,\infty})}^{\ell}\leq C\delta_{0},\quad\quad t>0,\label{lowd}
\end{aligned}
\end{equation}
where $\delta_{0}$ is defined by \eqref{D0}, and $C>0$ is a constant independent of time.
\end{lemma}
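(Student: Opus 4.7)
The plan is to follow the same two-step scheme used for Lemma \ref{prop41}, but replace the target space $\dot{B}^{\frac{d}{2}-1}_{2,1}$ by $\dot{B}^{\sigma_{0}}_{2,\infty}$ and use the already-established global bound $\mathcal{X}(t)\lesssim \mathcal{X}_{0}\leq \varepsilon_{0}$ from Theorem \ref{theorem12} to absorb every nonlinear contribution. The functional $\mathcal{E}_{1,j}(t)$ and its dissipation $\mathcal{D}_{1,j}(t)$ from \eqref{EL} depend only on the linear part of $\eqref{delta12}$, hence the Lyapunov inequality \eqref{Lowinequality1} is still available and, by the same integrating-factor argument that leads to \eqref{Lowinequality3}, yields for every $j\leq 0$ the frequency-localized bound
\begin{equation}\nonumber
\begin{aligned}
\|\dot{\Delta}_{j}(a,u,b,w)\|_{L^{\infty}_{t}(L^{2})}+2^{2j}\|\dot{\Delta}_{j}(a,u,b,w)\|_{L^{1}_{t}(L^{2})}
&\lesssim \|\dot{\Delta}_{j}(a_{0},u_{0},b_{0},w_{0})\|_{L^{2}}\\
&\quad+\|\dot{\Delta}_{j}(2^{j}au,u\cdot\nabla u,w\cdot\nabla b,w\cdot\nabla w,G)\|_{L^{1}_{t}(L^{2})}.
\end{aligned}
\end{equation}

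Next I would multiply by $2^{j\sigma_{0}}$ and take $\sup_{j\leq 0}$ (rather than the $\ell^{1}$-sum used in Lemma \ref{prop41}) to produce the $\widetilde{L}^{\infty}_{t}(\dot{B}^{\sigma_{0}}_{2,\infty})^{\ell}$ and $\widetilde{L}^{1}_{t}(\dot{B}^{\sigma_{0}+2}_{2,\infty})^{\ell}$ norms on the left. Each nonlinear term then has to be estimated in $\widetilde{L}^{1}_{t}(\dot{B}^{\sigma_{0}+1}_{2,\infty})$ for $au$ and in $\widetilde{L}^{1}_{t}(\dot{B}^{\sigma_{0}}_{2,\infty})$ for the convective and $G$-terms. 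This is the step where the restriction $\sigma_{0}\in[-\tfrac{d}{2},\tfrac{d}{2}-1)$ enters: the target regularity is below $\tfrac{d}{2}$, so I would apply the product rule in the form $\|fg\|_{\dot{B}^{\sigma_{0}+\alpha}_{2,\infty}}\lesssim \|f\|_{\dot{B}^{\sigma_{0}+\alpha}_{2,\infty}}\|g\|_{\dot{B}^{d/2}_{2,1}\cap\dot{B}^{d/2-1}_{2,1}}$, pairing the low-regularity factor with the $\dot{B}^{\sigma_{0}}_{2,\infty}$-type norm (bounded by $\mathcal{X}_{L,\sigma_{0}}(t)$) and the smooth factor with the high-regularity bound $\mathcal{X}(t)\lesssim \mathcal{X}_{0}$ from \eqref{XX0}. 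Composition estimates (Lemma \ref{lemma24}) applied to $f(a), g(a), h(a,b)$ take care of $G$; because $\mathcal{X}_{0}$ is small, every nonlinear contribution is bounded by $\mathcal{X}_{0}\mathcal{X}_{L,\sigma_{0}}(t)$, which is absorbed into the left-hand side.

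For the relative velocity, I would apply $\dot{\Delta}_{j}$ to \eqref{relativedamp1}, obtain
\begin{equation}\nonumber
\tfrac{d}{dt}\|\dot{\Delta}_{j}(u-w)\|_{L^{2}}+2\|\dot{\Delta}_{j}(u-w)\|_{L^{2}}\lesssim \|\dot{\Delta}_{j}(-\nabla a+\Delta u+\nabla b-u\cdot\nabla u+w\cdot\nabla w+G)\|_{L^{2}},
\end{equation}
and integrate once to get, after multiplication by $2^{j\sigma_{0}}$ and $\sup_{j\leq 0}$, the $\widetilde{L}^{1}_{t}(\dot{B}^{\sigma_{0}+1}_{2,\infty})^{\ell}$ bound directly from the damping term together with the already-controlled right-hand side (the $\Delta u$ term only costs one extra derivative, and for $j\leq 0$ it is dominated by the $\widetilde{L}^{1}_{t}(\dot{B}^{\sigma_{0}+2}_{2,\infty})$ norm of $u$ already obtained). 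The $\widetilde{L}^{2}_{t}(\dot{B}^{\sigma_{0}}_{2,\infty})^{\ell}$ bound would follow by multiplying the $L^{2}$ energy identity by $\|\dot{\Delta}_{j}(u-w)\|_{L^{2}}$ and exploiting the damping term, as was done around \eqref{Low2p}--\eqref{low22pp}.

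The main obstacle I anticipate is the nonlinearity $h(a,b)(u-w)$ hidden in $G$: the factor $u-w$ only has the critical regularity supplied by \eqref{XX0}, so I must pair it with $h(a,b)$ using the product rule $\|h(a,b)(u-w)\|_{\dot{B}^{\sigma_{0}}_{2,\infty}}\lesssim \|h(a,b)\|_{\dot{B}^{d/2}_{2,1}}\|(u-w)\|_{\dot{B}^{\sigma_{0}}_{2,\infty}}$, which is valid thanks to $\sigma_{0}<\tfrac{d}{2}-1$; otherwise one ends up needing a $\dot{B}^{\sigma_{0}}_{2,\infty}$ estimate of $u-w$ with a time-integrability that is not yet available. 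Once this nonlinear term is controlled, a Gronwall-type closure argument using the smallness of $\mathcal{X}_{0}$ gives $\mathcal{X}_{L,\sigma_{0}}(t)\leq C\delta_{0}$, completing the proof of \eqref{lowd}.
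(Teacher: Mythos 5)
Your proposal matches the paper's proof in all essentials: it reuses the low-frequency Lyapunov functional $\mathcal{E}_{1,j}$ from \eqref{EL}, passes from \eqref{Lowinequality1} to a frequency-localized bound, replaces the $\ell^1$-sum over $j\le 0$ used for Lemma \ref{prop41} by $\sup_{j\le 0}$ with weight $2^{j\sigma_0}$ to land in $\dot{B}^{\sigma_0}_{2,\infty}$, estimates the nonlinearities by pairing the low-regularity factor (measured by $\mathcal{X}_{L,\sigma_0}$) with the high-regularity factor (measured by $\mathcal{X}\lesssim\mathcal{X}_0$) via \eqref{uv3} and Lemma \ref{lemma24}, and treats $u-w$ through the damped equation \eqref{relativedamp1} exactly as in \eqref{Low2p}--\eqref{Low3}. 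Your identification of the product bound $\|h(a,b)(u-w)\|_{\dot{B}^{\sigma_0}_{2,\infty}}\lesssim\|h(a,b)\|_{\dot{B}^{d/2}_{2,1}}\|u-w\|_{\dot{B}^{\sigma_0}_{2,\infty}}$, with the two factors placed in $\widetilde{L}^2_t$, is precisely the mechanism the paper uses in \eqref{nonlinearsigma02}, closed by absorbing $\mathcal{X}(t)\mathcal{X}_{L,\sigma_0}(t)$ via the smallness $\mathcal{X}(t)\lesssim\mathcal{X}_0\ll1$ and noting $\|(a_0,u_0,b_0,w_0)^\ell\|_{\dot{B}^{\sigma_0}_{2,\infty}}+\mathcal{X}_0\sim\delta_0$.
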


\begin{proof}
Multiplying (\ref{Lowinequality3}) by $2^{\sigma_{0}j}$ and taking the supremum on both $[0,t]$ and $j\leq0$, we get
\begin{equation}\label{lowd1}
\begin{aligned}
&\|(a,u,b,w)\|_{\widetilde{L}^{\infty}_{t}(\dot{B}_{2,\infty}^{\sigma_{0}})}^{\ell}+\|(a,u,b,w)\|_{\widetilde{L}^1_{t}(\dot{B}_{2,\infty}^{\sigma_{0}+2})}^{\ell}\\
&\quad\lesssim \|(a_{0},u_{0},b_{0},w_{0})\|_{\dot{B}_{2,\infty}^{\sigma_{0}}}^{\ell}+\|au\|_{\widetilde{L}^1_{t}(\dot{B}^{\sigma_{0}+1}_{2,\infty})}^{\ell}+\|(u\cdot\nabla u,w\cdot\nabla b,w\cdot\nabla w)\|_{\widetilde{L}^1_{t}(\dot{B}^{\sigma_{0}}_{2,\infty})}^{\ell}+\|G\|_{\widetilde{L}^1_{t}(\dot{B}^{\sigma_{0}}_{2,\infty})}^{\ell}.
\end{aligned}
\end{equation}
Arguing similarly as in Lemma \ref{prop41}, we deduce by (\ref{simsim2}), (\ref{lh}), (\ref{uv3}) and Lemma \ref{lemma24} that
\begin{equation}\label{nonlinearsigma01}
\begin{aligned}
&\|(u\cdot\nabla u,w\cdot\nabla b,w\cdot\nabla w)\|_{\widetilde{L}^1_{t}(\dot{B}^{\sigma_{0}}_{2,\infty})}^{\ell}\\
&\quad\lesssim \|(u,w)\|_{\widetilde{L}^2_{t}(\dot{B}^{\frac{d}{2}}_{2,1})}\|(u,b,w)\|_{\widetilde{L}^2_{t}(\dot{B}^{\sigma_{0}+1}_{2,\infty})}\lesssim\mathcal{X}(t)\big{(}\mathcal{X}_{L,\sigma_{0}}(t)+\mathcal{X}(t)\big{)}.
\end{aligned}
\end{equation}
and
\begin{equation}\label{nonlinearsigma02}
\begin{aligned}
&\|G\|_{\widetilde{L}^1_{t}(\dot{B}^{\sigma_{0}}_{2,\infty})}\lesssim \|a\|_{\widetilde{L}^2_{t}(\dot{B}^{\frac{d}{2}}_{2,1})}\|a\|_{\widetilde{L}^2_{t}(\dot{B}^{\sigma_{0}+1}_{2,\infty})}+\|a\|_{\widetilde{L}^{\infty}_{t}(\dot{B}^{\frac{d}{2}}_{2,1})}\|u\|_{\widetilde{L}^1_{t}(\dot{B}^{\sigma_{0}+2}_{2,\infty})}\\
&\quad\quad\quad\quad\quad\quad+\big{(}\|a\|_{\widetilde{L}^{2}_{t}(\dot{B}^{\frac{d}{2}}_{2,1})}\|b\|_{\widetilde{L}^{2}_{t}(\dot{B}^{\frac{d}{2}}_{2,1})}+\|a\|_{\widetilde{L}^{2}_{t}(\dot{B}^{\frac{d}{2}}_{2,1})}+\|b\|_{\widetilde{L}^{2}_{t}(\dot{B}^{\frac{d}{2}}_{2,1})}\big{)}\|u-w\|_{\widetilde{L}^{2}_{t}(\dot{B}^{\sigma_{0}}_{2,\infty})}\\
&\quad\quad\quad\quad\quad~\lesssim\mathcal{X}(t)\big{(}\mathcal{X}_{L,\sigma_{0}}(t)+\mathcal{X}(t)\big{)}.
\end{aligned}
\end{equation}
Substituting the estimates (\ref{nonlinearsigma01})-(\ref{nonlinearsigma02}) into (\ref{lowd1}), we have
\begin{equation}\label{lowd2}
\begin{aligned}
&\|(a,u,b,w)\|_{\widetilde{L}^{\infty}_{t}(\dot{B}_{2,\infty}^{\sigma_{0}})}^{\ell}+\|(a,u,b,w)\|_{\widetilde{L}^1_{t}(\dot{B}_{2,\infty}^{\sigma_{0}+2})}^{\ell}\lesssim\|(a_{0},u_{0},b_{0},w_{0})\|_{\dot{B}_{2,\infty}^{\sigma_{0}}}^{\ell}+ \mathcal{X}(t)\big{(}\mathcal{X}_{L,\sigma_{0}}(t)+\mathcal{X}(t)\big{)}.
\end{aligned}
\end{equation}
Similarly to \eqref{Low2p}-\eqref{Low3}, one can have
\begin{equation}\label{lowd4}
\begin{aligned}
&\|u-w\|_{\widetilde{L}^1_{t}(\dot{B}^{\sigma_{0}+1}_{2,\infty})}^{\ell}+\|u-w\|_{\widetilde{L}^{2}_{t}(\dot{B}^{\sigma_{0}}_{2,\infty})}^{\ell}\\
&\quad\lesssim \|(u_{0},w_{0})\|_{\dot{B}^{\sigma_{0}}_{2,\infty}}^{\ell}+\|(a,u,b)\|_{\widetilde{L}^1_{t}(\dot{B}^{\sigma_{0}+2}_{2,\infty})}^{\ell}+\|(u\cdot\nabla u,w\cdot\nabla w)\|_{\widetilde{L}^1_{t}(\dot{B}^{\sigma_{0}+2}_{2,\infty})}^{\ell}+\|G\|_{\widetilde{L}^1_{t}(\dot{B}^{\sigma_{0}+2}_{2,\infty})}^{\ell}\\
&\lesssim  \|(a_{0},u_{0},b_{0},w_{0})\|_{\dot{B}^{\sigma_{0}}_{2,\infty}}^{\ell}+\mathcal{X}(t)\big{(}\mathcal{X}_{L,\sigma_{0}}(t)+\mathcal{X}(t)\big{)}.
\end{aligned}
\end{equation}
Thus, it follows by (\ref{nonlinearsigma01})-(\ref{lowd4}) that
\begin{equation}\nonumber
\begin{aligned}
&\mathcal{X}_{L,\sigma_{0}}(t)\lesssim \|(a_{0},u_{0},b_{0},w_{0})\|_{\dot{B}^{\sigma_{0}}_{2,\infty}}^{\ell}+\mathcal{X}(t)\big{(}\mathcal{X}_{L,\sigma_{0}}(t)+\mathcal{X}(t)\big{)}.
\end{aligned}
\end{equation}
Making use of (\ref{XX0}),  $\mathcal{X}(t)\lesssim\mathcal{X}_{0}<<1$ and $\|(a_{0},u_{0},b_{0},w_{0})\|_{\dot{B}^{\sigma_{0}}_{2,\infty}}^{\ell}+\mathcal{X}_{0}\sim \delta_{0}$, we prove (\ref{lowd}). The proof of Proposition \ref{prop51} is completed.
\end{proof}

\vspace{1mm}

Next, we introduce a new time-weighted energy functional
\begin{equation}\label{mathcalXtheta}
\begin{aligned}
\mathcal{X}_{\theta}(t)&:=\|\tau^{\theta}(a,u,b,w)\|_{\widetilde{L}^{\infty}_{t}(\dot{B}^{\frac{d}{2}-1}_{2,1})}^{\ell}+ \|\tau^{\theta}(a,u,b,w)\|_{L^1_{t}(\dot{B}^{\frac{d}{2}+1}_{2,1})}^{\ell}\\
&\quad+\|\tau^{\theta}(u-w)\|_{L^1_{t}(\dot{B}^{\frac{d}{2}}_{2,1})}^{\ell}+ \|\tau^{\theta}(u-w)\|_{\widetilde{L}^2_{t}(\dot{B}^{\frac{d}{2}-1}_{2,1})}^{\ell}\\
&\quad+\|\tau^{\theta}(\nabla a,u)\|_{\widetilde{L}^{\infty}_{t}(\dot{B}^{\frac{d}{2}-1}_{2,1})}^{h}+\|\tau^{\theta}(b,w)\|_{\widetilde{L}^{\infty}_{t}(\dot{B}^{\frac{d}{2}+1}_{2,1})}^{h}+\|\tau^{\theta}a\|_{L^1_{t}(\dot{B}^{\frac{d}{2}}_{2,1})}^{h}+\|\tau^{\theta}(u,b,w)\|_{L^1_{t}(\dot{B}^{\frac{d}{2}+1}_{2,1})}^{h}.
\end{aligned}
\end{equation}
We have the time-weighted estimates of the solution $(a,u,b,w)$ to the Cauchy problem \eqref{m1n} for low frequencies.
\begin{lemma}\label{lemma51}
Let $(a,u,b,w)$ be the global solution to the Cauchy problem $(\ref{m1n})$ given by Theorem \ref{theorem12}. Then, under the assumptions of Theorem \ref{theorem13}, for $-\frac{d}{2}\leq\sigma_{0}<\frac{d}{2}-1$ and $\theta>\frac{1}{2}(\frac{d}{2}+1-\sigma_{0})$, it holds
\begin{equation}
\begin{aligned}
&\|\tau^{\theta}(a,u,b,w)\|_{\widetilde{L}^{\infty}_{t}(\dot{B}^{\frac{d}{2}-1}_{2,1})}^{\ell}+ \|\tau^{\theta}(a,u,b,w)\|_{L^1_{t}(\dot{B}^{\frac{d}{2}+1}_{2,1})}^{\ell}+\|\tau^{\theta}(u-w)\|_{L^1_{t}(\dot{B}^{\frac{d}{2}}_{2,1})}^{\ell}+ \|\tau^{\theta}(u-w)\|_{\widetilde{L}^2_{t}(\dot{B}^{\frac{d}{2}-1}_{2,1})}^{\ell}\\
&\quad\lesssim \frac{\mathcal{X}(t)+\mathcal{X}_{L,\sigma_{0}}(t)}{\zeta}t^{\theta-\frac{1}{2}(\frac{d}{2}-1-\sigma_{0})}+  \big{(}\zeta+ \mathcal{X}(t)\big{)}\mathcal{X}_{\theta}(t),\quad t>0,\label{decaylow}
\end{aligned}
\end{equation}
where $\mathcal{X}(t)$, $\mathcal{X}_{L,\sigma_{0}}(t)$ and $\mathcal{X}_{\theta}(t)$ are defined by $(\ref{XX00})$, $(\ref{lowd})$ and $(\ref{mathcalXtheta})$, respectively, and $\zeta>0$ is a constant to be determined later.
\end{lemma}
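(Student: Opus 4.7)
My plan is to lift the low-frequency estimate of Lemma \ref{prop41} to its $\tau^\theta$-weighted analogue by applying the $\tau^\theta$-multiplier directly to the Lyapunov-type inequality \eqref{Lowinequality1} obtained in Lemma \ref{lemma41}. After dividing \eqref{Lowinequality1} by $\sqrt{\mathcal E_{1,j}+\varepsilon^2}$ and passing to the limit $\varepsilon\to 0^+$, one obtains the scalar ODE inequality
\[
\frac{d}{d\tau}\sqrt{\mathcal E_{1,j}(\tau)}+c\,2^{2j}\sqrt{\mathcal E_{1,j}(\tau)}\lesssim \mathcal N_j(\tau),\qquad j\leq 0,
\]
where $\mathcal N_j(\tau)$ bundles the nonlinear factors in the right-hand side of \eqref{Lowinequality1}, hence the Duhamel formula
\[
\sqrt{\mathcal E_{1,j}(t)}\lesssim e^{-c2^{2j}t}\sqrt{\mathcal E_{1,j}(0)}+\int_0^t e^{-c2^{2j}(t-\tau)}\mathcal N_j(\tau)\,d\tau.
\]
I would then multiply by $t^\theta$ and split the convolution at $\tau=t/2$. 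On $\tau\geq t/2$, the elementary $t^\theta\leq 2^\theta \tau^\theta$ allows me, after multiplying by $2^{j(d/2-1)}$ and summing over $j\leq 0$, to collapse the contribution into $\|\tau^\theta \mathcal N\|^\ell_{L^1_t(\dot B^{d/2-1}_{2,1})}$. On $\tau\leq t/2$, I use $e^{-c2^{2j}(t-\tau)}\leq e^{-c 2^{2j}t/2}$ and the uniform bound $t^\theta e^{-c2^{2j}t/2}\lesssim 2^{-2j\theta}$, combined with the Bernstein estimate $\|\mathcal N_j\|_{L^1_t}\lesssim 2^{-j\sigma_0}\|\mathcal N\|^\ell_{L^1_t(\dot B^{\sigma_0}_{2,\infty})}$ and a further frequency splitting at $2^j\sim t^{-1/2}$; this is the unique step that manufactures the decisive exponent $t^{\theta-(d/2-1-\sigma_0)/2}$. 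The initial-data contribution is treated identically and yields $t^{\theta-(d/2-1-\sigma_0)/2}\delta_0\lesssim t^{\theta-(d/2-1-\sigma_0)/2}\mathcal X_{L,\sigma_0}(t)$ through Lemma \ref{prop51}. Performing the same $\tau^\theta$-procedure on the damped relative-velocity equation \eqref{relativedamp1} (whose damping rate is the constant $1$ instead of $2^{2j}$) recovers the $L^1_t(\dot B^{d/2}_{2,1})\cap\widetilde L^2_t(\dot B^{d/2-1}_{2,1})$-components of $\mathcal X_\theta(t)$ by mimicking the passage \eqref{Low2p}-\eqref{Low3}.

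The nonlinear terms are then controlled with the product rules \eqref{uv1}-\eqref{uv2}, the composition bounds of Lemma \ref{lemma24} and the regularity \eqref{XX00}. For the $\tau\geq t/2$ contribution, placing the $\tau^\theta$-weight systematically on the factor whose $\mathcal X_\theta$-norm is available returns an upper bound of the form $\mathcal X(t)\mathcal X_\theta(t)$; for the $\tau\leq t/2$ contribution, the low-regularity estimates \eqref{nonlinearsigma01}-\eqref{nonlinearsigma02} already established in the proof of Lemma \ref{prop51} give $\|\mathcal N\|^\ell_{L^1_t(\dot B^{\sigma_0}_{2,\infty})}\lesssim \mathcal X(t)(\mathcal X(t)+\mathcal X_{L,\sigma_0}(t))$, which, together with the extracted factor $t^{\theta-(d/2-1-\sigma_0)/2}$ and the smallness $\mathcal X(t)\lesssim \mathcal X_0\ll 1$, produces the desired $t^{\theta-(d/2-1-\sigma_0)/2}(\mathcal X+\mathcal X_{L,\sigma_0})$ source in \eqref{decaylow}.

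The main obstacle lies in the damping-deficient nonlinear term $h(a,b)(u-w)$, which appears both inside $\mathcal N_j$ and in the source of \eqref{relativedamp1}: the only component carrying dissipative structure is $u-w$, which only sits at the $\widetilde L^2_t(\dot B^{d/2-1}_{2,1})$-level via the relaxation damping, while the coefficient $h(a,b)$ enjoys merely the $\widetilde L^\infty_t(\dot B^{d/2}_{2,1})$-bound from \eqref{XX00}. Controlling $\|\tau^\theta h(a,b)(u-w)\|^\ell_{L^1_t(\dot B^{d/2-1}_{2,1})}$ then forces one to borrow the $L^1_t(\dot B^{d/2+1}_{2,1})$-regularity of $(a,u,b,w)$ at the cost of a Young-type splitting $ab\leq \zeta^{-1}a+\zeta b$ (or its quadratic counterpart), and this is precisely where the parameter $\zeta$ in \eqref{decaylow} enters the picture: the $\zeta$-side contributes $\zeta \mathcal X_\theta(t)$, to be absorbed at the final closing argument of Theorem \ref{theorem13}, while the $\zeta^{-1}$-side multiplies the already-controlled factor $(\mathcal X+\mathcal X_{L,\sigma_0})\,t^{\theta-(d/2-1-\sigma_0)/2}$. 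Collecting all contributions and summing in $j\leq 0$ yields \eqref{decaylow}.
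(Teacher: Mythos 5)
Your route is viable but genuinely different from the paper's. The paper multiplies the Lyapunov inequality \eqref{Lowinequality1} (and \eqref{Low2p}) by $t^{\theta}$, which produces the lower-order term $\int_0^t\tau^{\theta-1}\|(a,u,b,w)\|_{\dot{B}^{\frac{d}{2}-1}_{2,1}}\,d\tau$; this term is the \emph{only} place where $\zeta$ enters, via the interpolation between $\dot{B}^{\sigma_0}_{2,\infty}$ and the $\tau^{\theta}$-weighted norms followed by Young's inequality, cf. \eqref{interL1}--\eqref{interL}, and it is what generates both $\frac{\mathcal{X}+\mathcal{X}_{L,\sigma_0}}{\zeta}t^{\theta-\frac12(\frac d2-1-\sigma_0)}$ and $\zeta\mathcal{X}_{\theta}$. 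You instead integrate \eqref{Lowinequality1} into a Duhamel formula and split the time convolution at $\tau=t/2$, extracting the decay factor from the far-past piece via a frequency splitting at $2^{j}\sim t^{-1/2}$ together with the $\dot B^{\sigma_0}_{2,\infty}$-bounds \eqref{nonlinearsigma01}--\eqref{nonlinearsigma02}; this is essentially the technique the paper deploys later for Lemma \ref{lemma61} (cf. \eqref{Lowsmall1}), transplanted to the present lemma. Carried out carefully (in particular: for the $L^1_t(\dot B^{\frac d2+1}_{2,1})$-component and for $u-w$ via \eqref{relativedamp1} you must exchange the two time integrals and keep the minimum of the kernel bounds in the frequency split, which you only gesture at), your argument yields the \emph{stronger} bound $(\mathcal{X}+\mathcal{X}_{L,\sigma_0})t^{\theta-\frac12(\frac d2-1-\sigma_0)}+\mathcal{X}\mathcal{X}_{\theta}$ with no $\zeta$ at all, which implies \eqref{decaylow} for every $\zeta\in(0,1]$ and is all that is needed when Theorem \ref{theorem13} is closed.

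The one step I would flag is your ``main obstacle'' paragraph: it misidentifies both the difficulty and the origin of $\zeta$. The term $\|\tau^{\theta}h(a,b)(u-w)\|^{\ell}_{L^1_t(\dot B^{\frac d2-1}_{2,1})}$ needs no Young-type splitting and no borrowing of $L^1_t(\dot B^{\frac d2+1}_{2,1})$-regularity: by \eqref{uv2} and Lemma \ref{lemma24} one simply puts the weight on the coefficient, $\|\tau^{\theta}h(a,b)(u-w)\|^{\ell}_{L^1_t(\dot B^{\frac d2-1}_{2,1})}\lesssim\|\tau^{\theta}(a,b)\|_{\widetilde L^2_t(\dot B^{\frac d2}_{2,1})}\|u-w\|_{\widetilde L^2_t(\dot B^{\frac d2-1}_{2,1})}\lesssim\mathcal{X}_{\theta}(t)\mathcal{X}(t)$, where the weighted $\widetilde L^2_t(\dot B^{\frac d2}_{2,1})$-norm of $(a,b)$ is obtained from the components of $\mathcal{X}_{\theta}$ by interpolation and the unweighted factor is exactly the $\widetilde L^2_t(\dot B^{\frac d2-1}_{2,1})$-bound on $u-w$ contained in $\mathcal{X}(t)$; this is precisely \eqref{Ltimenonlinear1} in the paper. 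Moreover, the splitting you describe could not work as stated: a $\zeta$/$\zeta^{-1}$ split of that product gives no mechanism for the $\zeta^{-1}$-side to acquire the factor $t^{\theta-\frac12(\frac d2-1-\sigma_0)}$. Since the direct estimate is available, this misstep does not break your proof, but the explanation of where $\zeta$ comes from should be corrected (in your framework it simply does not appear; in the paper it comes from the $\tau^{\theta-1}$-term discussed above).
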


\begin{proof}
We recall that $\mathcal{E}_{1,j}(t)$ given by \eqref{EL} satisfies the Lyapunov type inequality \eqref{Lowinequality1}. Multiplying (\ref{Lowinequality1}) by $t^{\theta}$ and using the fact $t^{\theta}\frac{d}{dt}\mathcal{E}_{1,j}(t)$=$\frac{d}{dt}\big{(}t^{\theta}\mathcal{E}_{1,j}(t)\big{)}-\theta t^{\theta-1}\mathcal{E}_{1,j}(t)$, we obtain
\begin{equation}\nonumber
\begin{aligned}
&\frac{d}{dt}\big{(}t^{\theta}\mathcal{E}_{1,j}(t)\big{)}+t^{\theta}2^{2j}\mathcal{E}_{1,j}(t)\\
&\quad\lesssim t^{\theta-1}\mathcal{E}_{1,j}(t)+t^{\theta} \big{(}2^{j}\|\dot{\Delta}_{j}(au) \|_{L^2}+\|\dot{\Delta}_{j}(u\cdot \nabla u,w\cdot\nabla b,w\cdot\nabla w)\|_{L^2}+\|\dot{\Delta}_{j}G\|_{L^2} \big{)}\sqrt{\mathcal{E}_{1,j}(t)},\quad j\leq0,
\end{aligned}
\end{equation}
which together with (\ref{ELsim1})-(\ref{DLsim1}) and $t^{\theta-1}\sqrt{\mathcal{E}_{1,j}(t)}\Big{|}_{t=0}=0$ yields for any $j\leq0$ that
\begin{equation}\label{decaylow11}
\begin{aligned}
&t^{\theta}\|\dot{\Delta}_{j}(a,u,b,w)\|_{L^2}+2^{2j}\int_{0}^{t}\tau^{\theta}\|\dot{\Delta}_{j}(a,u,b,w)\|_{L^2}d\tau\\
&\quad\lesssim \int_{0}^{t}\tau^{\theta-1}\|\dot{\Delta}_{j}(a,u,b,w)\|_{L^2}d\tau\\
&\quad\quad+\int_{0}^{t}\tau^{\theta}\big{(}2^{j}\|\dot{\Delta}_{j}(au)\|_{L^2}+\|\dot{\Delta}_{j}(u\cdot\nabla u,w\cdot\nabla b,w\cdot\nabla w)\|_{L^2}+\|\dot{\Delta}_{j}G\|_{L^2}\big{)}d\tau.
\end{aligned}
\end{equation}
Then we multiply (\ref{decaylow11}) by $2^{j(\frac{d}{2}-1)}$, take the supremum on $[0,t]$ and then sum over $j\leq 0$ to have
\begin{equation}\label{decaylow1}
\begin{aligned}
&\|\tau^{\theta}(a,u,b,w)\|_{\widetilde{L}^{\infty}_{t}(\dot{B}^{\frac{d}{2}-1}_{2,1})}^{\ell}+ \|\tau^{\theta}(a,u,b,w)\|_{L^1_{t}(\dot{B}^{\frac{d}{2}+1}_{2,1})}^{\ell}\\
&\quad\lesssim\int_{0}^{t}\tau^{\theta-1}\|(a,u,b,w)\|_{\dot{B}^{\frac{d}{2}-1}_{2,1}}^{\ell}d\tau+\|\tau^{\theta}au\|_{L^1_{t}(\dot{B}^{\frac{d}{2}}_{2,1})}^{\ell}\\
&\quad\quad+\|\tau^{\theta}(u\cdot \nabla u,w\cdot\nabla b,w\cdot\nabla w)\|_{L^1_{t}(\dot{B}^{\frac{d}{2}-1}_{2,1})}^{\ell}+\|\tau^{\theta}G\|_{L^1_{t}(\dot{B}^{\frac{d}{2}-1}_{2,1})}^{\ell}.
\end{aligned}
\end{equation}
To control the first term on the right-hand side of (\ref{decaylow1}), we deduce from (\ref{lh})-(\ref{inter}) that
\begin{equation}\label{interL1}
\begin{aligned}
&\int_{0}^{t}\tau^{\theta-1}\|(a,u,b,w)^{\ell}\|_{\dot{B}^{\frac{d}{2}-1}_{2,1}}d\tau\\
&\quad\lesssim \int_{0}^{t}\tau^{\theta-1}\|(a,u,b,w)^{\ell}\|_{\dot{B}^{\sigma_{0}}_{2,\infty}}^{1-\eta_{0}}\|(a,u,b,w)^{\ell}\|_{\dot{B}^{\frac{d}{2}+1}_{2,\infty}}^{\eta_{0}}d\tau\\
&\quad\lesssim \Big{(}\int_{0}^{t}\tau^{\theta-\frac{1}{1-\eta_{0}}}d\tau\Big{)}^{1-\eta_{0}}\|(a,u,b,w)^{\ell}\|_{\widetilde{L}^{\infty}_{t}(\dot{B}^{\sigma_{0}}_{2,\infty})}^{1-\eta_{0}}\|\tau^{\theta}(a,u,b,w)^{\ell}\|_{L^1_{t}(\dot{B}^{\frac{d}{2}+1}_{2,\infty})}^{\eta_{0}}\\
&\quad\lesssim \Big{(}t^{(\theta-\theta_{0})}\|(a,u,b,w)\|_{\widetilde{L}^{\infty}_{t}(\dot{B}^{\sigma_{0}}_{2,\infty})}^{\ell}\Big{)}^{(1-\eta_{0})}\Big{(}\|\tau^{\theta}(a,u,b,w)\|_{L^1_{t}(\dot{B}^{\frac{d}{2}+1}_{2,1})}^{\ell}\Big{)}^{\eta_{0}},
\end{aligned}
\end{equation}
for the constant $\eta_{0}\in (0,1)$ given by
\begin{equation}\label{eta3}
\begin{aligned}
\frac{d}{2}-1=\eta_{0}(\frac{d}{2}+1)+\sigma_{0}(1-\eta_{0}).
\end{aligned}
\end{equation}
Taking the advantage of (\ref{lh}) and the dissipative properties of $(a,u,b,w)$ for high frequencies, it is easy to verify that
\begin{equation}\label{interL2}
\begin{aligned}
&\int_{0}^{t}\tau^{\theta-1}\|a^{h}\|_{\dot{B}^{\frac{d}{2}-1}_{2,1}}d\tau\\
&\quad\lesssim \Big{(}\int_{0}^{t}\tau^{(\theta-1-\theta\eta_{0})\frac{1}{1-\eta_{0}}}d\tau\Big{)}^{1-\eta_{0}}\Big{(}\|a\|_{\widetilde{L}^{\infty}_{t}(\dot{B}^{\frac{d}{2}-1}_{2,1})}^{h}\Big{)}^{1-\eta_{0}}\Big{(}\|\tau^{\theta}a\|_{L^1_{t}(\dot{B}^{\frac{d}{2}-1}_{2,1})}^{h}\Big{)}^{\eta_{0}}\\
&\quad\lesssim \Big{(}t^{(\theta-\theta_{0})}\|a\|_{\widetilde{L}^{\infty}_{t}(\dot{B}^{\frac{d}{2}}_{2,1})}^{h}\Big{)}^{1-\eta_{0}}\Big{(}\|\tau^{\theta}a\|_{L^1_{t}(\dot{B}^{\frac{d}{2}}_{2,1})}^{h}\Big{)}^{\eta_{0}},
\end{aligned}
\end{equation}
and
\begin{equation}\label{interL3}
\begin{aligned}
&\int_{0}^{t}\tau^{\theta-1}\|u^{h}\|_{\dot{B}^{\frac{d}{2}-1}_{2,1}}d\tau\\
&\quad\lesssim\Big{(}\int_{0}^{t}\tau^{(\theta-1-\theta\eta_{0})\frac{1}{1-\eta_{0}}}d\tau\Big{)}^{1-\eta_{0}}\Big{(}\|u\|_{\widetilde{L}^{\infty}_{t}(\dot{B}^{\frac{d}{2}-1}_{2,1})}^{h}\Big{)}^{1-\eta_{0}}\Big{(}\|\tau^{\theta}u\|_{\widetilde{L}^1_{t}(\dot{B}^{\frac{d}{2}-1}_{2,\infty})}^{h}\Big{)}^{\eta_{0}}\\
&\quad\lesssim \Big{(}t^{(\theta-\theta_{0})}\|u\|_{\widetilde{L}^{\infty}_{t}(\dot{B}^{\frac{d}{2}-1}_{2,1})}^{h}\Big{)}^{1-\eta_{0}}\Big{(}\|\tau^{\theta}u\|_{L^1_{t}(\dot{B}^{\frac{d}{2}+1}_{2,1})}^{h}\Big{)}^{\eta_{0}}.
\end{aligned}
\end{equation}
Similarly, one has
\begin{equation}\label{interL4}
\begin{aligned}
&\int_{0}^{t}\tau^{\theta-1}\|(b,w)^h\|_{\dot{B}^{\frac{d}{2}-1}_{2,1}}d\tau\\
&\quad\lesssim \Big{(}t^{(\theta-\theta_{0})}\|(b,w)\|_{\widetilde{L}^{\infty}_{t}(\dot{B}^{\frac{d}{2}+1}_{2,1})}^{h}\Big{)}^{1-\eta_{0}}\Big{(}\|\tau^{\theta}(b,w)\|_{L^1_{t}(\dot{B}^{\frac{d}{2}+1}_{2,1})}^{h}\Big{)}^{\eta_{0}}.
\end{aligned}
\end{equation}
By (\ref{interL1})-(\ref{interL4}) and Young's inequality, we have for any constant $\zeta>0$ that
\begin{equation}\label{interL}
\begin{aligned}
&\int_{0}^{t}\tau^{\theta-1}\|(a,u,b,w)\|_{\dot{B}^{\frac{d}{2}-1}_{2,1}}^{\ell}d\tau\\
&\quad\lesssim \int_{0}^{t}\tau^{\theta-1}\big{(}\|(a,u,b,w)^{\ell}\|_{\dot{B}^{\frac{d}{2}-1}_{2,1}}+\|(a,u,b,w)^h\|_{\dot{B}^{\frac{d}{2}-1}_{2,1}}\big{)}d\tau\\
&\quad\lesssim \frac{\mathcal{X}(t)+\mathcal{X}_{L,\sigma_{0}}(t)}{\zeta}t^{\theta-\frac{1}{2}(\frac{d}{2}-1-\sigma_{0})}+\zeta\mathcal{X}_{\theta}(t).
\end{aligned}
\end{equation}
By similar arguments as used in Lemma \ref{prop41}, the nonlinearities on the right-hand side of (\ref{decaylow1}) can be estimated by
\begin{equation}
\begin{aligned}
&\|\tau^{\theta}au\|_{L^1_{t}(\dot{B}^{\frac{d}{2}}_{2,1})}+\|\tau^{\theta}(u\cdot \nabla u,w\cdot\nabla b,w\cdot\nabla w)\|_{L^1_{t}(\dot{B}^{\frac{d}{2}-1}_{2,1})}\\
&\quad\lesssim \|(a,u,w)\|_{\widetilde{L}^2_{t}(\dot{B}^{\frac{d}{2}}_{2,1})}\|\tau^{\theta}(u,b,w)\|_{\widetilde{L}^2_{t}(\dot{B}^{\frac{d}{2}}_{2,1})}\lesssim \mathcal{X}(t)\mathcal{X}_{\theta}(t) ,\label{Ltimenonlinear} 
\end{aligned}
\end{equation}
and
\begin{equation}
\begin{aligned}
&\|\tau^{\theta}G\|_{L^1_{t}(\dot{B}^{\frac{d}{2}-1}_{2,1})}\\
&\quad\lesssim \|a\|_{\widetilde{L}^2_{t}(\dot{B}^{\frac{d}{2}}_{2,1})}\|\tau^{\theta}a\|_{\widetilde{L}^2_{t}(\dot{B}^{\frac{d}{2}}_{2,1})}+\|a\|_{\widetilde{L}^{\infty}_{t}(\dot{B}^{\frac{d}{2}}_{2,1})}\|\tau^{\theta}u\|_{L^1_{t}(\dot{B}^{\frac{d}{2}+1}_{2,1})}\\
&\quad\quad+\|\tau^{\theta}(a,b)\|_{\widetilde{L}^{2}_{t}(\dot{B}^{\frac{d}{2}}_{2,1})}\|u-w\|_{\widetilde{L}^{2}_{t}(\dot{B}^{\frac{d}{2}-1}_{2,1})}\lesssim \mathcal{X}(t)\mathcal{X}_{\theta}(t).\label{Ltimenonlinear1}
\end{aligned}
\end{equation}
Substituting (\ref{interL})-(\ref{Ltimenonlinear1}) into (\ref{decaylow1}) and using Young's inequality, we get for any $\zeta>0$ that
\begin{equation}
\begin{aligned}
&\|\tau^{\theta}(a,u,b,w)\|_{\widetilde{L}^{\infty}_{t}(\dot{B}^{\frac{d}{2}-1}_{2,1})}^{\ell}+ \|\tau^{\theta}(a,u,b,w)\|_{L^1_{t}(\dot{B}^{\frac{d}{2}+1}_{2,1})}^{\ell}\\
&\quad\lesssim \frac{\mathcal{X}(t)+\mathcal{X}_{L,\sigma_{0}}(t)}{\zeta}t^{\theta-\frac{1}{2}(\frac{d}{2}-1-\sigma_{0})}+  \big{(}\zeta+ \mathcal{X}(t)\big{)}\mathcal{X}_{\theta}(t).\label{decaylowff}
\end{aligned}
\end{equation}

In addition, we multiply (\ref{Low2p}) by $t^{2\theta}$ to have
\begin{equation}\label{Low2pt}
\begin{aligned}
&\frac{d}{dt}\big( t^{2\theta}\|\dot{\Delta}_{j}(u-w)\|_{L^2}^2\big)+t^{2\theta}\|\dot{\Delta}_{j}(u-w)\|_{L^2}^2\\
&\lesssim t^{\theta-1} \|\dot{\Delta}_{j}(u,w)\|_{L^2} t^{\theta} \|\dot{\Delta}_{j}(u-w)\|_{L^2}\\
&\quad+t^{\theta}\big( 2^{j}\|\dot{\Delta}_{j}(a,u,b)\|_{L^2}+\|\dot{\Delta}_{j}(u\cdot\nabla u,w\cdot \nabla w)\|_{L^2}+\|\dot{\Delta}_{j}G\|_{L^2}\big) t^{\theta}\|\dot{\Delta}_{j}(u-w)\|_{L^2},\quad j\leq0,
\end{aligned}
\end{equation}
Similarly to \eqref{low22pp}-\eqref{Low3}, one obtains by \eqref{interL} and \eqref{decaylowff}-\eqref{Low2pt} that
\begin{equation}\label{decaylowsss}
\begin{aligned}
&\|\tau^{\theta}(u-w)\|_{\widetilde{L}^2_{t}(\dot{B}^{\frac{d}{2}-1}_{2,1})}^{\ell}+\|\tau^{\theta}(u-w)\|_{L^1_{t}(\dot{B}^{\frac{d}{2}}_{2,1})}^{\ell}\\
&\quad\lesssim \int_{0}^{t}\tau^{\theta-1}\|(u,w)\|_{\dot{B}^{\frac{d}{2}-1}_{2,1}}^{\ell}d\tau\\
&\quad\quad+\|\tau^{\theta}(a,u,b,w)\|_{\widetilde{L}^{\infty}_{t}(\dot{B}^{\frac{d}{2}-1}_{2,1})}^{\ell}+\|\tau^{\theta}(a,u,b)\|_{\widetilde{L}^2_{t}(\dot{B}^{\frac{d}{2}}_{2,1})}^{\ell}\\
&\quad\quad+\|\tau^{\theta}(u\cdot \nabla u,w\cdot\nabla b,w\cdot\nabla w)\|_{L^1_{t}(\dot{B}^{\frac{d}{2}-1}_{2,1})}^{\ell}+\|\tau^{\theta}G\|_{L^1_{t}(\dot{B}^{\frac{d}{2}-1}_{2,1})}^{\ell}\\
&\quad\lesssim \frac{\mathcal{X}(t)+\mathcal{X}_{L,\sigma_{0}}(t)}{\zeta}t^{\theta-\frac{1}{2}(\frac{d}{2}-1-\sigma_{0})}+  \big{(}\zeta+ \mathcal{X}(t)\big{)}\mathcal{X}_{\theta}(t).
\end{aligned}
\end{equation}
By \eqref{decaylowff} and \eqref{decaylowsss}, we prove (\ref{decaylow}). The proof of Lemma \ref{lemma51} is completed.
\end{proof}

Then, we show the time-weighted estimates of the solution $(a,u,b,w)$ to the Cauchy problem \eqref{m1n} for high frequencies.
\begin{lemma}\label{lemma52}
Let $(a,u,b,w)$ be the global solution to the Cauchy problem $(\ref{m1n})$ given by Theorem \ref{theorem12}. Then, under the assumptions of Theorem \ref{theorem13}, for $-\frac{d}{2}\leq\sigma_{0}<\frac{d}{2}-1$ and $\theta>\frac{1}{2}(\frac{d}{2}+1-\sigma_{0})$, it holds
\begin{equation}
\begin{aligned}
&\|\tau^{\theta }a\|_{\widetilde{L}^{\infty}_{t}(\dot{B}^{\frac{d}{2}}_{2,1})}^{h}+\|\tau^{\theta}u\|_{\widetilde{L}^{\infty}_{t}(\dot{B}^{\frac{d}{2}-1}_{2,1})}^{h}+\|\tau^{\theta}(b,w)\|_{\widetilde{L}^{\infty}_{t}(\dot{B}^{\frac{d}{2}+1}_{2,1})}^{h}\\ &\quad\quad+\|\tau^{\theta}a\|_{L^1_{t}(\dot{B}^{\frac{d}{2}}_{2,1})}^{h}+\|\tau^{\theta}(u,b,w)\|_{L^1_{t}(\dot{B}^{\frac{d}{2}+1}_{2,1})}^{h}\\ &\quad\lesssim \frac{\mathcal{X}(t)}{\zeta}t^{\theta-\frac{1}{2}(\frac{d}{2}-1-\sigma_{0})}+ \big{(}\zeta+ \mathcal{X}(t)\big{)}\mathcal{X}_{\theta}(t),\quad t>0,\label{decayhigh}
\end{aligned}
\end{equation}
where $\zeta>0$ is a constant to be determined later, and $\mathcal{X}(t)$ and $\mathcal{X}_{\theta}(t)$ are defined by $(\ref{XX00})$ and $(\ref{mathcalXtheta})$, respectively.
\end{lemma}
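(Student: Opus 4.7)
The plan is to run a time-weighted version of the high-frequency analysis from Lemma \ref{prop42}, paralleling the low-frequency argument of Lemma \ref{lemma51}. First I would start from the Lyapunov inequality \eqref{Highinequality2} (which is equivalent to $\mathcal{E}_{2,j}(t)\sim\|\dot\Delta_j(a,\nabla a,u,b,w)\|_{L^2}^2$ by \eqref{Highsim11}), multiply it by $t^{\theta}$ and write $t^{\theta}\tfrac{d}{dt}\mathcal{E}_{2,j}=\tfrac{d}{dt}(t^{\theta}\mathcal{E}_{2,j})-\theta t^{\theta-1}\mathcal{E}_{2,j}$. Dividing by $(\mathcal{E}_{2,j}+\varepsilon_{*}^{2})^{1/2}$, integrating on $[0,t]$, letting $\varepsilon_{*}\to 0$ exactly as in \eqref{Lowinequality1}-\eqref{Lowinequality3}, and then multiplying by $2^{j(d/2-1)}$, taking $\sup_{[0,t]}$ and summing over $j\geq -1$, I obtain the $\tau^{\theta}$-weighted analogue of \eqref{Highinequality3}, namely a bound on $\|\tau^{\theta}(\nabla a,u,b,w)\|_{\widetilde L^{\infty}_{t}(\dot B^{d/2-1}_{2,1})}^{h}+\|\tau^{\theta}(\nabla a,u,b,w)\|_{L^{1}_{t}(\dot B^{d/2-1}_{2,1})}^{h}$ by a linear boundary term $\int_{0}^{t}\tau^{\theta-1}\|(\nabla a,u,b,w)\|_{\dot B^{d/2-1}_{2,1}}^{h}d\tau$ plus $\tau^{\theta}$-weighted versions of every nonlinear and commutator contribution on the right of \eqref{Highinequality2}.

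For the boundary term, because only high frequencies are involved, I would not invoke $\mathcal{X}_{L,\sigma_{0}}(t)$ but instead use the pure high-frequency interpolation inequalities \eqref{interL2}-\eqref{interL4} directly, choosing the same exponent $\eta_{0}$ as in \eqref{eta3}, so that $(1-\eta_{0})(\theta-\theta_{*})=\theta-\tfrac{1}{2}(\tfrac{d}{2}-1-\sigma_{0})$ with $\theta_{*}:=\tfrac{1}{2}(\tfrac{d}{2}-1-\sigma_{0})$. Young's inequality with parameter $\zeta>0$ then converts this contribution into $\tfrac{\mathcal{X}(t)}{\zeta}t^{\theta-\frac{1}{2}(\frac{d}{2}-1-\sigma_{0})}+\zeta\mathcal{X}_{\theta}(t)$, which is exactly the form displayed in \eqref{decayhigh}.

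Next I would estimate the weighted nonlinearities. The terms $\|\tau^{\theta}au\|_{L^{1}_{t}(\dot B^{d/2}_{2,1})}^{h}$, $\|\tau^{\theta}(u\!\cdot\!\nabla u,w\!\cdot\!\nabla b,w\!\cdot\!\nabla w)\|_{L^{1}_{t}(\dot B^{d/2-1}_{2,1})}^{h}$ and $\|\tau^{\theta}G\|_{L^{1}_{t}(\dot B^{d/2-1}_{2,1})}^{h}$ are controlled by $\tau^{\theta}$-split versions of \eqref{Lownonlinear1}-\eqref{Lownonlinear4}, while the two commutator sums on the right of \eqref{Highinequality3} are handled by \eqref{Highinequality4}-\eqref{Highinequality5}, in both cases placing the weight $\tau^{\theta}$ on the factor whose $\widetilde L^{2}_{t}$ or $L^{1}_{t}$ norm is built into $\mathcal{X}_{\theta}(t)$ and leaving the other factor in the $\mathcal{X}(t)$-regularity. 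All these pieces bundle into $\lesssim\mathcal{X}(t)\mathcal{X}_{\theta}(t)$ via the product estimates \eqref{uv1}-\eqref{uv2}, the commutator estimates \eqref{commutator}-\eqref{commutator1} and the composition bound \eqref{F1}. This closes the $\widetilde L^{\infty}_{t}(\dot B^{d/2-1}_{2,1})\cap L^{1}_{t}(\dot B^{d/2-1}_{2,1})$-bound of $(\nabla a,u,b,w)^{h}$, and the corresponding estimate of $a^{h}$ at $\dot B^{d/2}_{2,1}$-level follows by Bernstein. Applying the heat-equation estimate of Lemma \ref{heat} to \eqref{ENS2} multiplied by $t^{\theta}$ as in \eqref{High2} then upgrades $u^{h}$ from the $L^{1}_{t}(\dot B^{d/2-1}_{2,1})$-level to $L^{1}_{t}(\dot B^{d/2+1}_{2,1})$, and finally multiplying the damped-Euler Lyapunov inequality \eqref{dEuler} by $t^{\theta}$ and repeating the same summation procedure recovers the top-order $(b,w)^{h}$-estimates in $\widetilde L^{\infty}_{t}(\dot B^{d/2+1}_{2,1})\cap L^{1}_{t}(\dot B^{d/2+1}_{2,1})$, whose right-hand side contains only the already controlled $\|\tau^{\theta}u\|_{L^{1}_{t}(\dot B^{d/2+1}_{2,1})}^{h}$ and a further $\mathcal{X}(t)\mathcal{X}_{\theta}(t)$ remainder.

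The main obstacle I anticipate is the careful allocation of $\tau^{\theta}$ in the trilinear and composition terms of $G$, especially $\|\tau^{\theta}h(a,b)(u-w)\|_{L^{1}_{t}(\dot B^{d/2-1}_{2,1})}^{h}$: the weight must fall on $a$ or $b$, so that the $u-w$ factor keeps its native $\widetilde L^{2}_{t}(\dot B^{d/2-1}_{2,1})$-integrability from $\mathcal{X}(t)$, whereas the weighted factor is taken in $\widetilde L^{2}_{t}(\dot B^{d/2}_{2,1})$ as provided by $\mathcal{X}_{\theta}(t)$ via an interpolation. Once this bookkeeping is verified, adding \eqref{decayhigh} to the low-frequency estimate \eqref{decaylow} and choosing $\zeta$ and $\mathcal{X}_{0}$ small enough absorbs every $\zeta\mathcal{X}_{\theta}(t)$ and $\mathcal{X}(t)\mathcal{X}_{\theta}(t)$ term into the left-hand side, yielding the desired bound.
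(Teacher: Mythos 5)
Your proposal reproduces the paper's proof of Lemma \ref{lemma52} step by step: the $t^{\theta}$-weighted Lyapunov inequality for $\mathcal{E}_{2,j}$, the interpolation-plus-Young treatment of the boundary term $\int_{0}^{t}\tau^{\theta-1}\|(\nabla a,u,b,w)\|_{\dot{B}^{d/2-1}_{2,1}}^{h}d\tau$ via the same exponent $\eta_{0}$, the weighted product and commutator bounds giving $\mathcal{X}(t)\mathcal{X}_{\theta}(t)$, the maximal-regularity upgrade of $u^{h}$ from Lemma \ref{heat} applied to \eqref{tuh}, the $t^{\theta}$-weighted damped-Euler inequality for $(b,w)^{h}$, and the careful placement of the weight on $(a,b)$ rather than $u-w$ in $\tau^{\theta}h(a,b)(u-w)$ — exactly as in \eqref{Ltimenonlinear1}. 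This is essentially the same argument as the paper's, with the same allocation of norms.
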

\begin{proof}
Let $\mathcal{E}_{2,j}(t)$ be denoted by \eqref{EH}. For any $j\geq-1$, we show after multiplying the Lyapunov type inequality (\ref{Highinequality2}) by $t^{\theta}$ that
\begin{equation}\nonumber
\begin{aligned}
&\!\!\!\!\!\!\frac{d}{dt}\big{(}t^{\theta}\mathcal{E}_{2,j}(t)\big{)}+t^{\theta}\mathcal{E}_{2,j}(t)\\
&\!\!\!\!\!\!\lesssim t^{\theta-1}\mathcal{E}_{2,j}(t)+t^{\theta}\big{(}2^{j}\|\dot{\Delta}_{j}(au) \|_{L^2}+\|\dot{\Delta}_{j}(u\cdot \nabla u,w\cdot\nabla b,w\cdot\nabla w)\|_{L^2}+\|\dot{\Delta}_{j}G\|_{L^2}\\
&\!\!\!\!\quad+\|\div u\|_{L^{\infty}}\|\nabla\dot{\Delta}_{j}a\|_{L^2}+\|\nabla\dot{\Delta}_{j}(a\div u)\|_{L^2} +\|[u\cdot \nabla ,\dot{\Delta}_{j}]u\|_{L^2}+\sum_{k=1}^{d}\|[u\cdot \nabla ,\partial_{k} \dot{\Delta}_{j}]a\|_{L^2}\big{)}\sqrt{ \mathcal{E}_{2,j}(t)}.
\end{aligned}
\end{equation}
Thence performing direct computations on the above inequality, we have
\begin{equation}\label{decayhigh1}
\begin{aligned}
&\|\tau^{\theta}(\nabla a,u,b,w)\|_{\widetilde{L}^{\infty}_{t}(\dot{B}^{\frac{d}{2}-1}_{2,1})}^{h}+\|\tau^{\theta}(\nabla a,u,b,w)\|_{L^1_{t}(\dot{B}^{\frac{d}{2}-1}_{2,1})}^{h}\\
&\quad\lesssim \int_{0}^{t}\tau^{\theta-1}\|(\nabla a,u,b,w)\|_{\dot{B}^{\frac{d}{2}-1}_{2,1}}^{h}d\tau\\
&\quad\quad+\|\tau^{\theta}au\|_{L^1_{t}(\dot{B}^{\frac{d}{2}}_{2,1})}^{h}+\|\tau^{\theta}(u\cdot \nabla u,w\cdot\nabla b,w\cdot\nabla w)\|_{L^1_{t}(\dot{B}^{\frac{d}{2}-1}_{2,1})}^{h}\\
&\quad\quad+\|\tau^{\theta}G\|_{L^1_{t}(\dot{B}^{\frac{d}{2}-1}_{2,1})}^{h}+\|\div u\|_{L^1_{t}(L^{\infty})}\|\tau^{\theta}a\|_{\widetilde{L}^{\infty}_{t}(\dot{B}^{\frac{d}{2}}_{2,1})}^{h}+\|\tau^{\theta}a\div u\|_{L_{t}^1(\dot{B}^{\frac{d}{2}}_{2,1})}^{h}\\
&\quad\quad+\sum_{j\geq-1}2^{j(\frac{d}{2}-1)}\big{(}\|[u\cdot \nabla , \dot{\Delta}_{j}]\tau^{\theta}u\|_{L^1_{t}(L^2)}+\sum_{k=1}^{d}\|[u\cdot \nabla ,\partial_{k}\dot{\Delta}_{j}]\tau^{\theta}a\|_{L^1_{t}(L^2)}\big{)}.
\end{aligned}
\end{equation}
By similar arguments as used in (\ref{interL1})-(\ref{interL4}), we have for any constant $\zeta>0$ that
\begin{equation}\label{timehigh}
\begin{aligned}
&\int_{0}^{t}\tau^{\theta-1}\|(\nabla a,u,b,w)\|_{\dot{B}^{\frac{d}{2}-1}_{2,1}}^{h}d\tau\\
&\quad\lesssim \Big{(}t^{\theta-\frac{1}{2}(\frac{d}{2}-1-\sigma_{0})}\big{(}\| (\nabla a,u)\|_{\widetilde{L}^{\infty}_{t}(\dot{B}^{\frac{d}{2}-1}_{2,1})}^{h}+\|(b,w)\|_{\widetilde{L}^{\infty}_{t}(\dot{B}^{\frac{d}{2}+1}_{2,1})}^{h}\big{)}\Big{)}^{1-\eta_{0}}\\
&\quad\quad\times\Big{(}\|\tau^{\theta}a\|_{L^1_{t}(\dot{B}^{\frac{d}{2}}_{2,1})}^{h}+\|\tau^{\theta}(u,b,w)\|_{L^1_{t}(\dot{B}^{\frac{d}{2}+1}_{2,1})}^{h}\Big{)}^{\eta_{0}}\\
&\quad\lesssim \frac{\mathcal{X}(t)}{\zeta}t^{\theta-\frac{1}{2}(\frac{d}{2}-1-\sigma_{0})}+\zeta\mathcal{D}^{H}_{\theta}(t),
\end{aligned}
\end{equation}
for the constant $\eta_{0}\in(0,1)$ given by (\ref{eta3}). As in (\ref{Highinequality4})-(\ref{Highinequality5}), one can show
\begin{equation}\label{Highinequality411}
\left\{
\begin{aligned}
&\|\tau^{\theta}a\div u\|_{L_{t}^1(\dot{B}^{\frac{d}{2}}_{2,1})}\lesssim \|a\|_{\widetilde{L}^{\infty}_{t}(\dot{B}^{\frac{d}{2}}_{2,1})}\|\tau^{\theta}u\|_{L^1_{t}(\dot{B}^{\frac{d}{2}+1}_{2,1})}\lesssim \mathcal{X}(t)\mathcal{X}_{\theta}(t),\\
&\|\div u\|_{L^1_{t}(L^{\infty})}\|\tau^{\theta}a\|_{\widetilde{L}^{\infty}_{t}(\dot{B}^{\frac{d}{2}}_{2,1})}\lesssim \|u\|_{L^1_{t}(\dot{B}^{\frac{d}{2}+1}_{2,1})}\|\tau^{\theta}a\|_{\widetilde{L}^{\infty}_{t}(\dot{B}^{\frac{d}{2}}_{2,1})}\lesssim \mathcal{X}(t)\mathcal{X}_{\theta}(t),\\
&\sum_{j\in\mathbb{Z}}2^{j(\frac{d}{2}-1)}\|[u\cdot \nabla , \dot{\Delta}_{j}]\tau^{\theta}u\|_{L^1_{t}(L^2)}\lesssim \|u\|_{L^1_{t}(\dot{B}^{\frac{d}{2}+1}_{2,1})}\|\tau^{\theta}u\|_{\widetilde{L}^{\infty}_{t}(\dot{B}^{\frac{d}{2}-1}_{2,1})}\lesssim \mathcal{X}(t)\mathcal{X}_{\theta}(t),\\
&\sum_{k=1}^{d}\sum_{j\in\mathbb{Z}}2^{j(\frac{d}{2}-1)}\|[u\cdot \nabla , \partial_{k}\dot{\Delta}_{j}]\tau^{\theta}a\|_{L^1_{t}(L^2)}\lesssim \|u\|_{L^1_{t}(\dot{B}^{\frac{d}{2}+1}_{2,1})}\|\tau^{\theta}a\|_{\widetilde{L}^{\infty}_{t}(\dot{B}^{\frac{d}{2}}_{2,1})}\lesssim \mathcal{X}(t)\mathcal{X}_{\theta}(t).
\end{aligned}
\right.
\end{equation}
Substituting (\ref{Ltimenonlinear})-(\ref{Ltimenonlinear1}) and (\ref{timehigh})-(\ref{Highinequality411}) into (\ref{decayhigh1}), we have
\begin{equation}\label{decayhigh2}
\begin{aligned}
&\|\tau^{\theta}(\nabla a,u,b,w)\|_{\widetilde{L}^{\infty}_{t}(\dot{B}^{\frac{d}{2}-1}_{2,1})}^{h}+\|\tau^{\theta}(\nabla a,u,b,w)\|_{L^1_{t}(\dot{B}^{\frac{d}{2}-1}_{2,1})}^{h}\\
&\quad\lesssim \frac{\mathcal{X}(t)}{\zeta}t^{\theta-\frac{1}{2}(\frac{d}{2}-1-\sigma_{0})}+\big{(}\zeta+ \mathcal{X}(t)\big{)}\mathcal{X}_{\theta}(t).
\end{aligned}
\end{equation}
Then we multiply $(\ref{ENS2})$ by $t^{\theta}$ to get
\begin{equation}\label{tuh}
\begin{aligned}
\partial_{t}(t^{\theta}u)-\Delta (t^{\theta} u)=\theta t^{\theta-1}u-\nabla(t^{\theta}a)+ t^{\theta}w- t^{\theta} u-t^{\theta}u\cdot\nabla u+t^{\theta}G,
\end{aligned}
\end{equation}
with the initial data $t^{\theta}u|_{t=0}=0$. By virtue of Lemma \ref{heat} for (\ref{tuh}), we have
\begin{equation}\nonumber
\begin{aligned}
&\|\tau^{\theta} u\|_{L^1_{t}(\dot{B}^{\frac{d}{2}+1}_{2,1})}^{h}\lesssim \int_{0}^{t}\tau^{\theta-1}\|u\|_{\dot{B}^{\frac{d}{2}-1}_{2,1}}^{h}d\tau+\|\tau^{\theta} a\|_{L^1_{t}(\dot{B}^{\frac{d}{2}}_{2,1})}^{h}+\|\tau^{\theta} (w,u)\|_{L^1_{t}(\dot{B}^{\frac{d}{2}-1}_{2,1})}^{h}\\
&\quad\quad\quad\quad\quad\quad~\quad+\|\tau^{\theta} (u\cdot\nabla u)\|_{L^1_{t}(\dot{B}^{\frac{d}{2}-1}_{2,1})}^{h}+\|\tau^{\theta} G\|_{L^1_{t}(\dot{B}^{\frac{d}{2}-1}_{2,1})}^{h},
\end{aligned}
\end{equation}
which together with (\ref{interL2}), $(\ref{Ltimenonlinear})$-$(\ref{Ltimenonlinear1})$, (\ref{timehigh}) and (\ref{decayhigh2}) gives rise to
\begin{equation}\label{decayhigh3}
\begin{aligned}
&\|\tau^{\theta} u\|_{L^1_{t}(\dot{B}^{\frac{d}{2}+1}_{2,1})}^{h}\lesssim \frac{\mathcal{X}(t)}{\zeta}t^{\theta-\frac{1}{2}(\frac{d}{2}-1-\sigma_{0})}+\zeta \mathcal{X}_{\theta}(t)+\mathcal{X}(t)\mathcal{X}_{\theta}(t).
\end{aligned}
\end{equation}

Finally, we multiply the inequality (\eqref{dEulerpp}) by $t^{\theta}$ to obtain
\begin{equation}\label{addppt}
\begin{aligned}
&\frac{d}{dt}\big{(}t^{\theta}\mathcal{E}_{3,j}(t)\big{)}+t^{\theta}\mathcal{E}_{3,j}(t)\\
&\lesssim t^{\theta-1}\mathcal{E}_{3,j}(t)\\
&\quad+ t^{\theta}\Big{(} \|\dot{\Delta}_{j}u\|_{L^2}+\|\div w\|_{L^{\infty}}\|\dot{\Delta}_{j}(b,w)\|_{L^2}+2^{-j}\|\dot{\Delta}_{j}(w\cdot\nabla b,w\cdot\nabla w)\|_{L^2}\\
&\quad+\|[w\cdot \nabla,\dot{\Delta}_{j}](b,w)\|_{L^2}\Big{)}\sqrt{\mathcal{E}_{3,j}(t)},\quad\quad j\geq-1.
\end{aligned}
\end{equation}
Therefore, the above inequality \eqref{addppt} as well as (\ref{eulerinequality1}) implies
\begin{equation}\label{eulertimein}
\begin{aligned}
&\|\tau^{\theta}(b,w)\|_{\widetilde{L}^{\infty}_{t}(\dot{B}^{\frac{d}{2}+1}_{2,1})}^{h}+\|\tau^{\theta}(b,w)\|_{L^1_{t}(\dot{B}^{\frac{d}{2}+1}_{2,1})}^{h}\\
&\quad\lesssim \int_{0}^{t}\tau^{\theta-1}\|(b,w)\|_{\dot{B}^{\frac{d}{2}+1}_{2,1}}^{h}d\tau+ \|\tau^{\theta}u\|_{L^1_{t}(\dot{B}^{\frac{d}{2}+1}_{2,1})}^{h}\\
&\quad\quad+\|\div w\|_{L^{\infty}_{t}(L^{\infty})}\|\tau^{\theta}(b,w)\|_{L^1_{t}(\dot{B}^{\frac{d}{2}+1}_{2,1})}^{h}+\|\tau^{\theta}(w\cdot\nabla b,w\cdot\nabla w)\|_{L^1_{t}(\dot{B}^{\frac{d}{2}}_{2,1})}^{h}\\
&\quad\quad+\int_{0}^{t}\sum_{j\geq-1}2^{j(\frac{d}{2}+1)}\|[w\cdot \nabla,\dot{\Delta}_{j}]\tau^{\theta}(b,w)\|_{L^2}d\tau.
\end{aligned}
\end{equation}
The right-hand side of \eqref{eulertimein} can be controlled below. As in (\ref{interL1}), one can get
\begin{equation}\label{hightime111}
\begin{aligned}
&\int_{0}^{t}\tau^{\theta-1}\|(b,w)\|_{\dot{B}^{\frac{d}{2}+1}_{2,1}}^{h}d\tau\lesssim \frac{\mathcal{X}(t)}{\zeta}t^{\theta-\frac{1}{2}(\frac{d}{2}-1-\sigma_{0})}+\zeta \mathcal{X}_{\theta}(t).
\end{aligned}
\end{equation}
Due to (\ref{uv2}), we get
\begin{equation}\label{hightime112}
\begin{aligned}
&\|\div w\|_{L^{\infty}_{t}(L^{\infty})}\|\tau^{\theta}(b,w)\|_{L^1_{t}(\dot{B}^{\frac{d}{2}+1}_{2,1})}^{h}+\|\tau^{\theta}(w\cdot\nabla b,w\cdot\nabla w)\|_{L^1_{t}(\dot{B}^{\frac{d}{2}}_{2,1})}\\
&\quad\lesssim \|w\|_{\widetilde{L}^{\infty}_{t}(\dot{B}^{\frac{d}{2}+1}_{2,1})}\|\tau^{\theta}(b,w)\|_{L^1_{t}(\dot{B}^{\frac{d}{2}+1}_{2,1})}\lesssim\mathcal{X}(t)\mathcal{X}_{\theta}(t).
\end{aligned}
\end{equation}
By $(\ref{commutator})_{1}$, it also holds
\begin{equation}\label{hightime113}
\begin{aligned}
&\int_{0}^{t}\sum_{j\in\mathbb{Z}}2^{j(\frac{d}{2}+1)}\|[w\cdot \nabla,\dot{\Delta}_{j}]\tau^{\theta}(b,w)\|_{L^2}d\tau\lesssim \|w\|_{L^1_{t}(\dot{B}^{\frac{d}{2}+1}_{2,1})}\|\tau^{\theta}(b,w)\|_{\widetilde{L}^{\infty}_{t}(\dot{B}^{\frac{d}{2}+1}_{2,1})}\lesssim \mathcal{X}(t)\mathcal{X}_{\theta}(t).
\end{aligned}
\end{equation}
Thence it follows by (\ref{decayhigh3})-(\ref{hightime113}) that
\begin{equation}\label{eulertimeinp}
\begin{aligned}
&\|\tau^{\theta}(b,w)\|_{\widetilde{L}^{\infty}_{t}(\dot{B}^{\frac{d}{2}+1}_{2,1})}^{h}+\|\tau^{\theta}(b,w)\|_{L^1_{t}(\dot{B}^{\frac{d}{2}+1}_{2,1})}^{h}\\
&\quad\lesssim \frac{\mathcal{X}(t)}{\zeta}t^{\theta-\frac{1}{2}(\frac{d}{2}-1-\sigma_{0})}+  \big{(}\zeta+ \mathcal{X}(t)\big{)}\mathcal{X}_{\theta}(t).
\end{aligned}
\end{equation}
The combination of (\ref{decayhigh2}), (\ref{decayhigh3}) and (\ref{eulertimeinp}) leads to (\ref{decayhigh}), and the proof of Lemma \ref{lemma52} is completed.
\end{proof}

\vspace{2ex}

\underline{\it\textbf{Proof of Theorem \ref{theorem13}:}}~ Let the assumptions of Theorem \ref{theorem13} hold, and $(a,u,b,w)$ be the global solution to the Cauchy problem \eqref{m1n} given by Theorem \ref{theorem12}. By \eqref{XX0}, \eqref{lowd} and \eqref{mathcalXtheta}, there holds
\begin{equation}\label{addpp}
\left\{
\begin{aligned}
&\|(a,u,b,w)(t)\|_{\dot{B}^{\frac{d}{2}-1}_{2,1}}^{\ell}\lesssim \mathcal{X}(t)<<1,\\
&\|a(t)\|_{\dot{B}^{\frac{d}{2}}_{2,1}}^{h}+\|u(t)\|_{\dot{B}^{\frac{d}{2}-1}_{2,1}}^{h}+\|(b,w)(t)\|_{\dot{B}^{\frac{d}{2}+1}_{2,1}}^{h}\lesssim \mathcal{X}(t)<<1,\\
&\|(a,u,b,w)(t)\|_{\dot{B}^{\sigma_{0}}_{2,\infty}}^{\ell}\lesssim \mathcal{X}_{L,\sigma_{0}}(t)\lesssim \delta_{0},\\
&t^{\theta}\|(a,u,b,w)(t)\|_{\dot{B}^{\frac{d}{2}-1}_{2,1}}^{\ell}\lesssim \mathcal{X}_{\theta}(t),\\
&t^{\theta}\big(\|a(t)\|_{\dot{B}^{\frac{d}{2}}_{2,1}}^{h}+\|u(t)\|_{\dot{B}^{\frac{d}{2}-1}_{2,1}}^{h}+\|(b,w)(t)\|_{\dot{B}^{\frac{d}{2}+1}_{2,1}}^{h}\big{)}\lesssim \mathcal{X}_{\theta}(t),
\end{aligned}
\right.
\end{equation}
where $\mathcal{X}(t)$, $\mathcal{X}_{L,\sigma_{0}}(t)$ and $\mathcal{X}_{\theta}(t)$ are defined by \eqref{XX00}, \eqref{lowd} and \eqref{mathcalXtheta}, respectively.

For any $\theta>\frac{1}{2}(\frac{d}{2}+1-\sigma_{0})>1$, we obtain by Lemmas \ref{lemma51}-\ref{lemma52} that
\begin{equation}\label{addpp1}
\begin{aligned}
&\mathcal{X}_{\theta}(t)\lesssim \frac{\mathcal{X}(t)+\mathcal{X}_{L,\sigma_{0}}(t)}{\zeta}t^{\theta-\frac{1}{2}(\frac{d}{2}-1-\sigma_{0})}+  \big{(}\zeta+ \mathcal{X}(t)\big{)}\mathcal{X}_{\theta}(t),\quad t>0.
\end{aligned}
\end{equation}
Choosing a suitably small constant $\zeta>0$ in \eqref{addpp1} and employing $\eqref{addpp}_{1}$-$\eqref{addpp}_{2}$, we have
\begin{equation}\label{Xtimetime}
\begin{aligned}
&\mathcal{X}_{\theta}(t)\lesssim  \delta_{0} t^{\theta-\frac{1}{2}(\frac{d}{2}-1-\sigma_{0})},\quad t>0,
\end{aligned}
\end{equation}
which together with $\eqref{addpp}_{3}$ implies
\begin{equation}\label{d21}
\begin{aligned}
\|(a,u,b,w)(t)\|_{\dot{B}^{\frac{d}{2}-1}_{2,1}}&\lesssim \|(a,u,b,w)(t)\|_{\dot{B}^{\frac{d}{2}-1}_{2,1}}^{\ell}+\|a(t)\|_{\dot{B}^{\frac{d}{2}}_{2,1}}^{h}+\|u(t)\|_{\dot{B}^{\frac{d}{2}-1}_{2,1}}^{h}+\|(b,w)(t)\|_{\dot{B}^{\frac{d}{2}+1}_{2,1}}^{h}\\
&\lesssim \delta_{0}(1+t)^{-\frac{1}{2}(\frac{d}{2}-1-\sigma_{0})},\quad t\geq1.
\end{aligned}
\end{equation}
Owing to \eqref{XX0}, there is a time $t_{*}>0$ such that $\|u(t_{*})\|_{\dot{B}^{\frac{d}{2}+1}_{2,1}}\lesssim \delta_{0}$ follows. For simplicity, we set $t_{*}=1$. By applying Lemma \ref{heat} to \eqref{tuh} on $[1,t]$ and using the estimates \eqref{Ltimenonlinear}-\eqref{Ltimenonlinear1} and \eqref{Xtimetime}, we get
\begin{equation}\nonumber
\begin{aligned}
\|\tau^{\theta}u\|_{\widetilde{L}^{\infty}_{t}(\dot{B}^{\frac{d}{2}+1}_{2,1})}^{h}&\lesssim \|u(1)\|_{\dot{B}^{\frac{d}{2}+1}_{2,1}}^{h}+\|\tau^{\theta-1}u\|_{\widetilde{L}^{\infty}(1,t;\dot{B}^{\frac{d}{2}-1}_{2,1})}^{h}+\|\tau^{\theta}a\|_{\widetilde{L}^{\infty}(1,t;\dot{B}^{\frac{d}{2}}_{2,1})}^{h}+\|\tau^{\theta}( w,u)\|_{\widetilde{L}^{\infty}(1,t;\dot{B}^{\frac{d}{2}-1}_{2,1})}^{h}\\
&\quad+\|\tau^{\theta}u\cdot\nabla u\|_{\widetilde{L}^{\infty}(1,t;\dot{B}^{\frac{d}{2}-1}_{2,1})}^{h}+\| \tau^{\theta}G\|_{\widetilde{L}^{\infty}(1,t;\dot{B}^{\frac{d}{2}-1}_{2,1})}^{h}\\
&\lesssim  \|u(1)\|_{\dot{B}^{\frac{d}{2}+1}_{2,1}}^{h}+\mathcal{X}_{\theta}(t)+\mathcal{X}(t)\mathcal{X}_{\theta}(t)\lesssim \delta_{0} (1+t)^{\theta-\frac{1}{2}(\frac{d}{2}-1-\sigma_{0})},\quad\quad t\geq 1,
\end{aligned}
\end{equation}
from which one infers
\begin{align}
&\|u(t)\|_{\dot{B}^{\frac{d}{2}+1}_{2,1}}^{h}\lesssim \delta_{0}(1+t)^{\frac{1}{2}(\frac{d}{2}-1-\sigma_{0})},\quad\quad t\geq1. \label{d221121}
\end{align}
Then it follows from $\eqref{addpp}_{2}$,  (\ref{d21}) and the interpolation inequality (\ref{inter}) that
\begin{equation}\label{d21222}
\begin{aligned}
&\|(a,u,b,w)^{\ell}(t)\|_{\dot{B}^{\sigma}_{2,1}}\\
&\quad\lesssim \|(a,u,b,w)^{\ell}(t)\|_{\dot{B}^{\sigma_{0}}_{2,\infty}}^{\frac{\frac{d}{2}-1-\sigma}{\frac{d}{2}-1-\sigma_{0}}}\|(a,u,b,w)^{\ell}(t)\|_{\dot{B}^{\frac{d}{2}-1}_{2,1}}^{\frac{\sigma-\sigma_{0}}{\frac{d}{2}-1-\sigma_{0}}}\lesssim \delta_{0} (1+t)^{-\frac{1}{2}(\sigma-\sigma_{0})},\quad \quad \sigma\in (\sigma_{0},\frac{d}{2}-1).
\end{aligned}
\end{equation}
By (\ref{d21})-(\ref{d21222}), the optimal time-decay estimates in $(\ref{decay1})_{1}$-$\eqref{decay1}_{2}$ hold.

Furthermore, we show that the relative velocity $u-w$ satisfies the faster time-decay rate in $\eqref{decay1}_{3}$. The equation (\ref{relativedamp1}) can be re-written as
\begin{equation}\label{relativedamp}
\begin{aligned}
u-w&=e^{-2 t}(u_{0}-w_{0})\\
&\quad+\int_{0}^{t}e^{-2(\tau-t)}\big{(}-\nabla a+  \Delta u+\nabla b-u\cdot \nabla u+w\cdot \nabla w+G\big{)}d\tau.
\end{aligned}
\end{equation}
We take the low-frequency $\dot{B}^{\sigma_{0}}_{2,\infty}$-norm of \eqref{relativedamp} and make use of \eqref{decay1} to get
\begin{equation}\label{relativesmall1000}
\begin{aligned}
&\|(u-w)(t)\|_{\dot{B}^{\sigma_{0}}_{2,\infty}}^{\ell}\\
&\quad\lesssim e^{-2 t}\|(u_{0},w_{0})\|_{\dot{B}^{\sigma_{0}}_{2,\infty}}^{\ell}\\
&\quad\quad+\int_{0}^{t}e^{-2(t-\tau)}\big{(}\|(a,u,b)\|_{\dot{B}^{\sigma_{0}+1}_{2,\infty}}^{\ell}+\|(u\cdot\nabla u,w\cdot \nabla w)\|_{\dot{B}^{\sigma_{0}}_{2,\infty}}^{\ell}+\|G\|_{\dot{B}^{\sigma_{0}}_{2,\infty}}^{\ell}\big{)}d\tau,\quad t>0.
\end{aligned}
\end{equation}
Employing \eqref{decay1}, we have
\begin{equation}\label{relativesmall131111pp}
\begin{aligned}
\int_{0}^{t}e^{-2(t-\tau)}\|(a,u,b)\|_{\dot{B}^{\sigma_{0}+1}_{2,\infty}}^{\ell}d\tau\lesssim (1+t)^{-\min\{\frac{1}{2},\frac{1}{2}(\frac{d}{2}-1-\sigma_{0})\}}.
\end{aligned}
\end{equation}
In addition, it holds by (\ref{XX0}), \eqref{lowd}, (\ref{d21}) and (\ref{uv3}) that
\begin{equation}\label{relativesmall131111}
\begin{aligned}
&\int_{0}^{t}e^{-2(t-\tau)}\|(u\cdot \nabla u,w\cdot\nabla w)\|_{\dot{B}^{\sigma_{0}}_{2,\infty}}^{\ell}+\|G\|_{\dot{B}^{\sigma_{0}}_{2,\infty}}^{\ell}\big{)}d\tau\\
&\quad\lesssim \int_{0}^{t}e^{-2(t-\tau)}\big( \|(a,u,w)\|_{\dot{B}^{\frac{d}{2}}_{2,1}} \|(a,u,w)\|_{\dot{B}^{\sigma_{0}+1}_{2,1}}+\|(a,b)\|_{\dot{B}^{\frac{d}{2}}_{2,1}} \|u-w\|_{\dot{B}^{\sigma_{0}}_{2,\infty}}\big)d\tau\\
&\quad\lesssim \Big( \int_{0}^{t}e^{-4(t-\tau)}  \|(a,u,w)\|_{\dot{B}^{\sigma_{0}+1}_{2,1}}^2 d\tau \Big)^{\frac{1}{2}} \|(a,u,w)\|_{\widetilde{L}^2_{t}(\dot{B}^{\frac{d}{2}}_{2,1})}\\
&\quad\quad+\Big( \int_{0}^{t}e^{-4(t-\tau)}\big( \|(a,b)\|_{\dot{B}^{\frac{d}{2}-1}_{2,1}}^{\ell}+\|a\|_{\dot{B}^{\frac{d}{2}}_{2,1}}^{h}+\|b\|_{\dot{B}^{\frac{d}{2}+1}_{2,1}}^{h} \big)^2 d\tau \Big)^{\frac{1}{2}}\|u-w\|_{\widetilde{L}^2_{t}(\dot{B}^{\sigma_{0}}_{2,\infty})}\\
&\quad\lesssim (1+t)^{-\min\{\frac{1}{2},\frac{1}{2}(\frac{d}{2}-1-\sigma_{0})\}}.
\end{aligned}
\end{equation}
By (\ref{relativesmall1000})-(\ref{relativesmall131111}), $\eqref{decay1}_{3}$ follows. Since $\eqref{decay11}$ can be proved in a similar way, we omit the details. The proof of Theorem \ref{theorem13} is completed.

\subsection{The proof of Theorem \ref{theorem14}}\label{sectionsmall}


In this subsection, we prove Theorem \ref{theorem14} on the time-decay estimates of the global solution to the Cauchy problem \eqref{m1n} in the case that $\|(a_{0},u_{0},b_{0},w_{0})^{\ell}\|_{\dot{B}^{\sigma_{0}}_{2,\infty}}$ is sufficiently small. In what follows, we need to use the following elementary inequality frequenctly:
\begin{equation}\label{decayw}
\begin{aligned}
&\int_{0}^{t}\langle t-\tau\rangle^{-\frac{1}{2}(\sigma-\sigma_{0})}\langle \tau\rangle^{-\sigma_{1}}d\tau\lesssim \langle t\rangle^{-\frac{1}{2}(\sigma-\sigma_{0})},\quad\quad 0\leq \frac{1}{2}(\sigma-\sigma_{0})\leq \sigma_{1},\quad \sigma_{1}>1.
\end{aligned}
\end{equation}
Define the time-weighted energy functional
\begin{equation}\label{r44}
\begin{aligned}
\mathcal{Z}(t):&=\sup_{\sigma\in [\sigma_{0}+\varepsilon,\frac{d}{2}+1]}\|\langle \tau\rangle^{\frac{1}{2}(\sigma-\sigma_{0})}(a,u,b,w)\|_{L^{\infty}_{t}(\dot{B}^{\sigma}_{2,1})}^{\ell}\\
&\quad+\|\langle \tau\rangle^{\frac{1}{2}}(u-w)\|_{L^{\infty}_{t}(\dot{B}^{\sigma_{0}}_{2,\infty})}^{\ell}+\sup_{\sigma\in [\sigma_{0}+\varepsilon,\frac{d}{2}]}\|\langle \tau\rangle^{\frac{1}{2}(1+\sigma-\sigma_{0})}(u-w)\|_{L^{\infty}_{t}(\dot{B}^{\sigma}_{2,1})}^{\ell}\\
&\quad+\|\langle \tau\rangle^{\alpha}a\|_{\widetilde{L}^{\infty}_{t}(\dot{B}^{\frac{d}{2}}_{2,1})}^{h}+\|\langle\tau\rangle^{\alpha}u\|_{\widetilde{L}^{\infty}_{t}(\dot{B}^{\frac{d}{2}-1}_{2,1})}^{h}+\|\tau^{\alpha}u\|_{\widetilde{L}^{\infty}_{t}(\dot{B}^{\frac{d}{2}+1}_{2,1})}^{h}+\|\langle \tau\rangle^{\alpha}(b,w)\|_{\widetilde{L}^{\infty}_{t}(\dot{B}^{\frac{d}{2}+1}_{2,1})}^{h},
\end{aligned}
\end{equation}
with $\langle t\rangle:=(1+t^2)^{\frac{1}{2}}$ and $\alpha:=\frac{1}{2}(d+1-2\sigma_{0}-2\varepsilon)$ for a sufficiently small constant $\varepsilon\in(0,1]$.

First, we have the time-weighted estimates of the solution $(a,u,b,w)$ to the Cauchy problem \eqref{m1n} for low frequencies.
\begin{lemma}\label{lemma61}
Let $(a,u,b,w)$ be the global solution to the Cauchy problem $(\ref{m1n})$ given by Theorem \ref{theorem12}. Then, under the assumptions of Theorem \ref{theorem14}, it holds
\begin{equation}\label{Lowsmall}
\begin{aligned}
&\sup_{\sigma\in[\sigma_{0}+\varepsilon,\frac{d}{2}+1]}\|\langle \tau\rangle^{\frac{1}{2}(\sigma-\sigma_{0})}(a,u,b,w)\|_{L^{\infty}_{t}(\dot{B}^{\sigma}_{2,1})}^{\ell}\\
&\quad\quad\lesssim \|(a_{0},u_{0},b_{0},w_{0})\|_{\dot{B}^{\sigma_{0}}_{2,\infty}}^{\ell}+\mathcal{X}^2(t)+\mathcal{Z}^2(t),\quad t>0,
\end{aligned}
\end{equation}
where $\mathcal{X}(t)$ and $\mathcal{Z}(t)$ are defined by $(\ref{XX00})$ and $(\ref{r44})$, respectively.
\end{lemma}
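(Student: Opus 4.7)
The plan is to apply Duhamel's formula to the low-frequency Lyapunov-type inequality $(\ref{Lowinequality1})$ derived in Lemma $\ref{lemma41}$ to obtain, for every $j\leq 0$,
\begin{equation*}
\|\dot{\Delta}_{j}(a,u,b,w)(t)\|_{L^2}\lesssim e^{-c\, 2^{2j}t}\|\dot{\Delta}_{j}(a_{0},u_{0},b_{0},w_{0})\|_{L^2}+\int_{0}^{t}e^{-c\,2^{2j}(t-\tau)}\mathcal{N}_{j}(\tau)\,d\tau,
\end{equation*}
where $\mathcal{N}_{j}(\tau)$ collects the low-frequency dyadic blocks of the nonlinearities $2^{j}\dot{\Delta}_{j}(au)$, $\dot{\Delta}_{j}(u\cdot\nabla u)$, $\dot{\Delta}_{j}(w\cdot\nabla b)$, $\dot{\Delta}_{j}(w\cdot\nabla w)$ and $\dot{\Delta}_{j}G$, exactly as on the right-hand side of $(\ref{Lowinequality})$.

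For the contribution of the initial data, I would multiply by $2^{j\sigma}$, sum over $j\leq 0$ in $\ell^{1}$, and invoke the standard heat-kernel-type bound
\begin{equation*}
\sum_{j\leq 0}2^{j\sigma}e^{-c\,2^{2j}t}\|\dot{\Delta}_{j}f_{0}\|_{L^2}\lesssim \langle t\rangle^{-\frac{1}{2}(\sigma-\sigma_{0})}\|f_{0}\|_{\dot{B}^{\sigma_{0}}_{2,\infty}}^{\ell},\qquad \sigma>\sigma_{0},
\end{equation*}
which, after multiplication by $\langle t\rangle^{\frac{1}{2}(\sigma-\sigma_{0})}$ and taking $\sup_{\sigma\in[\sigma_{0}+\varepsilon,\frac{d}{2}+1]}$, produces the $\|(a_{0},u_{0},b_{0},w_{0})\|_{\dot{B}^{\sigma_{0}}_{2,\infty}}^{\ell}$ term on the right of $(\ref{Lowsmall})$.

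For the Duhamel part, I would interchange the sum and the time integral, apply the same heat-kernel estimate pointwise in $\tau$, and reduce the problem to controlling $\int_{0}^{t}\langle t-\tau\rangle^{-\frac{1}{2}(\sigma-\sigma_{0})}\|\mathcal{N}(\tau)\|_{\dot{B}^{\sigma_{0}}_{2,\infty}}^{\ell}\,d\tau$. Each nonlinearity $au$, $u\cdot\nabla u$, $w\cdot\nabla b$, $w\cdot\nabla w$, together with the easy pieces $g(a)\nabla a$ and $f(a)\Delta u$ of $G$, can be estimated in $\dot{B}^{\sigma_{0}+1}_{2,\infty}$ or $\dot{B}^{\sigma_{0}}_{2,\infty}$ by product and composition estimates (Lemma $\ref{lemma24}$ and $(\ref{uv3})$), producing decay of order $\langle\tau\rangle^{-\sigma_{1}}$ with $\sigma_{1}>\frac{1}{2}(\frac{d}{2}+1-\sigma_{0})$ via the time-weighted norms defining $\mathcal{X}(t)$ and $\mathcal{Z}(t)$, so that the convolution bound $(\ref{decayw})$ yields the desired decay $\langle t\rangle^{-\frac{1}{2}(\sigma-\sigma_{0})}$.

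The main obstacle is the nonlinear piece $h(a,b)(u-w)$ of $G$. A naive product estimate using only $\|(a,b)\|_{\dot{B}^{\frac{d}{2}}_{2,1}}$ together with the $\dot{B}^{\sigma_{0}+\varepsilon}_{2,1}$-decay of $u-w$ coming from $\mathcal{Z}(t)$ yields at best $\langle\tau\rangle^{-\frac{1}{2}(\frac{d}{2}-\sigma_{0})}$, which falls short of the threshold $\frac{1}{2}(\frac{d}{2}+1-\sigma_{0})$ needed to apply $(\ref{decayw})$ at the extreme index $\sigma=\frac{d}{2}+1$. The key is to exploit instead the faster rate $\|(u-w)(\tau)\|_{\dot{B}^{\sigma_{0}}_{2,\infty}}^{\ell}\lesssim \langle\tau\rangle^{-\frac{1}{2}}\mathcal{Z}(t)$ that is specifically encoded in the second term of $(\ref{r44})$, combined with $\|(a,b)(\tau)\|_{\dot{B}^{\frac{d}{2}}_{2,1}}\lesssim \langle\tau\rangle^{-\frac{1}{2}(\frac{d}{2}-\sigma_{0})}(\mathcal{X}(t)+\mathcal{Z}(t))$ and the product rule with third index $\infty$; this recovers the sharp rate $\langle\tau\rangle^{-\frac{1}{2}(\frac{d}{2}+1-\sigma_{0})}$ for $\|h(a,b)(u-w)\|_{\dot{B}^{\sigma_{0}}_{2,\infty}}^{\ell}$. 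Collecting all contributions, using the smallness of $\mathcal{X}(t)$ and $\mathcal{Z}(t)$ ensured by $(\ref{a4})$ to absorb the quadratic remainders as $\mathcal{X}^{2}(t)+\mathcal{Z}^{2}(t)$, completes the bound $(\ref{Lowsmall})$.
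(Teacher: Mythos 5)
Your proposal matches the paper's proof in all essentials: apply Grönwall/Duhamel to the low-frequency Lyapunov inequality \eqref{Lowinequality1}, obtain the heat-kernel decay $\langle t\rangle^{-\frac{1}{2}(\sigma-\sigma_{0})}$ on the linear part after weighting by $2^{j\sigma}$ and summing over $j\le 0$, reduce the Duhamel contribution to the convolution bound \eqref{decayw} with the nonlinearities measured in $\dot{B}^{\sigma_{0}}_{2,\infty}$ or $\dot{B}^{\sigma_{0}+1}_{2,\infty}$, and--crucially--feed the $\langle\tau\rangle^{-\frac{1}{2}}$ decay of $\|u-w\|_{\dot{B}^{\sigma_{0}}_{2,\infty}}^{\ell}$ recorded in $\mathcal{Z}(t)$ into the product estimate \eqref{uv3} to recover the rate $\langle\tau\rangle^{-\frac{1}{2}(\frac{d}{2}+1-\sigma_{0})}$ for $h(a,b)(u-w)$, which is exactly the key fact \eqref{factddd1}. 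The only step glossed over relative to the written proof is the explicit case split $t\le 2$ versus $t\ge 2$ with the further subdivision $[0,1]\cup[1,t]$, but this is a routine bookkeeping device and your outline correctly captures the substance of the argument.
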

\begin{proof}
Applying the Gr\"{o}nwall  inequality to (\ref{Lowinequality1}), we get
\begin{equation}\label{Lowsmall1}
\begin{aligned}
&\|\dot{\Delta}_{j}(a,u,b,w)\|_{L^2}\\
&\quad\lesssim e^{-2^{2j}t}\|\dot{\Delta}_{j}(a_{0},u_{0},b_{0},w_{0})\|_{L^2}\\
&\quad\quad+\int_{0}^{t}e^{-2^{2j}(t-\tau)}\big{(}\|\dot{\Delta}_{j}(2^{j}(au), u\cdot \nabla u,w\cdot\nabla b,w\cdot\nabla w)\|_{L^2}+\|\dot{\Delta}_{j}G\|_{L^2}\big{)}d\tau,
\end{aligned}
\end{equation}
 which implies for any $\sigma>\sigma_{0}$ that
\begin{equation}\label{Lowsmall2}
\begin{aligned}
&\|(a,u,b,w)\|_{\dot{B}^{\sigma}_{2,1}}^{\ell}\\
&\quad\lesssim  \langle t\rangle^{-\frac{1}{2}(\sigma-\sigma_{0})}\|(a_{0},u_{0},b_{0},w_{0})\|_{\dot{B}^{\sigma_{0}}_{2,\infty}}^{\ell}\\
&\quad\quad+\int_{0}^{t}\langle t-\tau\rangle^{-\frac{1}{2}(\sigma-\sigma_{0})}\big{(}\|au\|_{\dot{B}^{\sigma_{0}+1}_{2,\infty}}^{\ell}+\|(u\cdot\nabla u,w\cdot\nabla b,w\cdot\nabla w)\|_{\dot{B}^{\sigma_{0}}_{2,\infty}}^{\ell}\\
&\quad\quad+\|g(a) \nabla a\|_{\dot{B}^{\sigma}_{2,\infty}}^{\ell}+\|f(a)  \Delta u    \|_{\dot{B}^{\sigma}_{2,\infty}}^{\ell}+\|h(a,b)(u-w)\|_{\dot{B}^{\sigma}_{2,\infty}}^{\ell}\big{)}d\tau.
\end{aligned}
\end{equation}
To control the first nonlinear term on the right-hand side of (\ref{Lowsmall2}), we consider the cases $t\leq 2$ and $t\geq 2$ separately. For the case $t\leq 2$, we make use of (\ref{lh}), (\ref{uv3}) and $\langle t\rangle\sim 1$ to obtain
\begin{equation}\label{lownon11}
\begin{aligned}
&\int_{0}^{t}\langle t-\tau\rangle^{-\frac{1}{2}(\sigma-\sigma_{0})}\|au\|_{\dot{B}^{\sigma_{0}+1}_{2,\infty}}d\tau\\
&\quad\lesssim\|a\|_{L^{\infty}_{t}(\dot{B}^{\frac{d}{2}}_{2,1})} \big{(} \|u\|_{L^{\infty}_{t}(\dot{B}^{\sigma_{0}+1}_{2,1})}^{\ell}+\|u\|_{L^1_{t}(\dot{B}^{\frac{d}{2}+1}_{2,1})}^{h} \big{)}\\
&\quad\lesssim\big{(}\mathcal{X}(t)\mathcal{Z}(t)+\mathcal{X}^2(t)\big{)}\langle t\rangle^{-\frac{1}{2}(\sigma-\sigma_{0})}.
\end{aligned}
\end{equation}
For the case $t\geq 2$, we split the integration into two parts:
$$
\int_{0}^{t}\langle t-\tau\rangle^{-\frac{1}{2}(\sigma-\sigma_{0})}\|au\|_{\dot{B}^{\sigma_{0}+1}_{2,\infty}}d\tau=\Big{(}\int_{0}^{1}+\int_{1}^{t}\Big{)}\langle t-\tau\rangle^{-\frac{1}{2}(\sigma-\sigma_{0})}\|au\|_{\dot{B}^{\sigma_{0}+1}_{2,\infty}}d\tau.
$$
Owing to (\ref{uv3}) and the fact $\langle t-\tau\rangle\sim \langle t\rangle$ for any $\tau\in[0,1]$, we have
\begin{equation}\label{lownon12}
\begin{aligned}
&\int_{0}^{1}\langle t-\tau\rangle^{-\frac{1}{2}(\sigma-\sigma_{0})}\|au\|_{\dot{B}^{\sigma_{0}+1}_{2,\infty}}d\tau\lesssim \langle t\rangle^{-\frac{1}{2}(\sigma-\sigma_{0})}\int_{0}^{1}\|a\|_{\dot{B}^{\frac{d}{2}}_{2,1}}\|u\|_{\dot{B}^{\sigma_{0}+1}_{2,\infty}}d\tau\\
&\quad\quad\quad\quad\quad\quad\quad\quad\quad\quad\quad\quad\quad\quad\lesssim \big{(}\mathcal{X}(t)\mathcal{Z}(t)+\mathcal{X}^2(t)\big{)}\langle t\rangle^{-\frac{1}{2}(\sigma-\sigma_{0})}.
\end{aligned}
\end{equation}
On the other hand, since it holds for $\sigma_{0}\in[-\frac{d}{2},\frac{d}{2}-1)$ and $\sigma\in (\sigma_{0},\frac{d}{2}+1]$ that
\begin{equation}\nonumber
\begin{aligned}
&\frac{1}{2}(\frac{d}{2}+1-\sigma_{0})>1,\quad\quad 0\leq \frac{1}{2}(\sigma-\sigma_{0})\leq \frac{1}{2}(\frac{d}{2}+1-\sigma_{0}),
\end{aligned}
\end{equation}
we use  (\ref{decayw}), (\ref{lh}), (\ref{uv3}), $au=a^{\ell}u^{\ell}+a^{h}u^{\ell}+a^{\ell}u^{h}+a^{h}u^{h}$ and $\tau^{-1}\lesssim \langle \tau\rangle^{-1}$ for $\tau\geq 1$ to get
\begin{equation}\label{non12}
\begin{aligned}
&\int_{1}^{t}\langle t-\tau\rangle^{-\frac{1}{2}(\sigma-\sigma_{0})}\|au\|_{\dot{B}^{\sigma_{0}+1}_{2,\infty}}d\tau\\
&\quad \lesssim\int_{1}^{t}\langle t-\tau\rangle^{-\frac{1}{2}(\sigma-\sigma_{0})} \big(\|a\|_{\dot{B}^{\frac{d}{2}}_{2,1}}^{\ell}\|u\|_{\dot{B}^{\sigma_{0}+1}_{2,1}}^{\ell}+\|a\|_{\dot{B}^{\frac{d}{2}}_{2,1}}^{h}\|u\|_{\dot{B}^{\sigma_{0}+1}_{2,1}}^{\ell}\\
&\quad\quad+\|a\|_{\dot{B}^{\frac{d}{2}}_{2,1}}^{\ell}\|u\|_{\dot{B}^{\frac{d}{2}+1}_{2,1}}^{h}+\|a\|_{\dot{B}^{\frac{d}{2}}_{2,1}}^{h}\|u\|_{\dot{B}^{\frac{d}{2}+1}_{2,1}}^{h} \big) d\tau\\
&\quad\lesssim\mathcal{Z}^2(t) \int_{0}^{t} \langle t-\tau\rangle^{-\frac{1}{2}(\sigma-\sigma_{0})} \langle \tau\rangle^{-\frac{1}{2}(\frac{d}{2}+1-\sigma_{0})} d\tau\\
&\quad\lesssim \mathcal{Z}^2(t)\langle t\rangle^{-\frac{1}{2}(\sigma-\sigma_{0})},\quad t\geq2,
\end{aligned}
\end{equation}
 which together with (\ref{lownon11})-(\ref{lownon12}) implies for any $\sigma\in (\sigma_{0}, \frac{d}{2}+1]$ that
\begin{equation}\label{non111}
\begin{aligned}
\int_{0}^{t}\langle t-\tau\rangle^{-\frac{1}{2}(\sigma-\sigma_{0})}\|au\|_{\dot{B}^{\sigma_{0}+1}_{2,\infty}}d\tau\lesssim\big{(}\mathcal{X}^2(t)+\mathcal{Z}^2(t)\big{)} \langle t\rangle^{-\frac{1}{2}(\sigma-\sigma_{0})},\quad t>0,
\end{aligned}
\end{equation}
 Similarly, one can show 
\begin{equation}\label{non112}
\begin{aligned}
&\int_{0}^{t}\langle t-\tau\rangle^{-\frac{1}{2}(\sigma-\sigma_{0})}\big{(}\|(u\cdot\nabla u,w\cdot\nabla b,w\cdot\nabla w)\|_{\dot{B}^{\sigma_{0}}_{2,\infty}}+\|g(a)\nabla a\|_{\dot{B}^{\sigma}_{2,\infty}}+\|f(a)  \Delta u   \|_{\dot{B}^{\sigma}_{2,\infty}}\big{)}d\tau\\
&\quad\lesssim \big{(}\mathcal{X}^2(t)+\mathcal{Z}^2(t)\big{)}\langle t\rangle^{-\frac{1}{2}(\sigma-\sigma_{0})},\quad  t>0.
\end{aligned}
\end{equation}
Due to the lack of one spatial derivative, the estimate of the last nonlinear term in (\ref{Lowsmall2}) is different from usual nonlinearities of compressible Navier-Stokes system \cite{danchin5,xu3}. To overcome this difficulty, according to (\ref{uv3})-\eqref{F1} and $\mathcal{X}(t)\lesssim \mathcal{X}_{0}<<1$, we have
\begin{equation}\label{keypp}
\begin{aligned}
&\int_{0}^{t}\langle t-\tau\rangle^{-\frac{1}{2}(\sigma-\sigma_{0})}\|h(a,b)(u-w)\|_{\dot{B}^{\sigma}_{2,\infty}}d\tau\\
&\quad\lesssim \int_{0}^{t}\langle t-\tau\rangle^{-\frac{1}{2}(\sigma-\sigma_{0})}\|(a,b)\|_{\dot{B}^{\frac{d}{2}}_{2,1}}\|u-w\|_{\dot{B}^{\sigma_{0}}_{2,\infty}}d\tau\lesssim \mathcal{Z}^2(t)\langle t\rangle ^{-\frac{1}{2}(\sigma-\sigma_{0})},
\end{aligned}
\end{equation}
where in the last inequality one has used the key fact
\begin{equation}\label{factddd1}
\begin{aligned}
&\|(a,b)\|_{\dot{B}^{\frac{d}{2}}_{2,1}}\|u-w\|_{\dot{B}^{\sigma_{0}}_{2,\infty}}\\
&\quad\lesssim \big{(}\|(a,b)\|_{\dot{B}^{\frac{d}{2}}_{2,1}}^{\ell}+\|a\|_{\dot{B}^{\frac{d}{2}}_{2,1}}^{h}+\|b\|_{\dot{B}^{\frac{d}{2}+1}_{2,1}}^{h}\big{)}\big{(}\|u-w\|_{\dot{B}^{\sigma_{0}}_{2,\infty}}^{\ell}+\|u\|_{\dot{B}^{\frac{d}{2}-1}_{2,1}}^{h}+\|w\|_{\dot{B}^{\frac{d}{2}+1}_{2,1}}^{h}\big{)}\\
&\quad\lesssim \mathcal{Z}^2(t)\langle t\rangle ^{-\frac{1}{2}(\frac{d}{2}+1-\sigma_{0})}.
\end{aligned}
\end{equation}
Combining (\ref{non111})-(\ref{keypp}) together, we obtain (\ref{Lowsmall}).
\end{proof}

Next, we prove the following time-weighted estimates of the solution $(a,u,b,w)$ to the Cauchy problem \eqref{m1n} for low frequencies.

\begin{lemma}\label{lemma62}
Let $(a,u,b,w)$ be the global solution to the Cauchy problem $(\ref{m1n})$ given by Theorem \ref{theorem12}. Then, under the assumptions of Theorem \ref{theorem14}, it holds
\begin{equation}\label{Highsmall}
\begin{aligned}
&\|\langle \tau\rangle^{\alpha}a\|_{\widetilde{L}^{\infty}_{t}(\dot{B}^{\frac{d}{2}}_{2,1})}^{h}+\|\langle \tau\rangle^{\alpha}u\|_{\widetilde{L}^{\infty}_{t}(\dot{B}^{\frac{d}{2}-1}_{2,1})}^{h}\\
&\quad\quad+\|\tau^{\alpha}u\|_{\widetilde{L}^{\infty}_{t}(\dot{B}^{\frac{d}{2}+1}_{2,1})}^{h}+\|\langle \tau\rangle^{\alpha}(b,w)\|_{\widetilde{L}^{\infty}_{t}(\dot{B}^{\frac{d}{2}+1}_{2,1})}^{h}\\
&\quad\lesssim \|a_{0}\|_{\dot{B}^{\frac{d}{2}}_{2,1}}^{h}+\|u_{0}\|_{\dot{B}^{\frac{d}{2}-1}_{2,1}}^{h}+\|(b_{0},w_{0})\|_{\dot{B}^{\frac{d}{2}+1}_{2,1}}^{h}+\mathcal{X}^2(t)+\mathcal{Z}^2(t),\quad t>0,
\end{aligned}
\end{equation}
where $\mathcal{X}(t)$ and $\mathcal{Z}(t)$ are defined by $(\ref{XX00})$ and $(\ref{r44})$, respectively, and $\alpha$ is given by $\alpha:=\frac{1}{2}(d+1-2\sigma_{0}-2\varepsilon)$ for a sufficiently small constant $\varepsilon\in (0,1]$.
\end{lemma}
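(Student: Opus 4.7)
The plan is to mirror the high-frequency a-priori estimates of Lemma \ref{prop42} but now attaching the time weight $\langle\tau\rangle^{\alpha}$ (respectively $\tau^{\alpha}$ for the gained regularity on $u$). The central structural fact is that at high frequencies the Lyapunov inequalities \eqref{Highinequality2} and \eqref{dEulerpp} already give \emph{exponential} dissipation, so by Duhamel one has, for $j\geq-1$,
\begin{equation*}
\|\dot\Delta_{j}(\nabla a,u,b,w)\|_{L^2}(t)\lesssim e^{-c t}\|\dot\Delta_j(\nabla a_0,u_0,b_0,w_0)\|_{L^2}+\int_0^t e^{-c(t-\tau)} N_j(\tau)\,d\tau,
\end{equation*}
where $N_j(\tau)$ bundles all the nonlinear and commutator contributions in \eqref{Highinequality}. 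Multiplying by $\langle t\rangle^\alpha$ and using the elementary inequality $\langle t\rangle^\alpha e^{-c(t-\tau)}\lesssim e^{-c(t-\tau)/2}\langle\tau\rangle^\alpha$ (polynomial is absorbed into half of the exponential), one reduces the weighted estimate to showing the pointwise bound $\langle\tau\rangle^\alpha\sum_{j\geq-1}2^{j(d/2-1)}N_j(\tau)\lesssim \mathcal{X}^2(t)+\mathcal{Z}^2(t)$; the convolution against $e^{-c(t-\tau)/2}$ is then harmless, and no interpolation with a lower regularity norm (as in Lemmas \ref{lemma51}--\ref{lemma52}) is required on the high-frequency side.

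The bulk of the work is verifying this pointwise bound on $\langle\tau\rangle^\alpha N_j$. For the bilinear terms $au$, $u\cdot\nabla u$, $w\cdot\nabla b$, $w\cdot\nabla w$, and the commutators controlled by \eqref{commutator}--\eqref{commutator1}, I split each factor into its low and high frequency parts and use \eqref{uv1}--\eqref{uv3}; each low-frequency factor contributes a decay $\langle\tau\rangle^{-(\sigma-\sigma_0)/2}$ coming from the $\mathcal{Z}(t)$ definition (with $\sigma$ chosen close to $d/2+1$), while each high-frequency factor contributes $\langle\tau\rangle^{-\alpha}$ from the pieces of $\mathcal{Z}(t)$ we are trying to bound. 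Adding the exponents and choosing $\varepsilon>0$ small yields the required cumulative decay $\langle\tau\rangle^{-\alpha}$. The composition terms $g(a)\nabla a$ and $f(a)\Delta u$ inside $G$ are handled by \eqref{F1} and reduce to the above type; the genuinely delicate piece is $h(a,b)(u-w)$, which loses a derivative, and here I reproduce \eqref{factddd1}, trading the $\dot B^{d/2}_{2,1}$-norm of $(a,b)$ (split $\ell+h$) against the $\langle\tau\rangle^{-1/2}$ decay of $\|u-w\|^\ell_{\dot B^{\sigma_0}_{2,\infty}}$ that is built into $\mathcal{Z}(t)$, so that the product decays as $\langle\tau\rangle^{-(d/2+1-\sigma_0)/2}$, which exceeds $\langle\tau\rangle^{-\alpha}$ by the margin $\varepsilon$.

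The bounds on $\|\langle\tau\rangle^\alpha a\|^h_{\widetilde{L}^\infty_t(\dot B^{d/2}_{2,1})}$, $\|\langle\tau\rangle^\alpha u\|^h_{\widetilde{L}^\infty_t(\dot B^{d/2-1}_{2,1})}$ and $\|\langle\tau\rangle^\alpha(b,w)\|^h_{\widetilde{L}^\infty_t(\dot B^{d/2+1}_{2,1})}$ follow directly from the above scheme (applied to \eqref{Highinequality2} and, for the higher $(b,w)$ regularity, to \eqref{dEulerpp} via \eqref{eulerinequality1}--\eqref{eulerinequality2}). For the remaining term $\|\tau^\alpha u\|^h_{\widetilde{L}^\infty_t(\dot B^{d/2+1}_{2,1})}$, I multiply \eqref{ENS2} by $\tau^\alpha$ exactly as in \eqref{tuh} and apply the maximal-regularity Lemma \ref{heat}, controlling the forcing $\alpha\tau^{\alpha-1}u$ by the already-obtained estimate on $\langle\tau\rangle^\alpha u$ in $\dot B^{d/2-1}_{2,1}$ at high frequencies. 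The main obstacle throughout is precisely the derivative-losing coupling $h(a,b)(u-w)$: closing the argument forces one to use the low-frequency $\dot B^{\sigma_0}_{2,\infty}$-decay of $u-w$ rather than any higher Besov norm, which in turn is the reason the exponent $\alpha=\tfrac12(d+1-2\sigma_0-2\varepsilon)$ appears with the small cushion $\varepsilon>0$ instead of the formal limit $\tfrac12(d+1-2\sigma_0)$.
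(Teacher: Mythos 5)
Your overall scheme is sound and tracks the paper's: dissipative Lyapunov inequalities at high frequency give an exponentially decaying Duhamel representation, then one inserts the time weight, bounds the nonlinear forcing using the time-decay encoded in $\mathcal{Z}(t)$, handles the derivative-losing piece $h(a,b)(u-w)$ via the $\langle\tau\rangle^{-1/2}$ decay of $\|u-w\|^{\ell}_{\dot B^{\sigma_0}_{2,\infty}}$, and recovers the $\dot B^{d/2+1}_{2,1}$ high-frequency gain for $u$ by applying Lemma \ref{heat} to the time-weighted heat equation \eqref{tuh}. The elementary inequality $\langle t\rangle^\alpha e^{-c(t-\tau)}\lesssim e^{-c(t-\tau)/2}\langle\tau\rangle^\alpha$ is a cleaner way to package the paper's explicit case split $t\lessgtr 2$ and $[0,1]\cup[1,t]$, and that replacement is legitimate in itself.

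However, there is a genuine gap in the next step: the claim that everything ``reduces to showing the pointwise bound $\langle\tau\rangle^\alpha\sum_{j\ge-1}2^{j(d/2-1)}N_j(\tau)\lesssim\mathcal{X}^2(t)+\mathcal{Z}^2(t)$'' is false for small $\tau$. At $\tau$ near $0$, $\langle\tau\rangle^\alpha\sim1$ but the quantities $\mathcal{X}(t)$ and $\mathcal{Z}(t)$ do \emph{not} control the forcing pointwise in time: e.g.\ $\sum_j 2^{j(d/2)}\|\dot\Delta_j(au)\|_{L^2}(\tau)$ requires $\|u\|_{\dot B^{d/2}_{2,1}}(\tau)$, which $\mathcal{X}$ only controls through its $\widetilde L^2_t$ or $L^1_t$ norms, not its $L^\infty_t$ norm; at $\tau=0$ the bound would even demand $u_0\in\dot B^{d/2}_{2,1}$, which is not assumed. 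Likewise the high-frequency pieces of $\mathcal{Z}$ carry the weight $\tau^\alpha$ (not $\langle\tau\rangle^\alpha$), so they are useless for $\tau\le1$. This is precisely why the paper's proof splits the Duhamel integral into a small-time region, where $\langle\tau\rangle^\alpha\sim1$ and the forcing is absorbed in $L^1_t$ (costing $\mathcal{X}^2$), and a region $\tau\ge1$ where $\tau^\alpha\sim\langle\tau\rangle^\alpha$ and $\mathcal{Z}$ gives the pointwise decay. The correct reduction after your convolution trick is to control $\sup_j 2^{j(d/2-1)}\sup_{t}\int_0^t e^{-c(t-\tau)/2}\langle\tau\rangle^\alpha N_j(\tau)\,d\tau$ by combining an $L^1_\tau$ bound near $\tau=0$ with the pointwise $\mathcal{Z}$-bound for $\tau\ge1$; a pure pointwise argument does not close. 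Once this is repaired, the remainder of your write-up (product/commutator estimates by frequency splitting, the $\varepsilon$-cushion in $\alpha$, the treatment of $h(a,b)(u-w)$) agrees with the paper.
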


\begin{proof}
First, it follows from (\ref{Highinequality2}) for any $j\geq -1$ that
\begin{equation}\nonumber
\begin{aligned}
&\|\dot{\Delta}_{j}(\nabla a,u,b,w)\|_{L^2}\lesssim e^{-t}\|\dot{\Delta}_{j}(\nabla a_{0},u_{0},b_{0},w_{0})\|_{L^2}+\int_{0}^{t}e^{-(t-\omega)}\sum_{i=1}^{7}I_{i,j}d\omega,
\end{aligned}
\end{equation}
where $I_{i,j}$ $(i=1,...,7)$ are given by
\begin{equation}\nonumber
\left\{
\begin{aligned}
&I_{1,j}:=2^{j}\|\dot{\Delta}_{j}(au) \|_{L^2},~~I_{2,j}:=\|\dot{\Delta}_{j}(u\cdot \nabla u)\|_{L^2}, ~~ I_{3,j}:=\|\dot{\Delta}_{j}G\|_{L^2},~~ I_{4,j}:=\|\div u\|_{L^{\infty}}\|\nabla\dot{\Delta}_{j}a\|_{L^2},\\
& I_{5,j}:=\|\nabla\dot{\Delta}_{j}(\nabla a\div u)\|_{L^2},~~I_{6,j}:=\|[u\cdot \nabla ,\dot{\Delta}_{j}]u\|_{L^2},~~ I_{7,j}=\sum_{k=1}^{d}\|[u\cdot \nabla ,\partial_{k} \dot{\Delta}_{j}]a\|_{L^2}.
\end{aligned}
\right.
\end{equation}
Therefore, one has
\begin{equation}\label{highdecaypp1}
\begin{aligned}
&\|\langle\tau\rangle^{\alpha}(\nabla a,u,b,w)\|_{\widetilde{L}^{\infty}_{t}(\dot{B}^{\frac{d}{2}-1}_{2,1})}^{h}\\
&\quad\lesssim \|(\nabla a_{0},u_{0},b_{0},w_{0})\|_{\dot{B}^{\frac{d}{2}-1}_{2,1}}^{h}+\sum_{j\geq-1} \sup_{\tau\in [0,t]}\langle \tau\rangle^{\alpha}\int_{0}^{\tau}e^{-(\tau-\omega)}2^{j(\frac{d}{2}-1)}\sum_{i=1}^{7}I_{i,j}d\omega.
\end{aligned}
\end{equation}
To control the nonlinear terms of (\ref{highdecaypp1}), we may consider two cases $t\leq 2$ and $t\geq 2$ and split the integration over $[0,t]$ into $[0,1]$ and $[1,t]$ for $t\geq 2$, respectively. One can show
\begin{equation}\nonumber
\begin{aligned}
&\sum_{j\geq-1} \sup_{\tau\in [0,t]}\langle \tau\rangle^{\alpha}\int_{0}^{\tau}e^{-(\tau-\omega)}2^{j(\frac{d}{2}-1)}(I_{1,j}+I_{2,j})d\omega\\
&\quad\lesssim \int_{0}^{t} \big{(}\|a u\|_{\dot{B}^{\frac{d}{2}}_{2,1}}^{h}+\|u\cdot\nabla u\|_{\dot{B}^{\frac{d}{2}-1}_{2,1}}^{h}\big{)}d\tau\lesssim \|(a,u)\|_{L^2_{t}(\dot{B}^{\frac{d}{2}}_{2,1})}\|u\|_{L^2_{t}(\dot{B}^{\frac{d}{2}}_{2,1})}\lesssim \mathcal{X}^2(t),\quad\quad t\leq 2.
\end{aligned}
\end{equation}
By direct computations, it also holds
\begin{equation}\nonumber
\begin{aligned}
&\sum_{j\geq-1} \sup_{\tau\in [2,t]}\langle \tau\rangle^{\alpha}\int_{0}^{1}e^{-(\tau-\omega)}(I_{1,j}+I_{2,j})d\omega\lesssim \mathcal{X}^2(1),\quad\quad t\geq 2.
\end{aligned}
\end{equation}
We turn to estimates the integration on $[1,t]$ of the first and second nonlinear terms on the right-hand side of $(\ref{highdecaypp1})$ for $t\geq 2$. Due to (\ref{lh}),  (\ref{uv1}) and
\begin{equation}\label{keyfact}
\left\{
\begin{aligned}
&\|\langle \tau\rangle^{\frac{1}{2}(\frac{d}{2}-\sigma_{0}+\varepsilon)}(a,u)^{\ell}\|_{\widetilde{L}^{\infty}_{t}(\dot{B}^{\frac{d}{2}}_{2,1})}\lesssim \|\langle \tau\rangle^{\frac{1}{2}(\frac{d}{2}-\sigma_{0}+\varepsilon)}(a,u)\|_{L^{\infty}_{t}(\dot{B}^{\frac{d}{2}-\varepsilon}_{2,1})}^{\ell}\lesssim\mathcal{Z}(t) ,\\
&\|\langle \tau\rangle^{\frac{1}{2}(\frac{d}{2}+1-\sigma_{0}+\varepsilon)}(a,u)^{\ell}\|_{\widetilde{L}^{\infty}_{t}(\dot{B}^{\frac{d}{2}+1}_{2,1})}\lesssim \|\langle \tau\rangle^{\frac{1}{2}(\frac{d}{2}+1-\sigma_{0}+\varepsilon)}(a,u)\|_{L^{\infty}_{t}(\dot{B}^{\frac{d}{2}+1-\varepsilon}_{2,1})}^{\ell}\lesssim\mathcal{Z}(t),
\end{aligned}
\right.
\end{equation}
one gets
\begin{equation}\label{keyfact1}
\begin{aligned}
&\|\tau^{\alpha} a^{\ell}u^{\ell}\|_{\widetilde{L}^{\infty}_{t}(\dot{B}^{\frac{d}{2}}_{2,1})}^{h}+\|\tau^{\alpha}u^{\ell}\cdot\nabla u^{\ell}\|_{\widetilde{L}^{\infty}_{t}(\dot{B}^{\frac{d}{2}-1}_{2,1})}^{h} \\
&\quad\lesssim \|\tau^{\alpha}a^{\ell}u^{\ell}\|_{\widetilde{L}^{\infty}_{t}(\dot{B}^{\frac{d}{2}+1}_{2,1})}^{h}+\|\tau^{\alpha}u^{\ell}\cdot\nabla u^{\ell}\|_{\widetilde{L}^{\infty}_{t}(\dot{B}^{\frac{d}{2}+1}_{2,1})}^{h}\\
&\quad\lesssim \|\langle \tau\rangle^{\frac{1}{2}(\frac{d}{2}+1-\sigma_{0}+\varepsilon)}(a,u)^{\ell}\|_{\dot{B}^{\frac{d}{2}+1}_{2,1}}\|\langle \tau\rangle^{\frac{1}{2}(\frac{d}{2}-\sigma_{0}+\varepsilon)}(a,u)^{\ell}\|_{\dot{B}^{\frac{d}{2}}_{2,1}}\lesssim \mathcal{Z}^2(t).
\end{aligned}
\end{equation}
By (\ref{uv1})-(\ref{uv2}), there holds
\begin{equation}\label{keyfact2}
\begin{aligned}
&\|\tau^{\alpha}a^{h}u^{\ell}\|_{\widetilde{L}^{\infty}_{t}(\dot{B}^{\frac{d}{2}}_{2,1})}^{h}+\|\tau^{\alpha} a u^{h}\|_{\widetilde{L}^{\infty}_{t}(\dot{B}^{\frac{d}{2}}_{2,1})}^{h}\\
&\quad\lesssim \|\langle \tau\rangle^{\alpha}a\|_{\widetilde{L}^{\infty}_{t}(\dot{B}^{\frac{d}{2}}_{2,1})}^{h}\|u\|_{\widetilde{L}^{\infty}_{t}(\dot{B}^{\frac{d}{2}-1}_{2,1})}^{\ell}+\|a\|_{\widetilde{L}^{\infty}_{t}(\dot{B}^{\frac{d}{2}}_{2,1})}\|\tau^{\alpha}u\|_{\widetilde{L}^{\infty}_{t}(\dot{B}^{\frac{d}{2}+1}_{2,1})}^{h}\\
&\quad\lesssim \mathcal{Z}(t)\mathcal{X}(t),
\end{aligned}
\end{equation}
and
\begin{equation}\label{keyfact3}
\begin{aligned}
&\|\tau^{\alpha}u^{h}\cdot\nabla u^{\ell}\|_{\widetilde{L}^{\infty}_{t}(\dot{B}^{\frac{d}{2}-1}_{2,1})}^{h} +\|\tau^{\alpha}u\cdot\nabla u^{h}\|_{\widetilde{L}^{\infty}_{t}(\dot{B}^{\frac{d}{2}-1}_{2,1})}^{h}\\
&\quad\lesssim \|\tau^{\alpha}u\|_{\widetilde{L}^{\infty}_{t}(\dot{B}^{\frac{d}{2}}_{2,1})}^{h}\|u\|_{\widetilde{L}^{\infty}_{t}(\dot{B}^{\frac{d}{2}-1}_{2,1})}^{\ell}+\|\tau^{\alpha} u\|_{\widetilde{L}^{\infty}_{t}(\dot{B}^{\frac{d}{2}+1}_{2,1})}^{h}\|u\|_{\widetilde{L}^{\infty}_{t}(\dot{B}^{\frac{d}{2}-1}_{2,1})}\\
&\quad \lesssim \mathcal{Z}(t)\mathcal{X}(t).
\end{aligned}
\end{equation}
For $t\geq 2$ and the integration on $[1,t]$, one deduces by (\ref{keyfact1})-(\ref{keyfact3}) that
\begin{equation}\nonumber
\begin{aligned}
&\sum_{j\geq-1} \sup_{\tau\in [2,t]}\langle \tau\rangle^{\alpha}\int_{1}^{\tau}e^{-(\tau-\omega)}(I_{1,j}+I_{2,j})d\omega\\
&\quad\lesssim \big{(}\|\tau^{\alpha} a u\|_{\widetilde{L}^{\infty}_{t}(\dot{B}^{\frac{d}{2}}_{2,1})}^{h}+\|\tau^{\alpha} u\cdot\nabla u\|_{\widetilde{L}^{\infty}_{t}(\dot{B}^{\frac{d}{2}}_{2,1})}^{h}\big{)}\sup_{\tau\in [2,t]}\langle \tau\rangle^{\alpha}\int_{1}^{\tau}e^{-(\tau-\omega)}\omega^{-\alpha}d\omega\\
&\quad\lesssim \mathcal{X}^2(t)+\mathcal{X}(t)\mathcal{Z}(t).
\end{aligned}
\end{equation}
Therefore, we have
\begin{equation}\label{S1S2}
\begin{aligned}
&\sum_{j\geq-1} \sup_{\tau\in [0,t]}\langle \tau\rangle^{\alpha}\int_{0}^{t}e^{-(\tau-\omega)}(I_{1,j}+I_{2,j})d\tau\lesssim  \mathcal{X}^2(t)+\mathcal{Z}^2(t),\quad t>0.
\end{aligned}
\end{equation}
By similar arguments as in (\ref{S1S2}), one can show
\begin{equation}\label{S1S21}
\begin{aligned}
&\sum_{j\geq-1} \sup_{\tau\in [0,t]}\langle \tau\rangle^{\alpha}\int_{0}^{t}e^{-(\tau-\omega)}\sum_{i=3}^{7}I_{i,j}d\tau\lesssim  \mathcal{X}^2(t)+\mathcal{Z}^2(t),\quad t>0.
\end{aligned}
\end{equation}
Substituting (\ref{S1S2})-(\ref{S1S21}) into (\ref{highdecaypp1}), we obtain 
\begin{equation}\label{highdecaypp55}
\begin{aligned}
&\|\langle\tau\rangle^{\alpha}(\nabla a,u,b,w)\|_{\widetilde{L}^{\infty}_{t}(\dot{B}^{\frac{d}{2}-1}_{2,1})}^{h}\lesssim \|(\nabla a_{0},u_{0},b_{0},w_{0})\|_{\dot{B}^{\frac{d}{2}-1}_{2,1}}^{h}+\mathcal{X}^2(t)+\mathcal{Z}^2(t).
\end{aligned}
\end{equation}

Then, we show the higher-order time-decay estimate of $u$. Employing Lemma \ref{heat} for (\ref{tuh}) with $\theta=\alpha>1$, we gain
\begin{equation}\label{tuhh}
\begin{aligned}
&\|\tau^{\alpha}u\|_{\widetilde{L}^{\infty}_{t}(\dot{B}^{\frac{d}{2}+1}_{2,1})}^{h}\\
&\quad\lesssim \|\tau^{\alpha-1}u\|_{\widetilde{L}^{\infty}_{t}(\dot{B}^{\frac{d}{2}-1}_{2,1})}^{h}+\|\tau^{\alpha}a\|_{\widetilde{L}^{\infty}_{t}(\dot{B}^{\frac{d}{2}}_{2,1})}^{h}+\|\tau^{\alpha}(u,w)\|_{\widetilde{L}^{\infty}_{t}(\dot{B}^{\frac{d}{2}-1}_{2,1})}^{h}\\
&\quad\quad+\|\tau^{\alpha}u\cdot \nabla u\|_{\widetilde{L}^{\infty}_{t}(\dot{B}^{\frac{d}{2}-1}_{2,1})}^{h}+\|\tau^{\alpha}G\|_{\widetilde{L}^{\infty}_{t}(\dot{B}^{\frac{d}{2}-1}_{2,1})}^{h}.
\end{aligned}
\end{equation}
It is easy to verify that
\begin{equation}\label{tuhh22}
\begin{aligned}
&\|\tau^{\alpha-1}u\|_{\widetilde{L}^{\infty}_{t}(\dot{B}^{\frac{d}{2}-1}_{2,1})}^{h}\lesssim \|\langle \tau\rangle^{\alpha}u\|_{\widetilde{L}^{\infty}_{t}(\dot{B}^{\frac{d}{2}-1}_{2,1})}^{h}.
\end{aligned}
\end{equation}
Similarly to (\ref{keyfact1})-(\ref{keyfact3}), one has
\begin{equation}\label{tuhh33}
\begin{aligned}
&\|\tau^{\alpha}u\cdot \nabla u\|_{\widetilde{L}^{\infty}_{t}(\dot{B}^{\frac{d}{2}-1}_{2,1})}^{h}+\|\tau^{\alpha}G\|_{\widetilde{L}^{\infty}_{t}(\dot{B}^{\frac{d}{2}-1}_{2,1})}^{h}\lesssim \mathcal{X}^2(t)+\mathcal{Z}^2(t).
\end{aligned}
\end{equation}
Combining (\ref{highdecaypp55})-(\ref{tuhh33}) together, we get
\begin{equation}\label{highdecaypp56}
\begin{aligned}
\|\tau^{\alpha}u\|_{\widetilde{L}^{\infty}_{t}(\dot{B}^{\frac{d}{2}+1}_{2,1})}^{h}\lesssim \|(\nabla a_{0},u_{0},b_{0},w_{0})\|_{\dot{B}^{\frac{d}{2}-1}_{2,1}}^{h}+\mathcal{X}^2(t)+\mathcal{Z}^2(t).
\end{aligned}
\end{equation}

Furthermore,  to establish the higher order time-weighted estimate of $(b,w)$, one has by (\ref{dEuler}) that
\begin{equation}\nonumber
\begin{aligned}
&\|\langle \tau\rangle^{\alpha}(b,w)\|_{\widetilde{L}^{\infty}_{t}(\dot{B}^{\frac{d}{2}+1}_{2,1})}^{h}\\
&\quad\lesssim \|(b_{0},w_{0})\|_{\dot{B}^{\frac{d}{2}+1}_{2,1}}+\sum_{j\geq-1}\sup_{\tau\in[0,t]}\langle\tau\rangle^{\alpha}\int_{0}^{\tau}e^{-(\tau-\omega)}2^{j(\frac{d}{2}+1)}\big{(} \|\dot{\Delta}_{j}u\|_{L^2}+\|\div w\|_{L^{\infty}}\|\dot{\Delta}_{j}(b,w)\|_{L^2}\\
&\quad\quad+2^{-j}\|\dot{\Delta}_{j}(w\cdot\nabla b,w\cdot\nabla w)\|_{L^2}+\|[w\cdot \nabla,\dot{\Delta}_{j}](b,w)\|_{L^2}\big{)}d\omega.
\end{aligned}
\end{equation}
Note that it holds
\begin{equation}\nonumber
\begin{aligned}
\sum_{j\geq-1}\sup_{\tau\in[0,t]}\langle\tau\rangle^{\alpha}\int_{0}^{\tau}e^{-(\tau-\omega)}2^{j(\frac{d}{2}+1)} \|\dot{\Delta}_{j}u\|_{L^2}d\omega\lesssim \|u\|_{L^1_{t}(\dot{B}^{\frac{d}{2}+1}_{2,1})}^{h},\quad\quad t\leq 2,
\end{aligned}
\end{equation}
and
\begin{equation}\nonumber
\begin{aligned}
&\sum_{j\geq-1}\sup_{\tau\in[0,t]}\langle\tau\rangle^{\alpha}\int_{0}^{\tau}e^{-(\tau-\omega)}2^{j(\frac{d}{2}+1)} \|\dot{\Delta}_{j}u\|_{L^2}d\omega\\
&\quad\lesssim \sum_{j\geq-1} \Big(\sup_{\tau\in[0,1]}\langle\tau\rangle^{\alpha}\int_{0}^{\tau} +\sup_{\tau\in[1,t]}\langle\tau\rangle^{\alpha}\int_{0}^{1}+\sup_{\tau\in[1,t]}\langle\tau\rangle^{\alpha}\int_{1}^{\tau}\Big) e^{-(\tau-\omega)}2^{j(\frac{d}{2}+1)} \|\dot{\Delta}_{j}u\|_{L^2}d\omega\\
&\quad\lesssim \|u\|_{L^1_{1}(\dot{B}^{\frac{d}{2}+1}_{2,1})}^{h}+\|\tau^{\alpha}u\|_{L^1_{t}(\dot{B}^{\frac{d}{2}+1}_{2,1})}^{h},\quad\quad t\geq 2,
\end{aligned}
\end{equation}
from which and (\ref{highdecaypp56}) we get
\begin{equation}\nonumber
\begin{aligned}
&\sum_{j\geq-1}\sup_{\tau\in[0,t]}\langle\tau\rangle^{\alpha}\int_{0}^{\tau}e^{-(\tau-\omega)}2^{j(\frac{d}{2}+1)} \|\dot{\Delta}_{j}u\|_{L^2}d\omega\lesssim \|(\nabla a_{0},u_{0},b_{0},w_{0})\|_{\dot{B}^{\frac{d}{2}-1}_{2,1}}^{h}+\mathcal{X}^2(t)+\mathcal{Z}^2(t).
\end{aligned}
\end{equation}
By (\ref{uv1}) and (\ref{commutator}), one can obtain after direct computations that
\begin{equation}\nonumber
\begin{aligned}
&\sum_{j\geq-1}\sup_{\tau\in[0,t]}\langle\tau\rangle^{\alpha}\int_{0}^{\tau}e^{-(\tau-\omega)}2^{j(\frac{d}{2}+1)}\big{(} \|\div w\|_{L^{\infty}}\|\dot{\Delta}_{j}(b,w)\|_{L^2}\\
&\quad\quad+2^{-j}\|\dot{\Delta}_{j}(w\cdot\nabla b,w\cdot\nabla w)\|_{L^2}+\|[w\cdot \nabla,\dot{\Delta}_{j}](b,w)\|_{L^2}\big{)}d\omega\lesssim\mathcal{Z}^2(t)+ \mathcal{X}(t)\mathcal{Z}(t).
\end{aligned}
\end{equation}
Thus, we prove
\begin{equation}\label{highdecaypp57}
\begin{aligned}
&\|\langle \tau\rangle^{\alpha}(b,w)\|_{\widetilde{L}^{\infty}_{t}(\dot{B}^{\frac{d}{2}+1}_{2,1})}^{h}\\
&\quad\lesssim \|a_{0}\|_{\dot{B}^{\frac{d}{2}}_{2,1}}^{h}+\|u_{0}\|_{\dot{B}^{\frac{d}{2}-1}_{2,1}}^{h}+\|(b_{0},w_{0})\|_{\dot{B}^{\frac{d}{2}+1}_{2,1}}^{h}+\mathcal{X}^2(t)+\mathcal{Z}^2(t).
\end{aligned}
\end{equation}
The combination of (\ref{highdecaypp55}) and (\ref{highdecaypp56})-(\ref{highdecaypp57}) leads to (\ref{Highsmall}) and completes the proof of Lemma \ref{lemma62}.
\end{proof}

Finally, we need the additional time-weighted estimates of the relative velocity $u-w$ to control the nonlinear term $h(a,b)(u-w)$ and enclose the expected time-weighted estimates.
\begin{lemma}\label{lemma63}
Let $(a,u,b,w)$ be the global solution to the Cauchy problem $(\ref{m1n})$ given by Theorem \ref{theorem12}. Then, under the assumptions of Theorem \ref{theorem14}, it holds
\begin{equation}\label{relativesmall}
\begin{aligned}
&\|\langle \tau\rangle^{\frac{1}{2}}(u-w)\|_{L^{\infty}_{t}(\dot{B}^{\sigma_{0}}_{2,\infty})}^{\ell}+\sup_{\sigma\in[\sigma_{0}+\var,\frac{d}{2}]}\|\langle \tau\rangle^{\frac{1}{2}(\sigma-\sigma_{0})}(u-w)\|_{L^{\infty}_{t}(\dot{B}^{\sigma}_{2,1})}^{\ell}\\
&\quad\lesssim \|(a_{0},u_{0},b_{0},w_{0})\|_{\dot{B}^{\sigma_{0}}_{2,\infty}}^{\ell}+\mathcal{X}^2(t)+\mathcal{Z}^2(t),\quad t>0,
\end{aligned}
\end{equation}
where $\mathcal{X}(t)$ and $\mathcal{Z}(t)$ are defined by $(\ref{XX00})$ and $(\ref{r44})$, respectively, and $\var\in(0,1)$ is any suitably small constant.
\end{lemma}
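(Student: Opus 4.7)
The plan is to exploit the fully damped evolution equation (3.22) satisfied by the relative velocity $u-w$, which in Duhamel form reads as (3.52). Taking a frequency-localised low-frequency Besov norm and using the elementary convolution inequality $\int_{0}^{t} e^{-2(t-\tau)} \langle \tau\rangle^{-\beta} d\tau \lesssim \langle t\rangle^{-\beta}$ (valid for any $\beta>0$), one converts polynomial time-decay of the source terms into the claimed polynomial decay of $u-w$. Crucially, the target rate $\langle t\rangle^{-1/2}$ (respectively $\langle t\rangle^{-(1+\sigma-\sigma_{0})/2}$) is one half-power faster than the analogous rate for $(a,u,b,w)$ itself, and this gain reflects the dissipation produced by the drag force; the exponential kernel effectively ``costs'' no power of $\tau$.

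Concretely, I would first target $\|\langle \tau\rangle^{1/2}(u-w)\|_{L^{\infty}_{t}(\dot{B}^{\sigma_{0}}_{2,\infty})}^{\ell}$. Taking the low-frequency $\dot{B}^{\sigma_{0}}_{2,\infty}$-norm of (3.52), the exponential factor absorbs the initial-data contribution. The linear source terms $\nabla a$, $\Delta u$, $\nabla b$ are handled by Lemma~\ref{lemma61} applied at regularity index $\sigma=\sigma_{0}+1$ (or $\sigma_{0}+2$ for $\Delta u$), which belongs to $[\sigma_{0}+\varepsilon,\frac{d}{2}+1]$ and therefore yields the required $\langle \tau\rangle^{-1/2}$ (respectively $\langle\tau\rangle^{-1}$) pointwise bound; the exponential convolution then returns a $\langle t\rangle^{-1/2}$ rate. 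The quadratic terms $u\cdot\nabla u$ and $w\cdot\nabla w$ are treated by a paraproduct decomposition combined with the bounds already encoded in $\mathcal{X}(t)$ and $\mathcal{Z}(t)$, exactly as in the arguments leading to (\ref{non111})--(\ref{non112}), yielding $\mathcal{Z}^2(t)\langle t\rangle^{-1/2}$.

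For the higher-regularity piece $\|\langle\tau\rangle^{\frac{1}{2}(1+\sigma-\sigma_{0})}(u-w)\|_{L^{\infty}_{t}(\dot{B}^{\sigma}_{2,1})}^{\ell}$ with $\sigma\in[\sigma_{0}+\varepsilon,\tfrac{d}{2}]$, the same Duhamel strategy applies: the linear terms are now estimated in $\dot{B}^{\sigma}_{2,1}$ at indices $\sigma+1$ and $\sigma+2$, both of which still lie in the admissible window $[\sigma_{0}+\varepsilon,\frac{d}{2}+1]$ where Lemma~\ref{lemma61} supplies decay at rate $\langle\tau\rangle^{-\frac{1}{2}(1+\sigma-\sigma_{0})}$ and $\langle\tau\rangle^{-\frac{1}{2}(2+\sigma-\sigma_{0})}$ respectively. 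Convolving with $e^{-2(t-\tau)}$ preserves these rates by the same elementary inequality, yielding the target decay up to $\mathcal{X}^2+\mathcal{Z}^2$.

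The main obstacle will be the nonlinear contribution $h(a,b)(u-w)$ inside $G$, because $u-w$ itself is the unknown being estimated. The resolution mirrors the key estimate (\ref{factddd1}): splitting $u-w=(u-w)^{\ell}+u^{h}-w^{h}$ and $(a,b)=(a,b)^{\ell}+a^{h}+b^{h}$, one combines the $\dot{B}^{\frac{d}{2}}_{2,1}$-bound on $(a,b)$ (via $\mathcal{Z}(t)$ and Lemma~\ref{lemma62}) with the already-built-in $\langle\tau\rangle^{-1/2}$ decay of $(u-w)^{\ell}$ in $\dot{B}^{\sigma_{0}}_{2,\infty}$ and the $\langle\tau\rangle^{-\alpha}$ decay of $(u,w)^{h}$ inherited from the definition of $\mathcal{Z}(t)$. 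Since $\alpha=\frac{1}{2}(d+1-2\sigma_{0}-2\varepsilon)>\frac{1}{2}$, the product decays strictly faster than $\langle\tau\rangle^{-\frac{1}{2}(\frac{d}{2}+1-\sigma_{0})}$, so convolution with $e^{-2(t-\tau)}$ produces a contribution bounded by $\mathcal{Z}^2(t)\langle t\rangle^{-1/2}$ (respectively $\mathcal{Z}^2(t)\langle t\rangle^{-\frac{1}{2}(1+\sigma-\sigma_{0})}$ after one uses the higher-regularity norms from Lemma~\ref{lemma62}). This exactly closes the bootstrap on $\mathcal{Z}(t)$, finishing the proof.
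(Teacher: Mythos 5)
Your strategy is essentially the same as the paper's: take the low-frequency Besov norm of the Duhamel formula \eqref{relativedamp} for $u-w$, absorb the linear source terms $\nabla a$, $\Delta u$, $\nabla b$ via the time-weighted low-frequency bounds of Lemma \ref{lemma61}, use the convolution inequality $\int_0^t e^{-2(t-\tau)}\langle\tau\rangle^{-\beta}\,d\tau\lesssim\langle t\rangle^{-\beta}$, and control $h(a,b)(u-w)$ by the frequency splitting behind \eqref{factddd1}; this matches the paper's \eqref{relativesmall1}--\eqref{relativesmall2}. One small slip: for $\sigma$ near $\frac{d}{2}$ the index $\sigma+2$ needed for $\|\Delta u\|_{\dot B^{\sigma}_{2,1}}^{\ell}$ exceeds $\frac{d}{2}+1$ and thus does \emph{not} lie in the admissible window of Lemma \ref{lemma61}; you should instead use the low-frequency embedding $\|u\|_{\dot B^{\sigma+2}_{2,1}}^{\ell}\lesssim\|u\|_{\dot B^{\sigma+1}_{2,1}}^{\ell}$ from \eqref{lh} (the target rate $\langle t\rangle^{-\frac{1}{2}(1+\sigma-\sigma_0)}$ only requires the $\sigma+1$ decay anyway), after which the argument closes exactly as you describe.
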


\begin{proof}
Taking the low-frequency $\dot{B}^{\sigma_{0}}_{2,\infty}$-norm of \eqref{relativedamp} for low frequnecies, we get
\begin{equation}\label{relativesmall1}
\begin{aligned}
&\|u-w\|_{\dot{B}^{\sigma_{0}}_{2,\infty}}^{\ell}&\\
&\lesssim e^{-2 t}\|(u_{0},w_{0})\|_{\dot{B}^{\sigma_{0}}_{2,\infty}}^{\ell}\\
&\quad+\int_{0}^{t}e^{-2(t-\tau)}\big{(}\|(a,b,u)\|_{\dot{B}^{\sigma_{0}+1}_{2,\infty}}^{\ell}+\|(u\cdot\nabla u,w\cdot \nabla w)\|_{\dot{B}^{\sigma_{0}}_{2,\infty}}^{\ell}+\|G\|_{\dot{B}^{\sigma_{0}}_{2,\infty}}^{\ell}\big{)}d\tau.
\end{aligned}
\end{equation}
The every term on the right-hand side of (\ref{relativesmall1}) can be estimated as follows. It holds by (\ref{Lowsmall}) that
\begin{equation}\label{relativesmall11}
\begin{aligned}
&\int_{0}^{t}e^{-2(t-\tau)}\big{(}\|(a,b)\|_{\dot{B}^{\sigma_{0}+1}_{2,\infty}}^{\ell}+\|u\|_{\dot{B}^{\sigma_{0}+2}_{2,\infty}}^{\ell}\big{)}d\tau\\
&\quad\lesssim  \|\langle \tau\rangle^{\frac{1}{2}}(a,u)\|_{L^{\infty}_{t}(\dot{B}^{\sigma_{0}+1}_{2,1})}^{\ell}\int_{0}^{t}e^{-2(t-\tau)}\langle \tau\rangle^{-\frac{1}{2}}d\tau\\
&\quad\lesssim \big{(}\|(a_{0},u_{0},b_{0},w_{0})\|_{\dot{B}^{\sigma_{0}}_{2,\infty}}^{\ell}+\mathcal{X}^2(t)+\mathcal{Z}^2(t)\big{)}\langle t\rangle^{-\frac{1}{2}}.
\end{aligned}
\end{equation}
We derive by (\ref{Lowsmall}) and (\ref{uv3}) that
\begin{equation}\label{relativesmall13}
\begin{aligned}
&\int_{0}^{t}e^{-2(t-\tau)}\big{(}\|(u\cdot \nabla u,w\cdot\nabla w)\|_{\dot{B}^{\sigma_{0}}_{2,\infty}}^{\ell}+\|G\|_{\dot{B}^{\sigma_{0}}_{2,\infty}}^{\ell}\big{)}d\tau\\
&\quad\lesssim \big{(}\mathcal{X}^2(t)+\mathcal{Z}^2(t)\big{)} \langle t\rangle^{-\frac{1}{2}(\frac{d}{2}+1-\sigma_{0})}.
\end{aligned}
\end{equation}
The combination of (\ref{relativesmall1})-(\ref{relativesmall13}) gives rise to
\begin{equation}\label{relativesmall2}
\begin{aligned}
\|\langle \tau\rangle^{\frac{1}{2}}(u-w)\|_{L^{\infty}_{t}(\dot{B}^{\sigma_{0}}_{2,\infty})}^{\ell}\lesssim\|(a_{0},u_{0},b_{0},w_{0})\|_{\dot{B}^{\sigma_{0}}_{2,\infty}}^{\ell}+\mathcal{X}^2(t)+\mathcal{Z}^2(t).
\end{aligned}
\end{equation}
By a similar argument as used in (\ref{relativesmall2}), one can have
\begin{equation}\nonumber
\begin{aligned}
\|\langle \tau\rangle^{\frac{1+\sigma-\sigma_{0}}{2}}(u-w)\|_{L^{\infty}_{t}(\dot{B}^{\sigma}_{2,1})}^{\ell}\lesssim \|(a_{0},u_{0},b_{0},w_{0})\|_{\dot{B}^{\sigma_{0}}_{2,\infty}}^{\ell}+\mathcal{X}^2(t)+\mathcal{Z}^2(t), \quad \sigma\in (\sigma_{0},\frac{d}{2}].
\end{aligned}
\end{equation}
For brevity, the details are omitted here. 
\end{proof}

\vspace{2ex}

\underline{\it\textbf{Proof of Theorem \ref{theorem14}:}}~
Assume that the initial data $(a_{0},u_{0},b_{0},w_{0})$ satisfies \eqref{a1} and \eqref{a4}, and let $(a,u,b,w)$ be the global solution to the Cauchy problem \eqref{m1n} given by Theorem \ref{theorem12}. In terms of the time-weighted estimated established in Lemmas \ref{lemma61}-\ref{lemma63}, we have
\begin{equation}\label{z2}
\begin{aligned}
&\mathcal{Z}(t)\lesssim \|(a_{0},u_{0},b_{0},w_{0})\|_{\dot{B}^{\sigma_{0}}_{2,\infty}}^{\ell}+\mathcal{X}_{0}+\mathcal{X}^2(t)+\mathcal{Z}^2(t),\quad\quad t>0,
\end{aligned}
\end{equation}
where $\mathcal{X}(t)$ and $\mathcal{Z}(t)$ are defined by \eqref{XX00} and \eqref{r44}, respectively. Due to \eqref{z2} and the fact that $\delta_{0}\sim\mathcal{X}_{0}+\|(a_{0},u_{0},b_{0},w_{0})\|_{\dot{B}^{\sigma_{0}}_{2,\infty}}^{\ell}$ is sufficiently small, we conclude $\mathcal{Z}(t)\lesssim \delta_{0}$ for any $t>0$. Thus, \eqref{r4} follows.

\vspace{2ex}

\section{Appendix: Littlewood-Paley decomposition and Besov spaces}\label{sectionnotation}

We explain the notations and technical lemmas used throughout this paper. $C>0$ and $c>0$ denote two constants independent of time. $A\lesssim B(A\gtrsim B)$ means $A\leq CB$ $(A\geq CB)$, and $A\sim B$ stands for $A\lesssim B$ and $A\gtrsim B$. For any Banach space $X$ and the functions $g,h\in X$, let $\|(g,h)\|_{X}:=\|g\|_{X}+\|h\|_{X}$. For any $T>0$ and $1\leq \varrho\leq\infty$, we denote by $L^{\varrho}(0,T;X)$ the set of measurable functions $g:[0,T]\rightarrow X$ such that $t\mapsto \|g(t)\|_{X}$ is in $L^{\varrho}(0,T)$ and write $\|\cdot\|_{L^{\varrho}(0,T;X)}:=\|\cdot\|_{L^{\varrho}_{T}(X)}$.

We recall the Littlewood-Paley decomposition, Besov spaces and related analysis tool. The reader can refer to Chapters 2-3 in \cite{bahouri1} for the details. Choose a smooth radial non-increasing function $\chi(\xi)$  compactly supported in $B(0,\frac{4}{3})$ and satisfying $\chi(\xi)=1$ in $B(0,\frac{3}{4})$. Then $\varphi(\xi):=\chi(\frac{\xi}{2})-\chi(\xi)$ satisfies
$$
\sum_{j\in \mathbb{Z}}\varphi(2^{-j}\cdot)=1,\quad \text{{\rm{Supp}}}~ \varphi\subset \{\xi\in \mathbb{R}^{d}~|~\frac{3}{4}\leq |\xi|\leq \frac{8}{3}\}.
$$
For any $j\in \mathbb{Z}$, define the homogeneous dyadic blocks $\dot{\Delta}_{j}$ by
$$
\dot{\Delta}_{j}u:=\mathcal{F}^{-1}\big{(} \varphi(2^{-j}\cdot )\mathcal{F}(u) \big{)}=2^{jd}h(2^{j}\cdot)\star u,\quad\quad h:=\mathcal{F}^{-1}\varphi,
$$
where $\mathcal{F}$ and $\mathcal{F}^{-1}$ are the Fourier transform and its inverse. Let $\mathcal{P}$ be the class of all polynomials on $\mathbb{R}^{d}$ and $\mathcal{S}_{h}'=\mathcal{S}'/\mathcal{P}$ stand for the tempered distributions on $\mathbb{R}^{d}$ modulo polynomials. One can get
\begin{equation}\nonumber
\begin{aligned}
&u=\sum_{j\in \mathbb{Z}}\dot{\Delta}_{j}u\quad\text{in}~\mathcal{S}',\quad \forall u\in \mathcal{S}_{h}',\quad \quad \dot{\Delta}_{j}\dot{\Delta}_{l}u=0,\quad\text{if}\quad|j-l|\geq2.
\end{aligned}
\end{equation}

With the help of those dyadic blocks, we give the definition of homogeneous Besov spaces as follow.

\begin{defn}\label{defnbesov}
For $s\in \mathbb{R}$ and $1\leq p,r\leq \infty$, the  homogeneous Besov space $\dot{B}^{s}_{p,r}$ is defined by
$$
\dot{B}^{s}_{p,r}:=\big{\{} u\in \mathcal{S}_{h}'~|~\|u\|_{\dot{B}^{s}_{p,r}}:=\|\{2^{js}\|\dot{\Delta}_{j}u\|_{L^{p}}\}_{j\in\mathbb{Z}}\|_{l^{r}}<\infty \big{\}} .
$$
\end{defn}

Next, we state a class of mixed space-time Besov spaces introduced by Chemin-Lerner \cite{chemin1}.
\begin{defn}\label{defntimespace}
For $T>0$, $s\in\mathbb{R}$ and $1\leq \varrho,r, q \leq \infty$, the space $\widetilde{L}^{\varrho}(0,T;\dot{B}^{s}_{p,r})$ is defined as
$$
\widetilde{L}^{\varrho}(0,T;\dot{B}^{s}_{p,r}):= \big{\{} u\in L^{\varrho}(0,T;\mathcal{S}'_{h})~|~ \|u\|_{\widetilde{L}^{\varrho}_{T}(\dot{B}^{s}_{p,r})}:=\|\{2^{js}\|\dot{\Delta}_{j}u\|_{L^{\varrho}_{T}(L^{p})}\}_{j\in\mathbb{Z}}\|_{l^{r}}<\infty \big{\}}.
$$
By the Minkowski inequality, it holds 
\begin{equation}\nonumber
\begin{aligned}
&\|u\|_{\widetilde{L}^{\varrho}_{T}(\dot{B}^{s}_{p,r})}\leq(\geq) \|u\|_{L^{\varrho}_{T}(\dot{B}^{s}_{p,r})}\quad\text{if}~r\geq(\leq)\rho,
\end{aligned}
\end{equation}
where $\|\cdot\|_{L^{\varrho}_{T}(\dot{B}^{s}_{p,r})}$ is the usual Lebesgue-Besov norm. Moreover, we denote
\begin{equation}\nonumber
\begin{aligned}
&\mathcal{C}_{b}(\mathbb{R}_{+};\dot{B}^{s}_{p,r}):=\big{\{} u\in\mathcal{C}(\mathbb{R}_{+};\dot{B}^{s}_{p,r})~|~\|f\|_{\widetilde{L}^{\infty}(\mathbb{R}_{+};\dot{B}^{s}_{p,r})}<\infty \big{\}}.
\end{aligned}
\end{equation}
\end{defn}

In order to restrict Besov norms to the low frequency part and the high-frequency part, we often use the following notations for any $s\in\mathbb{R}$ and $p\in[1,\infty]$:
\begin{equation}\nonumber
\left\{
\begin{aligned}
&\|u\|_{\dot{B}^{s}_{p,r}}^{\ell}:=\|\{2^{js}\|\dot{\Delta}_{j}u\|_{L^{p}}\}_{j\leq 0}\|_{l^r},\quad \quad\quad\quad~\|u\|_{\dot{B}^{s}_{p,r}}^{h}:=\|\{2^{js}\|\dot{\Delta}_{j}u\|_{L^{p}}\}_{j\geq-1}\|_{l^r},\\
&\|u\|_{\widetilde{L}^{\varrho}_{T}(\dot{B}^{s}_{p,r})}^{\ell}:=\|\{2^{js}\|\dot{\Delta}_{j}u\|_{L^{\varrho}_{T}(L^{p})}\}_{j\leq 0}\|_{l^r},\quad \|u\|_{\widetilde{L}^{\varrho}_{T}(\dot{B}^{s}_{p,r})}^{h}:=\|\{2^{js}\|\dot{\Delta}_{j}u\|_{L_{T}^{\varrho}(L^{p})}\}_{j\geq-1}\|_{l^r}.
\end{aligned}
\right.
\end{equation}
Define
$$
u^{\ell}:=\sum_{j\leq -1}\dot{\Delta}_{j}u,\quad\quad u^{h}:=u-u^{\ell}=\sum_{j\geq0}\dot{\Delta}_{j}u.
$$
It is easy to check for any $s'>0$ that
\begin{equation}\label{lh}
\left\{
\begin{aligned}
&\|u^{\ell}\|_{\dot{B}^{s}_{p,r}}\lesssim \|u\|_{\dot{B}^{s}_{p,r}}^{\ell}\lesssim \|u\|_{\dot{B}^{s-s'}_{p,r}}^{\ell},\quad\quad\quad\quad\quad\quad \|u^{h}\|_{\dot{B}^{s}_{p,1}}\lesssim \|u\|_{\dot{B}^{s}_{p,r}}^{h}\lesssim \|u\|_{\dot{B}^{s+s'}_{p,r}}^{h},\\
&\|u^{\ell}\|_{\widetilde{L}^{\varrho}_{T}(\dot{B}^{s}_{p,r})}\lesssim \|u\|_{\widetilde{L}^{\varrho}_{T}(\dot{B}^{s}_{p,r})}^{\ell}\lesssim \|u\|_{\widetilde{L}^{\varrho}_{T}(\dot{B}^{s-s'}_{p,r})}^{\ell},\quad\|u^{h}\|_{\widetilde{L}^{\varrho}_{T}(\dot{B}^{s}_{p,r})}\lesssim \|u\|_{\widetilde{L}^{\varrho}_{T}(\dot{B}^{s}_{p,r})}^{h}\lesssim \|u\|_{\widetilde{L}^{\varrho}_{T}(\dot{B}^{s+s'}_{p,r})}^{h}.
\end{aligned}
\right.
\end{equation}

We recall some basic properties of Besov spaces and product estimates which will be used repeatedly in this paper. Remark that all the properties remain true for the Chemin--Lerner type spaces whose time exponent has to behave according to the H${\rm{\ddot{o}}}$lder inequality for the time variable.

The first lemma is the Bernstein inequalities, which in particular implies that $\dot{\Delta}_{j}u$ is smooth for every $u$ in any Besov spaces so that we can take direct calculations on linear equations after applying the operator $\dot{\Delta}_{j}$.
\begin{lemma}\label{lemma21}
Let $0<r<R$, $1\leq p\leq q\leq \infty$ and $k\in \mathbb{N}$. For any $u\in L^p$ and $\lambda>0$, it holds
\begin{equation}\nonumber
\left\{
\begin{aligned}
&{\rm{Supp}}~ \mathcal{F}(u) \subset \{\xi\in\mathbb{R}^{d}~| ~|\xi|\leq \lambda R\}\Rightarrow \|D^{k}u\|_{L^q}\lesssim\lambda^{k+d(\frac{1}{p}-\frac{1}{q})}\|u\|_{L^p},\\
&{\rm{Supp}}~ \mathcal{F}(u) \subset \{\xi\in\mathbb{R}^{d}~|~ \lambda r\leq |\xi|\leq \lambda R\}\Rightarrow \|D^{k}u\|_{L^{p}}\sim\lambda^{k}\|u\|_{L^{p}}.
\end{aligned}
\right.
\end{equation}
\end{lemma}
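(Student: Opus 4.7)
The plan is to prove both Bernstein-type inequalities by realizing $u$ (or $D^k u$) as a convolution with a Schwartz kernel adapted to the Fourier support and then invoking Young's inequality for convolutions. First I would fix a radial function $\phi\in\mathcal{S}(\mathbb{R}^d)$ whose Fourier transform equals $1$ on $\overline{B(0,R)}$ and is compactly supported in a slightly larger ball; setting $\phi_\lambda(x):=\lambda^d\phi(\lambda x)$, the spectral hypothesis ${\rm Supp}\,\mathcal{F}(u)\subset\{|\xi|\leq\lambda R\}$ gives the reproducing identity $u=\phi_\lambda\star u$, hence $D^k u=(D^k\phi_\lambda)\star u$ for any multi-index of order $k$. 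Young's inequality with exponents $\tfrac{1}{s}=1+\tfrac{1}{q}-\tfrac{1}{p}$ then yields
\[
\|D^k u\|_{L^q}\leq \|D^k\phi_\lambda\|_{L^s}\|u\|_{L^p},
\]
and a change of variable produces $\|D^k\phi_\lambda\|_{L^s}=\lambda^{k+d(1/p-1/q)}\|D^k\phi\|_{L^s}$, with the $\phi$-norm finite because $\phi\in\mathcal{S}$. This proves the first inequality.

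For the annulus case, the upper bound $\|D^k u\|_{L^p}\lesssim\lambda^k\|u\|_{L^p}$ follows by the identical argument with $q=p$, using an auxiliary $\widetilde{\phi}\in\mathcal{S}$ whose Fourier transform equals $1$ on $\{r\leq|\xi|\leq R\}$. For the matching lower bound $\lambda^k\|u\|_{L^p}\lesssim\|D^k u\|_{L^p}$ I would introduce $\psi\in\mathcal{S}$ defined by $\widehat{\psi}(\xi)=|\xi|^{-2k}\eta(\xi)$, where $\eta\in C_c^\infty(\mathbb{R}^d\setminus\{0\})$ equals $1$ on $\{r\leq|\xi|\leq R\}$; since $\widehat{\psi}$ is smooth and compactly supported away from the origin, $\psi$ is Schwartz. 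The reproducing identity then becomes $u=(-1)^k\psi_\lambda\star\Delta^k u$, and a second application of Young's inequality produces the reverse estimate, the factor $\lambda^{-2k}$ from $\psi_\lambda$ combining with the single $\Delta^k$ to give the stated $\lambda^k\sim$ equivalence.

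The argument is essentially textbook, and no step constitutes a genuine obstacle; the only item requiring care is the bookkeeping of the dilation $\phi_\lambda(x)=\lambda^d\phi(\lambda x)$, so that the $\lambda^{d(1/p-1/q)}$ factor emerging from $\|D^k\phi_\lambda\|_{L^s}$ matches the one predicted by dimensional analysis. The auxiliary Schwartz functions $\phi$, $\widetilde{\phi}$, $\psi$ depend only on the fixed parameters $r,R,k,d$ and not on $\lambda$, so all their Schwartz seminorms are absorbed into the implicit constant.
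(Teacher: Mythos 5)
The paper does not present a proof of Lemma~\ref{lemma21}; it simply cites the Bernstein inequalities from Chapters~2--3 of \cite{bahouri1}. Your proposal follows the same textbook strategy (reproducing kernel plus Young's inequality), and the first inequality, together with the upper half of the annulus estimate, is handled correctly; the scaling computation $\|D^k\phi_\lambda\|_{L^s}=\lambda^{k+d(1/p-1/q)}\|D^k\phi\|_{L^s}$ is exactly right.

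There is, however, a gap in the lower bound for the annulus case, and it is a gap of \emph{order}. With $\widehat{\psi}(\xi)=|\xi|^{-2k}\eta(\xi)$ the reproducing identity is in fact $u=(-1)^k\lambda^{-2k}\,\psi_\lambda\star\Delta^k u$ (you dropped the $\lambda^{-2k}$ in the displayed identity, although you acknowledge the factor afterwards), and Young's inequality gives
\[
\|u\|_{L^p}\lesssim\lambda^{-2k}\|\Delta^k u\|_{L^p}.
\]
But $\Delta^k$ is a differential operator of order $2k$, not $k$: the estimate above is the Bernstein lower bound at order $2k$, and it does not, as stated, yield $\lambda^k\|u\|_{L^p}\lesssim\|D^k u\|_{L^p}$. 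You need to insert one of the following. Either apply the already-proved upper bound once more to the functions $\partial^\alpha u$ (whose Fourier transforms live in the same annulus), writing $\Delta^k u=\sum_{|\alpha|=k}\binom{k}{\alpha}\partial^\alpha(\partial^\alpha u)$ so that $\|\Delta^k u\|_{L^p}\lesssim\lambda^k\max_{|\alpha|=k}\|\partial^\alpha u\|_{L^p}$, and then $\|u\|_{L^p}\lesssim\lambda^{-2k}\cdot\lambda^k\|D^k u\|_{L^p}=\lambda^{-k}\|D^k u\|_{L^p}$; or, more directly, replace the single scalar kernel by the tensorial family $\widehat{\psi_\alpha}(\xi)=\binom{k}{\alpha}(-i\xi)^\alpha|\xi|^{-2k}\eta(\xi)$ ($|\alpha|=k$), for which the multinomial identity $\sum_{|\alpha|=k}\binom{k}{\alpha}\xi^{2\alpha}=|\xi|^{2k}$ gives $u=\lambda^{-k}\sum_{|\alpha|=k}(\psi_\alpha)_\lambda\star\partial^\alpha u$, and Young then produces the order-$k$ lower bound directly. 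Either completion is routine, but as written the passage from $\Delta^k u$ to $D^k u$ is missing and the phrase ``the single $\Delta^k$'' suggests a miscount of its differential order.
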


Due to the Bernstein inequalities, the Besov spaces have the following properties.
\begin{lemma}\label{lemma22}
The following properties hold{\rm:}
\begin{itemize}
\item{} For $s\in\mathbb{R}$, $1\leq p_{1}\leq p_{2}\leq \infty$ and $1\leq r_{1}\leq r_{2}\leq \infty$, it holds
\begin{equation}\nonumber
\begin{aligned}
\dot{B}^{s}_{p_{1},r_{1}}\hookrightarrow \dot{B}^{s-d(\frac{1}{p_{1}}-\frac{1}{p_{2}})}_{p_{2},r_{2}};
\end{aligned}
\end{equation}
\item{} For $1\leq p\leq q\leq\infty$, we have the following chain of continuous embedding:
\begin{equation}\nonumber
\begin{aligned}
\dot{B}^{0}_{p,1}\hookrightarrow L^{p}\hookrightarrow \dot{B}^{0}_{p,\infty}\hookrightarrow \dot{B}^{\sigma}_{q,\infty},\quad \sigma=-d(\frac{1}{p}-\frac{1}{q})<0;
\end{aligned}
\end{equation}
\item{} If $p<\infty$, then $\dot{B}^{\frac{d}{p}}_{p,1}$ is continuously embedded in the set of continuous functions decaying to 0 at infinity;
\item{} The following real interpolation property is satisfied for $1\leq p\leq\infty$, $s_{1}<s_{2}$ and $\theta\in(0,1)$:
\begin{equation}
\begin{aligned}
&\|u\|_{\dot{B}^{\theta s_{1}+(1-\theta)s_{2}}_{p,1}}\lesssim \frac{1}{\theta(1-\theta)(s_{2}-s_{1})}\|u\|_{\dot{B}^{ s_{1}}_{p,\infty}}^{\theta}\|u\|_{\dot{B}^{s_{2}}_{p,\infty}}^{1-\theta},\label{inter}
\end{aligned}
\end{equation}
which in particular implies for any $\varepsilon>0$ that
\begin{equation}\nonumber
\begin{aligned}
H^{s+\varepsilon}\hookrightarrow \dot{B}^{s}_{2,1}\hookrightarrow \dot{H}^{s};
\end{aligned}
\end{equation}
\item{}
Let $\Lambda^{\sigma}$ be defined by $\Lambda^{\sigma}=(-\Delta)^{\frac{\sigma}{2}}u:=\mathcal{F}^{-}\big{(} |\xi|^{\sigma}\mathcal{F}(u) \big{)}$ for $\sigma\in \mathbb{R}$ and $u\in\dot{S}^{'}_{h}$, then $\Lambda^{\sigma}$ is an isomorphism from $\dot{B}^{s}_{p,r}$ to $\dot{B}^{s-\sigma}_{p,r}$;
\item{} Let $1\leq p_{1},p_{2},r_{1},r_{2}\leq \infty$, $s_{1}\in\mathbb{R}$ and $s_{2}\in\mathbb{R}$ satisfy
    $$
    s_{2}<\frac{d}{p_{2}}\quad\text{\text{or}}\quad s_{2}=\frac{d}{p_{2}}~\text{and}~r_{2}=1.
    $$
    The space $\dot{B}^{s_{1}}_{p_{1},r_{1}}\cap \dot{B}^{s_{2}}_{p_{2},r_{2}}$ endowed with the norm $\|\cdot \|_{\dot{B}^{s_{1}}_{p_{1},r_{1}}}+\|\cdot\|_{\dot{B}^{s_{2}}_{p_{2},r_{2}}}$ is a Banach space and has the weak compact and Fatou properties$:$ If $u_{n}$ is a uniformly bounded sequence of $\dot{B}^{s_{1}}_{p_{1},r_{1}}\cap \dot{B}^{s_{2}}_{p_{2},r_{2}}$, then an element $u$ of $\dot{B}^{s_{1}}_{p_{1},r_{1}}\cap \dot{B}^{s_{2}}_{p_{2},r_{2}}$ and a subsequence $u_{n_{k}}$ exist such that $u_{n_{k}}\rightarrow u $ in $\mathcal{S}'$ and
    \begin{equation}\nonumber
    \begin{aligned}
    \|u\|_{\dot{B}^{s_{1}}_{p_{1},r_{1}}\cap \dot{B}^{s_{2}}_{p_{2},r_{2}}}\lesssim \liminf_{n_{k}\rightarrow \infty} \|u_{n_{k}}\|_{\dot{B}^{s_{1}}_{p_{1},r_{1}}\cap \dot{B}^{s_{2}}_{p_{2},r_{2}}}.
    \end{aligned}
    \end{equation}
\end{itemize}
\end{lemma}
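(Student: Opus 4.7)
The plan is to treat each listed property as a separate, essentially independent computation at the level of Littlewood--Paley blocks, relying only on the Bernstein inequalities of Lemma \ref{lemma21} together with standard Fourier-analytic facts. I would first dispatch the embedding $\dot{B}^{s_1}_{p_1,r_1}\hookrightarrow \dot{B}^{s_1-d(1/p_1-1/p_2)}_{p_2,r_2}$: since $\dot{\Delta}_j u$ is spectrally supported in an annulus of size $\sim 2^j$, Lemma \ref{lemma21} gives $\|\dot{\Delta}_j u\|_{L^{p_2}}\lesssim 2^{jd(1/p_1-1/p_2)}\|\dot{\Delta}_j u\|_{L^{p_1}}$, so multiplying by the appropriate weight $2^{j(s_1-d(1/p_1-1/p_2))}$ and using the monotonicity $\ell^{r_1}\hookrightarrow \ell^{r_2}$ finishes the step. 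The chain $\dot{B}^{0}_{p,1}\hookrightarrow L^p\hookrightarrow \dot{B}^{0}_{p,\infty}\hookrightarrow \dot{B}^{\sigma}_{q,\infty}$ then follows from the reconstruction identity $u=\sum_j \dot{\Delta}_j u$ in $\mathcal{S}'_h$ via the triangle inequality for the first inclusion, from the uniform $L^p$-boundedness of $\dot{\Delta}_j$ (convolution with the $L^1$ kernel $2^{jd}h(2^j\cdot)$) for the second, and from the previous embedding step for the third.

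Next I would treat the continuity/decay statement and the real interpolation inequality. For $u\in \dot{B}^{d/p}_{p,1}$, Bernstein gives $\|\dot{\Delta}_j u\|_{L^\infty}\lesssim 2^{jd/p}\|\dot{\Delta}_j u\|_{L^p}$, so the series $\sum_j \dot{\Delta}_j u$ is uniformly Cauchy in $L^\infty$; each partial sum is continuous and vanishes at infinity (being the inverse Fourier transform of a compactly supported $L^1$ function), and the uniform limit inherits both properties. For the interpolation inequality, I split $u=\sum_{j\leq N}\dot{\Delta}_j u+\sum_{j>N}\dot{\Delta}_j u$ for an integer $N$ to be chosen, bound the first piece in $\dot{B}^{\theta s_1+(1-\theta)s_2}_{p,1}$ by $\|u\|_{\dot{B}^{s_1}_{p,\infty}}$ times the geometric sum $\sum_{j\leq N}2^{j(1-\theta)(s_2-s_1)}\sim 2^{N(1-\theta)(s_2-s_1)}/((1-\theta)(s_2-s_1))$, the second piece similarly by $\|u\|_{\dot{B}^{s_2}_{p,\infty}}$ times $\sum_{j>N}2^{-j\theta(s_2-s_1)}\sim 2^{-N\theta(s_2-s_1)}/(\theta(s_2-s_1))$, and then optimize $N$ so that the two contributions balance. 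The factor $1/(\theta(1-\theta)(s_2-s_1))$ emerges naturally from combining the two denominators produced by the geometric sums.

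For the isomorphism property of $\Lambda^\sigma$, I write $\dot{\Delta}_j \Lambda^\sigma u=\mathcal{F}^{-1}(|\xi|^\sigma\varphi(2^{-j}\xi)\hat u)$ and note that $|\xi|^\sigma\varphi(2^{-j}\xi)=2^{j\sigma}\psi_\sigma(2^{-j}\xi)$ for a fixed Schwartz function $\psi_\sigma$ supported in the same annulus as $\varphi$; Young's convolution inequality then yields $\|\dot{\Delta}_j \Lambda^\sigma u\|_{L^p}\sim 2^{j\sigma}\|\dot{\Delta}_j u\|_{L^p}$, which gives the claimed isomorphism from $\dot{B}^{s}_{p,r}$ onto $\dot{B}^{s-\sigma}_{p,r}$. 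For the weak-compactness/Fatou property, I extract from the uniformly bounded sequence $(u_n)$ a subsequence such that each $\dot{\Delta}_j u_n$ converges weakly in $L^{p_1}\cap L^{p_2}$ via the Banach--Alaoglu theorem, then invoke a Cantor diagonal argument over $j\in\mathbb{Z}$; the limits $v_j$ are still spectrally localised, and $u:=\sum_j v_j$ is well defined in $\mathcal{S}'$ thanks to the assumption $s_2<d/p_2$ (or the marginal case with $r_2=1$), which controls low-frequency contributions modulo polynomials. The norm bound then follows directly from Fatou's lemma applied inside the $\ell^{r_i}$-sum.

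The main obstacle, conceptually, is the Fatou/weak-compactness step: one must verify that the partial reconstruction $\sum_j v_j$ defines a tempered distribution on the nose (not just modulo polynomials in some delicate way) and that weak limits interact correctly with the localisation operators $\dot{\Delta}_j$. The technical assumption $s_2<d/p_2$, or $s_2=d/p_2$ with $r_2=1$, is precisely what prevents low-frequency contributions from producing unbounded polynomial growth when summing in $\mathcal{S}'$, so this is the step I would treat with the most care; all the other items reduce to clean one-line consequences of Bernstein's inequality together with geometric-series bookkeeping.
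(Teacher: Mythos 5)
The paper does not actually prove Lemma \ref{lemma22}: it recalls these as standard facts and refers the reader to Chapters 2--3 of \cite{bahouri1}, so there is no in-paper proof to compare against. Your sketch reproduces, for the most part, the proofs one finds there, but two points need correction. First, in the Fatou/weak-compactness item you invoke Banach--Alaoglu to extract weak limits of $\dot{\Delta}_j u_n$ in $L^{p_i}$. This fails when $p_i=1$, which the statement permits: $L^1$ is not a dual space, so a bounded sequence in $L^1$ need not have a weak-$\ast$ convergent subsequence in $L^1$ (only in the space of measures). The argument in \cite{bahouri1} (Theorem~2.25) instead exploits the spectral localisation of $\dot{\Delta}_j u_n$: Bernstein's inequalities give uniform bounds on these functions and all their derivatives, so an Arzel\`{a}--Ascoli plus diagonal argument produces a subsequence converging locally uniformly, and the norm bound then follows from Fatou's lemma applied to the $L^{p_i}$-norms of the blocks. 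Second, your justification for the decay claim --- that $\dot{\Delta}_j u$ is the inverse Fourier transform of a compactly supported $L^1$ function --- is only available for $p\le 2$; for $2<p<\infty$, $\widehat{u}$ need not be a function at all. The clean argument is that $\dot{\Delta}_j u$ is the convolution of $u\in L^p$ with the Schwartz kernel $2^{jd}h(2^j\cdot)$, and one shows $\dot{\Delta}_j u\in C_0(\mathbb{R}^d)$ by approximating $u$ in $L^p$ by compactly supported functions and using the Bernstein bound $\|\dot{\Delta}_j(u-v)\|_{L^\infty}\lesssim 2^{jd/p}\|u-v\|_{L^p}$; this is precisely where the hypothesis $p<\infty$ enters, which your write-up does not make explicit. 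The remaining items --- the embeddings, the chain through $L^p$, the interpolation inequality with its $\frac{1}{\theta(1-\theta)(s_2-s_1)}$ constant, and the isomorphism property of $\Lambda^\sigma$ --- are handled correctly and match the standard treatment.
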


\begin{lemma}
For $1\leq p,\varrho\leq \infty$, $s\in\mathbb{R}$ and $0<\varepsilon\leq 1$, we have following log-type inequality:
\begin{equation}
\begin{aligned}
&\|u\|_{\widetilde{L}^{\varrho}_{T}(\dot{B}^{s}_{p,1})}\lesssim \frac{\|u\|_{\widetilde{L}^{\varrho}_{T}(\dot{B}^{s}_{p,\infty})}}{\varepsilon}\log\Big{\{}{1+\frac{\|u\|_{\widetilde{L}^{\varrho}_{T}(\dot{B}^{s-\varepsilon}_{p,\infty})}+\|u\|_{\widetilde{L}^{\varrho}_{T}(\dot{B}^{s+\varepsilon}_{p,\infty})}}{\|u\|_{\widetilde{L}^{\varrho}_{T}(\dot{B}^{s}_{p,\infty})}}}\Big{\}}.\label{log}
\end{aligned}
\end{equation}
\end{lemma}

The following Morse-type product estimates in Besov spaces play a fundamental role in the analysis on nonlinear terms:
\begin{lemma}\label{lemma23}
The following statements hold:
\begin{itemize}
\item{} Let $s>0$ and $1\leq p,r\leq \infty$. Then $\dot{B}^{s}_{p,r}\cap L^{\infty}$ is a algebra and
    \begin{equation}\label{uv1}
\begin{aligned}
\|uv\|_{\dot{B}^{s}_{p,r}}\lesssim \|u\|_{L^{\infty}}\|v\|_{\dot{B}^{s}_{p,r}}+ \|v\|_{L^{\infty}}\|u\|_{\dot{B}^{s}_{p,r}};
\end{aligned}
\end{equation}
\item{}
Let $s_{1}$, $s_{2}$ and $p$ satisfy $2\leq p\leq \infty$, $s_{1}\leq \frac{d}{p}$, $s_{2}\leq \frac{d}{p}$ and $s_{1}+s_{2}>0$. Then we have
\begin{equation}\label{uv2}
\begin{aligned}
&\|uv\|_{\dot{B}^{s_{1}+s_{2}-\frac{d}{p}}_{p,1}}\lesssim \|u\|_{\dot{B}^{s_{1}}_{p,1}}\|v\|_{\dot{B}^{s_{2}}_{p,1}};
\end{aligned}
\end{equation}
\item{} Assume that $s_{1}$, $s_{2}$ and $p$ satisfy $2\leq p\leq \infty$, $s_{1}\leq \frac{d}{p}$, $s_{2}<\frac{d}{p}$ and $s_{1}+s_{2}\geq0$. Then it holds 
\begin{equation}\label{uv3}
\begin{aligned}
&\|uv\|_{\dot{B}^{s_{1}+s_{2}-\frac{d}{p}}_{p,\infty}}\lesssim \|u\|_{\dot{B}^{s_{1}}_{p,1}}\|v\|_{\dot{B}^{s_{2}}_{p,\infty}}.
\end{aligned}
\end{equation}
\end{itemize}
\end{lemma}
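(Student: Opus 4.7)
The plan is to prove the three product estimates in Lemma \ref{lemma23} via Bony's paraproduct decomposition
\begin{equation}\nonumber
uv = T_u v + T_v u + R(u,v), \qquad T_u v := \sum_{j\in\mathbb{Z}} S_{j-1}u\, \dot{\Delta}_j v, \quad R(u,v) := \sum_{j\in\mathbb{Z}} \dot{\Delta}_j u\, \widetilde{\dot{\Delta}}_j v,
\end{equation}
where $\widetilde{\dot{\Delta}}_j := \sum_{|k-j|\leq 1}\dot{\Delta}_k$ and $S_{j-1} := \sum_{k\leq j-2}\dot{\Delta}_k$. The key structural information is that each summand of $T_uv$ is spectrally localized in an annulus of radius $\sim 2^j$, while each summand of $R(u,v)$ is spectrally localized in a ball of radius $\lesssim 2^j$. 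This dictates which values of $j$ contribute after applying $\dot{\Delta}_{j'}$.

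For the algebra bound \eqref{uv1}, I would apply $\dot{\Delta}_{j'}$ to each piece and use the Bernstein inequality together with almost-orthogonality. For the paraproduct only indices with $|j-j'|\leq N$ contribute, giving
\begin{equation}\nonumber
\|\dot{\Delta}_{j'}T_uv\|_{L^p} \lesssim \sum_{|j-j'|\leq N} \|S_{j-1}u\|_{L^\infty}\|\dot{\Delta}_j v\|_{L^p} \lesssim \|u\|_{L^\infty} \sum_{|j-j'|\leq N} \|\dot{\Delta}_j v\|_{L^p},
\end{equation}
so multiplying by $2^{j's}$ and taking the $\ell^r$-norm yields the desired bound. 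The symmetric term is handled the same way, while the remainder uses the embedding into $\dot{B}^0_{p,\infty}$ combined with the fact that convergence over $j\geq j'-N$ is ensured by $s>0$.

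For the refined estimates \eqref{uv2}-\eqref{uv3} where neither factor is in $L^\infty$, I would estimate the low-frequency cutoff via
\begin{equation}\nonumber
\|S_{j-1}u\|_{L^\infty} \lesssim \sum_{k\leq j-2} 2^{kd/p}\|\dot{\Delta}_k u\|_{L^p} \lesssim 2^{j(d/p-s_1)}\|u\|_{\dot{B}^{s_1}_{p,1}}
\end{equation}
thanks to $s_1\leq d/p$ (the case $s_1=d/p$ requires the $\ell^1$ summation rather than $\ell^\infty$, which is why the hypothesis fixes $r=1$ on the $u$-factor). A parallel estimate with the roles of $u,v$ swapped handles $T_v u$, where the weaker hypothesis $s_2<d/p$ in \eqref{uv3} allows a geometric summation in $\ell^\infty$. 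For the remainder, I would use the H\"older bound $\|\dot{\Delta}_j u\,\widetilde{\dot{\Delta}}_j v\|_{L^{p/2}}\lesssim \|\dot{\Delta}_j u\|_{L^p}\|\widetilde{\dot{\Delta}}_j v\|_{L^p}$ and then convert back to $L^p$ via Bernstein, reducing the question to summability of $2^{j(d/p-s_1-s_2)}$ over $j\geq j'-N$, which is precisely guaranteed by $s_1+s_2>0$ in \eqref{uv2} and by $s_1+s_2\geq 0$ combined with the $\ell^\infty$ tolerance on $v$ in \eqref{uv3}.

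The main technical point is keeping track of exactly which $\ell^r$-summation is used at each stage and matching it with the hypotheses: the paraproduct $T_uv$ inherits the summation index of $v$, while the remainder $R(u,v)$ acts like a convolution that demands an $\ell^1$-type constraint unless one factor sits in the weaker $\ell^\infty$-index Besov space. The endpoint case \eqref{uv3} is the most delicate precisely because the remainder summation is borderline, and here one must exploit $s_2<d/p$ strictly (to gain geometric decay in the paraproduct $T_v u$) together with the $\dot{B}^{s_2}_{p,\infty}$-norm on $v$ (to absorb the remainder summation against the critical exponent $s_1+s_2-d/p$).
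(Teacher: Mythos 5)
The paper does not actually prove Lemma~\ref{lemma23}: it appears in the appendix as one of several ``basic properties of Besov spaces and product estimates'' recalled from Chapters~2--3 of \cite{bahouri1}, so there is no in-paper proof to compare against. Your Bony-paraproduct argument is exactly the standard proof from that reference, and the sketch is sound: the hypotheses $s_1\le d/p$ (respectively $s_2<d/p$) control the low-frequency cutoff $S_{j-1}$ appearing in the paraproducts, $p\ge 2$ lets you run H\"older at $L^{p/2}$ and Bernstein back to $L^p$ for the remainder, and $s_1+s_2>0$ (respectively $\ge 0$ combined with the $\ell^\infty$ index on $v$) handles the remainder summation over $j\ge j'-N$. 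The one place to be slightly more careful is the phrasing ``summability of $2^{j(d/p-s_1-s_2)}$ over $j\ge j'-N$'': after Bernstein the Bernstein factor is $2^{j'd/p}$ (with $j'$, not $j$), and what one actually needs for \eqref{uv2} is that $\sum_{j'\le j+N}2^{(j'-j)(s_1+s_2)}$ is bounded (geometric in $j'-j$, using $s_1+s_2>0$) so that Fubini and the $\ell^1\times\ell^\infty$ duality on the sequences $2^{js_1}\|\dot\Delta_j u\|_{L^p}$, $2^{js_2}\|\widetilde{\dot\Delta}_j v\|_{L^p}$ close the $\ell^1$-sum in $j'$; for \eqref{uv3} only the uniform-in-$j'$ bound is needed, which is why $s_1+s_2=0$ is tolerable there. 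With that minor clarification your proposal matches the textbook argument the paper implicitly relies on.
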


We state the following result about the continuity for composition functions:
\begin{lemma}\label{lemma24}
Let $G:I\rightarrow \mathbb{R}$ be a smooth function satisfying $G(0)=0$. For any $1\leq p\leq \infty$, $s>0$ and $g\in\dot{B}^{s}_{2,1}\cap L^{\infty}$, there holds $G(g)\in \dot{B}^{s}_{p,r}\cap L^{\infty}$ and
\begin{align}
\|G(g)\|_{\dot{B}^{s}_{p,r}}\leq C_{g}\|g\|_{\dot{B}^{s}_{p,r}},\label{F1}
\end{align}
where the constant $C_{g}>0$ depends only on $\|g\|_{L^{\infty}}$, $G'$, $s$ and $d$.

In addition, if $g_{1}, g_{2}\in \dot{B}^{s}_{p,1}\cap L^{\infty}$, then it holds
\begin{align}
&\|G(g_{1})-G(g_{2})\|_{\dot{B}^{s}_{p,1}}\leq C_{g_{1},g_{2}}(1+\|(g_{1},g_{2})\|_{\dot{B}^{\frac{d}{2}}_{2,1}})\|g_{1}-g_{2}\|_{\dot{B}^{s}_{p,1}},\quad ~s\in (-\frac{d}{2},\frac{d}{2}],\label{F2}\\
&\|G(g_{1})-G(g_{2})\|_{\dot{B}^{s}_{p,\infty}}\leq C_{g_{1},g_{2}}(1+\|(g_{1},g_{2})\|_{\dot{B}^{\frac{d}{2}}_{2,1}})\|g_{1}-g_{2}\|_{\dot{B}^{s}_{p,\infty}},\quad s\in (-\frac{d}{2},\frac{d}{2}),\label{F3}
\end{align}
where the constant $C_{g_{1},g_{2}}>0$ depends only on $\|(g_{1},g_{2})\|_{L^{\infty}}$, $G'$, $s$, $p$ and $d$.

\end{lemma}

Finally, the following commutator estimates will be useful to control the nonlinearities in high frequencies:
\begin{lemma}\label{lemma25}
Let $1\leq p\leq \infty$ and $-\frac{d}{p}-1\leq s\leq 1+\frac{d}{p}$. Then it holds
\begin{align}
&\sum_{j\in\mathbb{Z}}2^{js}\|[u\cdot \nabla ,\dot{\Delta}_{j}]a\|_{L^{p}}\lesssim\|u\|_{\dot{B}^{\frac{d}{p}+1}_{p,1}}\|a\|_{\dot{B}^{s}_{p,1}},\label{commutator}\\
&\sum_{j\in\mathbb{Z}}2^{j(s-1)}\|[u\cdot \nabla ,\partial_{k} \dot{\Delta}_{j}]a\|_{L^{p}}\lesssim\|u\|_{\dot{B}^{\frac{d}{p}+1}_{p,1}}\|a\|_{\dot{B}^{s}_{p,1}},\quad k=1,...,d,\label{commutator1}
\end{align}
with the commutator $[A,B]:=AB-BA$.
\end{lemma}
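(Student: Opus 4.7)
The plan is to establish both commutator estimates via Bony's paraproduct decomposition combined with the kernel representation of $\dot{\Delta}_{j}$. Writing $u\cdot\nabla a = T_u\nabla a + T_{\nabla a}u + R(u,\nabla a)$, with paraproduct $T_u v := \sum_{j'\in\mathbb{Z}}\dot{S}_{j'-1}u\,\dot{\Delta}_{j'}v$ and remainder $R(u,v) := \sum_{|j'-j''|\leq 1}\dot{\Delta}_{j'}u\,\dot{\Delta}_{j''}v$, then applying $\dot{\Delta}_{j}$ and subtracting $u\cdot\nabla\dot{\Delta}_{j}a$ (itself Bony-expanded), the commutator $[\dot{\Delta}_{j}, u\cdot\nabla]a$ splits into three groups: a genuine commutator $[\dot{\Delta}_{j}, T_u]\nabla a$, a paraproduct difference $\dot{\Delta}_{j} T_{\nabla a}u - T_{\nabla\dot{\Delta}_{j}a}u$, and a remainder difference $\dot{\Delta}_{j}R(u,\nabla a) - R(u,\nabla\dot{\Delta}_{j}a)$.

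The most delicate piece is the genuine commutator. Since $T_u\nabla a$ exploits the spectral gap between $\dot{S}_{j'-1}u$ and $\dot{\Delta}_{j'}\nabla a$, the localization of $\dot{\Delta}_{j}$ restricts the sum to $|j'-j|\leq 4$. Using the convolution representation $\dot{\Delta}_{j}f(x)=2^{jd}\int h(2^j(x-y))f(y)\,dy$ with $h=\mathcal{F}^{-1}\varphi\in\mathcal{S}$, a first-order Taylor expansion of $\dot{S}_{j'-1}u(y)$ about $x$ combined with $\int |y||h(y)|\,dy<\infty$ yields the pointwise bound
\begin{equation*}
\|[\dot{\Delta}_{j},\dot{S}_{j'-1}u]\dot{\Delta}_{j'}\nabla a\|_{L^p} \lesssim 2^{-j}\,\|\nabla\dot{S}_{j'-1}u\|_{L^\infty}\,\|\dot{\Delta}_{j'}\nabla a\|_{L^p}.
\end{equation*}
Multiplying by $2^{js}$, summing in $j$ and $j'$, and using $\dot{B}^{d/p+1}_{p,1}\hookrightarrow\mathrm{Lip}$ controls this contribution by $\|u\|_{\dot{B}^{d/p+1}_{p,1}}\|a\|_{\dot{B}^{s}_{p,1}}$. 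The paraproduct-difference and remainder-difference pieces are handled by standard Fourier-localization arguments: each summand has Fourier support in a ball or annulus of radius $\sim 2^j$, and H\"older's inequality combined with the product estimates (\ref{uv1})--(\ref{uv3}) and $\ell^1$ summability yields the same bound.

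For (\ref{commutator1}), use the algebraic identity
\begin{equation*}
[u\cdot\nabla,\partial_k\dot{\Delta}_{j}]a = \partial_k[u\cdot\nabla,\dot{\Delta}_{j}]a - (\partial_k u)\cdot\nabla\dot{\Delta}_{j}a.
\end{equation*}
The second term is bounded pointwise by $2^j\|\nabla u\|_{L^\infty}\|\dot{\Delta}_{j}a\|_{L^p}$, and weighting by $2^{j(s-1)}$ and summing gives the required bound. For the first term, each of the five pieces in the Bony expansion of $[\dot{\Delta}_{j},u\cdot\nabla]a$ is individually spectrally localized in a ball or annulus of radius $\sim 2^j$, so Bernstein's inequality (Lemma \ref{lemma21}) gives $\|\partial_k(\cdot)\|_{L^p}\lesssim 2^j\|\cdot\|_{L^p}$ on each piece, reducing (\ref{commutator1}) to (\ref{commutator}) since the factor $2^j$ is absorbed by the weight $2^{j(s-1)}=2^{-j}\cdot 2^{js}$.

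The main obstacle is the endpoint regularities $s=\pm(d/p+1)$: the usual remainder estimate $\|R(u,v)\|_{\dot{B}^{s_1+s_2}_{p,1}}\lesssim\|u\|_{\dot{B}^{s_1}_{p,1}}\|v\|_{\dot{B}^{s_2}_{p,1}}$ requires $s_1+s_2>0$, so at $s=-d/p-1$ one cannot estimate $\dot{\Delta}_{j}R(u,\nabla a)$ and $R(u,\nabla\dot{\Delta}_{j}a)$ separately and must instead exploit their cancellation, writing the difference as $\sum_{|j'-j''|\leq 1}[\dot{\Delta}_{j},\dot{\Delta}_{j'}u]\nabla\dot{\Delta}_{j''}a$ and repeating the Taylor-expansion argument. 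A symmetric cancellation is required at $s=d/p+1$. The $\ell^1$ summability index on both Besov norms on the right-hand side is essential for absorbing the resulting convolution sums in $j$, $j'$, $j''$ without logarithmic loss.
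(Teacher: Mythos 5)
The paper does not supply a proof of Lemma \ref{lemma25}; it is quoted as a standard commutator estimate from the literature (cf.\ Lemma 2.100 of \cite{bahouri1} and the references \cite{danchin1,danchin2}). Your Bony-decomposition strategy is the standard one for \eqref{commutator} and is sound in outline, so there is no methodological divergence to report. However, your argument for \eqref{commutator1} contains a concrete error. You claim that each of the five Bony pieces of $[\dot{\Delta}_{j},u\cdot\nabla]a$ is spectrally localized in a ball or annulus of radius $\sim 2^{j}$, so that Bernstein yields $\|\partial_{k}(\cdot)\|_{L^{p}}\lesssim 2^{j}\|\cdot\|_{L^{p}}$ on every piece. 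This is false for the piece $T_{\nabla\dot{\Delta}_{j}a}u=\sum_{j'}\dot{S}_{j'-1}(\nabla\dot{\Delta}_{j}a)\,\dot{\Delta}_{j'}u$: the requirement $\dot{S}_{j'-1}\nabla\dot{\Delta}_{j}a\neq 0$ forces $j'\geq j+1$, so each $j'$-summand is supported in an annulus of radius $\sim 2^{j'}\geq 2^{j}$ and $\partial_{k}$ costs $2^{j'}$, not $2^{j}$. The ``uniform $2^{j}$'' reduction of \eqref{commutator1} to \eqref{commutator} therefore fails on this term; one must instead estimate it summand-by-summand with the factor $2^{j'}$ and then close via a convolution bound of the form $\sum_{j\le j'+O(1)}2^{(j-j')d/p}$, which does converge since $d/p\geq 0$ but is a genuinely different mechanism than the one you describe.

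A secondary issue is the endpoint $s=-\tfrac{d}{p}-1$. You correctly observe that one cannot estimate $\dot{\Delta}_{j}R(u,\nabla a)$ and $R(u,\nabla\dot{\Delta}_{j}a)$ separately there, and you propose to exploit cancellation by writing the difference as $\sum_{|j'-j''|\le 1}[\dot{\Delta}_{j},\dot{\Delta}_{j'}u]\nabla\dot{\Delta}_{j''}a$ and invoking the Taylor bound $2^{-j}\|\nabla\dot{\Delta}_{j'}u\|_{L^{\infty}}\|\nabla\dot{\Delta}_{j''}a\|_{L^{p}}$. But for $j'\gg j$ the commutator collapses to the single high--high term $\dot{\Delta}_{j}(\dot{\Delta}_{j'}u\cdot\nabla\dot{\Delta}_{j''}a)$ with no cancellation at all, and inserting the Taylor bound into the $j$-sum gives $\sum_{j\le j'+O(1)}2^{(j-j')(s-1)}$, which converges only for $s>1$ --- it does not rescue the low endpoint. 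Reaching $s=-\tfrac{d}{p}-1$ requires a different device for the $j'\gg j$ tail (for instance rewriting $u\cdot\nabla a=\div(ua)-a\,\div u$ to shift the derivative off $a$, combined with conjugate-exponent Bernstein, or imposing $\div u=0$), none of which your sketch supplies. Since the paper only invokes the lemma with $s\geq\tfrac{d}{2}\geq 1$ and $s-1\geq\tfrac{d}{2}-1\geq 0$, this endpoint subtlety is harmless for the paper's applications, but the gap in your proof as written is real.
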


\subsection{Estimates for some linear equations}

We first consider the linear transport equation:
\begin{equation}\label{trans}
\left\{
\begin{aligned}
&\partial_{t}f+u\cdot \nabla f=F,\quad\quad x\in\mathbb{R}^{d},\quad t>0,\\
&f(x,0)=f_{0}(x),\quad\quad\quad x\in\mathbb{R}^{d}.
\end{aligned}
\right.
\end{equation}

\begin{lemma}[\!\!\cite{bahouri1}]\label{lemma31}
Let $T>0$, $-\frac{d}{2}<s\leq 1+\frac{d}{2}$, $1\leq r\leq \infty$, $f_{0}\in\dot{B}^{s}_{2,1}$, $u\in L^1(0,T;\dot{B}^{\frac{d}{2}+1}_{2,1})$ and $F\in L^1(0,T;\dot{B}^{s}_{2,1})$. Then, for any solution $f$ to $(\ref{trans})$ in the sense of distributions, there exists a constant $C$ depending only on $s$, $d$ and $r$ such that for any $t\in[0,T]$, it holds
\begin{equation}\nonumber
\begin{aligned}
&\|f\|_{\widetilde{L}^{\infty}_{t}(\dot{B}^{s}_{2,r})}\leq e^{C\|u\|_{L^1_{t}(\dot{B}^{\frac{d}{2}+1}_{2,1})}}\big{(}\|f_{0}\|_{\dot{B}^{s}_{2,r}}+\int_{0}^{t}\|F\|_{\dot{B}^{s}_{2,r}}d\tau\big{)}.
\end{aligned}
\end{equation}
If $r<\infty$, we have $f\in\mathcal{C}([0,T];\dot{B}^{s}_{2,r})$.
\end{lemma}

Next, for the equation
\begin{equation}\label{mass}
\left\{
\begin{aligned}
&\partial_{t}a+u\cdot \nabla a =(1+a)\div u,\quad\quad x\in\mathbb{R}^{d},\quad t>0,\\
&a(x,0)=a_{0}(x),\quad\quad\quad \quad\quad\quad\quad~~ x\in\mathbb{R}^{d},
\end{aligned}
\right.
\end{equation}
we have the following estimates:

\begin{lemma}[\!\!\cite{danchin2}]\label{lemma32}
Let $T>0$, $a_{0}\in\dot{B}^{\frac{d}{2}}_{2,1}$ and $u\in L^1(0,T;\dot{B}^{\frac{d}{2}+1}_{2,1})$. Then there exists a constant $C$ depending only on $d$ such that if $a$ is a solution to $(\ref{mass})$ in the sense of distributions, then any $t\in[0,T]$ and $m\in\mathbb{Z}$, we have
\begin{align}
&~~\|a\|_{\widetilde{L}^{\infty}_{t}(\dot{B}^{\frac{d}{2}}_{2,1})}\leq e^{C\| u\|_{L^1_{t}(\dot{B}^{\frac{d}{2}+1}_{2,1})}}\|a_{0}\|_{\dot{B}^{\frac{d}{2}}_{2,1}}+e^{C\|u\|_{L^1_{t}(\dot{B}^{\frac{d}{2}+1}_{2,1})}}-1,\label{lemma321}\\
&\sum_{j\geq m}2^{\frac{d}{2}j}\|\dot{\Delta}_{j}a\|_{L^{\infty}_{t}(L^2)}\leq \sum_{j\geq m}2^{\frac{d}{2}j}\|\dot{\Delta}_{j}a_{0}\|_{L^2}+(1+\|a_{0}\|_{\dot{B}^{\frac{d}{2}}_{2,1}})\Big{(}e^{C\| u\|_{L^1_{t}(\dot{B}^{\frac{d}{2}+1}_{2,1})}}-1\Big{)},\label{lemma322}\\
&\sum_{j\leq m}2^{\frac{d}{2}j}\|\dot{\Delta}_{j}(a-a_{0})\|_{L^{\infty}_{t}(L^2)}\leq (1+\|a_{0}\|_{\dot{B}^{\frac{d}{2}}_{2,1}})\Big{(}e^{C\| u\|_{L^1_{t}(\dot{B}^{\frac{d}{2}+1}_{2,1})}}-1\Big{)}+C2^{m}\|a_{0}\|_{\dot{B}^{\frac{d}{2}}_{2,1}}\|u\|_{L^1_{t}(\dot{B}^{\frac{d}{2}}_{2,1})}.\label{lemma323}
\end{align}

\end{lemma}

~\

Then, consider the following parabolic system with constant coefficients:
\begin{equation}\label{heat11}
\left\{
\begin{aligned}
&\partial_{t}u-\mathcal{A}u=F,\quad \quad~~x\in\mathbb{R}^{d},\quad t>0,\\
&u(x,0)=u_{0}(x),\quad \quad x\in\mathbb{R}^{d}.
\end{aligned}
\right.
\end{equation}
We have the following classical $L^2$-regularity estimates:
\begin{lemma}[\!\!\cite{danchin2}]\label{lemma33}
Let $T>0$, $s\in\mathbb{R}$, $\nu:=\min\{\mu,2\mu+\lambda\}>0$, $1\leq \varrho_{1}\leq \varrho_{2}\leq\infty$, $\varrho_{3}=(1+\frac{1}{\varrho_{1}}-\frac{1}{\varrho_{2}})^{-1}$,  $u_{0}\in \dot{B}^{s}_{2,1}$ and $F\in \widetilde{L}^{\varrho_{2}}(0,T;\dot{B}^{s}_{2,1})$. If $u$ is a solution to $(\ref{heat11})$ in the sense of distributions, then there exists two constants $C>0$ and $c_{0}>0$ depending only on $d$ such that for any $t\in[0,T]$, it holds
\begin{equation}\nonumber
\begin{aligned}
&\|u\|_{\widetilde{L}^{\varrho_{1}}_{t}(\dot{B}^{s+\frac{2}{\varrho_{1}}}_{2,1})}\leq C\sum_{j\in\mathbb{Z}}2^{js}\|\dot{\Delta}_{j}u\|_{L^2}\Big{(}\frac{1-e^{-c_{0}\nu t2^{2j}\varrho_{1}}}{c_{0}\nu\varrho_{1}}\Big{)}^{\frac{1}{\varrho_{1}}}\\
&\quad\quad\quad\quad\quad\quad\quad+C\sum_{j\in\mathbb{Z}}2^{j(s-2+\frac{2}{\varrho_{2}})}\|\dot{\Delta}_{j}F\|_{L^{\varrho_{2}}_{t}(L^2)}\Big{(}\frac{1-e^{-c_{0}\nu t2^{2j}\varrho_{3}}}{c_{0}\nu\varrho_{3}}\Big{)}^{\frac{1}{\varrho_{3}}},
\end{aligned}
\end{equation}
which in particular implies
\begin{equation}\nonumber
\begin{aligned}
\|u\|_{\widetilde{L}^{\varrho_{1}}_{t}(\dot{B}^{s+\frac{2}{\varrho_{1}}}_{2,1})}\leq \frac{C}{(c_{0}\nu)^{\frac{1}{\varrho_{1}}}}\|u_{0}\|_{\dot{B}^{s}_{2,1}}+\frac{C}{(c_{0}\nu)^{\frac{1}{\varrho_{3}}}}\|F\|_{\widetilde{L}^{\varrho_{2}}_{t}(\dot{B}^{s-2+\frac{2}{\varrho_{2}}}_{2,1})}.
\end{aligned}
\end{equation}
Moreover, we have $u\in \mathcal{C}([0,T];\dot{B}^{s}_{2,1})$.
\end{lemma}

Then, we consider the transport-diffusion system with variable coefficients:
\begin{equation}\label{heat1}
\left\{
\begin{aligned}
&\partial_{t}u+\upsilon\cdot\nabla  u+u\cdot\nabla  \zeta-\vartheta\mathcal{A}u=F,\quad\quad x\in\mathbb{R}^{d},\quad t>0,\\
&u(x,0)=u_{0}(x),\quad\quad\quad\quad\quad\quad\quad\quad\quad~~ x\in\mathbb{R}^{d}.
\end{aligned}
\right.
\end{equation}
The following two lemmas play a key role in the local well-posednss for large initial density \cite{danchin2,danchin3}.
\begin{lemma}[\!\!\cite{danchin2,danchin3}]\label{lemma34}
Let $T>0$, $\nu:=\min\{\mu,2\mu+\lambda\}>0$, $s\in(-\frac{d}{2},\frac{d}{2}]$, $u_{0}\in \dot{B}^{s}_{2,1}$, $F\in L^1(0,T;\dot{B}^{s}_{2,1})$, $\vartheta-1\in L^2(0,T;\dot{B}^{\frac{d}{2}}_{2,1})$, $\upsilon,\zeta \in L^2(0,T;\dot{B}^{\frac{d}{2}+1}_{2,1})$ and $\vartheta$ be bounded from below by a constant $\underline{\vartheta}>0$. There exist three constants $c_{i}$ $(i=1,2,3)$ (with $c_{2},c_{3}$ depending only on $d$ and $s$ and $c_{1}$ universal) and a sufficiently large constant $m\in\mathbb{Z}$ such that if
\begin{equation}\nonumber
\left\{
\begin{aligned}
&\inf_{(x,t)\in \mathbb{R}^{d}\times[0,T]}\Big{(}1+\sum_{j\leq m-1}\dot{\Delta}_{j}(\vartheta-1)\Big{)}(x,t)\geq \frac{\underline{\vartheta}}{2},\\
&\quad\|\sum_{j\geq m}\dot{\Delta}_{j}(\vartheta-1)\|_{\widetilde{L}^{\infty}_{T}(\dot{B}^{\frac{d}{2}}_{2,1})}\leq \frac{c_{2}\underline{\vartheta}\nu}{\mu+|\mu+\lambda|},
\end{aligned}
\right.
\end{equation}
then for any solution $u$ to $(\ref{heat1})$ in the sense of distributions, we have
\begin{equation}\nonumber
\begin{aligned}
&\|u\|_{\widetilde{L}^{\infty}_{T}(\dot{B}^{s}_{2,1})}+c_{1}\underline{\vartheta}\nu\|u\|_{L^1_{T}(\dot{B}^{s+2}_{2,1})}\leq e^{c_{3}(Z_{m}(T)+W(T))}\big{(}\|u_{0}\|_{\dot{B}^{s}_{2,1}}+\|F\|_{L^1_{T}(\dot{B}^{s}_{2,1})}\big{)},
\end{aligned}
\end{equation}
with
$$
Z_{m}(t):=\frac{2^{2m}(\mu+|\mu+\lambda|)^2}{\underline{\vartheta}\nu}\int_{0}^{t}\|\vartheta-1\|_{\dot{B}^{\frac{d}{2}}_{2,1}}^2d\tau,\quad W(t):=\int_{0}^{t}\|(\upsilon,\zeta)\|_{\dot{B}^{\frac{d}{2}+1}_{2,1}}d\tau.
$$
\end{lemma}

~\

To touch the limit case $s=-\frac{d}{2}$, we need the following lemma:
\begin{lemma}[\!\!\cite{danchin2}]\label{lemma35}
Let $T>0$, $\nu:=\min\{\mu,2\mu+\lambda\}>0$, $u_{0}\in \dot{B}^{-\frac{d}{2}}_{2,\infty}$, $g\in L^1(0,T;\dot{B}^{-\frac{d}{2}}_{2,\infty})$, $\vartheta-1\in L^2(0,T;\dot{B}^{\frac{d}{2}}_{2,1})$, $\upsilon,\zeta \in L^2(0,T;\dot{B}^{\frac{d}{2}+1}_{2,1})$ and $\vartheta$ be bounded from below by a constant $\underline{\vartheta}>0$. There exist three constants $c_{i}$ $(i=4,5.6)$ (with $c_{4}$ universal and $c_{5},c_{6}$ depending only on $d$) and a sufficiently large constant $m\in\mathbb{Z}$ such that if
\begin{equation}\nonumber
\left\{
\begin{aligned}
&\inf_{(x,\tau)\in \mathbb{R}^{d}\times[0,T]}\Big{(}1+\sum_{j\leq m-1}\dot{\Delta}_{j}(\vartheta-1)\Big{)}(x,\tau)\geq \frac{\underline{\vartheta}}{2},\\
&\quad\|\sum_{j\geq m}\dot{\Delta}_{j}(\vartheta-1)\|_{\widetilde{L}^{\infty}_{T}(\dot{B}^{\frac{d}{2}}_{2,1})}\leq \frac{c_{5}\underline{\vartheta}\nu}{\mu+|\mu+\lambda|},
\end{aligned}
\right.
\end{equation}
then for any solution $u$ to $(\ref{heat1})$ in the sense of distributions and suitably small time $t\in [0,T]$ satisfying
$$
t\|\vartheta -1\|_{\widetilde{L}^{\infty}_{t}(\dot{B}_{2,1}^{\frac{d}{2}})}^2\leq \frac{c_{5}2^{-2m}\underline{\vartheta}\nu}{(\mu+|\mu+\lambda|)^2},
$$
we have
\begin{equation}\nonumber
\begin{aligned}
&\|u\|_{\widetilde{L}^{\infty}_{t}(\dot{B}^{-\frac{d}{2}}_{2,\infty})}+c_{4}\underline{\vartheta}\nu\|u\|_{L^1_{t}(\dot{B}^{2-\frac{d}{2}}_{2,\infty})}\leq 2e^{c_{6}W(t)}\big{(}\|u_{0}\|_{\dot{B}_{2,\infty}^{-\frac{d}{2}}}+\|G\|_{L^1_{t}(\dot{B}^{-\frac{d}{2}}_{2,\infty})}\big{)}.
\end{aligned}
\end{equation}
\end{lemma}

~\

Finally, we need the optimal regularity estimates for the Lam\'e system with constant coefficients.
\begin{lemma}\label{heat}
Let $T>0$, $\mu>0$, $2\mu+\lambda>0$, $s\in\mathbb{R}$, $1\leq p,r\leq\infty$ and $1\leq\varrho_{2}\leq\varrho_{1}\leq\infty$. Assume that $u_{0}\in\dot{B}^{s}_{p,r}$ and $f\in\widetilde{L}^{\rho_{2}}(0,T;\dot{B}^{s-2+\frac{2}{\varrho_{2}}}_{p,r})$ hold. If $u$ is a solution of
\begin{equation}\nonumber
\left\{
\begin{aligned}
&\partial_{t}u-\mu \Delta u-(\mu+\lambda)\nabla \div u=f,\quad x\in\mathbb{R}^{d},\quad t\in(0,T),\\
&u(x,0)=u_{0}(x),~\quad\quad\quad\quad\quad\quad\quad\quad x\in\mathbb{R}^{d},
\end{aligned}
\right.
\end{equation}
then $u$ satisfies
\begin{equation}\nonumber
\begin{aligned}
&\min\{\mu,2\mu+\lambda\}^{\frac{1}{\varrho_{1}}}\|u\|_{\widetilde{L}^{\varrho_{1}}_{T}(\dot{B}^{s+\frac{2}{\varrho_{1}}}_{p,r})}\lesssim \|u_{0}\|_{\dot{B}^{s}_{p,r}}+\|f\|_{\widetilde{L}^{\rho_{2}}_{T}(\dot{B}^{s-2+\frac{2}{\varrho_{2}}}_{p,r})}.
\end{aligned}
\end{equation}
\end{lemma}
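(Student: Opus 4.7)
\textbf{Proof proposal for Lemma \ref{heat}.} The plan is to reduce the Lam\'e system to two scalar heat equations via the Helmholtz projection and then perform a standard frequency-localized semigroup analysis. Define the projectors $\mathcal{Q}:=-\nabla(-\Delta)^{-1}\div$ and $\mathcal{P}:=\mathrm{Id}-\mathcal{Q}$, which are homogeneous Fourier multipliers of order zero and hence bounded on every $\dot{B}^{s}_{p,r}$. Applying them to the equation and using that $\nabla\div\mathcal{P}u=0$ and $\nabla\div\mathcal{Q}u=\Delta\mathcal{Q}u$, we decouple the system into
\[
\partial_{t}\mathcal{P}u-\mu\Delta\mathcal{P}u=\mathcal{P}f,\qquad \partial_{t}\mathcal{Q}u-(2\mu+\lambda)\Delta\mathcal{Q}u=\mathcal{Q}f,
\]
with initial data $\mathcal{P}u_{0}$ and $\mathcal{Q}u_{0}$ respectively. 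Thus it suffices to prove the estimate for the scalar parabolic equation $\partial_{t}v-\nu\Delta v=g$, $v|_{t=0}=v_{0}$, uniformly in $\nu\in\{\mu,2\mu+\lambda\}$, and then reassemble.

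The next step is the standard dyadic computation. Applying $\dot{\Delta}_{j}$ and using Duhamel's formula together with the frequency-localized bound $\|e^{\nu t\Delta}\dot{\Delta}_{j}h\|_{L^{p}}\lesssim e^{-c\nu t 2^{2j}}\|\dot{\Delta}_{j}h\|_{L^{p}}$ (which follows from Bernstein's inequality as in Lemma~2.4 of \cite{bahouri1}), one obtains
\[
\|\dot{\Delta}_{j}v(t)\|_{L^{p}}\lesssim e^{-c\nu t 2^{2j}}\|\dot{\Delta}_{j}v_{0}\|_{L^{p}}+\int_{0}^{t}e^{-c\nu (t-\tau)2^{2j}}\|\dot{\Delta}_{j}g(\tau)\|_{L^{p}}\,d\tau.
\]
Taking the $L^{\varrho_{1}}(0,T)$ norm in $t$ and applying Young's convolution inequality with $1+\tfrac{1}{\varrho_{1}}=\tfrac{1}{\varrho}+\tfrac{1}{\varrho_{2}}$ (valid since $\varrho_{2}\leq\varrho_{1}$) produces
\[
\|\dot{\Delta}_{j}v\|_{L^{\varrho_{1}}_{T}(L^{p})}\lesssim (\nu 2^{2j})^{-\frac{1}{\varrho_{1}}}\|\dot{\Delta}_{j}v_{0}\|_{L^{p}}+(\nu 2^{2j})^{-1+\frac{1}{\varrho_{2}}-\frac{1}{\varrho_{1}}}\|\dot{\Delta}_{j}g\|_{L^{\varrho_{2}}_{T}(L^{p})}.
\]
Multiplying by $2^{j(s+2/\varrho_{1})}$ and summing in $\ell^{r}(\mathbb{Z})$ yields the claimed mixed-norm estimate, where the viscosity factor $\nu^{1/\varrho_{1}}$ on the left cancels precisely against the exponents $\nu^{-1/\varrho_{1}}$ and $\nu^{-1+1/\varrho_{2}-1/\varrho_{1}}$ in the frequency sums (noting that $\tfrac{2}{\varrho_{1}}+(s-2+\tfrac{2}{\varrho_{2}})=s+\tfrac{2}{\varrho_{1}}-2+\tfrac{2}{\varrho_{2}}$ so that the powers of $2^{j}$ match).

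Finally, gathering the contributions from $\mathcal{P}u$ (with $\nu=\mu$) and $\mathcal{Q}u$ (with $\nu=2\mu+\lambda$) and bounding $\nu^{1/\varrho_{1}}\geq \min\{\mu,2\mu+\lambda\}^{1/\varrho_{1}}$ on the left-hand side deliver the lemma; boundedness of $\mathcal{P},\mathcal{Q}$ on $\dot{B}^{s}_{p,r}$ absorbs the projections of $u_{0}$ and $f$ into the same norms. The only mildly technical point is the careful bookkeeping of the powers of $\nu$ and $2^{j}$ in the Young step; no genuine obstacle arises, since the argument is the familiar heat-equation maximal regularity in the Chemin--Lerner framework and the Lam\'e operator is diagonalized once projected.
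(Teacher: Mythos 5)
The paper itself does not prove Lemma~\ref{heat}; it is recalled in the Appendix without proof as a standard maximal-regularity estimate for the Lam\'e system (it appears, e.g., in \cite{bahouri1,danchin1}). Your proposal reconstructs the standard textbook argument: diagonalize the Lam\'e operator via the Helmholtz projectors $\mathcal{P}$, $\mathcal{Q}$ into two scalar heat equations with viscosities $\mu$ and $2\mu+\lambda$, apply the frequency-localized heat semigroup bound $\|e^{\nu t\Delta}\dot{\Delta}_j h\|_{L^p}\lesssim e^{-c\nu t 2^{2j}}\|\dot{\Delta}_j h\|_{L^p}$, use Duhamel, and close with Young's convolution inequality in time before summing in $j$. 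All the algebraic identities you use ($\mathcal{P}\nabla\div=0$, $\nabla\div\mathcal{Q}u=\Delta\mathcal{Q}u$), the condition $\varrho_2\leq\varrho_1$ for Young's inequality, and the exponent matching $2/\varrho_1 + (s-2+2/\varrho_2) = s + 2/\varrho_1 - 2 + 2/\varrho_2$ are correct. This is precisely the argument one would find in the cited references, so the approach is not different in substance from the paper's intended source.

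One small caveat on your bookkeeping: you claim the viscosity factor ``cancels precisely'' against $\nu^{-1/\varrho_1}$ and $\nu^{-1+1/\varrho_2-1/\varrho_1}$. It cancels on the initial-data term, but on the forcing term $\nu^{1/\varrho_1}\cdot\nu^{-1+1/\varrho_2-1/\varrho_1}=\nu^{1/\varrho_2-1}$, which equals $1$ only when $\varrho_2=1$. The fully $\nu$-explicit version of the estimate therefore carries a factor $\nu^{1/\varrho_2-1}$ in front of $\|f\|$, as in the classical statement in \cite{bahouri1}. This matches the paper's usage (it only applies the lemma with $\varrho_2=1$) and the paper's own wording absorbs it into the implicit constant, so your proof is consistent with the lemma as stated; just be aware that the cancellation is exact only for $\varrho_2=1$.
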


\vspace{2ex}
\textbf{Acknowledgments}
The authors thank the referees for their valuable suggestions and comments on the manuscript.  Part of this work was done when the second author was visiting Universit\'e Paris-Est in 2020.  The second author is grateful to Professor R. Danchin and Dr. T. Crin-Barat for their helpful discussions. The research of the paper is supported by National Natural Science Foundation of China (No.11931010 and 11871047) and by the key research project of Academy for Multidisciplinary Studies, Capital Normal University, and by the Capacity Building for Sci-Tech Innovation-Fundamental Scientific Research Funds (No.007/20530290068).

\end{document}